\theoremstyle{plain}
\newtheorem{theo}{Theorem}
\theoremstyle{definition}
\newtheorem{definition}[theo]{Definition}
\theoremstyle{plain}
\newtheorem{lemma}[theo]{Lemma}
\newtheorem{theorem}[theo]{Theorem}
\newtheorem{corollary}[theo]{Corollary}
\newtheorem{proposition}[theo]{Proposition}
\theoremstyle{definition}
\newtheorem{remark}[theo]{Remark}
\newenvironment{colored}{\color{red}}{}
\newcommand{\bc}{\begin{colored}}
\newcommand{\ec}{\end{colored}}
\newcommand{\opp}[1]{\operatorname{#1}}
\renewcommand{\inf}{\mathfrak{inf}}
\newcommand{\beq}{\begin{equation}}
\newcommand{\eeq}{\end{equation}}
\renewcommand{\a}{\alpha}
\renewcommand{\b}{\beta}
\newcommand{\g}{\gamma}
\newcommand{\x}{\xi}
\renewcommand{\O}{\Omega}
\newcommand{\bC}{\mathbb{C}}
\newcommand{\bR}{\mathbb{R}}
\newcommand\R{{\mathbb R}}
\newcommand\C{{\mathbb C}}
\newcommand{\bZ}{\mathbb{Z}}
\newcommand{\bP}{\mathbb{P}}
\newcommand{\gb}{\mathfrak{b}}
\newcommand{\gc}{\mathfrak{c}}
\newcommand{\gf}{\mathfrak{f}}
\renewcommand{\gg}{\mathfrak{g}}
\newcommand{\gh}{\mathfrak{h}}
\newcommand{\gm}{\mathfrak{m}}
\newcommand{\gp}{\mathfrak{p}}
\newcommand{\gs}{\mathfrak{s}}
\renewcommand\sl{\mathfrak{sl}}
\newcommand{\cD}{\mathcal{D}}
\newcommand{\cE}{\mathcal{E}}
\newcommand{\cI}{\mathcal{I}}
\newcommand{\cJ}{\mathcal{J}}
\newcommand{\cK}{\mathcal{K}}
\newcommand{\cL}{\mathcal{L}}
\newcommand{\clL}{\mathscr{L}}
\newcommand{\clB}{\mathscr{B}}
\newcommand{\cM}{\mathcal{M}}
\newcommand{\cO}{\mathcal{O}}
\newcommand{\cP}{\mathcal{P}}
\newcommand{\cQ}{\mathcal{Q}}
\newcommand{\cU}{\mathcal{U}}
\newcommand{\cV}{\mathcal{V}}
\newcommand{\p}{\partial}
\renewcommand{\square}{\kern1pt\vbox
{\hrule height 0.6pt\hbox{\vrule width 0.6pt\hskip 3pt
\vbox{\vskip 6pt}\hskip 3pt\vrule width 0.6pt}\hrule height0.6pt}\kern1pt}
\DeclareMathOperator\Id{Id}
\DeclareFontFamily{U}{mathx}{}
\DeclareFontShape{U}{mathx}{m}{n}{<-> mathx10}{}
\DeclareSymbolFont{mathx}{U}{mathx}{m}{n}
\DeclareMathAccent{\widehat}{0}{mathx}{"70}
\DeclareMathAccent{\widecheck}{0}{mathx}{"71}
\renewcommand\Re{\operatorname{\mathfrak{Re}}}
\newcommand{\wt}{\widetilde}
\newcommand{\ot}{\otimes}
\newcommand{\be}{\begin{equation}}
\newcommand{\ee}{\end{equation}}
\def\<#1,#2>{\langle\,#1,\,#2\,\rangle}
\newcommand{\arr}{\begin{array}{rlll}}
\newcommand{\ea}{\end{array}}
\newcommand{\bea}{\begin{eqnarray}}
\newcommand{\eea}{\end{eqnarray}}
\newcommand{\bean}{\begin{eqnarray*}}
\newcommand{\eean}{\end{eqnarray*}}
\newcommand{\rk}{\mathrm{rk}}
\newcommand{\vardbtilde}[1]{\widetilde{\raisebox{0pt}[0.85\height]{$\widetilde{#1}$}}}
\title[$3$-nondegenerate CR manifolds in dimension $7$ (II)]
{On $3$-nondegenerate CR manifolds in dimension $7$ (II):\\ the intransitive case}
\address{Boris Kruglikov, Department of Mathematics and Statistics, UiT The Arctic University of Norway, Troms\o\,  9037, Norway}
\email{boris.kruglikov@uit.no}
\address{Andrea Santi, Dipartimento di Matematica,
Universit\'a degli Studi di Roma ``Tor Vergata'',
Via della ricerca scientifica 1, 00133 Roma, ITALY}
\email{asanti.math@gmail.com, santi@mat.uniroma2.it}
\thanks{}
\keywords{$k$-nondegenerate CR manifold, Levi degenerate CR manifold}
\subjclass[2020]{32V40, 32V35, 53C30, 22E15}
\author{Boris Kruglikov, Andrea Santi}
\begin{document}

\begin{abstract} 
We investigate $3$-nondegenerate CR structures in the lowest possible dimension $7$ and
show that $8$ is the maximal dimension for the Lie algebra of symmetries of such structures. The next possible symmetry dimension is $6$, and 
for the automorphism groups the dimension $7$ is also realizable.  
This part (II) is devoted to the case where the symmetry algebra acts intransitively.
We use various methods to bound its dimension and 
demonstrate the existence of infinitely many non-equivalent submaximally symmetric models.
Summarizing, we get a stronger form of Beloshapka's conjecture on the symmetry dimension of 
hypersurfaces in $\mathbb C^4$. 

\end{abstract}

\maketitle
\null \vspace*{-.50in}

\tableofcontents

\section{Introduction}
\setcounter{equation}{0}\setcounter{section}{1}

An {\it almost CR-structure} is a triple $(\cM,\cD,\cJ)$ consisting of a manifold, a vector distribution
$\cD\subset T\cM$, called the CR-distribution, and a field of complex structures
$\cJ\in\Gamma(\operatorname{End}(\cD))$. 
The complexified CR-distribution splits as the direct sum of its holomorphic and antiholomorphic parts, 
$\cD\otimes\mathbb C=\cD_{10}\oplus \cD_{01}$, where
 $$
\cD_{10}=\{X-i\cJ X\mid X\in \cD\},\;\;\cD_{01}=\{X+i\cJ X\mid X\in \cD\}=\overline{\cD_{10}}.
 $$
A CR-structure is an integrable almost CR-structure, i.e., an almost CR-structure for which the distribution $\cD_{10}$ is involutive. This paper, as its preceding part \cite{KS1}, studies CR-structures of hypersurface type:
CR-codimension $1$ and CR-dimension $n=\frac12(\dim \cM-1)=\operatorname{rank}_{\mathbb C}(\cD)$.

There are two intimately related problems that are important in CR geometry: equivalence and symmetry/automorphism
of CR structures.  For Levi-nodegenerate CR manifolds these problems were addressed and resolved in 
the classical works \cite{C,CM,Ta1,Ta2},  in particular it was shown that the dimension of the algebra of germs of 
any Levi-nondegenerate connected CR-hypersurface $\cM$ of CR-dimension $n$ 
does not exceed $n^2+4n+3$. 

In what follows, the manifold $\cM$ will be assumed connected and real-analytic. In this case the CR structure $(\cM,\cD,\cJ)$
can be locally realized as a real submanifold of codimension 1 in $\bC^{n+1}$; then $\cD=T\cM\cap \cJ_oT\cM$,
where $\cJ_o=i$ is the complex structure in $\bC^{n+1}$, and $\cJ=\cJ_o|_{\cD}$. The realization, however,
is not always possible globally, and in this paper we will not rely on a choice of realization.
All other involved objects, such as infinitesimal symmetries and global automorphisms, will be also assumed
real-analytic. As algebra of infinitesimal symmetries, we can either choose the algebra of 
germs of symmetries $\gg=\inf(\cM,\cD,\cJ;p)$ at a fixed point $p\in\cM$ or the algebra of 
global vector fields $\gg=\inf(\cM,\cD,\cJ)$. It is also possible to consider fields defined on a neighbourhood
$\cU\subset\cM$, our results hold true in each of these three cases.
In the arguments of our proofs, we will often tacitly assume $(\cM,\cD,\cJ)$ to be {\it regular}, i.e., the ranks of all involved bundles are constant.  
In fact, they are constant on an open dense subset of $\cM$ due to upper-semicontinuity, and we may restrict 
to it by analyticity.  

In the Levi-nondegenerate case, the {\it Levi form\/} of a CR-hypersurface 
\begin{equation}
\label{eq:usual-Levi-form}
\clL:\cD_{10}\otimes\cD_{01}\to(T\cM/\cD)\otimes\mathbb C,\quad
\clL(Z,\overline{Z'})=i[Z,\overline{Z'}]\!\!\!\mod\cD\otimes\bC\,,
\end{equation}
is a conformal $\cJ$-Hermitian form on the distribution $\cD$ of signature $(p,n-p)$,
and a choice of orientation of the normal tangent bundle $T\cM/\cD$ interchanges the signature with $(n-p,p)$.
The maximally symmetric Levi-nondegenerate CR-hypersurfaces are locally CR-equivalent 
to a hyperquadric, which can be written as the tube
 \begin{equation*}
{\mathcal Q}_{(p)}=\Sigma_{(p)}\times\R^{n+1}(y)\subset\bC^{n+1}(z),\quad
\Sigma_{(p)}=\bigl\{x_0=x_1^2+\dots+x_p^2-x_{p+1}^2-\dots-x_n^2\bigr\}\subset\R^{n+1}(x)\,,
 \end{equation*}
where $z=x+iy\in\mathbb C^{n+1}$.
In the absence of Levi-nondegeneracy,  finite-dimensionality of $\gg$ is guaranteed by 
{\it holomorphic nondegeneracy} \cite[\S12.5]{BER}, or equivalently,  by $k$-nondegeneracy 
for some $1\le k\le n$ \cite[\S11.1-11.3]{BER}. 
We will recall this notion as well as the Freeman filtration \cite{Fr} in \S\ref{sec:2.1}.
The 2-nondegenerate CR-structures were extensively studied recently, see \cite{Gr,PZ,SZ}. 

In this paper we focus on understanding the symmetries of $3$-nondegenerate 
CR-manifolds for the smallest possible value $n=3$. (Along the way, we will also deal with some $2$-degenerate CR-structures.)  In \cite{KS1} we treated 
the homogeneous and locally homogeneous CR-manifolds, now we concentrate on the complementary case.

 \begin{theorem}\label{T1}
If the symmetry algebra $\gg$ of a 7-dimensional 3-nondegenerate CR-hypersurface acts locally intransitively,  i.e., the orbits have dimension strictly less than $7$,  then $\dim\gg<7$ as well.
 \end{theorem}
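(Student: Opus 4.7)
The plan is to use the orbit--stabilizer decomposition $\dim\gg = \dim\cO + \dim\gg_p$ at a sufficiently generic point $p \in \cM$, where $\cO = \gg\cdot p$ is the orbit and $\gg_p$ is its isotropy subalgebra. Intransitivity gives $\dim\cO \leq 6$, and the desired bound $\dim\gg \leq 6$ becomes equivalent to $\dim\gg_p \leq 6 - d$ for $d := \dim\cO$, which must then be verified for each of the possible orbit dimensions $d\in\{0,1,\dots,6\}$.

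First I would restrict to a regular point $p$ on an orbit of maximal dimension, at which all relevant bundles and the Freeman filtration have locally constant rank; this is a dense open condition by analyticity. At such a $p$, the isotropy $\gg_p$ acts linearly on $T_p\cM$, and is forced to preserve the full CR-data: the CR-distribution $\cD_p$, the complex structure $\cJ_p$, the Freeman filtration $K_2(p) \subset K_1(p) \subset \cD_{10,p}$ (of complex dimensions $1\subset 2\subset 3$ as dictated by 3-nondegeneracy in CR-dimension $3$), the Levi form and its higher iterates, and in addition the tangent subspace $T_p\cO \subset T_p\cM$. This embeds $\gg_p$ as a tightly constrained subalgebra of the linear automorphisms of the tangent symbol, and the problem is reduced to bounding this algebraic object.

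The argument then proceeds by case analysis on $d$. When $d=6$ the orbit is a real hypersurface in $\cM$, and I would stratify according to the relative position of $T_p\cO$ with respect to $\cD_p$ and the Freeman filtration; the rigidity of the 3-nondegenerate symbol should force the residual isotropy to vanish in each subcase. When $d\le 5$ the orbit is locally homogeneous of dimension at most $5$, and carries an induced geometric structure whose infinitesimal symmetry can be bounded using the low-dimensional classification from part (I) \cite{KS1}, combined with the extra requirement that $\gg_p$ preserve the transverse direction $T_p\cM/T_p\cO$ together with its image in the various graded pieces of the Freeman filtration.

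The main obstacle will be excluding a putative isotropy element that acts trivially on $T_p\cO$ but non-trivially on $T_p\cM/T_p\cO$: such an element would inflate $\dim\gg_p$ without contributing to $\dim\cO$, and is exactly what breaks any naive dimension count. Ruling it out is where 3-nondegeneracy must be used most sharply, since the transverse action is governed by higher-order brackets that are constrained by the chain $K_2\subset K_1$ and the higher Levi forms. A secondary technical difficulty lies in the uniform treatment of the low-dimensional orbits, whose induced geometry may be substantially more degenerate than a genuine CR-submanifold structure and thus requires a direct analysis rather than a single general bound from part (I).
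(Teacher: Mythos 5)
Your overall frame --- restrict to regular points, case analysis on the generic orbit dimension $d$, use that the isotropy preserves $\cD$, $\cJ$, the Freeman filtration and $T_p\cO$ --- does match the skeleton of the paper's proof, but the proposal assumes away the two mechanisms that actually carry the argument. First, for $d=6$ the claim that ``the rigidity of the 3-nondegenerate symbol should force the residual isotropy to vanish in each subcase'' is false: in the subcases with $\opp{rk}(TU_\cK)=3$ the reduced structure algebra is $2$-dimensional and the Tanaka prolongation of the relevant orbit symbol is $9$- to $11$-dimensional, so the a priori bound coming from preserving the symbol is $\dim\gg\le 9$ or $10$, not $6$. To close the gap the paper must (i) run multi-step normalizations of adapted frames to produce canonical parallelisms in the generic branches, and (ii) in the non-generic branches exclude every filtered deformation of dimension $\ge 7$ of the prolonged graded algebra by explicit Jacobi-identity computations. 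Second, in the branches where each orbit carries a maximally symmetric induced structure (the sphere $SU(1,2)/B$ for $d=3$, the tube over the future light cone for $d=5$), no isotropy bound on a single orbit suffices: a priori $\dim(\gg|_{\cO^G_q})$ could be $8$ or $10$. What kills these cases is a transverse argument --- the invariant (almost) CR structure on each orbit is unique up to sign, the symmetries are $v$-independent in rectified coordinates, hence $\cJ$ would be projectable to the leaf space of $\cK$, contradicting $3$-nondegeneracy via Proposition \ref{prop:inducedcomplexstructure}$(iv)$. Your proposal contains no substitute for this step.

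Two further points. The obstacle you single out --- an isotropy element acting trivially on $T_p\cO$ but non-trivially on $T_p\cM/T_p\cO$ --- cannot occur: since every symmetry is tangent to the orbits, the linearization of any $\xi\in\gg(p)^0$ automatically induces the zero map on $T_p\cM/T_p\cO$ (this is the elementary unlabelled lemma at the start of \S\ref{sec:3.1}); the genuine difficulty sits instead in the higher filtrands $\gg(p)^i$, $i\ge 1$, which is exactly the prolongation/filtered-deformation issue above. Finally, the appeal to ``the low-dimensional classification from part (I)'' is misplaced: part (I) classifies homogeneous $7$-dimensional $3$-nondegenerate structures, whereas the orbits here carry $3$- or $5$-dimensional induced structures which are frequently not genuine CR structures at all ($TU_\cD$ need not be $\cJ$-stable, forcing constructions such as the projected structure $\widecheck\cJ$ of \eqref{eq:projectedCR-structure}); the external inputs actually needed are the classical bounds of Cartan in dimension $3$ and the $2$-nondegenerate results in dimension $5$, combined with case-by-case frame analysis. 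As it stands the proposal is a statement of intent rather than a proof.
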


One can expect from the formulation of Theorem \ref{T1} the construction of a canonical frame on the CR-manifold,
however our approach is different. In fact, we do not claim that the local action of $\gg$ has trivial stabilizer $\gg_p$ at $p\in\cM$, although this may be true, but apply a variety of methods depending on the dimension of
the generic orbits (as noted above we can assume, without loss of generality,
that the dimension of 
$\gg/\gg_p$ is constant for all $p\in\cM$). We employ the Tanaka-Weisfeiler filtration of $\gg$ at regular intransitive points, retaining information along directions transverse to the orbits. 
The complex structure $\cJ$ is never projectable to the leaf space of the Cauchy characteristic space of $\cD$, still $\gg$ projects faithfully as infinitesimal contact symmetries.
Therefore, it often happens that $\gg$ is effectively represented on a fixed orbit $\cO^G_p$ as Lie algebra of infinitesimal symmetries of a smaller dimensional geometric structure (e.g., a non-integrable CR structure). We then take advantage of the Tanaka-Weisfeiler filtration of $\gg|_{\cO^G_p}$ at the transitive point $p\in \cO^G_p$, which is more constrained than the intransitive filtration. In the most involved cases, such filtrations are combined with a careful study in Cartan's spirit of the structure equations of adapted frames   (on the CR-manifold and on the orbit), providing crucial restrictions to disprove existence of the relevant filtered deformations. 

Together with the main result of \cite{KS1} the above Theorem \ref{T1} finishes the proof of Theorem 1 of \cite{KS1}.
Namely we conclude that a 3-nondegenerate 7-dimensional real-analytic CR-hypersurface 
has sharp bound of the symmetry dimension $\dim\gg\leq8$. 

We recall that in the homogeneous case the only possible symmetry dimension is 8
and that the CR-structure with such symmetry dimension is locally unique, given in the tube form as follows \cite{KS1}:
 \begin{equation*}
{\mathcal R}^7=(TR\setminus R)\times\R^4(y)\subset\bC^4(z),\quad
R=\bigl\{x_0x_2=x_1^2,\ x_0x_3=x_1x_2,\ x_1x_3=x_2^2\bigr\}\subset\R^4(x)\,,
 \end{equation*} 
where $R$ is cone over the rational normal curve of degree $3$ and $TR$ its tangent variety. 
Theorem \ref{T1} implies that the symmetry dimension 7 is also non-realizable in the intransitive case.
Concerning the next realizable symmetry dimension $\dim\gg$ we have the following statement.

 \begin{theorem}\label{T2}
The submaximal symmetry dimension for 7-dimensional 3-nondegenerate CR-hypersurfaces is 6.
There is a continuum of pairwise non-equivalent such CR-hypersurfaces with 
the dimension of the symmetry algebra $\dim\gg=6$.
 \end{theorem}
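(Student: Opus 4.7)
The proof of Theorem \ref{T2} splits into two independent tasks: the upper bound $\dim\gg \leq 6$ for any $3$-nondegenerate CR-hypersurface not locally equivalent to $\mathcal{R}^7$, and the construction of a continuum of pairwise inequivalent examples saturating this bound. The upper bound is essentially formal at this point. By Theorem~1 of \cite{KS1} the maximal symmetry dimension is $8$, attained uniquely by $\mathcal{R}^7$; intermediate values in the locally homogeneous case are excluded in \cite{KS1}, so the only remaining possibility to rule out is $\dim\gg=7$ with an intransitive action, which is exactly the content of Theorem \ref{T1}. Together these three ingredients force $\dim\gg\leq 6$ whenever the structure is not locally equivalent to $\mathcal{R}^7$.

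To realize this bound, the plan is to exhibit a one-parameter family $\{\cM_s\}_{s\in I}$ of real-analytic hypersurfaces in $\bC^4$, given by explicit normal-form defining equations and $3$-nondegenerate at a generic point. Natural candidates arise by perturbing the defining equation of $\mathcal{R}^7$ in a direction compatible with the Freeman filtration but breaking enough symmetry to reduce the prolongation from $8$ down to $6$. The symmetry algebra of each $\cM_s$ is then computed via the Tanaka--Weisfeiler prolongation of the graded symbol at a generic point: one writes the linear system for an infinitesimal CR-symmetry order by order and verifies that, for generic $s$, the full prolongation space stabilizes in dimension $6$. A subsidiary check is that $3$-nondegeneracy (rather than merely $2$-nondegeneracy) persists on an open dense subset of each $\cM_s$; this can be read off directly from the Freeman filtration of the defining equation.

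The main obstacle will be showing pairwise non-equivalence, i.e.\ that $s$ indexes a genuine modulus rather than a reparametrization. For this I would produce a scalar relative invariant of $3$-nondegenerate CR-hypersurfaces --- for instance a normalized component of a Cartan-type curvature built from the Tanaka prolongation, or an invariant extracted from the induced structure on the Cauchy characteristic leaf space studied in this paper --- and compute its value on $\cM_s$. Since the symmetry algebra is $6$-dimensional and acts with codimension-one orbits on the $7$-dimensional $\cM_s$, any such scalar invariant is constant on orbits and hence depends on at most one transverse variable; evaluating it and establishing injective dependence on $s$ separates the members of the family, reducing the problem to a finite explicit differential computation in the chosen coordinates.
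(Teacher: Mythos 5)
Your reduction of the upper bound to Theorem \ref{T1} plus the homogeneous classification of \cite{KS1} is exactly the paper's argument and is fine. The realization half, however, is a program rather than a proof, and it has two concrete gaps.

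First, no family is actually exhibited: ``natural candidates arise by perturbing the defining equation of $\mathcal{R}^7$'' does not produce hypersurfaces, and there is no a priori reason a perturbation lands on symmetry dimension exactly $6$ rather than collapsing further, nor that $3$-nondegeneracy survives. The paper instead constructs the examples explicitly as tubes $\cM=\Sigma\times\R^4(y)\subset\bC^4$, where $\Sigma=TR\setminus R$ is the smooth part of the tangent variety of the cone $R=C\mathbbm{r}$ over a nondegenerate \emph{projectively homogeneous} curve $\mathbbm{r}\subset\R\mathbb{P}^3$ that is not a rational normal curve. Such tubes are $3$-nondegenerate precisely when $\mathbbm{r}$ is nondegenerate, they visibly carry the six symmetries $v,\rho,\partial_{y^k}$, and the classification of homogeneous projective curves (Jordan normal forms) supplies the continuum of parameters. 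Crucially, the exact count $\dim\gg=6$ is obtained \emph{without} any prolongation computation: either the tube is the flat model (excluded by inspecting the spectrum of $\opp{ad}(v)$ on $\R^4$), or by \cite{KS1} it is not locally homogeneous, and then Theorem \ref{T1} caps $\dim\gg$ at $6$, which the visible symmetries already attain.

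Second, your separation argument leans on ``a normalized component of a Cartan-type curvature built from the Tanaka prolongation.'' No such canonical frame or Cartan connection is available for $3$-nondegenerate structures in dimension $7$ --- the paper explicitly points out that the bigraded Tanaka schemes of \cite{PZ,SZ} do not extend past $2$-nondegeneracy, and that its own approach deliberately avoids constructing a canonical frame. The paper's separating invariant is far more elementary: the abstract isomorphism class of $\gg\cong\R^2\ltimes\R^4$, i.e.\ the representation of the traceless generator $v$ on the translation ideal $\R^4$, which records the projective type of $\mathbbm{r}$ and varies injectively with the continuous parameters. Non-isomorphic symmetry algebras immediately give non-equivalent CR structures, with no differential computation needed. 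If you want to salvage your route, replacing the hypothetical curvature invariant by the isomorphism type of $\gg$ is the step to take.
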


For global symmetry the situation is different: the submaximal dimension is larger.
 \begin{theorem}\label{T3}
Let $G$ be the automorphism group of a 3-nondegenerate 7-dimensional real-analytic CR-hypersurface $(\cM,\cD,\cJ)$. Then
$\dim G\leq8$, and the bound corresponds to a finite or countable cover of the model $\mathcal{R}^7$. Otherwise:
 \begin{enumerate}
\item Either $\dim G=7$, which is achieved for a unique $(\cM,\cD,\cJ)$ that is locally but not globally isomorphic to 
$\mathcal{R}^7$. The automorphism group of this submaximal model is 
$G=B\ltimes S^3\R^2$, where $B$ is the Borel subgroup in $GL_2(\R)$;
\item Or $\dim G<7$, and the dimensions 6, 5, and 4 are achieved for infinitely many non-equivalent 
intransitive $(\cM,\cD,\cJ)$.
 \end{enumerate}
 \end{theorem}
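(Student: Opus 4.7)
The upper bound $\dim G\le 8$ follows from $\dim G\le\dim\gg\le 8$, the right inequality obtained by combining the main theorem of \cite{KS1} with Theorem \ref{T1}. When $\dim G=8$, the germ algebra is $8$-dimensional at every point of $\cM$, so by the local uniqueness from \cite{KS1} the CR-structure is locally isomorphic to $\mathcal R^7$. A standard developing-map argument then identifies $\cM$ with a quotient of the universal cover $\widetilde{\mathcal R^7}$ by a discrete subgroup of $\mathrm{Aut}(\widetilde{\mathcal R^7})$ acting freely and properly discontinuously, yielding the finite or countable cover description.

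For $\dim G=7$, set $\gh:=\mathrm{Lie}(G)\subseteq\gg$; then $\dim\gg\ge 7$. Theorem \ref{T1} excludes intransitive $\gg$ of dimension at least $7$, and the transitive classification of \cite{KS1} shows that transitive dimensions strictly larger than $6$ can only equal $8$. Hence $\dim\gg=8$ and $\cM$ is locally isomorphic to $\mathcal R^7$, so $\gh$ is a codimension-one subalgebra of $\gg_{\mathcal R^7}=\ggl_2(\R)\ltimes S^3\R^2$. Irreducibility of $S^3\R^2$ as a $\ggl_2(\R)$-module forces every such $\gh$ to contain the nilpotent radical $S^3\R^2$ and to project onto a codimension-one subalgebra of $\ggl_2(\R)$, which up to conjugation is either the Borel $\gb$ or $\gsl_2(\R)$.

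The plan to discriminate between these cases is to analyse the developing map $\mathrm{dev}\colon\widetilde{\cM}\to\widetilde{\mathcal R^7}$: its image is an open subset $U$ preserved by a subgroup $\Gamma\subset\mathrm{Aut}(\widetilde{\mathcal R^7})$ covering $\pi_1(\cM)$, and the complete subalgebra on $\cM$ consists of those $X\in\gg_{\mathcal R^7}$ whose flow preserves $U$ and commutes with $\Gamma$. One excludes $\gh=\gsl_2(\R)\ltimes S^3\R^2$ by showing that any $(U,\Gamma)$ on which the central scaling $I\in\ggl_2(\R)$ fails to be complete necessarily forces failure of completeness of an additional generator, contradicting $\dim G=7$. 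Conversely, the Borel case is realised by an explicit pair $(U,\Gamma)$ in which only the non-Borel root direction fails to be complete, and uniqueness follows from rigidity of the developing map combined with the description of the $\gb\ltimes S^3\R^2$-orbit structure on $\mathcal R^7$. The main obstacle is pinning down $(U,\Gamma)$ explicitly and verifying that the resulting real-analytic CR-hypersurface has full automorphism group exactly $B\ltimes S^3\R^2$.

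For the final part of (2), we construct infinitely many pairwise non-equivalent intransitive analytic hypersurfaces in $\bC^4$ with $\dim G\in\{6,5,4\}$. Starting from the continuum of non-equivalent submaximal germ models of Theorem \ref{T2}, we exhibit representatives given by globally defined real-analytic equations on which all six infinitesimal symmetries integrate to global automorphisms, giving infinitely many examples with $\dim G=6$. For $\dim G=5$ and $4$, continuous analytic perturbations of the defining equations break one or two symmetries while preserving $3$-nondegeneracy and the intransitive orbit structure, producing further continua of non-equivalent models; that no accidentally larger $\dim G$ arises follows from the germ-level bounds of Theorem \ref{T2}.
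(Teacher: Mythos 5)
Your reduction of the $\dim G=7$ case is sound as far as it goes: the bound $\dim G\le\dim\gg\le 8$, the forced local flatness, and the observation that a codimension-one subalgebra of $\mathfrak{gl}_2(\R)\ltimes S^3\R^2$ must contain $S^3\R^2$ (by irreducibility) and hence be conjugate to either $\sl_2(\R)\ltimes S^3\R^2$ or $\mathfrak b\ltimes S^3\R^2$ all match the paper. But the heart of part (1) --- excluding the $\sl_2(\R)\ltimes S^3\R^2$ case and exhibiting the unique Borel model with a proof that its automorphism group is exactly $7$-dimensional --- is left as a ``plan'' whose ``main obstacle'' you acknowledge you have not overcome. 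That is a genuine gap, not a routine verification: your proposed completeness/developing-map analysis of pairs $(U,\Gamma)$ is never carried out, and it is not clear it would close. The paper's actual argument is different and much shorter: $SL_2(\R)\ltimes\R^4$ acts \emph{simply transitively} on $\mathcal R^7$ (checked via the root-type description of binary cubics), so any Lie group integrating $\sl_2(\R)\ltimes\R^4$ and acting on a locally flat structure produces one of the homogeneous models already classified in \cite{KS1}, each of which has automorphism group of dimension $8$; hence $\sl_2(\R)\ltimes\R^4$ is never the \emph{full} automorphism algebra. The Borel model is then produced explicitly as the tube over $TR'\setminus R'$ with $R'$ the cone over the \emph{punctured} rational normal curve $\mathbbm r_\times=\R\mathbb P^1\setminus\{\infty\}$ (the open $B$-orbit), on which $B$ acts simply transitively; uniqueness follows because $F=B\ltimes\R^4$ has trivial fundamental group and no nontrivial discrete normal subgroups, so it admits no proper quotients acting on a locally flat model. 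None of this appears in your proposal.

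Part (2) is also under-substantiated. For $\dim G=6$ your appeal to Theorem \ref{T2} plus global integrability of the six symmetries is fine (this is the content of the paper's Remark on automorphism groups of tubes). But for $\dim G=5$ and $4$, ``continuous analytic perturbations \ldots break one or two symmetries'' is an assertion, not an argument: you give no mechanism guaranteeing that a perturbation kills \emph{exactly} one prescribed symmetry while preserving $3$-nondegeneracy, intransitivity, and the remaining symmetries, nor that the perturbed infinitesimal symmetries still integrate globally. The paper instead constructs these cases explicitly: for $\dim G=5$, the tube over $TR\setminus R$ for a \emph{generic} (projectively non-homogeneous) nondegenerate curve, whose symmetries are the radial field and the four translations; for $\dim G=4$, the tube over the ruled $3$-fold $\psi(t,r,s)=\gamma(t)+r\gamma'(t)+s\gamma''(t)$, whose only symmetries are the translations. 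You would need to supply comparable explicit families (or a genuine perturbation lemma) to complete this part.
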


In fact, we will show that the submaximal model is the tube over (the nonsingular part of)
the tangent variety of the cone over the punctured rational normal curve of degree $3$. 

Regarding the maximal and submaximal symmetry dimensions in CR geometry, without 
specification of nondegeneracy type, we have the following: 

 \begin{theorem}\label{T4}
For any real-analytic connected holomorphically nondegenerate CR-hyper\-surface $(\cM,\cD,\cJ)$ in $\C^4$ 
the symmetry dimension is bounded by $\dim\gg\le 24$. If the CR manifold is not everywhere spherical,
i.e., not locally CR-equivalent to the hyperquadric ${\mathcal Q}_{(p)}$, 
then $\dim\gg\le 17$.
 \end{theorem}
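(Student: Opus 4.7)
My plan is to argue by cases on the $k$-nondegeneracy level. Since $(\cM,\cD,\cJ)$ has CR-dimension $n=3$ and is holomorphically nondegenerate, it is $k$-nondegenerate for some $1\le k\le 3$ on a dense open regular subset (cf.~\cite[\S11--12]{BER}), and by real-analyticity the bound on $\dim\gg$ may be read off from that subset. The strategy is to bound $\dim\gg$ separately for each of the three possible values of $k$ and then take the maximum.

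For $k=1$ the classical Chern--Moser theorem \cite{C,CM,Ta1,Ta2} gives the sharp bound $\dim\gg\le n^2+4n+3=24$, attained precisely on hyperquadric-equivalent neighbourhoods. For the refined bound in the non-spherical subcase, I would appeal to the Kruglikov--The gap phenomenon for parabolic geometries of type $\mathfrak{su}(p+1,q+1)$ with $p+q=3$, which enforces a uniform drop below $24$ whenever the Levi-nondegenerate structure fails to be spherical and, in particular, yields a bound at most $17$.

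For $k=2$ I would invoke the bounds on symmetry algebras of $2$-nondegenerate $7$-dimensional CR-hypersurfaces developed in \cite{PZ,SZ,Gr} and their descendants; these give $\dim\gg\le 17$ and pinpoint the maximally symmetric $2$-nondegenerate model in this dimension. For $k=3$, \cite[Theorem 1]{KS1} combined with Theorems \ref{T1} and \ref{T2} of the present paper yields the much stronger bound $\dim\gg\le 8$. Taking the maximum over $k\in\{1,2,3\}$ delivers the two announced bounds: $24$ in general, and $17$ under the non-spherical hypothesis.

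The step I anticipate as the main obstacle is extracting the tight constant $17$ uniformly across the $2$-nondegenerate and Levi-nondegenerate non-spherical subcases: while the general upper bounds follow from the cited literature, one must check that the maxima over the open regular strata of constant $k$-type genuinely control $\dim\gg$ on all of $\cM$, and that no unexpected symmetries concentrate at points where the type jumps. This reduces to the upper-semicontinuity and analytic-continuation mechanism already invoked in the introduction: symmetries on the regular locus extend analytically, so the stratum-wise bound propagates to the whole hypersurface and completes the proof.
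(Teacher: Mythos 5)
Your overall strategy --- case analysis on the nondegeneracy type of the generic stratum, combined with analytic continuation of symmetries --- is the same skeleton as the paper's proof, and your treatment of the uniformly $2$-nondegenerate and $3$-nondegenerate cases, as well as of the uniformly Levi-nondegenerate non-spherical case, is essentially correct (the paper even gets the sharper bounds $13$ and $12$ in the latter case from \cite{Kru2016}, and notes that the clean bound $16$ from \cite{PZ,SZ} requires a constancy assumption on the CR-symbols, with \cite{B2} supplying $17$ in general). However, there is a genuine gap in your argument for the bound $17$: the hypothesis ``not everywhere spherical'' does \emph{not} imply that the generic Levi-nondegenerate stratum is non-spherical. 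A hypersurface can be spherical on a dense open subset and still fail to be everywhere locally CR-equivalent to $\mathcal{Q}_{(p)}$ because it carries a nonempty Levi-degeneracy locus. In that situation your stratum-wise maximum returns $24$ (the spherical Chern--Moser bound on the open stratum), not $17$, and no amount of upper-semicontinuity or analytic continuation closes the gap --- the restriction of $\gg$ to the dense stratum is injective but a priori could still be $24$-dimensional there.

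The paper handles exactly this configuration as a separate case: if $\cM$ contains both Levi-nondegenerate and Levi-degenerate points, then $\dim\gg\le 17$ by the blow-up/degeneracy-locus analysis of \cite{Kr}. The mechanism is not a submaximal-symmetry bound on the generic stratum but the constraint that every symmetry must preserve the (proper, analytic) Levi-degeneracy locus, which cuts the spherical algebra $\su(1,4)$ or $\su(2,3)$ down to a subalgebra of dimension at most $17$. You need to add this case explicitly; without it, your proof establishes only the bound $24$ for hypersurfaces that are spherical away from a degenerate set, which is precisely the borderline situation the constant $17$ in the theorem is designed to capture.
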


In particular, as noticed in \cite{KS1}, this asserts Beloshapka's conjecture for $n=3$
by methods independent of \cite{B2}. 
We recall that the original conjecture of Beloshapka (see \cite[p.~38]{B2}) 
for $n=1$ follows directly from \cite{C}, while for $n=2$ it is due to \cite{IZ,MS} (see also \cite{MP}). 
A generalized form of it was established in \cite{KS,IK1} for $n=1$ and in \cite{IK2,Kr} for $n=2$. 
Our Theorem \ref{T4} is a version of such generalization for $n=3$, the symmetry dimension bound in it is sharp
and it is attained only on inhomogeneous CR manifolds (see the proof for details).

Note that the classification in dimension $7$ of locally homogeneous CR-structures (or those with large symmetry)
is far from completion both for 1-nondegenerate and 2-nondegenerate CR-manifolds, cf. \cite{Lo}. 
Surprisingly, the 3-nondegenerate case in dimension 7 has been fully resolved in \cite{KS1} 
-- this has to be compared with the 2-nondegenerate case in dimension 5 \cite{FK2}.

\vskip0.15cm\par\noindent
{\it Structure of the paper.} 
In \S\ref{sec:2} we recall the notions of the Freeman filtration in dimension $7$, of the Tanaka--Weisfeiler filtration of the symmetry algebra (including the intransitive case), and establish first general results on the Freeman bundles and their adapted frames. We also describe the geometric structures induced on the local leaf spaces of the Freeman bundles. The following \S\ref{sec:3} settles the upper bound on the CR-symmetry dimension of 
an intransitive 3-nondegenerate CR structure in dimension 7, with careful geometric and Lie-theoretic analysis that depend on the dimension of generic orbits. It starts with general arguments in \S\ref{sec:3.1}-\S\ref{sec:3.2}, including an a priori characterization of the CR-symmetries when the Cauchy characteristic space has small codimension in the orbit distribution, and it continues with \S\ref{sec:3.3}-\S\ref{sec:3.6}, where Theorems \ref{thm:dim123}, \ref{thm:dim4}, \ref{thm:dim6}, and \ref{thm:dim5} are demonstrated. Theorem \ref{T1} is a consequence of these theorems.
We note that the results of \S\ref{sec:2} and \S\ref{sec:3.1}-\S\ref{sec:3.2} are general and might be of independent
interest. The paper ends with \S\ref{sec:4}, where infinitesimal and global submaximal CR-symmetry dimensions are established via certain geometric constructions on nondegenerate projective curves in $\R\mathbb{P}^3$ -- the proofs of Theorems \ref{T2} and \ref{T3} can be found in \S\ref{sec:5.1} and \S\ref{sec:5.2}, respectively. We also consider some generalizations in \S\ref{sec:5.3} and ends in \S\ref{sec:5.4} with a stronger version of Beloshapka's conjecture in dimension $7$ -- the proof of Theorem \ref{T4} can be found here.

\vskip0.15cm\par\noindent
{\it Notations.} 
For a real vector space $V$ we set $V_\times=\left\{v\in V\,|\,v\neq0\right\}$. We decompose any section $X$ of $\cD\otimes\bC$ into the sum $X=X_{10}+X_{01}$ of its holomorphic and antiholomorphic components, which are sections of $\cD_{10}$ and $\cD_{01}$.
 
\vskip0.15cm\par\noindent
{\it Acknowledgments.} 
The research leading to these results has received funding from the Norwegian Financial Mechanism 2014-2021 (project registration number 2019/34/H/ST1/00636) and the Troms\o{} Research Foundation (project ``Pure Mathematics in Norway''). The second author acknowledges the MIUR Excellence Department Project MatMod@TOV
awarded to the Department of Mathematics, University of Rome Tor Vergata, CUP E83C23000330006. 
This article/publication was also supported by the ``National Group for
Algebraic and Geometric Structures, and their Applications'' GNSAGA-INdAM (Italy) and it is
based upon work from COST Action CaLISTA CA21109 supported by COST
(European Cooperation in Science and Technology). 
\vskip0.1cm\par\noindent

\section{Some background on degenerate CR-structures}\hfill\par\label{sec:2}
\setcounter{section}{2}\setcounter{equation}{0}

\subsection{The Freeman filtration in dimension 7}\label{sec:2.1}

Let $(\cM, \cD, \cJ)$ be a $2n+1$-dimensional CR manifold of hypersurface type, 
with Lie algebra $\gg$ of infinitesimal symmetries. We will work in the real-analytic category, since
the focus of this part (II) is on real-analytic CR manifolds.
As explained in the Introduction and at the beginning of \S\ref{sec:3}, we may assume that $(\cM, \cD, \cJ)$ is {\it regular}, i.e., the sections of any sheaf of (complex or real) vector fields over $\cM$ are sections of an associated bundle. (In fact, the constant rank assumption can always be enforced upon localization to an open dense subset of $\cM$.) 
The notion of $k$-nondegeneracy can then be given in terms of Freeman filtrands  \cite{Fr}, 
and we refer the reader to \S 2.1 of part (I) for full details.

For $7$-dimensional CR-hypersurfaces $(\cM,\cD,\cJ)$, the Freeman filtrands are a sequence of complex bundles  
$\cD_{10}\supset \cK_{10}\supset \cL_{10}$, with $\cK_{10}$ the holomorphic part 
of the Cauchy characteristic space $\cK$ of $\cD$. The filtrand $\cK_{10}$ can also be defined as the left kernel of the Levi form $\clL_1=-i\clL$ 
as in \eqref{eq:usual-Levi-form}.  We let $\cD_{01}=\overline{\cD_{10}}$ and $\cK_{01}=\overline{\cK_{10}}$ be the anti-holomorphic parts of $\cD$ and $\cK$, and note
that $\cK$ is $\cJ$-stable but it does not depend on the complex structure $\cJ$.

In a similar fashion, we may consider the second-order Levi form
\begin{equation}
\label{eq:h.o.l.f.2}
\begin{aligned}
\clL_{2}:&\;\; \cK_{10}\otimes\cD_{01}\longrightarrow ({\cD}_{10}\oplus\cD_{01})/ (\cK_{10}\oplus\cD_{01})\cong \cD_{10}/\cK_{10}\\
&\;(X,Y)\longrightarrow [X,Y]\!\!\!\mod\cK_{10}\oplus\cD_{01}\,,
\end{aligned}
\end{equation}
whose left kernel is the filtrand $\cL_{10}$. We set $\cL_{01}=\overline{\cL_{10}}$, 
$\cL=\Re(\cL_{10}\oplus\cL_{01})$, and note that, contrary to the Cauchy characteristic space, the distribution $\cL$ depends on $\cD$ but also on $\cJ$. It is not difficult to see that both $\cK$ and $\cL$ are integrable distributions. Finally, the left-kernel of the
third-order Levi form
\begin{equation}
\label{eq:h.o.l.f.3}
\begin{aligned}
\clL_{3}:&\;\; \cL_{10}\otimes\cD_{01}\longrightarrow ({\cK}_{10}\oplus\cD_{01})/ (\cL_{10}\oplus\cD_{01})\cong \cK_{10}/\cL_{10}\\
&\;(X,Y)\longrightarrow [X,Y]\!\!\!\mod\cL_{10}\oplus\cD_{01}\,,
\end{aligned}
\end{equation}
either vanishes, in which case the CR-hypersurface is called holomorphically non-degenerate, or it is non-trivial, in which case the CR-hypersurface is holomorphically degenerate. By \cite{Fr} a holomorphically degenerate CR-hypersurface is locally isomorphic to the direct product 
of a lower-dimensional CR-manifold and a complex space.

\begin{definition}\cite{BER, Fr}  
A holomorphically non-degenerate $7$-dimensional CR-manifold $(\cM, \cD, \cJ)$ of hypersurface type is called:
\begin{itemize}
	\item[$(i)$] Levi-nondegenerate if $\cK_{10}=0$,
	\item[$(ii)$] $2$-nondegenerate if $\cK_{10}\neq 0$ and $\cL_{10}=0$,
	\item[$(iii)$] $3$-nondegenerate if $\cL_{10}\neq 0$.
\end{itemize}
\end{definition}

When $(\cM, \cD, \cJ)$ is $3$-nondegenerate, then by dimensional reasons
 $$
\rk_\mathbb C(\cD_{10})=3>\rk_\mathbb C(\cK_{10})=2>\rk_\mathbb C(\cL_{10})=1.
 $$ 
Here we collect some preliminary useful results for such hypersurfaces.

\begin{lemma}
\label{lem:preliminarybrackets}
The Lie brackets between the sections of the Freeman bundles of a $7$-dimensional CR-hypersurface $(\cM, \cD, \cJ)$ are collected in the following table: 
\par
 \bigskip
\centerline{\rotatebox{0}{\footnotesize
$\begin{array}{||c|c|c|c|c|c|c||}\hline
[-,-] &
\cD_{10}& \cK_{10}& \cL_{10}& \cD_{01}& \cK_{01}& \cL_{01} \\
\hline\hline
\cD_{10} & \cD_{10} & \cD_{10}& \cD_{10} &
T\cM\otimes\mathbb C  & \cD\otimes\mathbb C & \cD_{10}\oplus\cK_{01} \\
\hline
\cK_{10} & \star & \cK_{10} & \cK_{10} & \star & \cK\otimes\mathbb C & \cK_{10}\oplus\cL_{01} \\
\hline
\cL_{10} & \star & \star  & \cL_{10} & \star & \star  & \cL\otimes\mathbb C \\
\hline
\end{array}$
}}
\bigskip
\centerline{\small\it Lie brackets between vector fields of the Freeman bundles.}
\bigskip
\par\noindent
where ``$\star$'' refers to brackets that can be directly obtained from the others by skew-symmetry and conjugation, and we omitted brackets with first antiholomorphic entry for the same reason.
\end{lemma}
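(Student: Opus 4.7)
The plan is to derive each entry of the table from four basic ingredients: the filtration $\cL_{10}\subset\cK_{10}\subset\cD_{10}$; the CR-involutivity of $\cD_{10}$; the integrability of the real distributions $\cK$ and $\cL$ recorded in \S\ref{sec:2.1}; and the defining left-kernel conditions $[\cK_{10},\cD_{01}]\subset\cD\otimes\bC$ of $\clL_1$ and $[\cL_{10},\cD_{01}]\subset\cK_{10}\oplus\cD_{01}$ of $\clL_2$. Most entries follow from intersecting two of these a priori containments. The three holomorphic brackets $[\cD_{10},\cD_{10}]$, $[\cD_{10},\cK_{10}]$, $[\cD_{10},\cL_{10}]\subset\cD_{10}$ are immediate from CR-involutivity and the filtration. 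The diagonals $[\cK_{10},\cK_{10}]\subset(\cK\otimes\bC)\cap\cD_{10}=\cK_{10}$ and $[\cL_{10},\cL_{10}]\subset(\cL\otimes\bC)\cap\cD_{10}=\cL_{10}$ combine integrability of $\cK$ and $\cL$ with CR-involutivity, and the same reasoning gives $[\cK_{10},\cL_{10}]\subset\cK_{10}$. The mixed entries $[\cD_{10},\cK_{01}]\subset\cD\otimes\bC$ and $[\cD_{10},\cL_{01}]\subset\cD_{10}\oplus\cK_{01}$ are the complex conjugates of the defining left-kernel conditions just listed, while $[\cK_{10},\cK_{01}]\subset\cK\otimes\bC$ and $[\cL_{10},\cL_{01}]\subset\cL\otimes\bC$ are the complexifications of the integrabilities of $\cK$ and $\cL$.

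The only entry that is not a direct intersection is $[\cK_{10},\cL_{01}]\subset\cK_{10}\oplus\cL_{01}$, and for this I would invoke the Jacobi identity. Fix $X\in\cK_{10}$, $Y\in\cL_{01}$, and a test field $C\in\cD_{10}$. Integrability of $\cK$ together with the conjugate containment $[\cD_{10},\cL_{01}]\subset\cD_{10}\oplus\cK_{01}$ already forces $[X,Y]\in\cK_{10}\oplus\cK_{01}$, so write $[X,Y]=P+\bar Q$ with $P,Q\in\cK_{10}$. Using the entries established above,
\[
[[Y,C],X]\in[\cD_{10}\oplus\cK_{01},\cK_{10}]\subset\cD_{10}\oplus\cK_{01},\qquad[[C,X],Y]\in[\cD_{10},\cL_{01}]\subset\cD_{10}\oplus\cK_{01},
\]
so the Jacobi identity yields $[P,C]+[\bar Q,C]=[[X,Y],C]\in\cD_{10}\oplus\cK_{01}$. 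Since $[P,C]\in[\cK_{10},\cD_{10}]\subset\cD_{10}$ automatically, the Cauchy-kernel bound $[\bar Q,C]\in\cD\otimes\bC$ is refined to $[\bar Q,C]\in\cD_{10}\oplus\cK_{01}$ for every $C\in\cD_{10}$. Taking complex conjugates, this reads $[Q,\cD_{01}]\subset\cK_{10}\oplus\cD_{01}$, which is precisely the condition placing $Q$ in the left kernel $\cL_{10}$ of $\clL_2$. Hence $\bar Q\in\cL_{01}$ and $[X,Y]\in\cK_{10}\oplus\cL_{01}$, as desired.

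I expect this last step to be the main obstacle: the sharper localization of the $\cK_{01}$-component of $[\cK_{10},\cL_{01}]$ inside $\cL_{01}$ is invisible to naive intersection arguments and really encodes a Hermitian-type symmetry of the second Levi form $\clL_2$. Once this is in hand, every remaining entry marked $\star$ in the table is obtained from the ones above by skew-symmetry of the Lie bracket and complex conjugation.
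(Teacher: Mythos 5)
Your proposal is correct and follows essentially the same route as the paper: every entry except $[\cK_{10},\cL_{01}]$ is obtained from the filtration, the involutivity of $\cD_{10}$ and the integrability of $\cK$ and $\cL$, and the antiholomorphic component of $[\cK_{10},\cL_{01}]$ is forced into $\cL_{01}$ by the Jacobi identity against $\cD_{10}$ combined with the left-kernel characterization of $\cL_{10}$, exactly as in the paper's second displayed computation. The only (harmless) deviation is that you place the holomorphic component in $\cK_{10}$ directly via the integrability of $\cK$, whereas the paper derives it from a second Jacobi computation with $\cD_{01}$ and the left-kernel characterization of $\cK_{10}$.
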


\begin{proof}
The table follows readily from the definitions of the Freeman bundles, the integrability of the CR structure $\mathcal J$ and of the distributions $\cK$ and $\cL$, except for the bracket $[\cK_{10},\cL_{01}]$.
First note that $[\cK_{10},\cL_{01}]\subset[\cD_{10},\cL_{01}]\subset \cD_{10}\oplus \cK_{01}$. The identities
\begin{align*}
[\cD_{01},[\cK_{10},\cL_{01}]]&\subset [[\cD_{01},\cK_{10}],\cL_{01}]+[\cK_{10},[\cD_{01},\cL_{01}]]
\\& \subset 
[\cD\otimes\mathbb C,\cL_{01}]+[\cK_{10},\cD_{01}]\subset\cD\otimes\mathbb C\,,\\
[\cD_{10},[\cK_{10},\cL_{01}]]&\subset [[\cD_{10},\cK_{10}],\cL_{01}]+[\cK_{10},[\cD_{10},\cL_{01}]]\\
&\subset 
[\cD_{10},\cL_{01}]+[\cK_{10},\cD_{10}\oplus \cK_{01}]\subset \cD_{10}\oplus \cK_{01} \,,
\end{align*}
then say that $[\cK_{10},\cL_{01}]\subset \cK_{10}\oplus \cL_{01}$.
\end{proof}

\begin{lemma}
\label{lem:simple-but-useful}
Let $(\cM,\cD,\cJ)$ be a $3$-nondegenerate $7$-dimensional CR-hypersurface and $\cU\subset \cM$ a sufficiently small open subset. Then there exist local trivializing sections $X_{10}=\tfrac12\big(X-i\cJ X\big)$, $Y_{10}=\tfrac12\big(Y-i\cJ Y\big)$, $Z_{10}=\tfrac12\big(Z-i\cJ Z\big)$ of the complex line bundles $\cD_{10}/\cK_{10}$, $\cK_{10}/\cL_{10}$, $\cL_{10}$, satisfying the normalization conditions 
\begin{equation}
\label{eq:norm-conditions}
\clL_{2}(Y_{10},\overline{X_{10}})=X_{10}\,,\qquad\mathcal \clL_{3}(Z_{10},\overline{X_{10}})=Y_{10}\,.
\end{equation}
Furthermore:
\begin{itemize}
	\item[$(i)$] 
Such sections are uniquely defined up to transformations of the form
 $$
X_{10}\mapsto \lambda e^{i\varphi}X_{10},\ 
Y_{10}\mapsto e^{2i\varphi}Y_{10},\ 
Z_{10}\mapsto \lambda^{-1}e^{3i\varphi} Z_{10}
 $$ 
for arbitrary functions $\lambda:\cU\rightarrow \mathbb R_+$, $e^{i\varphi}:\cU\rightarrow S^1\subset\mathbb C$,
	\item[$(ii)$] 
If there is a canonical real line subbundle in one of the above complex line bundles, then the corresponding normalized section may be taken real, and $e^{i\varphi}$ is a sixth root of unity at each point. Hence, modulo a finite cover of $\cU$, $Y_{10}$ is canonical and $X_{10}\mapsto \lambda X_{10}$, $Z_{10}\mapsto \lambda^{-1}Z_{10}$ (in particular the real lines of $X_{10}$ and $Z_{10}$ are canonical at each point),
	\item[$(iii)$] 
If there is a canonical trivializing section in one of the complex line bundles $\cD_{10}/\cK_{10}$ and $\cL_{10}$, 
then that section may be taken as a normalized section and:
  \vskip0.2cm\par\noindent
	\begin{enumerate}
\item In the case $\cD_{10}/\cK_{10}$, then $Y_{10}$ and $Z_{10}$ are canonical too,
\item In the case $\cL_{10}$, then $X_{10}$ and $Y_{10}$ are canonical too, 
modulo a finite cover of $\cU$.
	\end{enumerate}
\end{itemize}
\end{lemma}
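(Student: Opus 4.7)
The plan is to first produce normalized sections by direct rescaling, then extract the gauge group in (i) from the sesquilinearity of $\clL_2$ and $\clL_3$, and finally deduce (ii) and (iii) as purely algebraic consequences of that gauge.

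For existence, the first observation is that $3$-nondegeneracy promotes $\clL_2$ and $\clL_3$ to nondegenerate sesquilinear pairings between the rank-one complex line bundles $\cD_{10}/\cK_{10}$, $\cK_{10}/\cL_{10}$, $\cL_{10}$ and $\cD_{01}/\cK_{01}$. Indeed, Lemma~\ref{lem:preliminarybrackets}, via $[\cK_{10},\cK_{01}]\subset\cK\otimes\bC$ and $[\cL_{10},\cK_{01}]\subset\cL_{10}\oplus\cK_{01}$, shows that $\cK_{01}$ lies in the right kernel of both $\clL_2(Y,\,\cdot\,)$ and $\clL_3(Z,\,\cdot\,)$, while $2$- and $3$-nondegeneracy make these maps nonzero for $Y\in\cK_{10}\setminus\cL_{10}$ and for nonzero $Z\in\cL_{10}$ respectively, so they become isomorphisms between the rank-one quotients. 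Consequently, for arbitrary trivializing sections $X_{10}^0,Y_{10}^0,Z_{10}^0$ of the three line bundles, the structure coefficients $c_2,c_3\colon\cU\to\bC^\times$ defined by $\clL_2(Y_{10}^0,\overline{X_{10}^0})=c_2\,X_{10}^0$ and $\clL_3(Z_{10}^0,\overline{X_{10}^0})=c_3\,Y_{10}^0$ are nowhere zero, and the rescaling $Y_{10}=c_2^{-1}Y_{10}^0$, $Z_{10}=(c_2c_3)^{-1}Z_{10}^0$ realizes \eqref{eq:norm-conditions}.

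For part (i), I would compare with a second normalized triple $(aX_{10},bY_{10},cZ_{10})$. Sesquilinearity of $\clL_2$ applied to $\clL_2(bY_{10},\overline{aX_{10}})=aX_{10}$ forces $b\bar a=a$, and $\clL_3(cZ_{10},\overline{aX_{10}})=bY_{10}$ forces $c\bar a=b$; writing $a=\lambda e^{i\varphi}$ with $\lambda>0$ then yields $b=a/\bar a=e^{2i\varphi}$ and $c=b/\bar a=\lambda^{-1}e^{3i\varphi}$, which is exactly the claimed $\bR_+\times S^1$ gauge action.

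Parts (ii) and (iii) are then a purely algebraic exercise on this gauge. For (iii), a canonical trivialization of $\cD_{10}/\cK_{10}$ forces $\lambda e^{i\varphi}=1$, hence $\lambda=1$, $e^{i\varphi}=1$, and all three sections become canonical; a canonical trivialization of $\cL_{10}$ forces $\lambda^{-1}e^{3i\varphi}=1$, hence $\lambda=1$ and $e^{i\varphi}$ a cube root of unity, so a threefold cover of $\cU$ rigidifies $X_{10}$ and $Y_{10}$. For (ii), a canonical real line in $\cL_{10}$ forces $\lambda^{-1}e^{3i\varphi}\in\bR^\times$; since $\lambda>0$ this gives $e^{3i\varphi}=\pm 1$, i.e.\ $e^{i\varphi}$ a sixth root of unity, and a $\bZ/3$-cover restricts it to $\{\pm 1\}$, making $Y_{10}$ canonical and leaving only the real rescaling $X_{10}\mapsto\lambda X_{10}$, $Z_{10}\mapsto\lambda^{-1}Z_{10}$. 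Analogous analysis for canonical real lines in $\cD_{10}/\cK_{10}$ or $\cK_{10}/\cL_{10}$ yields the same final form after a further finite cover. The only conceptually nontrivial step is the right-kernel identification via Lemma~\ref{lem:preliminarybrackets}; the rest is linear algebra in rank-one bundles and requires no adapted frame or Cartan-style reduction.
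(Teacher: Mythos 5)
Your proof is correct and supplies exactly the details the paper declares ``straightforward'' and omits: the well-definedness of the induced pairings on the quotient line bundles via Lemma~\ref{lem:preliminarybrackets}, existence of normalized sections by rescaling against the nowhere-vanishing structure coefficients, and the gauge group in $(i)$ from sesquilinearity, with $(ii)$ and $(iii)$ following by elementary algebra on $\lambda e^{i\varphi}$, $e^{2i\varphi}$, $\lambda^{-1}e^{3i\varphi}$. One cosmetic remark: for a canonical real line inside the middle bundle $\cK_{10}/\cL_{10}$ the constraint is $e^{2i\varphi}=\pm1$, so $e^{i\varphi}$ is a fourth (not sixth) root of unity --- an imprecision of the statement rather than of your argument, and harmless for the conclusion modulo a finite cover.
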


\begin{proof}
The fact that the normalization conditions \eqref{eq:norm-conditions} are well-defined 
follows directly from the definitions of the higher-order Levi forms and  Lemma \ref{lem:preliminarybrackets}.
The proof is then straightforward and we omit it.
\end{proof}

\subsection{On 2-nondegenerate structures}\label{sec:2.2}

In dimension 5, the tube over the future light cone 
 \begin{equation}\label{5D}
{\mathcal C}_5=\Bigl\{z=(z_0,z_1,z_2)\in\mathbb C^3:x=\Re(z)\text{ satisfies }
x_0^2=x_1^2+x_2^2,\;x_0>0\Bigr\}
 \end{equation}
is the most symmetric 2-nondegenerate CR-manifold. 
In fact, any maximally symmetric $2$-nondegenerate CR-manifold is locally isomorphic to the projective completion in 
$\mathbb C\mathbb P^4$ of \eqref{5D}, seen as the homogeneous manifold $SO^\circ(3,2)/H$ 
for an appropriate $5$-dimensional closed subgroup $H$ of $SO^\circ(3,2)$. 
Absolute parallelisms for 5-dimensional 2-nondegenerate CR-structures  have been constructed in \cite{IZ, MS} 
and the existence of an $\mathfrak{so}(3,2)$-valued Cartan connection addressed in \cite{Gr}.
The submaximal symmetry dimension is 5, which is attained only in the locally homogeneous case \cite{MP}.
All the submaximally symmetric homogeneous models are simply transitive solvmanifolds, they were classified in \cite{FK2}.

Such CR-manifolds will be relevant in \S\ref{subsec:3.6.1}-\S\ref{subsec:3.6.3}. In the latter, however, we will also deal with $5$-dimensional $2$-nondegenerate {\it almost} CR-structures that are 
integrable when restricted to the Cauchy characteristic distribution but not on the whole CR-distribution.

In dimension $7$ and higher, absolute parallelisms for $2$-nondegenerate CR-hypersurfaces have been found
via various modifications of Tanaka's geometric prolongation scheme \cite{PZ, SZ}. 
(This approach does not extend immediately to CR-manifolds satisfying the higher $k$-nondegeneracy conditions. 
Ultimately, this is due to the fact that describing the $k$-nondegeneracy of a CR-structure at the filtered level requires 
the assignment of nonnegative degrees to the vectors in the Cauchy characteristic space; 
however, from 3-nondegeneracy on, the $p^{th}$-Freeman filtrand may have non-trivial components 
of filtration degree $< p$, as explained in detail in \cite[\S 2-\S 3]{KS1}).

The moduli space of CR-symbols of $2$-nondegenerate $7$-dimensional CR-structures contains continuous parameters. Under the constancy assumption on the CR-symbols, the maximal symmetry dimension is given by $16$ \cite{PZ,SZ} and it is attained on the hypersurface
 \begin{equation}\label{7D2NonDeg}
{\mathcal D}_5=\Bigl\{z=(z_0,z_1,z_2,z_3)\in\mathbb C^4:
\mathfrak{Im}\bigl(z_0-z_1\overline z_2+z_1^2\overline z_3\bigr)=0\Bigr\}\,,
 \end{equation}
having symmetry algebra $\mathfrak{co}(3,1)\ltimes S^2_o(\mathbb R^{3,1})\cong\mathfrak{co}(3,1)\ltimes\mathbb R^9$. 
In the general case the bounds increases to $17$ \cite{B2}, but this bound does not seem to be sharp.

\subsection{The intransitive Tanaka-Weisfeiler filtration}\label{sec:2.3}

Let $(\cM,\cD,\cJ)$ be a $3$-nondegenerate $7$-dimensional CR-hypersurface. Since its Lie algebra $\gg$ of infinitesimal symmetries is finite-dimensional, we may consider the connected, simply connected Lie group $G$ with Lie algebra of right-invariant vector fields $\gg$ and effective local action on $\cM$ (a family $\{\Theta_p\}_{p\in \cM}$ of compatible local actions $\Theta_p:\cV_p \times \cU_p \rightarrow \cM$, with $\cU_p$ an open neighborhood of $p\in \cM$ and  $\cV_p$ an open neighborhood of the identity in $G$).
In this section we assume that the action is intransitive, i.e., all $G$-orbits $\cO^G_q$
have dimension $<7$. In this case, the Tanaka--Weisfeiler filtration $$\cdots\supset \gg(p)^{i}\supset \gg(p)^{i+1}\supset\cdots$$ of $\gg$ at $p\in\cM$ can be introduced following \cite{Kru2011} (see also \cite[\S 2.3.2]{KruThe2017}), as we now briefly recall.

Given any $p\in \cM$, we consider the usual metabelian Lie algebra $\gm(p)=\gm_{-2}(p)\oplus\gm_{-1}(p)$ (sometimes called ``symbol'' or ``Carnot algebra'' of the distribution $\cD$ at $p\in\cM$) with
$$\gm_{-1}(p):= \cD|_p\,,\qquad \gm_{-2}(p):=T_p\cM/\cD|_p\,,$$ and 
its maximal graded prolongation $\operatorname{pr}(\gm(p))=\bigoplus_{i\geq -2} \operatorname{pr}_i(\gm(p))$ in the sense of Tanaka. By construction 
$\operatorname{pr}_i(\gm(p))\hookrightarrow\operatorname{Hom}(\ot^{i+1} \gm_{-1}(p), \gm_{-1}(p))$
for all $i \geq 0$. 

On the other hand, we let $\operatorname{ev}_p : \gg \to T_p \cM$ be the evaluation map and define 
$\gg(p)^{-2}:=\gg$, $\gg(p)^{-1} :=\operatorname{ev}_p^{-1}(\cD|_p)$ and $\gg(p)^0 := \ker(\operatorname{ev}_p)$.  Inductively, given any $\xi\in\gg(p)^i$ with $i \geq 0$, there exists a map 
\begin{align*}
\Psi^{i+1}_\xi&:\ot^{i+1}\cD|_p\to T_p\cM\\
\Psi^{i+1}_\xi&(Y_1,\dots,Y_{i+1})=\bigl[\bigl[\dots\bigl[[\xi,Y_1],Y_2\bigr],\dots\bigr],Y_{i+1}\bigr]|_p\,,
\end{align*}
where each $Y_j\in \cD|_p$ has been tacitly extended to a section of $\cD$ denoted by the same symbol. It can be seen that $\operatorname{Im}(\Psi^{i+1}_\xi) \subset \cD|_p$.
We set $\gg(p)^{i+1} :=\{\xi\in\gg(p)^i \mid \Psi_\xi^{i+1}=0\}$
for all $i\geq 0$. 

Then $\gg$ is a filtered Lie algebra, and each graded component $\gg_i(p)=\gg(p)^{i}/\gg(p)^{i+1}$ injects into the prolongation $\operatorname{pr}_i(\gm(p))$ via $\xi\!\! \mod \gg(p)^{i+1} \mapsto \Psi^{i+1}_\xi$. In the case of reductions given by the presence of an additional geometric structure preserved by $\gg$, we will denote the reduced structure algebra by $\mathfrak f_0(p)\subset\operatorname{pr}_0(\gm(p))$, and similarly for higher reductions $\mathfrak f_i(p)$    
 with $i>0$.
\begin{definition}
A point $p\in \cM$ is {\it regular} (w.r.t.\ the associated Tanaka-Weisfeiler filtration) if there exists a neighbourhood $\cU\subset \cM$ of
$p$ such that $q\mapsto \dim(\gg_i (q))$ is a constant function of $q\in\cU$ for all $i\geq -2$. 
\end{definition}
Equivalently, $p$ is a regular point if the dimension of each filtrand $\gg(q)^{i}$ is a constant function of $q\in\cU$. By analyticity, the set of regular points is an open dense subset of $\cM$ \cite[Prop. 2]{Kru2014}. 
It is a general result for filtered structures at regular points that 
the graded Lie algebra $\opp{gr}(\gg)$ (associated to the filtered Lie algebra $\gg$) is contained in the prolongation $\operatorname{pr}(\gm(p)\rtimes\gg_0(p))$ 
of the non-positively graded Lie algebra $\gm(p)\rtimes\gg_0(p)$
\cite[Thm. 2]{Kru2014}.

It follows that $q\mapsto \gg|_q$ defines a distribution of constant rank 
$d:=\dim\gg-\dim\gg(q)^0$
on $\cU$. We denote this distribution by $\mathfrak S$ and, by fixing symmetries $\xi_1,\ldots,\xi_s$ which are a basis of $\mathfrak S$, we see that $\mathfrak S$ is involutive and $\gg$-stable.

\begin{lemma}
\label{lem:1}
Let $p\in\cM$ be a regular point. If $\gg\neq 0$ then $\gg_{-2}(q)\neq 0$ for all $q\in \cU$.
\end{lemma}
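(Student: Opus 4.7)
The plan is to proceed by contradiction. Suppose $\gg_{-2}(q)=0$ at some regular point $q\in\cU$. By regularity, $q'\mapsto\dim\gg_{-2}(q')$ is constant on $\cU$, so this forces $\mathfrak S|_{q'}\subset\cD|_{q'}$ for every $q'\in\cU$, where $\mathfrak S$ is the evaluation distribution of $\gg$. First I would observe that $\mathfrak S$ cannot vanish identically on $\cU$: otherwise every symmetry would vanish on an open set and, by real-analyticity and unique continuation, on all of $\cM$, forcing $\gg=0$ against the hypothesis. Hence $\mathfrak S$ is a nonzero involutive $\gg$-invariant subdistribution of $\cD$ on $\cU$.

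The core step is then to walk $\mathfrak S$ down the Freeman filtration. Pick any $\xi\in\mathfrak S$; since $\xi$ is a section of $\cD$ and a CR symmetry, $[\xi,\cD]\subset\cD$. Combined with $\xi\in\cD$, this is precisely the defining condition for membership in the Cauchy characteristic space, so $\xi\in\cK$. Decompose $\xi=\xi_{10}+\xi_{01}$ with $\xi_{10}\in\cK_{10}$. The $\cJ$-equivariance of $\ad\xi$ gives $[\xi,\cD_{01}]\subset\cD_{01}$; the involutivity of $\cD_{01}$ together with $\xi_{01}\in\cK_{01}\subset\cD_{01}$ gives $[\xi_{01},\cD_{01}]\subset\cD_{01}$; subtracting yields the sharp inclusion $[\xi_{10},\cD_{01}]\subset\cD_{01}$. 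Since $\cD_{01}\subset\cK_{10}\oplus\cD_{01}$, this places $\xi_{10}\in\ker\clL_2=\cL_{10}$; and since $\cD_{01}\subset\cL_{10}\oplus\cD_{01}$, it then places $\xi_{10}$ in the left kernel of the third-order Levi form $\clL_3$. Under the standing $3$-nondegeneracy hypothesis (in particular, holomorphic non-degeneracy) this left kernel vanishes, so $\xi_{10}=0$ and hence $\xi=0$. Thus $\mathfrak S=0$, the desired contradiction.

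The step I expect to be the main obstacle is the sharpened commutator inclusion $[\xi_{10},\cD_{01}]\subset\cD_{01}$: the generic Lie-bracket table in Lemma~\ref{lem:preliminarybrackets} only provides $[\cK_{10},\cD_{01}]\subset\cD\otimes\bC$, which is enough to land $\xi_{10}$ in $\cL_{10}$ but not tight enough to force $\clL_3(\xi_{10},\cdot)=0$. What rescues the argument is that $\xi$ is genuinely a CR symmetry (so $\cJ$-preserving, not merely $\cD$-preserving); this, together with the integrability of $\cD_{01}$, confines the bracket to the purely antiholomorphic direction. Once this sharpening is in place, the successive applications of the definitions of $\cL_{10}$ and $\ker\clL_3$ are essentially formal, and $3$-nondegeneracy closes the argument.
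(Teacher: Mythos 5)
Your proof is correct and follows the same route as the paper: contradiction plus regularity forces every symmetry to be tangent to $\cD$ on $\cU$, after which such a symmetry must vanish by nondegeneracy, and analyticity finishes. The paper compresses the middle step into the single phrase ``$\xi=0$ on $\cU$ by $3$-nondegeneracy''; your walk down the Freeman filtration ($\xi\in\cK$, then $[\xi_{10},\cD_{01}]\subset\cD_{01}$ via $\cJ$-invariance and integrability, then $\xi_{10}\in\cL_{10}$ and finally $\xi_{10}\in\ker\clL_3=0$ by holomorphic nondegeneracy) is exactly the argument being invoked there.
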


\begin{proof}
Assume by contradiction that $\gg_{-2}(q)=0$ for some $q\in \cU$. By regularity, $\gg_{-2}(q)=0$ for all $q\in\cU$ and any symmetry $\xi$ belongs to $\gg=\gg(q)^{-2}=\gg(q)^{-1}$, i.e., it is tangent to $\cD$ everywhere on $\cU$. Then $\xi=0$ on $\cU$ by $3$-nondegeneracy, $\xi=0$ everywhere by analyticity.
\end{proof}

We conclude this section with a useful generalization of the simple fact that infinitesimal CR-symmetries restrict to right-invariant vector fields on any simply transitive orbit, hence commute with left-invariant vector fields. To avoid unnecessary minus signs in our formulae, in the rest of \S\ref{sec:2.3} we denote by $\gg$ the Lie algebra of left-invariant vector fields on $G$, instead of the Lie algebra of infinitesimal CR-symmetries. 

\begin{lemma}\label{lem:from-left-to-right}
Let $\cE_1,\cE_2,\cE_3$ be $G$-stable
subbundles of $T\cM$ that are tangent to the orbits, i.e., they are subbundles of the distribution $\mathfrak S$. We identify 
$T_p\cO^G_p\cong\gg/\gg(p)^0$ and let $\mathfrak{e}_1,\mathfrak{e}_2,\mathfrak{e}_3$ be the corresponding 
$\gg(p)^0$-stable subspaces of $\gg$ that contain $\gg(p)^0$ and such that $\cE_j|_p\cong\mathfrak{e}_j/\gg(p)^0$. Then:
\vskip0.1cm\par\noindent
\begin{itemize}
\item[$(i)$] If the map 
$$\Gamma(\cM,\cE_1)\times \Gamma(\cM,\cE_2)
\rightarrow \Gamma(\cM,\mathfrak S)/\Gamma(\cM,\cE_3)$$ induced by the Lie bracket of vector fields 
defines a tensor $\clB:\cE_1\otimes \cE_2\to \mathfrak S/\cE_3$, then
$\clB|_p(Y_1,Y_2)\cong[\xi_1,\xi_2]\!\mod\mathfrak{e}_3$ as an element of $T_p\cO^G_p/\cE_3|_p\cong \gg/\mathfrak{e}_3$,
for all $Y_j\in \cE_j|_p$ and any  choice of their representatives $\xi_j\in\mathfrak{e}_j$, $j=1,2$;
\item[$(ii)$] Assume now there exists a canonical section $U_1$ of $\cE_1$ and choose any representative  $\xi_1\in\mathfrak{e}_1$ of $U_1|_p\in\cE_1|_p$. If the map 
$$\Gamma(\cM,\cE_2)
\rightarrow \Gamma(\cM,\mathfrak S)/\Gamma(\cM,\cE_3)$$ induced by the Lie bracket with $U_1$ 
defines a tensor $\clB:\cE_2\to \mathfrak S/\cE_3$, then
$\clB|_p(Y_2)\cong[\xi_1,\xi_2]\!\mod\mathfrak{e}_3$ as an element of $T_p\cO^G_p/\cE_3|_p\cong \gg/\mathfrak{e}_3$,
for all $Y_2\in \cE_2|_p$ and any choice of representative $\xi_2\in\mathfrak{e}_2$. The analogous statement holds for canonical sections of $\cE_2$;
\item[$(iii)$] If $U_j$ is a canonical section of $\cE_j$ then 
$
[U_1,U_2]|_p\cong[\xi_1,\xi_2]\mod\gg(p)^0$ as an element of $T_p\cO^G_p\cong \gg/\gg(p)^0
$, for any choice of representatives $\xi_j\in\mathfrak{e}_j$, $j=1,2$.
\end{itemize}
Analogous statements remain true upon restriction to any open subset $\cU\subset \cM$ and upon complexification of bundles and symmetries.
\end{lemma}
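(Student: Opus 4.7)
The plan is to reduce all three items to the identity that, under the convention just adopted (left-invariant vector fields on $G$ acting on $\cM$ from the left), the fundamental-vector-field map $\xi\mapsto\xi^*$ from $\gg$ into $\Gamma(T\cM)$ is an \emph{anti}-Lie-algebra homomorphism, i.e.\ $[\xi_1^*,\xi_2^*]|_p=-[\xi_1,\xi_2]^*|_p$. This sign will be absorbed by a second minus sign coming from the decomposition $\xi^*=(\text{section of the relevant }\cE)+(\text{remainder vanishing at }p)$, with the $G$-stability of the $\cE_j$ playing the role of absorbing all unwanted derivative-at-$p$ contributions into $\cE_3$. It is cleanest to treat (iii) first, then (ii), then (i), as each adds a single complication.

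For (iii), pick representatives $\xi_j\in\mathfrak{e}_j$ with $\xi_j^*|_p=U_j|_p$ and decompose $\xi_j^*=U_j+W_j$ near $p$, where $W_j\in\Gamma(\mathfrak S)$ vanishes at $p$. Canonicity of each $U_j$ says $[\eta^*,U_j]=0$ for every $\eta\in\gg$, so $[U_1,W_2]=[U_1,\xi_2^*]-[U_1,U_2]=-[U_1,U_2]$, and similarly $[W_1,U_2]=-[U_1,U_2]$; also $[W_1,W_2]|_p=0$ because both factors vanish there. Bilinear expansion of $[\xi_1^*,\xi_2^*]$ then evaluates to $-[U_1,U_2]|_p$ at $p$, and combining with the anti-homomorphism identity gives $[U_1,U_2]|_p=[\xi_1,\xi_2]^*|_p\cong[\xi_1,\xi_2]\!\mod\gg(p)^0$.

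For (ii), extend $Y_2$ to a local section $X_2\in\Gamma(\cE_2)$ and put $\xi_2^*=X_2+Z_2$ with $Z_2|_p=0$. The $C^\infty$-bilinearity criterion applied to $[U_1,fX_2]=f[U_1,X_2]+(U_1f)X_2$ shows that the tensor hypothesis forces $\cE_2\subset\cE_3$, so $\mathfrak{e}_2\subset\mathfrak{e}_3$ and the class $[\xi_1,\xi_2]\!\mod\mathfrak{e}_3$ is insensitive to the choice of representatives. Canonicity of $U_1$ still yields $[U_1,Z_2]|_p\equiv-[U_1,X_2]|_p\pmod{\cE_3|_p}$; $G$-stability of $\cE_2$ puts $[\xi_1^*,X_2]\in\Gamma(\cE_2)\subset\Gamma(\cE_3)$, hence $[W_1,X_2]|_p\equiv-[U_1,X_2]|_p\pmod{\cE_3|_p}$; summing over the four terms yields $[\xi_1^*,\xi_2^*]|_p\equiv-[U_1,X_2]|_p\pmod{\cE_3|_p}$, and the anti-homomorphism identity delivers $\clB|_p(Y_2)=[U_1,X_2]|_p\cong[\xi_1,\xi_2]\!\mod\mathfrak{e}_3$. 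Part (i) is structurally identical but with both $U_j$ replaced by sections $X_j\in\Gamma(\cE_j)$ extending $Y_j$: tensoriality now also forces $\cE_1\subset\cE_3$, and $G$-stability of $\cE_3$ (applied in turn to both $X_j\in\Gamma(\cE_3)$) takes over the role of canonicity in absorbing each mixed bracket $[\xi_k^*,X_j]$ modulo $\cE_3$.

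The main obstacle is purely bookkeeping: one has to verify that the minus sign from the anti-homomorphism cancels precisely against the minus sign produced by the $[U,W]+[W,U]+[W,W]$ (resp.\ $[X,Z]+[Z,X]+[Z,Z]$) expansion so that the final formula carries the stated $+$ sign and not its negative, which is exactly why the paper switches conventions at the start of this lemma. The stated extensions (to open subsets of $\cM$ and to complexified bundles and symmetries) require no modification of the argument since everything is purely local and algebraic.
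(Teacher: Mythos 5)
Your proof is correct, but it takes a genuinely different route from the paper's. The paper establishes $(i)$ by invoking Main Lemma 1 of \cite{Fel}: one constructs special vector fields $\zeta_j$ on $G$ that are right-invariant for the stabilizer subgroup, hence projectable under $\pi:G\to\cO^G_p$, whose projections are distinguished local sections of the $\cE_j$; tensoriality lets one evaluate $\clB|_p$ on these particular sections, and $[\zeta_1,\zeta_2]|_e\cong[\xi_1,\xi_2]$ holds by how the $\zeta$'s are built (``left-invariant'' only at the identity). Claims $(ii)$--$(iii)$ are then reduced to $(i)$ by observing that a canonical section arises as such a projection. You instead work entirely on $\cM$: you split each fundamental vector field $\xi_j^*$ as a chosen local section of $\cE_j$ plus a remainder vanishing at $p$, expand the bracket into four terms, kill the cross terms using canonicity or $G$-stability together with the inclusions $\cE_1,\cE_2\subset\cE_3$ extracted from $C^\infty$-linearity, and let the anti-homomorphism property of the fundamental-vector-field map (for the left-invariant bracket, per the convention adopted before the lemma) produce the stated sign. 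Your version is self-contained --- no appeal to \cite{Fel} --- and makes the sign bookkeeping and the role of each hypothesis explicit; the paper's version is shorter but delegates the construction of the good representatives to the reference. Two minor remarks: the step ``tensoriality forces $\cE_2\subset\cE_3$'' presupposes $U_1$ is not identically zero (and in $(i)$ that $\cE_1,\cE_2\neq 0$), an implicit genericity the paper shares; and the independence of the choice of $\xi_2$ in $(ii)$ is best justified by noting that your four-term computation applies verbatim to any representative, rather than by $\mathfrak{e}_2\subset\mathfrak{e}_3$ alone, since a priori $[\xi_1,\gg(p)^0]$ lies only in $\mathfrak{e}_1$, which is not assumed to be contained in $\mathfrak{e}_3$ in that case.
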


\begin{proof}
Claim $(i)$ is a slight reformulation of Main Lemma 1 of \cite[p. 899]{Fel}, which, after fixing a vector space complement $\widetilde\gm$ in $\gg$ to $\gg(p)^0$, is carried out using a local model of $(\cM,\cD,\cJ)$ and 
constructing special vector fields $\zeta$ on $G$ that are
right-invariant for the stabilizer subgroup, see \cite[p. 900]{Fel}. Hence, they are
$\pi$-projectable for the quotient map $\pi:G\to\cO^G_p$ and they project to special sections $Y=\pi_*(\zeta)$ of the above bundles $\cE$. Now
$$\clB|_p(Y_1|_p,Y_2|_p)=
[Y_1,Y_2]|_p\!\mod\cE_3|_p=\pi_*[\zeta_1,\zeta_2]|_e\!\mod\cE_3|_p$$
by tensoriality and $\pi$-projectability, and 
$[\zeta_1,\zeta_2]|_e\cong[\xi_1,\xi_2]$ by how the $\zeta$'s are constructed (by construction, they are ``left-invariant'' in a weaker sense and just at the identity).
Furthermore, the result does not depend on the choice of representatives since $\mathfrak{e}_1+\mathfrak{e}_2\subset \mathfrak{e}_3$ by tensoriality.

The fact that Main Lemma 1 \cite{Fel} is stated in the locally homogeneous case poses no difficulty, since our bundles and symmetries are tangent to the orbits and can just be restricted there. 

Finally, if $U_1$ is a canonical section of $\cE_1$ (i.e., it is $G$-invariant), then its restriction to the orbit $\cO^G_p$ is determined by the value $$U_1|_p\in T_p\cO^G_p\cong\mathfrak{e}_1/\gg(p)^0\cong\widetilde\gm\cap \mathfrak{e}_1\,,$$ 
an element $\x_1\in\widetilde\gm\cap \mathfrak{e}_1$ stable for the adjoint action of the stabilizer subgroup, modulo $\gg(p)^0$. The associated special vector field $\zeta_1$ projects then to a $G$-invariant section $Y_1= 
\pi_*(\zeta_1)$ of $\cE_1$ with $Y_1|_p=U_1|_p$. Hence $U_1=Y_1$, and claim $(ii)$ is established following the same lines as $(i)$, using that $\mathfrak{e}_2\subset\mathfrak{e}_3$ by tensoriality. Claim $(iii)$ and last statements are now straightforward.
\end{proof}

\begin{remark}
\label{rem:change-sign}
Lemma \ref{lem:from-left-to-right} allows to trade the bracket of sections with the bracket of symmetries, up to appropriate quotients and with an overall minus sign. The minus sign, however, can be removed by just changing sign to symmetries (for instance, if a basis of the symmetry algebra is given, it is enough to consider the opposite basis).
\end{remark}

\subsection{The leaf spaces of $\cK$ and $\cL$}\label{sec:2.4}

Let 
\begin{align}
\label{eq:projectionstoleafspaceL}
\widetilde\pi&:\cM\to\widetilde \cM\,,\\
\label{eq:projectionstoleafspaceK}
\vardbtilde{\pi}&:\cM\to\vardbtilde{\cM}\,,
\end{align}
be the leaf spaces of $\cL$ and $\cK$, respectively, with their natural projections. 
Their topological structure can be quite complicated, however 
we tacitly restrict to a sufficiently small open subset $\cU$ in $\cM$ where the leaf spaces are smooth manifolds. 

\begin{proposition}\label{prop:inducedcomplexstructure}
Let $(\cM,\cD,\cJ)$ be a $7$-dimensional CR-hypersurface. Then:
\begin{itemize}
\item[$(i)$] The Freeman filtrands $\cD\supset\cK\supset\cL$ descend to well-defined distributions $\widetilde \cD, \widetilde \cK$ on $\widetilde \cM$, and $\vardbtilde{\cD}$ on $\vardbtilde{\cM}$,
\item[$(ii)$] $\cJ$ projects to a complex structure $\widetilde \cJ$ on the bundle
	$\widetilde \cD/ \widetilde \cK\oplus  \widetilde \cK$ over $\widetilde \cM$, preserving both summands. It is an integrable complex structure on the second summand.
\end{itemize}
Moreover, if $(\cM,\cD,\cJ)$ is $3$-nondegenerate, then:
\begin{itemize}
\item[$(iii)$] $\widetilde \cD$ is a distribution on $\widetilde \cM$ with Cauchy characteristic space $\widetilde \cK$ of rank $2$, and $\vardbtilde{\cD}$ is a contact distribution on $\vardbtilde{\cM}$,
	\item[$(iv)$] $\cJ$ is not projectable neither to $\widetilde \cD$ or to $\vardbtilde{\cD}$,
	\item[$(v)$]  Any infinitesimal CR-symmetry is $\widetilde\pi$- and $\vardbtilde{\pi}$-projectable, and the Lie algebra $\gg$ embeds into the corresponding Lie algebra of symmetries on $\widetilde \cM$ and $\vardbtilde{\cM}$:
	\begin{align*}
	\gg&\cong\widetilde\pi_\star(\gg)\subset \inf(\widetilde \cM,\widetilde \cD,\widetilde \cK,\widetilde \cJ)\,,\\
	\gg&\cong\vardbtilde{\pi}_\star(\gg)\subset \inf(\vardbtilde{\cM},\vardbtilde{\cD})\,.
	\end{align*}
\end{itemize}
In particular $\gg$ is isomorphic to a subalgebra of the Lie algebra of infinitesimal symmetries of the $3$-dimensional 
contact manifold $(\vardbtilde{\cM},\vardbtilde{\cD})$. 
\end{proposition}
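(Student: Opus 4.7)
Throughout I restrict to the open subset $\cU\subset\cM$ where the leaf spaces are smooth, and repeatedly invoke the descent criterion: a subbundle $E$ containing an integrable foliation $\cF$ descends to the leaf space of $\cF$ iff $[\Gamma(\cF),\Gamma(E)]\subset\Gamma(E)$, with the analogous statement for complex subbundles after complexification. Every verification reduces to reading off the bracket table of Lemma~\ref{lem:preliminarybrackets}. For part (i), the integrability of $\cK,\cL$ was already noted in \S\ref{sec:2.1}, and the table yields $[\cL,\cD]\subset\cD$, $[\cL,\cK]\subset\cK$, $[\cK,\cD]\subset\cD$, producing $\widetilde\cD,\widetilde\cK$ on $\widetilde\cM$ and $\vardbtilde\cD$ on $\vardbtilde\cM$. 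For part (ii) I work with the $\cJ$-eigenbundles on each summand: on $\widetilde\cK$, the descent of $\cK_{10}$ along $\cL\otimes\bC$ follows from $[\cL_{10},\cK_{10}]\subset\cK_{10}$ and $[\cL_{01},\cK_{10}]\subset\cK_{10}\oplus\cL_{01}$; on $\widetilde\cD/\widetilde\cK$, the descent of $\cD_{10}$ modulo $\cK\otimes\bC$ follows from $[\cL_{10},\cD_{10}]\subset\cD_{10}$ and $[\cL_{01},\cD_{10}]\subset\cD_{10}\oplus\cK_{01}$, the stray $\cK_{01}$-term being absorbed into $\cK\otimes\bC$. Integrability of $\widetilde\cJ$ on $\widetilde\cK$ is the descent of $[\cK_{10},\cK_{10}]\subset\cK_{10}$.

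For part (iii) I combine rank counts with the defining property of $\cK$ as the left kernel of $\cL_1$: $\rk\widetilde\cK=\rk\cK-\rk\cL=2$ and $\rk\vardbtilde\cD=2$ on the $3$-manifold $\vardbtilde\cM$. Since $[\cK,\cD]\subset\cD$, $\widetilde\cK$ lies in the Cauchy characteristic of $\widetilde\cD$; conversely, any lift $X\in\cD$ of a Cauchy vector satisfies $[X,\cD]\subset\cD+\cL=\cD$, forcing $X\in\cK$. For the contact property, the induced Levi form on $\cD/\cK$ is non-degenerate by the very definition of $\cK$, and descends isomorphically to the Levi form of $\vardbtilde\cD$, which is thus a contact distribution. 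For part (iv), descent of $\cJ$ to $\vardbtilde\cD$ would require $[\cK_{01},\cD_{10}]\subset\cD_{10}+\cK\otimes\bC$, but the component of this bracket in $\cD_{01}/\cK_{01}$ is $\overline{\cL_2}$, non-zero since $\cL_{10}\subsetneq\cK_{10}$; analogously, descent to $\widetilde\cD$ would force $[\cL_{01},\cD_{10}]\subset\cD_{10}+\cL\otimes\bC$, and the component in $\cK_{01}/\cL_{01}$ is $\overline{\cL_3}$, non-zero by the very hypothesis of $3$-nondegeneracy.

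For part (v), every CR-symmetry preserves $\cD$ and $\cJ$, hence the intrinsically defined filtrands $\cK,\cL$ and the corresponding foliations; so it projects under both $\widetilde\pi$ and $\vardbtilde\pi$. Injectivity follows from the key step used in the proof of Lemma~\ref{lem:1}: a symmetry in $\ker\vardbtilde\pi_\star$ (resp.\ $\ker\widetilde\pi_\star$) is a global section of $\cK\subset\cD$ (resp.\ $\cL\subset\cD$), and any CR-symmetry tangent to $\cD$ must vanish by $3$-nondegeneracy. The embedding into infinitesimal contact symmetries of $(\vardbtilde\cM,\vardbtilde\cD)$ is then automatic. The main obstacle, conceptually, is part (iv): one must carefully match each obstruction bracket with the corresponding higher-order Levi form and verify its non-triviality. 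Once this identification is in place, the remaining arguments reduce to bookkeeping from the table and rank counts.
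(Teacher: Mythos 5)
Your proposal is correct and follows essentially the same route as the paper: descent of the filtrands via the bracket table of Lemma~\ref{lem:preliminarybrackets}, identification of the obstruction in $(iv)$ with the non-vanishing of $\clL_2$ (for $\vardbtilde{\pi}$) and of $\clL_3$ (for $\widetilde\pi$), and injectivity in $(v)$ from $3$-nondegeneracy as in Lemma~\ref{lem:1}. The only cosmetic difference is in $(ii)$, where the paper makes explicit that the objects that actually descend are the $\cL\otimes\bC$-stable bundles $\cK_{10}\oplus\cL_{01}$ and $\cD_{10}\oplus\cK_{01}$ rather than $\cK_{10}$ and $\cD_{10}$ themselves — your ``stray term absorbed'' phrasing encodes the same computation.
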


\begin{proof}
We will use Lemma \ref{lem:preliminarybrackets} and that distributions on $\widetilde\cM$ (resp. $\vardbtilde{\cM}$) are in bijective correspondence with distributions on $\cM$ that contain $\cL$ (resp. $\cK$) and are $\cL$-stable (resp. $\cK$-stable). 

Since $\cK$ is the Cauchy characteristic space of $\cD$, and it is integrable, claim $(i)$ is immediate.
Now $[\cL,\cL_{10}\oplus \cK_{01}]
\subset \cL_{10}\oplus \cK_{01}$. In other words, the bundle $\cL_{10}\oplus \cK_{01}$ is $\widetilde\pi$-projectable to $\widetilde \cK_{01}=\widetilde\pi_\star(\cL_{10}\oplus \cK_{01})$. Similarly $\widetilde \cK_{10}=\widetilde\pi_\star(\cK_{10}\oplus \cL_{01})$, so that $\widetilde \cK\otimes\mathbb C=\widetilde\pi_\star (\cK\otimes\mathbb C)=\widetilde \cK_{10}\oplus \widetilde \cK_{01}$. This proves that $\cJ$ induces a complex structure $\wt\cJ$ on $\wt\cK$, which is clearly integrable since $\cJ$ is so and $\cK$ is $\cJ$-stable. Similarly $[\cL,\cD_{10}\oplus \cK_{01}]\subset \cD_{10}\oplus \cK_{01}$, so the bundle $\cD_{10}\oplus \cK_{01}$ is $\widetilde\pi$-projectable. The quotient $(\cD_{10}\oplus \cK_{01})/\cK\otimes\mathbb C$ is then $\widetilde\pi$-projectable to 
$(\widetilde \cD/\widetilde \cK)_{10}=\widetilde\pi_\star((\cD_{10}\oplus \cK_{01})/\cK\otimes\mathbb C)$ and evidently
$(\widetilde \cD/\widetilde \cK)\otimes\mathbb C=(\widetilde \cD/\widetilde \cK)_{10}\oplus(\widetilde \cD/\widetilde \cK)_{01}$. This proves claim $(ii)$. 

Claim $(iii)$ is a direct consequence of definitions, we prove $(iv)$.
If $\cJ$ is projectable along $\widetilde\pi$, then the distribution $\cD_{10}\oplus \cL_{01}$ is $\widetilde\pi$-projectable, in other words, $[\cL,\cD_{10}\oplus \cL_{01}]\subset \cD_{10}\oplus \cL_{01}$. Hence $[\cL_{01},\cD_{10}]\subset \cD_{10}\oplus \cL_{01}$, which contradicts $3$-nondegeneracy.
If $\cJ$ is $\vardbtilde{\pi}$-projectable, we similarly see that $[\cK_{01},\cD_{10}]\subset \cD_{10}\oplus \cK_{01}$, again contradicting $3$-nondegeneracy.

Any symmetry $\xi\in\gg$ is seen to be $\widetilde\pi$- and $\vardbtilde{\pi}$-projectable, since it preserves $\cL$ and $\cK$, and the projections 
$\widetilde\pi_\star(\xi)$ and $\vardbtilde{\pi}_\star(\xi)$ are symmetries of the projected geometric structures. If $\widetilde\pi_\star(\xi)=0$ or $\vardbtilde{\pi}_\star(\xi)=0$, then $\xi$ is everywhere tangent to $\cK$, so $\xi=0$ by $3$-nondegeneracy. 
\end{proof}

From $(v)$ of Proposition \ref{prop:inducedcomplexstructure},
there is an action of $G$ as in \S\ref{sec:2.3} by local automorphisms on 
$(\widetilde \cM,\widetilde{\cD},\widetilde{\cK},\widetilde{\cJ})$ and $(\vardbtilde{\cM},\vardbtilde{\cD})$, so that 
$\widetilde\pi$ and $\vardbtilde{\pi}$ are $G$-equivariant.
We denote by $\widetilde{\mathfrak S}$ and $\vardbtilde{\mathfrak S}$ the analogous distributions defined by the local action of $G$ on $\widetilde M$ and $\vardbtilde M$, respectively. 

\begin{lemma}
The distribution $\mathfrak S$ is $\widetilde\pi$- and $\vardbtilde{\pi}$-projectable, with
$\widetilde{\mathfrak S}=\widetilde\pi_\star(\mathfrak S)$, $\vardbtilde{\mathfrak S}=\vardbtilde\pi_\star(\mathfrak S)$. 
\end{lemma}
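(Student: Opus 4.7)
The plan is to exploit two facts: first, the distributions $\cK$ and $\cL$ are canonical Freeman filtrands, hence $G$-invariant, so every $\xi\in\gg$ satisfies $[\xi,\Gamma(\cL)]\subset\Gamma(\cL)$ and $[\xi,\Gamma(\cK)]\subset\Gamma(\cK)$; second, the distribution $\mathfrak S$ is locally generated by a basis $\xi_1,\dots,\xi_s$ of $\gg$ as a module over $C^\infty$, at every regular point. Only these two inputs, together with the integrability of $\cL$ and $\cK$, are needed.

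I would first prove $\widetilde\pi$-projectability. Recall that a distribution $\cE\supset\cL$ is $\widetilde\pi$-projectable iff $[\Gamma(\cL),\Gamma(\cE)]\subset\Gamma(\cE)$, so it suffices to show $[\Gamma(\cL),\Gamma(\mathfrak S+\cL)]\subset\Gamma(\mathfrak S+\cL)$. For any $Z\in\Gamma(\cL)$ and $X=\sum_i f_i\xi_i\in\Gamma(\mathfrak S)$ one computes
$$[Z,X]=\sum_i Z(f_i)\,\xi_i+\sum_i f_i\,[Z,\xi_i]\in\Gamma(\mathfrak S)+\Gamma(\cL),$$
using that $[Z,\xi_i]=-[\xi_i,Z]\in\Gamma(\cL)$ by $G$-invariance of $\cL$. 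Combined with the integrability of $\cL$ this yields the claimed inclusion, hence $\mathfrak S+\cL$ projects to a distribution on $\widetilde\cM$; since $\widetilde\pi_\star(\cL)=0$, this projected distribution coincides with $\widetilde\pi_\star(\mathfrak S)$. The same argument, with $\cL$ replaced by $\cK$, gives $\vardbtilde\pi$-projectability of $\mathfrak S$.

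For the identification with the orbit distributions, I would invoke the $G$-equivariance of $\widetilde\pi$ and $\vardbtilde\pi$ established in Proposition \ref{prop:inducedcomplexstructure}(v). Since $\gg\hookrightarrow\inf(\widetilde\cM,\widetilde\cD,\widetilde\cK,\widetilde\cJ)$ via $\xi\mapsto\widetilde\pi_\star(\xi)$, the vector fields $\widetilde\pi_\star(\xi_1),\dots,\widetilde\pi_\star(\xi_s)$ span $\widetilde{\mathfrak S}$ pointwise on $\widetilde\cU$. Their value at $\widetilde\pi(p)$ equals $d\widetilde\pi|_p(\xi_i|_p)$, so
$$\widetilde{\mathfrak S}|_{\widetilde\pi(p)}=d\widetilde\pi|_p(\mathfrak S|_p)=\widetilde\pi_\star(\mathfrak S)|_{\widetilde\pi(p)},$$
which is precisely the asserted equality. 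The statement for $\vardbtilde\pi$ is analogous.

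I do not expect any serious obstacle here: the lemma is a formal consequence of the $G$-invariance of the canonical integrable distributions $\cL$ and $\cK$ together with the definition of the orbit distribution, and the mildly subtle point is merely the bookkeeping between projectability of $\mathfrak S$ (which is really projectability of $\mathfrak S+\cL$, resp.\ $\mathfrak S+\cK$) and the tautological identification of projected orbits with orbits of the induced $G$-action.
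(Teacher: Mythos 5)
Your proposal is correct and follows essentially the same route as the paper: the paper's proof likewise observes that $\mathfrak S+\cK$ (resp.\ $\mathfrak S+\cL$) is $\cK$-stable (resp.\ $\cL$-stable), hence projectable, and identifies the projection with the orbit distribution "by construction", which is exactly your equivariance argument spelled out. You merely make explicit the Leibniz computation and the pointwise identification that the paper leaves implicit.
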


\begin{proof}
The distribution $\mathfrak S+\cK$ is $\cK$-stable, hence $\vardbtilde\pi$-projectable, and 
$\vardbtilde\pi_\star(\mathfrak S+\cK)=\vardbtilde\pi_\star(\mathfrak S)=\vardbtilde{\mathfrak S}$ by construction.
The other case is proved similarly using $\mathfrak S+\cL$.
\end{proof}

\vskip0.1cm\par\noindent

\section{The upper bound on symmetry dimension}\label{sec:3}

In this section we get an upper bound on the dimension of the algebra of infinitesimal CR-symmetries of 
a 3-nondegenerate CR structure in dimension 7, provided the structure is not locally homogeneous.
More precisely, since the dimension of an orbit is a lower-semicontinuous function, 
the symmetry algebra acts with orbits of maximum dimension on an open dense subset $\cU$ of $\cM$. Let then $d$ be the orbit dimension through a generic point of $\cM$, and restrict to $\cU$ by analyticity. In other words, we may assume w.l.o.g. that the symmetry algebra acts intransitively everywhere, and then separately consider the possible values $0<d<7$. 

We stress that there can be singular orbits of smaller dimensions, however we restrict to the domain $\cU$ in $\cM$ where the orbits dimension is maximum, and the orbits foliate $\cU$.
In fact, we also have the freedom to shrink $\cU$ upon necessity, by regularity or other reasons. 
Since the CR structure and its symmetries are assumed analytic, this localization does not decrease generality. 
From now on, we always denote the domain and its localizations just by $\cU$. 

\vskip0.2cm\par
{\textit{Theorem \ref{T1} is a direct consequence of Theorems \ref{thm:dim123}, \ref{thm:dim4}, \ref{thm:dim6}, \ref{thm:dim5} from \S\ref{sec:3.3}-\S\ref{sec:3.6} that provide upper bounds on the symmetry dimension depending on the dimension $d$ of generic orbits.}}

\subsection{Some general arguments}\label{sec:3.1}
Recall the definition of the distribution $\mathfrak S$ determined by the algebra of infinitesimal CR-symmetries $\gg$ given in \S\ref{sec:2.3}. By the Frobenius theorem, there exist local rectifying coordinates and a foliation by maximal integral submanifolds of $\mathfrak S$ near any regular point $p\in\cU$. In other words,
we have a local diffeomorphism
 $
\cU\cong U\times V
 $,
where $U$ and $V$ have the local coordinates $(u^i)_{i=1}^d$ and $(v^j)_{j=1}^{7-d}$, respectively, and $\mathfrak S=TU=\langle\partial_{u^i}\rangle$. We stress that $V$ is not canonically defined and refer to the $v^j$'s as invariants of $\mathfrak S$. 

Any infinitesimal CR-symmetry has the form 
$$\xi=\sum_{i=1}^d \xi^i(u,v)\p_{u^i}\,,$$
and, in general, the group $G$ acts on each (local) orbit $\mathcal O^G_q\cong U\times\{v_q\}$ in a $v$-dependent fashion. If $\xi\in\gg(p)^0$ (i.e, $\xi|_p=0$), then the linear part $\Xi:=[\xi,-]|_p$ of the symmetry acts on $T_p\,\cU$, and it is represented by a block-diagonal matrix of the form
 $
\Xi=\begin{bsmallmatrix} * & *\\ 0 & 0\end{bsmallmatrix}
 $
w.r.t.\ the basis $(\p_{u^i},\p_{v^j})$. The proof of the following lemma is then straightforward and we omit it.
\begin{lemma}
The linearization $\Xi$ of an infinitesimal CR-symmetry $\xi\in\gg(p)^0$ satisfies the following properties:
\begin{itemize}
	\item[$(i)$] It acts trivially on the quotient $T_p\,\cU/T_p U$, where $T_p\,U=T_p\mathcal O^G_p$ is the tangent space to the orbit through $p$,
	\item[$(ii)$] It preserves the Freeman filtration $\cD\supset\cK\supset\cL$ at $p$,
	\item[$(iii)$] It preserves $\cJ$ on $\cD|_p$ (therefore also its restriction to $\cK|_p$ and to $\cL|_p$),
	\item[$(iv)$] It preserves the Freeman filtration $\cD_{10}\supset \cK_{10}\supset \cL_{10}$ at $p$.
\end{itemize}
\end{lemma}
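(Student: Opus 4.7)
The plan is to exploit that $\xi\in\gg(p)^0$ vanishes at $p$, so that $\Xi=[\xi,-]|_p$ is a well-defined endomorphism of $T_p\cU$, and that $\cL_\xi$ preserves every object invariantly constructed from the CR-structure. All four claims then follow by differentiating the corresponding flow-level statements and evaluating at $p$.

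For (i), I would work in the rectifying coordinates $(u^i,v^j)$ introduced just before the lemma. Since $\gg\subset\Gamma(\mathfrak S)$ and $\mathfrak S=\langle\p_{u^i}\rangle$, the symmetry has the form $\xi=\sum_{i}\xi^i(u,v)\p_{u^i}$. Then for any basis vector $\p_{v^j}$ one computes $[\xi,\p_{v^j}]|_p=-\sum_i(\p_{v^j}\xi^i)(p)\,\p_{u^i}\in T_pU$, and similarly $[\xi,\p_{u^k}]|_p\in T_pU$. Hence $\Xi(T_p\cU)\subset T_pU=T_p\cO^G_p$, which is exactly the statement that $\Xi$ acts trivially on $T_p\cU/T_pU$.

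For (ii) and (iv), the flow $\phi_t$ of $\xi$ consists of local CR-automorphisms, so $\phi_{t*}\cD=\cD$ and $\phi_{t*}$ commutes with $\cJ$ on $\cD$. Since $\cK$ and $\cL$ are defined purely from $\cD$ and $\cJ$ via the higher-order Levi forms $\clL_1,\clL_2$ (and are therefore preserved by any CR-automorphism), we also get $\phi_{t*}\cK=\cK$ and $\phi_{t*}\cL=\cL$. Differentiating yields $[\xi,\Gamma(\cF)]\subset\Gamma(\cF)$ for $\cF\in\{\cD,\cK,\cL\}$, and evaluating at $p$ (using $\xi|_p=0$) gives $\Xi(\cF|_p)\subset\cF|_p$, which is (ii). For (iii), the same flow-level commutation $\phi_{t*}\circ\cJ=\cJ\circ\phi_{t*}$ on $\cD$ differentiates to $[\xi,\cJ X]=\cJ[\xi,X]$ for any $X\in\Gamma(\cD)$; evaluating at $p$ for $X$ with $X|_p=v\in\cD|_p$ gives $\Xi(\cJ v)=\cJ\Xi(v)$, and $\cJ$-invariance on $\cK|_p$ and $\cL|_p$ follows from their $\cJ$-stability. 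Finally, (iv) is immediate: a real endomorphism preserving $\cD|_p$ and commuting with $\cJ$ extends complex-linearly to $\cD|_p\otimes\mathbb C$ preserving the $\pm i$-eigenspaces of $\cJ$, hence $\cD_{10},\cD_{01}$; the same argument on $\cK|_p$ and $\cL|_p$ yields the result for $\cK_{10}$ and $\cL_{10}$.

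There is no real obstacle here, the only point requiring a moment's care is ensuring that $\cK$ and $\cL$ are genuinely CR-invariant (so that they are preserved by any CR-automorphism and not just by $\cJ$-preserving ones), but this is built into their intrinsic definition as left-kernels of the Levi forms $\clL_1,\clL_2$ described in Section \ref{sec:2.1}. Thus the proof reduces to a direct verification from the definitions, and as noted by the authors, the computation is routine and may be omitted.
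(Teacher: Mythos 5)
Your proof is correct and is precisely the argument the paper has in mind: the authors omit the proof as straightforward, but the block-matrix form of $\Xi$ in the rectifying coordinates $(u^i,v^j)$ stated immediately before the lemma is exactly your computation for $(i)$, and $(ii)$--$(iv)$ follow, as you say, by differentiating the invariance of the intrinsically defined filtrands $\cD\supset\cK\supset\cL$ and of $\cJ$ under the flow of $\xi$ and evaluating at the zero $p$. No gaps.
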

 As a consequence, the linear part $\Xi\in\opp{End}(T_p\,\cU)$ of any symmetry $\xi\in\gg$ induces a graded morphism $\opp{gr}(\Xi)\in \opp{End}(\gm(p))$ of the metabelian Lie algebra $\gm(p)$, which preserves $\cJ|_p$ and the Freeman filtration $\gm_{-1}(p)= \cD|_p\supset \cK|_p\supset \cL|_p$. We consider the Tanaka--Weisfeiler filtration of the stabilizer subalgebra $\gg(p)^0$ as in \S\ref{sec:2.3} and its $0$-graded component $\gg_0(p)\subset\opp{End}(\gm_{-1}(p))$, with the $0$-graded symbol of $\xi$ given by $\Xi_0:=\Xi|_\cD\in\gg_0(p)$. 
 The full graded morphism $\opp{gr}(\Xi)$ can be simply recovered as the natural extension of $\Xi_0$ as a derivation of the Lie algebra $\gm(p)$.  
\begin{lemma}
\label{lem:idontknow}
If $\gg_0(p)=0$ then $\gg(p)^0=0$ and
$\dim\gg=\dim U<7$.
\end{lemma}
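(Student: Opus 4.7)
The plan is to reduce the statement to two steps: first, that $\gg_0(p)=0$ forces the full stabilizer $\gg(p)^0$ to vanish, and second, that then the dimension count $\dim\gg=\dim U<7$ follows directly from the definitions and the intransitivity hypothesis. Only the first step is substantive, and I would handle it via the embedding $\opp{gr}(\gg)\hookrightarrow\opp{pr}(\gm(p)\rtimes\gg_0(p))$ of the graded algebra into the Tanaka prolongation, as recalled in \S\ref{sec:2.3} from \cite[Thm.~2]{Kru2014}.

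To prove $\gg(p)^0=0$, I would argue by contradiction. Since $\gg$ is finite-dimensional, the Tanaka--Weisfeiler filtration $\gg(p)^0\supset\gg(p)^1\supset\cdots$ terminates at zero, so if $\gg(p)^0\neq 0$ there is a largest integer $k\geq 0$ with $\gg(p)^k\neq 0$. Any nonzero $\xi\in\gg(p)^k$ has nontrivial class in the graded piece $\gg_k(p)$, and the hypothesis $\gg_0(p)=0$ rules out $k=0$. By the embedding above, $\gg_k(p)$ then injects into $\opp{pr}_k(\gm(p)\rtimes\gg_0(p))=\opp{pr}_k(\gm(p))$, and the contradiction will follow once we show that $\opp{pr}_i(\gm(p))=0$ for every $i\geq 1$.

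This is the heart of the matter, and the argument is an induction on $i$ starting from $\opp{pr}_0=\gg_0(p)=0$. Any $\phi\in\opp{pr}_i(\gm(p))$ is uniquely determined by its restriction to $\gm_{-1}(p)=\cD|_p$, since $\gm(p)$ is fundamental (generated by its degree $-1$ component); and $\phi$ sends $\gm_{-1}(p)$ into $\opp{pr}_{i-1}(\gm(p))$ by degree considerations. The inductive hypothesis forces $\phi|_{\gm_{-1}(p)}=0$ and hence $\phi=0$, contradicting $\gg_k(p)\neq 0$. Thus $\gg(p)^0=0$.

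For the final conclusion, the vanishing $\gg(p)^0=\ker(\ev_p)=0$ makes $\ev_p:\gg\to T_p\cM$ injective, so $\dim\gg=\dim\ev_p(\gg)=\dim\mathfrak S|_p=\dim U$ by the description of $\mathfrak S$ preceding the lemma, and intransitivity gives $\dim U<7$. The only real obstacle is the inductive vanishing of $\opp{pr}_i(\gm(p))$, but it is routine once one uses that the CR-distribution $\cD$ is bracket-generating; no feature specific to $3$-nondegeneracy or to CR geometry enters the argument, which is purely Tanaka-theoretic.
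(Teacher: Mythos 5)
Your proof is correct and follows essentially the same route as the paper: the paper's one-line argument invokes the containment of $\opp{gr}(\gg)$ in the Tanaka prolongation of $\gm(p)\rtimes\gg_0(p)$ at regular points, which is exactly your embedding, and your induction merely spells out the standard fact that this prolongation is trivial in positive degrees when the degree-zero part vanishes. The only caveat is notational: in the paper $\opp{pr}_i(\gm(p))$ denotes the \emph{maximal} graded prolongation (whose degree-zero part is certainly nonzero), so your identification $\opp{pr}_k(\gm(p)\rtimes\gg_0(p))=\opp{pr}_k(\gm(p))$ should be read as referring to the prolongation with structure algebra reduced to $\gg_0(p)=0$, as your subsequent induction makes clear.
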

\begin{proof}
At regular points, the graded Lie algebra $\opp{gr}(\gg)$ is contained in the Tanaka prolongation
of the non-positively graded Lie algebra $\gm(p)\rtimes\gg_0(p)$, so $\gg_i(p)=0$ for $i\geq 0$.
\end{proof}
\begin{definition}
Given the subbundle $TU$
of the tangent bundle $T\cU$, we
set 
\begin{align*}
TU_\cD&=TU\cap \cD\,,\quad TU_\cD^\cJ=TU_\cD\cap \cJ(TU_\cD)\,,\\
TU_\cK&=TU\cap \cK\,,\quad TU_\cK^\cJ=TU_\cK\cap \cJ(TU_\cK)\,,\\
TU_\cL&=TU\cap \cL\,,\quad TU_\cL^\cJ=TU_\cL\cap \cJ(TU_\cL)\,.
\end{align*}
\end{definition}
As usual, we may localize to regular domains. More precisely, since the set of points of $\cU$ where the rank of the intersection of two vector bundles attains the minimum is open in $\cU$, we may assume that the above intersections have all constant ranks, i.e., they are bundles. 
\begin{lemma}
\label{lem:codimension}
If $\gg\neq 0$, then:
\begin{itemize}
	\item[$(i)$] $TU_\cD\subset TU$ has codimension 1,
	\item[$(ii)$] $\opp{Im}(\Xi_0)=\opp{Im}(\Xi|_\cD)\subset TU_\cD^\cJ$, and similarly for $\cK$ and $\cL$,
	\item[$(iii)$] If $TU_\cD^\cJ=0$, then $\dim\gg=\dim U<7$.
\end{itemize}
\end{lemma}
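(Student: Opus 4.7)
The proof splits naturally along the three items, each following quite directly from the preceding structural results, so the plan is essentially to assemble them in the right order.

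For part $(i)$, the plan is to identify $TU|_p/TU_\cD|_p$ with the graded piece $\gg_{-2}(p)$ of the intransitive Tanaka--Weisfeiler filtration and then invoke Lemma \ref{lem:1}. Concretely, the evaluation map $\opp{ev}_p:\gg\to T_p\cM$ has image $TU|_p$ by the very definition of the orbit, and by definition of the filtration $\gg(p)^{-1} = \opp{ev}_p^{-1}(\cD|_p)$, so $\opp{ev}_p$ induces an isomorphism $\gg_{-2}(p)\cong TU|_p/TU_\cD|_p$. Since the right-hand side is a subspace of the $1$-dimensional quotient $\gm_{-2}(p) = T_p\cM/\cD|_p$, its dimension is at most $1$, and by Lemma \ref{lem:1} it is exactly $1$ whenever $\gg\neq 0$.

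For part $(ii)$, the idea is to combine the three relevant properties of the linearization $\Xi$ listed in the preceding lemma on linearizations of symmetries in $\gg(p)^0$: $\Xi$ has image contained in $T_pU = TU|_p$ (from the block-matrix form given just before that lemma), $\Xi$ preserves $\cD|_p$, and $\Xi$ is $\cJ$-linear on $\cD|_p$. The first two together give $\opp{Im}(\Xi|_\cD)\subset TU|_p \cap \cD|_p = TU_\cD|_p$; the third shows that $\opp{Im}(\Xi|_\cD)$ is $\cJ$-stable, hence also contained in $\cJ(TU_\cD|_p)$. Intersecting, $\opp{Im}(\Xi|_\cD)\subset TU_\cD^\cJ|_p$. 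The equality $\opp{Im}(\Xi_0) = \opp{Im}(\Xi|_\cD)$ holds tautologically since $\Xi_0$ is $\Xi|_\cD$ regarded as an endomorphism of $\gm_{-1}(p)=\cD|_p$, its image being automatically in $\cD|_p$ by the $\cD$-invariance of $\Xi$. The analogous inclusions for $\cK$ and $\cL$ follow by verbatim repetition, using that the Freeman filtrands are $\cJ$-stable and preserved by $\Xi$.

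For part $(iii)$, the assumption $TU_\cD^\cJ=0$ combined with $(ii)$ forces $\opp{Im}(\Xi_0)=0$, so $\Xi_0=0\in\opp{End}(\gm_{-1}(p))$ for every $\xi\in\gg(p)^0$. Consequently $\gg_0(p)=0$, and then Lemma \ref{lem:idontknow} yields $\gg(p)^0=0$ and $\dim\gg=\dim U$, which is strictly less than $7$ by the standing intransitivity assumption. No step is a genuine obstacle here; the only nontrivial content is the codimension claim in $(i)$, where the nonvanishing of $\gg_{-2}(p)$ is ultimately a consequence of the $3$-nondegeneracy hypothesis, repackaged by Lemma \ref{lem:1}. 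The rest is a direct application of the properties of the linearization recorded immediately before the lemma.
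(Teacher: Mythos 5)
Your proof is correct and follows essentially the same route as the paper: part $(i)$ via Lemma \ref{lem:1} and the one-dimensionality of $T_p\cM/\cD|_p$, part $(ii)$ via the $\cJ$-linearity and $\cD$-invariance of $\Xi$ giving $\Xi\cD\subset TU_\cD\cap\cJ(TU_\cD)$, and part $(iii)$ via Lemma \ref{lem:idontknow}. The only difference is that you spell out the identification $\gg_{-2}(p)\cong TU|_p/TU_\cD|_p$ explicitly, which the paper leaves implicit.
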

\begin{proof}
The first claim follows from the fact that $TU_\cD\subsetneq TU$ due to Lemma \ref{lem:1}. For the second claim, note that $\Xi\cD=\Xi(\cJ\cD)=\cJ(\Xi\cD)\subset TU_\cD\cap \cJ(TU_\cD)$, and similarly for $\cK$ and $\cL$. 

If $TU_\cD^\cJ=0$, then 
all $\Xi_0$ vanish by the second claim, so $\gg_0(p)=0$ and Lemma \ref{lem:idontknow} applies.
\end{proof}
\subsection{CR-symmetries when $TU_\cK$ has small codimension in $TU$}\label{sec:3.2}
Let $\vardbtilde{\pi}:\cU\to\vardbtilde{\cU}$ be the natural projection to the leaf space of $\cK$ as in
\S\ref{sec:2.4}, which is a $3$-dimensional contact manifold $\vardbtilde{\cU}$ with contact  distribution $\vardbtilde{\cD}$. We recall that a Lie algebra of contact vector fields on $\vardbtilde \cU$ consists of point vector fields (for some choice of Darboux cordinates) if and only if it preserves a line subbundle 
$\vardbtilde \cE\subset \vardbtilde \cD$
of the contact distribution. In this case, we have a natural projection 
$p:\vardbtilde{\cU}\to\vardbtilde{\cU}/\vardbtilde{\cE}$
from $\vardbtilde{\cU}$ to an open subset $\vardbtilde{\cU}/\vardbtilde{\cE}$ of the real plane
 and a point vector field  is the first jet prolongation $X^{(1)}$ of a vector field $X$ on the real plane.
If $(x,y,y_x)$ are jet coordinates on $\vardbtilde{\cU}$, the lift of $X=-\varphi_1(x,y)\partial_x+\varphi_0(x,y)\partial_y$ is the contact vector field
\begin{equation}
\label{eq:pointvf}
X^{(1)}=X+\big(\partial_x\varphi_0+y_x(\partial_y\varphi_0+\partial_x\varphi_1)+y_x^2\partial_y\varphi_1\big)\partial_{y_x}\,,
\end{equation}
with generating function $\varphi=\varphi_0+y_x\varphi_1$. Clearly $\vardbtilde{\cE}=\langle\partial_{y_x}\rangle$.

We say that a point in $\vardbtilde{\cU}/\vardbtilde{\cE}$ is generic if 
the induced orbits have constant dimension in a neighborhood of the point and that 
$q\in \cU$ is generic if $p(\vardbtilde{\pi}(q))$ is so. By possibly shrinking $\cU$, we may assume w.l.o.g. 
that it consists of generic points.

\begin{proposition}
\label{prop:smallcodimension}
If $\gg\neq 0$, then:
\begin{itemize}
	\item[$(i)$] If $TU_\cK\subset TU$ has codimension $1$, then $\dim\gg=1$,
	\item[$(ii)$] If $TU_\cK\subset TU$ has codimension $2$, then the Lie algebra of contact vector fields $\vardbtilde{\pi}_\star(\gg)$ consists of point vector fields. Moreover, either $\dim \gg\leq 3$ or $\gg$ is isomorphic  to the Abelian Lie algebra $\mathbb R^r$
	or its scaling extension $\mathbb R\ltimes\mathbb R^r$.
	In these last two cases, there exist local rectifying coordinates $(u^1,\ldots,u^{d-1},u^d,v)$ on $\cU$ such that $\gg$ is represented by the vector fields
	\vskip0.2cm\par\noindent
\begin{enumerate}
\item $\displaystyle\mathbb R^r=\mathrm{span}\big(\partial_{u^1},\ldots,\partial_{u^d},\sum_{i=1}^{d}\xi^i_{d+1}(v)\partial_{u^i},\ldots,
\sum_{i=1}^{d}\xi^i_{r}(v)\partial_{u^i}\big)$, 
\item $\displaystyle\mathbb R\ltimes\mathbb R^r=
		\begin{cases}\displaystyle
		\mathrm{span}\big(\partial_{u^1},\ldots,\partial_{u^d},\sum_{i=1}^{d}\xi^i_{d+1}(v)\partial_{u^i},\ldots,
		\sum_{i=1}^{d}\xi^i_{r}(v)\partial_{u^i},\sum_{i=1}^{d} u^i\partial_{u^i}\big)\;\;\text{or}\;\;\\
		\\
		\displaystyle
	\mathrm{span}\big(\partial_{u^1},\ldots,\partial_{u^{d-1}},\sum_{i=1}^{d-1}\xi^i_{d}(v)\partial_{u^i},\ldots,
		\sum_{i=1}^{d-1}\xi^i_{r}(v)\partial_{u^i},\partial_{u^d}+\sum_{i=1}^{d-1}u^i\partial_{u^i}\big)
		\end{cases}$
\end{enumerate}
\vskip0.3cm\par\noindent
for some functions $\xi^i_k$ of the coordinates $v$, with $r\geq 4$ for $\mathbb R^r$ and $r\geq 3$ for $\mathbb R\ltimes\mathbb R^r$.
\end{itemize}
\end{proposition}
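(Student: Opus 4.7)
Both parts rest on the embedding $\gg\hookrightarrow\inf(\vardbtilde{\cU},\vardbtilde{\cD})$ from Proposition~\ref{prop:inducedcomplexstructure}(v), which realises $\gg$ as a subalgebra of contact vector fields on the $3$-dimensional leaf space $\vardbtilde{\cU}$ of $\cK$. The plan is to exploit this, together with the codimension hypothesis, to control $\vardbtilde{\pi}_\star(\gg)$.

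For (i), I would combine Lemma~\ref{lem:codimension}(i) with the hypothesis $\mathrm{codim}_{TU}(TU_\cK)=1$ to force $TU_\cD=TU_\cK$. Then a tangent vector $\xi\in TU$ projects into the contact distribution $\vardbtilde{\cD}=\vardbtilde{\pi}_\star(\cD)$ iff $\xi\in\cD\cap TU=\cK\cap TU$, iff $\vardbtilde{\pi}_\star(\xi)=0$; hence $\vardbtilde{\mathfrak{S}}:=\vardbtilde{\pi}_\star(\mathfrak S)$ is a rank-$1$ distribution transverse to $\vardbtilde{\cD}$. In Darboux coordinates $(x,y,z)$ with $\alpha=dz-y\,dx$ and $\vardbtilde{\mathfrak{S}}=\langle\partial_z\rangle$, I would expand the general contact field $X_f=-\partial_y f\,\partial_x+(\partial_x f+y\partial_z f)\,\partial_y+(f-y\partial_y f)\,\partial_z$ and observe that $X_f\in\langle\partial_z\rangle$ everywhere forces $\partial_y f=0$ and $\partial_x f+y\partial_z f=0$, whence $\partial_x f=\partial_z f=0$ and $f$ is constant. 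This yields $\dim\gg=\dim\vardbtilde{\pi}_\star(\gg)\leq 1$, with equality by the assumption $\gg\neq 0$.

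For (ii), the orbit in $\vardbtilde{\cU}$ is now $2$-dimensional, so $\vardbtilde{\mathfrak{S}}$ is a rank-$2$ integrable distribution; since $\vardbtilde{\cD}$ is non-integrable one has $\vardbtilde{\mathfrak{S}}\neq\vardbtilde{\cD}$, and $\vardbtilde{\cE}:=\vardbtilde{\mathfrak{S}}\cap\vardbtilde{\cD}$ is a $G$-invariant line subbundle of the contact distribution. The characterisation of point vector fields recalled before the proposition then yields that $\vardbtilde{\pi}_\star(\gg)$ consists of point vector fields, i.e.\ prolongations of vector fields on the $2$-plane $P:=\vardbtilde{\cU}/\vardbtilde{\cE}$, and a direct computation in jet coordinates shows that $p_\star\circ\vardbtilde{\pi}_\star\colon\gg\to\mathfrak{vect}(P)$ is still injective. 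The image orbits in $P$ are $1$-dimensional, so in coordinates $(u,v)$ on $P$ whose $v$-level sets are the orbits, every element of $\gg_P:=p_\star\vardbtilde{\pi}_\star(\gg)$ takes the form $X=a(u,v)\partial_u$.

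The final step is to classify such finite-dimensional $\gg_P\subset\mathfrak{vect}(P)$. I would use the generic-level evaluation $\mathrm{ev}_{v_0}\colon\gg_P\to\mathfrak{vect}(\mathbb R)$, $a(u,v)\partial_u\mapsto a(u,v_0)\partial_u$, whose image lies in one of $\{0\},\mathbb R,\mathrm{aff}(\mathbb R),\mathfrak{sl}_2(\mathbb R)$; the trivial case is ruled out by analyticity. When the image contains the dilation $u\partial_u$, I would decompose $\gg_P$ into $\ad_E$-weight spaces for a preimage $E$ of $u\partial_u$: a growth bound restricts the $u$-polynomial degree of the coefficients to at most $2$ and the weights to $\{-1,0,+1\}$, while a pointwise multiplication argument on the coefficient spaces $W_\lambda\subset C^\infty(v)$ forces the simultaneous presence of two non-zero weights to collapse the non-scaling $W_\lambda$'s to constants, capping $\dim\gg_P\leq 3$. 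In the remaining case $\mathrm{ev}_{v_0}(\gg_P)=\mathbb R\cdot\partial_u$, the coefficients are $u$-independent and $\gg_P$ is abelian. Lifting these normal forms along $\cU\to\vardbtilde{\cU}\to P$ and choosing adapted rectifying coordinates produces the stated presentations of $\mathbb R^r$ and $\mathbb R\ltimes\mathbb R^r$; the two scaling variants distinguish whether the canonical dilation lifts to $\sum u^i\partial_{u^i}$ or must absorb a residual translation as $\partial_{u^d}+\sum_{i<d}u^i\partial_{u^i}$. The main obstacle will be this detailed weight analysis together with the verification that the dichotomy between the two lifted normal forms in the scaling case is exhaustive.
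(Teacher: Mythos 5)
Your argument is correct and follows the same skeleton as the paper's proof -- project to the leaf space of $\cK$, exploit the contact geometry of $\vardbtilde{\cU}$, reduce to point vector fields, classify, and lift back -- but you replace the paper's two external inputs by direct arguments. For $(i)$ the paper simply invokes the fact that two non-trivial analytic contact vector fields are locally proportional only if they are homothetic; your Darboux computation (after observing that $TU_\cD=TU_\cK$ makes $\vardbtilde{\mathfrak S}$ a line field transverse to $\vardbtilde{\cD}$, so a generator is the Reeb field of a rescaled contact form) is a concrete proof of that fact. For $(ii)$ the paper quotes Doubrov--Komrakov to get point vector fields and then jet-lifts the Gonz\'alez-L\'opez--Kamran--Olver table, discarding by inspection the lifts of dimension $\leq 3$; you instead exhibit the invariant line field $\vardbtilde{\cE}=\vardbtilde{\mathfrak S}\cap\vardbtilde{\cD}$ (of constant rank $1$ after the usual localization, since $\vardbtilde{\mathfrak S}$ is integrable while $\vardbtilde{\cD}$ is contact) and apply the characterization recalled just before the proposition; this buys you that the induced orbits in $\vardbtilde{\cU}/\vardbtilde{\cE}$ are automatically $1$-dimensional, so the GKO cases with $2$-dimensional planar orbits never arise and only algebras of the form $\{a(u,v)\partial_u\}$ must be classified. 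Your evaluation-homomorphism and weight-space analysis of these is sound ($\mathrm{ev}_{v_0}$ is a Lie algebra morphism precisely because the fields have no $\partial_v$-component, and the product relations $W_0\cdot W_0\subset W_0$, $W_{-1}\cdot W_{+1}\subset W_0$ do force $\dim\leq 3$ when both nonzero weights occur), and it reproduces exactly the algebras $(20)$, $(21)$ together with the $\mathfrak{sl}_2$-type cases. Two spots are thinner than the paper's treatment and would need to be filled in: the ``growth bound'' should be spelled out as the iterated-bracket finite-dimensionality argument establishing that the coefficients are polynomial in $u$ of degree $\leq 2$ before the weight decomposition is meaningful; and the final normalization on the $7$-dimensional $\cU$ -- straightening the commuting fields and absorbing the residual functions in the scaling element via explicit coordinate changes, which is where the two scaling variants (derived ideal transitive or not on the orbits) are actually separated -- is carried out concretely in the paper and only gestured at in your sketch.
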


\begin{proof}
Consider the projection $\vardbtilde{\pi}:\cU\to\vardbtilde{\cU}$ to the leaf space of $\cK$ and note that the restriction of the differential $\vardbtilde{\pi}_\star|_q$ to $T_qU\subset T_q\,\cU$ 
has Kernel $T_qU_\cK$. Since $\vardbtilde{\pi}$ sends $G$-orbits on $\cU$ onto $G$-orbits on $\vardbtilde{\cU}$, the latter have dimension $\dim(T_qU/T_qU_\cK)$, i.e., $\vardbtilde{\mathfrak S}$ has rank $\dim(T_qU/T_qU_\cK)$.
\vskip0.15cm\par\noindent
{\it Claim $(i)$}. By Lemma \ref{lem:1}, there exists a symmetry $\xi\in\gg$ such that its projection $\vardbtilde{\xi}=\vardbtilde{\pi}_\star(\xi)$ is nowhere vanishing around 
$\vardbtilde{\pi}(p)$. 
 Since $\vardbtilde{\mathfrak S}$ is a line bundle, the projection $\vardbtilde{\eta}=\vardbtilde{\pi}_\star(\xi)$ of any $\eta\in\gg$ is of the form $\vardbtilde{\eta}=f\vardbtilde{\xi}$, 
for some function $f$ on
$\vardbtilde{\cU}$. Since two non-trivial analytic contact vector fields are locally proportional if and only if they are homothetic, we get $\dim\vardbtilde{\pi}_\star(\gg)=1$. The claim
$\dim\gg=1$ follows then from $(v)$ of Proposition \ref{prop:inducedcomplexstructure}.
\vskip0.15cm\par\noindent
{\it Claim $(ii)$}. In this case $\vardbtilde{\pi}_\star(\gg)$ is an intransitive Lie algebra of contact vector fields on $\vardbtilde{\cU}$,
with $2$-dimensional orbits. Hence it consists of point vector fields,
see \cite[Corollary, p. 5]{DouKom}. It is then a simple matter of jet lifting via \eqref{eq:pointvf} the list in \cite[Table 1]{GKO} of analytic finite-dimensional Lie algebras of vector fields in the real plane (under changes of
local coordinates, at generic points)  to determine the contact Lie algebras with $2$-dimensional generic orbits.

Following the numeration of  \cite[Table 1]{GKO}, these are the lifts of the Lie algebras $(10)$, $(11)$, $(12)$ with $\alpha=1$, $(22)$ with $r=1$, which all have dimension $\leq 3$, and the lifts of $(20)$, $(21)$, which can be explicitly described as follows 
(locally, at generic points):
\vskip0.1cm\par\noindent
 \begin{enumerate}
\item[$(i)$] $\displaystyle\mathbb R^r=\mathrm{span}\big(\partial_y, f_1(x)\partial_y+f_1'(x)\partial_{y_x},\ldots,f_{r-1}(x)\partial_y+f'_{r-1}(x)\partial_{y_x}\big)$, $r\geq 4$,
\item[$(ii)$] $\displaystyle\mathbb R\ltimes\mathbb R^r=\mathrm{span}\big(\partial_y, f_1(x)\partial_y+f_1'(x)\partial_{y_x},\ldots,f_{r-1}(x)\partial_y+f'_{r-1}(x)\partial_{y_x},y\partial_y+y_x\partial_{y_x}\big)$, $r\geq 3$,
 \end{enumerate}
\vskip0.1cm\par\noindent
with the functions $1,f_1,\ldots,f_{r-1}$ linearly independent.
In particular $\vardbtilde{\pi}_\star(\gg)$ is either an Abelian Lie algebra $\mathbb R^r$ or its scaling extension $\mathbb R\ltimes\mathbb R^r$, the same holds for $\gg$ by (5) of Proposition \ref{prop:inducedcomplexstructure}.

If $\gg$ is Abelian, we may apply the straightening theorem for 
a systems of $d$  commuting vector fields that are linearly independent at $p\in\cU$ to get $\partial_{u^1},\ldots,\partial_{u^d}$. The remaining vector fields are sections of $\mathfrak S=\langle\partial_{u^i}\rangle$ commuting with each $\partial_{u^i}$, for $i=1,\ldots,d$. This is the case (1).
If $\gg\cong \mathbb R\ltimes\mathbb R^r$ and the first derived subalgebra $\mathbb R^r$ includes $d$ 
vector fields as above, then $\mathbb R^r$ can be straightened as in the Abelian case. 
It is then easy to see that the scaling element has the form $\sum_{i=1}^d(u^i+g^i)\partial_{u^i}$ for 
some functions $g^i=g^i(v)$, and the change 
of coordinates $\widehat u^i=u^i+g^i$, $\widehat v=v$, leads to the isomorphic subalgebra 
$\mathrm{span}\big(\partial_{\widehat u^1},\ldots,\partial_{\widehat u^d},\xi^i_{d+1}(\widehat v)
\partial_{\widehat u^i},\ldots,\xi^i_{r}(\widehat v)\partial_{\widehat u^i},\widehat u^i\partial_{\widehat u^i}\big)$.

Finally, if $\gg\cong \mathbb R\ltimes\mathbb R^r$ and $\mathbb R^r$ has only $d-1$ vector fields that are linearly independent at $p\in\cU$, we first straighten them to get $\partial_{u^1},\ldots,\partial_{u^{d-1}}$. 
Then, in a preferred local coordinate system, the scaling element has the form 
 $$
\sum_{i=1}^{d-1}(u^i+g^i)\partial_{u^i}+h\partial_{u^d}
 $$
for some functions $g^i$ and $h$ of the coordinates $u^d$ and $v$. The function $h$ cannot be identically zero. 
Restricting to a domain $\cU$ with $h\neq0$, we may rectify the coordinates $(u^d,v)$ in such a way that $h\equiv 1$. 
The final change of coordinates $\widehat u^i=u^i+h^i$, $\widehat u^d=u^d$, $\widehat v=v$, with $h^i$ a function of $u^d$ and $v$ satisfying $\partial_{u^d}h^i=h^i-g^i$, allows us to also set $g^i=0$ for all $i=1,\ldots, d-1$.  

We obtained $\partial_{u^1},\ldots,\partial_{u^{d-1}},\sum_{i=1}^{d-1}u^i\partial_{u^i}+\partial_{u^d}$. The remaining vector fields in $\mathbb R^r$ are of the form $\sum_{i=1}^{d-1}\xi^i(u,v)\partial_{u^i}$ (since $\mathfrak S=\langle\partial_{u^i}\mid 1\leq i\leq d\rangle$ and $\mathbb R^r$ does not have $d$ vector fields linearly independent at some point), the Lie algebra relations force $\partial_{u^k}\xi^i=0$ for all $k=1,\ldots,d$.
\end{proof}

{\it We will now proceed to the proof of Theorem \ref{T1} successively in the dimension of the (generic) orbits $d<7$.
Thanks to $(iii)$ of Lemma \ref{lem:codimension}, we may assume that the bundle $TU_\cD^\cJ$ is non-trivial. In fact, if $TU_\cD^\cJ=0$, then $\gg$ acts simply transitively on the orbits and
$\dim\gg=d$.}

\subsection{Dimensions $1\leq d\leq 3$}\label{sec:3.3}

\begin{theorem}\label{thm:dim123}
If the symmetry algebra $\gg$ of a 7-dimensional 3-nondegenerate CR-hypersurface acts with generic orbits of dimension 
$d=1$ or $d=2$, then $\gg$ acts simply transitively on orbits. If $\gg$ acts with generic orbits of dimension $d=3$, 
then $\dim\gg\leq 5$.
\end{theorem}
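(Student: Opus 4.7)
The plan is to proceed by cases on the generic orbit dimension $d\in\{1,2,3\}$, exploiting the fact that the bundle $TU_\cD^\cJ=TU_\cD\cap \cJ(TU_\cD)$ is $\cJ$-invariant and therefore has even real rank. For $d=1$, the bundle $TU_\cD\subset TU$ has codimension $1$ by Lemma \ref{lem:codimension}$(i)$, so $TU_\cD=0$, hence $TU_\cD^\cJ=0$, and Lemma \ref{lem:codimension}$(iii)$ immediately yields $\dim\gg=d=1$. For $d=2$ the bundle $TU_\cD$ has rank $1$, so by parity $TU_\cD^\cJ=0$ and the same lemma gives $\dim\gg=2$; in both cases $\gg$ acts simply transitively.

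The case $d=3$ is where the real work lies. Here $TU_\cD$ has rank $2$ and $TU_\cD^\cJ$ is either $0$ or $2$ by parity. The possibility $TU_\cD^\cJ=0$ is again dispatched by Lemma \ref{lem:codimension}$(iii)$, giving $\dim\gg=3$. The remaining case $TU_\cD=TU_\cD^\cJ$ $\cJ$-invariant of rank $2$ I would subdivide according to the rank of $TU_\cK=TU\cap \cK\subset TU_\cD$. Rank $2$ cannot occur: it would mean $TU_\cD=TU_\cK\subset \cK$ has codimension $1$ in $TU$, whence Proposition \ref{prop:smallcodimension}$(i)$ would give $\dim\gg=1$, contradicting $\dim\gg\geq d=3$. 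For rank $1$, I would linearize any $\xi\in\gg(p)^0$: by Lemma \ref{lem:codimension}$(ii)$ the image of $\Xi|_\cD$ lies in $TU_\cD^\cJ=TU_\cD$, and its intersection with $\cK|_p$ lies in the real line $TU_\cK|_p$, which is \emph{not} $\cJ$-invariant; since $\Xi$ is $\cJ$-linear, the image $\Xi(\cK|_p)$ would be $\cJ$-invariant while sitting in a non-$\cJ$-invariant line, forcing $\Xi|_{\cK|_p}=0$. A twin argument on the complex rank-$1$ quotient $\cD|_p/\cK|_p$, whose image lands in the real line $TU_\cD|_p/TU_\cK|_p$, yields $\Xi=0$ on this quotient; hence $\gg_0(p)=0$, and Lemma \ref{lem:idontknow} delivers $\dim\gg=3$.

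The most delicate subcase is $TU_\cK=0$. Then $TU$ is transverse to $\cK$ and $\cD=\cK\oplus TU_\cD$ is a $\gg$-invariant splitting into $\cJ$-invariant subbundles, so the orbit $\cO^G_p$ carries an induced $3$-dimensional Levi-nondegenerate CR hypersurface structure with CR distribution $TU_\cD|_{\cO^G_p}$ and complex structure $\cJ|_{TU_\cD}$ (the Levi form is non-degenerate precisely because $TU\cap \cK=0$). I would then upgrade the embedding $\gg\hookrightarrow\inf(\vardbtilde\cU,\vardbtilde\cD)$ from Proposition \ref{prop:inducedcomplexstructure}$(v)$ to an embedding of $\gg$ into the CR-symmetry algebra of $\cO^G_p$: injectivity of the restriction to the orbit follows because $\vardbtilde\pi|_{\cO^G_p}$ is a local diffeomorphism onto an open subset of $\vardbtilde\cU$ (since $TU\cap \cK=0$), so a symmetry $\xi\in\gg$ vanishing on $\cO^G_p$ would project to a contact vector field vanishing on an open subset of $\vardbtilde\cU$, hence vanishing identically by real-analyticity, forcing $\xi=0$ via Proposition \ref{prop:inducedcomplexstructure}$(v)$. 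The $\gg$-action on the orbit is transitive, so $\gg$ embeds as a transitive subalgebra of the CR-symmetry algebra of a $3$-dimensional Levi-nondegenerate CR hypersurface, which is at most $8$-dimensional and equals $\mathfrak{su}(2,1)$ only on the spherical model. Every proper subalgebra of $\mathfrak{su}(2,1)$ sits in one of its maximal proper subalgebras, of dimensions $5$ (the minimal parabolic) or $4$ ($\mathfrak s(\mathfrak u(2)\oplus \mathfrak u(1))$ and $\mathfrak u(1,1)$), so a proper subalgebra has dimension at most $5$; and $\dim\gg=8$ would force local CR-equivalence with the unique maximally symmetric model $\mathcal R^7$ of \cite{KS1}, which is locally homogeneous, contradicting intransitivity. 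The main obstacle I anticipate is the clean verification of the injectivity of the restriction map and the identification of the inherited structure on the orbit as an honest $3$-dimensional Levi-nondegenerate CR hypersurface.
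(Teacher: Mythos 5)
Your treatment of $d=1,2$ and of the reduction for $d=3$ to the subcase $TU_\cK=0$ matches the paper's argument, and your injectivity-of-restriction argument via $\vardbtilde{\pi}|_{\cO^G_p}$ being a local diffeomorphism is a legitimate variant of the paper's coordinate argument ($\xi^i=\xi^i(u)$). However, there is a genuine gap at the decisive step: your exclusion of $\dim\gg=8$. You argue that $\dim\gg=8$ ``would force local CR-equivalence with the unique maximally symmetric model $\mathcal R^7$ of \cite{KS1}, which is locally homogeneous, contradicting intransitivity.'' The uniqueness statement in \cite{KS1} is established only for \emph{locally homogeneous} $3$-nondegenerate structures; the assertion that \emph{any} such structure with $\dim\gg=8$ is locally $\mathcal R^7$ is precisely part of Theorem 1 of \cite{KS1}, whose proof is completed only by the present paper's Theorem \ref{T1} --- the statement you are proving. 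So this step is circular. (It is also dubious on its face: the embedding you constructed lands $\gg$ in $\mathfrak{su}(1,2)$, whereas the symmetry algebra of $\mathcal R^7$ is $\mathfrak{gl}_2(\R)\ltimes S^3\R^2$, so nothing identifies your hypothetical structure with $\mathcal R^7$.) The paper closes this case differently: if $\gg\cong\mathfrak{su}(1,2)$, then on each orbit there is a \emph{unique up to sign} invariant contact CR structure; since the symmetries are $v$-independent, the structures on all orbits coincide, hence $\cJ$ descends to the leaf space $\vardbtilde{\cM}$ of $\cK$, contradicting $(iv)$ of Proposition \ref{prop:inducedcomplexstructure} (which encodes $3$-nondegeneracy). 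Some argument of this kind is needed; your proposal does not supply one.

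A minor point: in the subcase $TU_\cD=TU_\cD^\cJ$, the bundle $TU_\cK=TU_\cD\cap\cK$ is the intersection of two $\cJ$-stable bundles, hence $\cJ$-stable and of even rank, so your ``$\opp{rk}(TU_\cK)=1$'' subcase is vacuous --- indeed your own remark that $TU_\cK|_p$ is ``not $\cJ$-invariant'' contradicts this observation. The linearization argument you give there is therefore unnecessary (and its premise inconsistent), though this does no harm to correctness since the case is empty.
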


If $d=1,2$, then $TU_\cD$ has at most rank $1$ by $(i)$ of Lemma \ref{lem:codimension}, hence $TU_\cD^\cJ=0$. We now turn to the case $d=3$.

If $d=3$, the bundle $TU_\cD$ has rank $2$, so $TU_\cD^\cJ$ is either trivial or equal to $TU_\cD$. Thanks to our remark above, we may assume that $TU_\cD=TU_\cD^\cJ$, i.e., the bundle $TU_\cD$ is $\cJ$-stable. 
In particular $TU_\cK$ is $\cJ$-stable too.
If $TU_\cK=TU_\cD$, then $TU_\cK$ has codimension $1$ in $TU$ and $\dim\gg=1$ by $(i)$ of Proposition \ref{prop:smallcodimension}, which is a contradiction. Therefore $TU_\cK=0$, i.e., $TU$ and $\cK$ are complementary integrable distributions. Thus each $3$-dimensional orbit $\mathcal O^G_q$, $q\in\cU$, carries a natural structure of a $3$-dimensional contact CR manifold $(\mathcal O^G_q, TU_\cD,\cJ|_{TU_\cD})$.

Since $T\cU=TU\oplus\cK$ as complementary integrable distributions, we may take local rectifying coordinates on
 $
\cU\cong U\times V
 $,
where $U$ and $V$ have the local coordinates $(u^i)_{i=1}^3$ and $(v^j)_{j=1}^{4}$, respectively,
and $TU=\langle\partial_{u^i}\rangle$, $\cK=\langle\partial_{v^j}\rangle$. (See, e.g., \cite[Lemma, p. 182]{KN}.) 
An infinitesimal CR-symmetry
 $$
\xi=\sum_{i=1}^3 \xi^i(u,v)\p_{u^i}
 $$
preserves $\cK$, whence $\xi^i=\xi^i(u)$. Then $\gg$ is {\it effectively} represented on each orbit 
$\mathcal O^G_q$ as a Lie algebra of infinitesimal symmetries of a $3$-dimensional contact CR structure.

It is well-known that any maximally symmetric such structure is locally spherical, i.e., locally isomorphic 
to the $3$-dimensional CR sphere seen as the flag variety $SU(1,2)/B$, with $B$ the Borel subgroup.  
The submaximal CR symmetry dimension, i.e.\ the maximal dimension among all non-spherical Levi-nondegenerate
CR-manifolds, is $3$ by \cite{C,Kru2016}. 

Thus we restrict to the case where every orbit $(\mathcal O^G_q, TU_\cD,\cJ|_{TU_\cD})$, $q\in\cU$,
is locally spherical and $\gg\subset\mathfrak{su}(1,2)$.
The maximal dimension of a proper subalgebra of $\mathfrak{su}(1,2)$ is 5, attained only on Borel subalgebras, 
corresponding to the claim $\dim\gg\leq5$.
Assume now that $\gg\cong\mathfrak{su}(1,2)$, so $\dim(\gg|_{\mathcal O^G_q})=\dim\gg=8$ for all $q\in\cU$. 
We claim that this contradicts $3$-nondegeneracy.

First, we recall that the Tanaka-Weisfeiler filtration of the Lie algebra of infinitesimal CR-symmetries 
$\gg\cong\gg|_{\mathcal O^G_q}\cong\mathfrak{su}(1,2)$ of the $3$-dimensional contact CR manifold $(\mathcal O^G_q, TU_\cD,\cJ|_{TU_\cD})$ is the natural filtration associated with the $\mathbb Z$-grading 
corresponding to the Borel subalgebra $\mathfrak b=Lie(B)$ of $\mathfrak{su}(1,2)$: 
$$\gg=\gg_{-2}\oplus\cdots\oplus\gg_{+2}\,,$$ where $\gg_{-2}=\mathbb R$, $\gg_{-1}=\mathbb C$, $\gg_{0}=
\mathfrak{gl}_1(\mathbb C)$, $\gg_{+i}=\gg_{-i}^*$ for $i=1,2$, and $\gb=\gg_{\geq 0}=\gg_0\oplus\gg_1\oplus\gg_2$. 
The tangent space of the orbit at a preferred point can be identified with
$\gm=\gg/\mathfrak b\cong\gg_{-2}\oplus\gg_{-1}$ and invariant distributions correspond to $\mathfrak b$-stable subspaces 
of $\gm$. Since $\mathfrak{gl}_1(\mathbb C)$ acts with different spectrum on $\gg_{-2}$ and $\gg_{-1}$, there is in fact 
a unique invariant distribution of rank $2$: the one corresponding to $\gg_{-1}\subset\gm$. Similarly, 
invariant complex structures on the distribution correspond to $\mathfrak b$-invariant complex structures on $\gg_{-1}$, 
there is only one of them up to sign. In summary: there is a unique (up to sign) contact CR structure on 
$\mathcal O^G_q$ preserved by $\gg|_{\mathcal O^G_q}$. 

As we have already seen, symmetries in $\gg$ have the $v$-independent local form
$$\xi=\sum_{i=1}^3 \xi^i(u)\p_{u^i}\,,$$
so that the Lie algebra $\gg|_{\mathcal O^G_q}$ is not only abstractly isomorphic to $\gg|_{\mathcal O^G_p}$ for all $q\in\cU$ but actually {\it equal}, where we identify $\mathcal O^G_q\cong U\times\{v_q\}$ with $\mathcal O^G_p\cong U\times\{v_p\}$ in the obvious way.
The contact CR structure on $\mathcal O^G_q$ is then equal to that on $\mathcal O^G_p$ up to sign, but since $\cJ$ depends smoothly on $v$, they are equal. Hence $\cJ$ is projectable to $\vardbtilde{D}$, 
a contradiction by $(iv)$ of Proposition \ref{prop:inducedcomplexstructure}.

\subsection{Dimension $d=4$}\label{sec:3.4}

\begin{theorem}\label{thm:dim4}
If the symmetry algebra $\gg$ of a 7-dimensional 3-nondegenerate CR-hypersurface acts with generic orbits of dimension $d=4$, then $\dim\gg\leq 5$.
\end{theorem}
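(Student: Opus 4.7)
The plan is to bound $\dim\gg$ by a case analysis on the position of the rank-$2$ bundle $TU_\cD^\cJ$ inside the Freeman filtration at $p$, exploiting the image constraint $\opp{Im}(\Xi_0)\subset TU_\cD^\cJ$ from Lemma~\ref{lem:codimension}(ii).

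With $d=4$, Lemma~\ref{lem:codimension}(i) gives $\rk TU_\cD=3$, so the $\cJ$-stable subbundle $TU_\cD^\cJ$ has rank $0$ or $2$. If it is zero, Lemma~\ref{lem:codimension}(iii) already delivers $\dim\gg=4$, so I assume it is a complex line in $\cD$. Working in the normalized frame $(X_{10},Y_{10},Z_{10})$ of Lemma~\ref{lem:simple-but-useful}, preservation of the Freeman filtration $\cD_{10}\supset\cK_{10}\supset\cL_{10}$ makes the matrix of any $\Xi_0\in\gg_0(p)$ lower triangular, while preservation of the higher Levi forms via the normalizations \eqref{eq:norm-conditions} and the brackets of Lemma~\ref{lem:preliminarybrackets} pins the diagonal to $(a,a-\bar a,a-2\bar a)$. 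I then split into three cases according to the position of $TU_\cD^\cJ$ at $p$: (a) transverse to $\cK|_p$; (b) contained in $\cK|_p$ but distinct from $\cL|_p$; (c) equal to $\cL|_p$.

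In case (a) the requirement that $\Xi_0$ land in a complex line transverse to $\cK$ forces the three off-diagonal entries, and then successively the three diagonal entries, to vanish, so $\gg_0(p)=0$ and Lemma~\ref{lem:idontknow} delivers $\dim\gg=4$. In cases (b) and (c) one has $TU_\cD^\cJ\subset TU_\cK$, hence $\rk TU_\cK\in\{2,3\}$: if $3$, Proposition~\ref{prop:smallcodimension}(i) gives $\dim\gg=1$; if $2$, then $TU_\cK=TU_\cD^\cJ$ is a $\cJ$-invariant integrable complex line in $\cK$, and Proposition~\ref{prop:smallcodimension}(ii) either yields $\dim\gg\leq 3$ or places $\gg$ inside the coordinate models $\mathbb R^r$ or $\mathbb R\ltimes\mathbb R^r$ in rectifying coordinates $(u^1,\ldots,u^4,v^1,v^2,v^3)$.

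The main obstacle is therefore to bound $r$ in the two remaining coordinate models. My approach is to impose the CR-data that the contact projection $\vardbtilde\pi$ forgets, namely preservation of $\cJ$ on $\cD$ together with the subfiltration $\cL\subset\cK$ and the higher Levi forms $\clL_2,\clL_3$. Applied to the ansatz $\sum_i\xi^i_j(v)\p_{u^i}$, this compatibility becomes a closed linear ODE system for the coefficient functions $\xi^i_j$ in the transverse variable $v$, with coefficient matrix read off from the structure equations of an adapted frame on $\cM$. A rank count on the resulting system, leveraging the fact that in cases (b)-(c) the symbol $\Xi_0$ depends on a single complex parameter (the entry directly below $a$ in the triangular matrix), should force the solution space to have dimension at most $5$ in the abelian case and at most $4$ in the scaling extension, giving $\dim\gg\leq 5$. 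Equivalently at the filtered level, I would compute the Tanaka-Weisfeiler prolongation of the symbol data with the image constraint $\opp{Im}(\Xi_0)\subset TU_\cD^\cJ$ and verify that all higher prolongations $\gg_i(p)$ for $i\geq 1$ vanish in cases (b)-(c), so that $\dim\gg(p)^0=\dim\gg_0(p)\leq 1$. I anticipate case (c), where $TU_\cD^\cJ=\cL|_p$ and the matrix constraint on $\Xi_0$ is weakest, to be the most delicate: here the required rigidity must come from the full $3$-nondegeneracy via the bracket data of Lemma~\ref{lem:preliminarybrackets}, in the Cartan-style spirit of adapted-frame analysis announced in the Introduction.
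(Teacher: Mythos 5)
Your reduction of the case analysis is sound and matches the paper's skeleton: $\rk TU_\cD=3$ forces $\rk TU_\cD^\cJ\in\{0,2\}$, and since $TU_\cD^\cJ\cap\cK$ is $\cJ$-stable it is either $0$ (your case (a), equivalently $\rk TU_\cK=1$) or all of $TU_\cD^\cJ$ (your cases (b)--(c), which after excluding $\rk TU_\cK=3$ via Proposition \ref{prop:smallcodimension}(i) give $TU_\cK=TU_\cD^\cJ$ of rank $2$ and place $\gg$ in the coordinate models of Proposition \ref{prop:smallcodimension}(ii)). Your case (a) argument is correct and in fact sharper than the paper's: choosing $X_{10}$ inside $(TU_\cD^\cJ)_{10}$, the constraint $\Xi_0\cK\subset TU_\cD^\cJ\cap\cK=0$ kills the last two diagonal entries, the weights $(a,a-\bar a,a-2\bar a)$ then force $a=0$, and $\Xi_0 X_{10}\in\cK_{10}\cap(TU_\cD^\cJ)_{10}=0$ gives $\gg_0(p)=0$ and $\dim\gg=4$ (the paper instead runs two subcases on $\rk TU_\cL$ and only concludes $\dim\gg\le5$ there).

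The genuine gap is in cases (b)--(c), which is where all the work lies. First, your symbol count is wrong in case (c): when $TU_\cD^\cJ=\cL$, the constraint $\opp{Im}(\Xi_0)\subset\cL$ together with the diagonal weights forces the entry ``directly below $a$'' to vanish and leaves \emph{two} free complex entries ($\Xi_0X_{10},\Xi_0Y_{10}\in\cL_{10}$ arbitrary), while in case (b) one complex entry survives; the derivation identities for $\clL_2,\clL_3$ impose no further conditions because all the relevant brackets land in the quotiented subbundles by Lemma \ref{lem:preliminarybrackets}. Hence the proposed fallback ``compute the Tanaka--Weisfeiler prolongation and verify $\gg_i(p)=0$ for $i\ge1$, so $\dim\gg_0(p)\le1$'' cannot work: the symbol-level bound is $\dim\gg_0(p)\le2$ (case (b)) or $\le4$ (case (c)), giving at best $\dim\gg\le6$ or $\le8$ even with vanishing higher prolongations. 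The required rigidity is not algebraic at the symbol level; it comes from integrating the structure equations. This is precisely what the paper does: it builds the adapted frame $Y_1,\dots,Y_6$, normalizes coordinates using integrability of $\cK$ and $TU+\cK$, expresses $\cJ$ in that frame, and derives the ODE system \eqref{eq:19} with the constraint matrix \eqref{eq:20}, whose determinant $\tfrac1{\gamma\gamma_4}\bigl((\dot a_1)^2+(\dot a_1\delta+\dot a_2\gamma)^2\bigr)$ governs the two terminal subcases; a separate functional argument (via the $\gg$-stable line $\Pi\cap TU_\cK$) handles the scaling extension $\xi_{r+1}=\partial_{u^4}+\sum u^i\partial_{u^i}$, which your proposal does not mention at all. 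Your text only asserts that such an analysis ``should force'' the bound, so the core of the theorem remains unproved.
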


Here $\opp{rk} (TU_\cD)=3$ by $(i)$ of Lemma \ref{lem:codimension}, hence $\opp{rk} (TU_\cD^\cJ)$ is either $0$ or $2$. 
In the first case, we have $\dim\gg=d=4$  by $(iii)$ of Lemma \ref{lem:codimension}, so we may assume $\opp{rk} (TU_\cD^\cJ)=2$ from now on.
Thus, we have a flag of proper inclusions 
$
TU_\cD^\cJ\subset TU_\cD\subset TU
$
of vector bundles of rank $2$, $3$, $4$, respectively. We note that $\opp{rk}(TU_\cK)\geq \opp{rk}(TU_\cD)+\opp{rk}(\cK)-\opp{rk}(\cD)=1$, and turn to consider the cases $\opp{rk}(TU_\cK)=3,2,1$ separately.
We immediately note that if $\opp{rk}(TU_\cK)=3$, then $\dim\gg=1$ by $(i)$ of Proposition \ref{prop:smallcodimension}, which is a contradiction. 
\subsubsection{The case $\opp{rk}(TU_\cK)=2$}
We may fix a basis $(Y_1,Y_2)$ of local sections  of $TU_\cK$
and extend it to a basis $(Y_1,Y_2,Y_3)$ of local sections  of $TU_\cD$. Since $TU_\cK$ is integrable and $\cK$ is the Cauchy characteristic space of $\cD$, we see that 
$TU_\cD$ is integrable too. Moreover
$$TU^\cJ_\cK=TU_\cD^\cJ\cap \cK=TU_\cD^\cJ\cap TU_\cK$$ 
implies that $\opp{rk}(TU^\cJ_\cK)\geq \opp{rk}(TU_\cD^\cJ)+\opp{rk}(TU_\cK)-\opp{rk}(TU_\cD)=1$, so that $TU_\cK=TU_\cK^\cJ=TU^\cJ_\cD$.

If $\dim\gg\geq 4$, then $\gg$ is isomorphic to the Abelian Lie algebra 
\begin{equation*}
\label{eq:abelian4}
\mathbb R^r=\mathrm{span}\big(\xi_a=\partial_{u^a}\;(1\leq a\leq 4),\;\xi_{b}=\sum_{i=1}^{4}\xi^i_{b}(v)\partial_{u^i}\;(5\leq b\leq r)\big), 
\end{equation*}
or its scaling extension
\begin{equation*}
\displaystyle
\label{eq:scalingextension}
\mathbb R\ltimes\mathbb R^r=
		\begin{cases}
		\displaystyle
		\mathrm{span}\big(\xi_a=\partial_{u^a}\;(1\leq a \leq 4),\;\xi_{b}=\sum_{i=1}^{4}\xi^i_{b}(v)\partial_{u^i}\;(5\leq b\leq r),\;
		\xi_{r+1}=\sum_{i=1}^{4} u^i\partial_{u^i}\big)
		\\
		\text{or}\\
		\displaystyle
		\mathrm{span}\big(\xi_a=\partial_{u^a}\;(1\leq a\leq 3),\;\xi_b=\sum_{i=1}^{3}\xi^i_{b}(v)\partial_{u^i}\;(4\leq b\leq r),\;
		\xi_{r+1}=\partial_{u^4}+\sum_{i=1}^{3}u^i\partial_{u^i}\big)
		\end{cases}
		\end{equation*}
thanks to $(ii)$ of Proposition \ref{prop:smallcodimension}. 

\medskip

We first consider the cases where $\gg$
includes the Abelian Lie algebra $\mathbb R^4=\mathrm{span}\big(\partial_{u^1},\ldots,\partial_{u^4}\big)$ that acts simply transitively on the orbits. Now $TU=\langle\partial_{u^i}\rangle$ and, up to a permutation of the coordinates $u^{1},\ldots,u^4$, we may assume that $TU_\cK$ is generated by 
$
Y_1\equiv\partial_{u^1}\!\!\mod\langle\partial_{u^3},\partial_{u^4}\rangle$ and
$Y_2\equiv\partial_{u^2}\!\!\mod\langle\partial_{u^3},\partial_{u^4}\rangle$,
while $TU_\cD$ is generated by $Y_1, Y_2$ together with $Y_3\equiv\partial_{u^3}\!\!\mod\langle\partial_{u^4}\rangle$. 
It is convenient to explicitly write them as follows:
\begin{equation}
\label{eq:basisI}
\begin{aligned}
Y_1&=\partial_{u^1}+a_1Y_3+\alpha_1\partial_{u^4}\,,\\
Y_2&=\partial_{u^2}+a_2Y_3+\alpha_2\partial_{u^4}\,,\\
Y_3&=\partial_{u^3}+\alpha_3\partial_{u^4}\,,
\end{aligned}
\end{equation}
for some functions $a_1,a_2,\alpha_1,\alpha_2,\alpha_3$. Up to a permutation of the coordinates $v^{1},v^2,v^3$, the distribution $\cK$ is then generated by $Y_1,Y_2$ and further two vector fields 
\begin{equation}
\label{eq:basisII}
\begin{aligned}
Y_4&=\partial_{v^1}+a_4Y_3+\alpha_4\partial_{u^4}+\beta_4\partial_{v^3}\,,\\
Y_5&=\partial_{v^2}+a_5Y_3+\alpha_5\partial_{u^4}+\beta_5\partial_{v^3}\,,\\
\end{aligned}
\end{equation}
for some functions $a_4,a_5,\alpha_4,\alpha_5,\beta_4,\beta_5$.
Note that all the  functions appearing in \eqref{eq:basisI}--\eqref{eq:basisII} depend only on the coordinates $v$, since the Abelian algebra $\mathbb R^4$ preserves $TU_\cK$, $TU_\cD$ and $\cK$. 
Furthermore the distribution $\cK$ is integrable and $\mathbb R^4$-stable, therefore $TU+\cK$ is integrable too and there exists a change of coordinates
$\widehat u=u$, $\widehat v=\varphi(v)$ allowing us to set $\beta_4=\beta_5=0$. The distribution $\cD$ is then generated by $Y_1,\ldots,Y_5$ and an additional vector field 
\begin{equation}
\label{eq:basisIII}
Y_6=\partial_{v^3}+\alpha_6\partial_{u^4}\,,
\end{equation}
for, again, a function $\alpha_6$ that depends only on the coordinates $v$.

Now, the Lie brackets 
$[Y_4,Y_3]
=(\partial_{v^1}\alpha_3)\partial_{u^4}$ and 
$[Y_5,Y_3]
=(\partial_{v^2}\alpha_3)\partial_{u^4}$
are in $\cD$, since $\cK$ is the characteristic space of $\cD$, so they vanish and 
$\alpha_3=\alpha_3(v^3)$. Note that $\alpha_3$ cannot be constant, otherwise
$Y_3$ would be a non-trivial infinitesimal CR-symmetry everywhere tangent to $\cD$ (which is not possible by $3$-nondegeneracy).
We compute
$
[Y_{k+3},Y_l]=(\partial_{v^k}a_l)Y_3+(\partial_{v^k}\alpha_l)\partial_{u^4}
$
for $k,l=1,2$ and infer that all the functions in \eqref{eq:basisI} depend only on $v^3$, since $\cK$ is integrable. 

We established that all the Lie brackets between vector fields in \eqref{eq:basisI}-\eqref{eq:basisII} vanish, except possibly
$
[Y_4,Y_5]=\big((\partial_{v^1}a_5)-(\partial_{v^2}a_4)\big)Y_3
+\big((\partial_{v^1}\alpha_5)
-(\partial_{v^2}\alpha_4)\big)\partial_{u^4}
$. However, this Lie bracket has to be in $\cK$, so it vanishes as well and
\begin{align}
\label{eq:s=4eq7}
\partial_{v^1}a_5&=\partial_{v^2}a_4\Rightarrow \,a_4=\partial_{v^1}a,\; a_5=\partial_{v^2}a\,,\\
\label{eq:s=4eq8}
\partial_{v^1}\alpha_5&=\partial_{v^2}\alpha_4\Rightarrow \alpha_4=\partial_{v^1}\alpha, \alpha_5=\partial_{v^2}\alpha\,,
\end{align}
for some functions $a=a(v)$ and $\alpha=\alpha(v)$. To conclude the analysis of the Lie brackets, we need to consider those involving \eqref{eq:basisIII}:
\begin{equation*}
\label{eq:remainingbrackets}
\begin{aligned}
{}[Y_6,Y_1]&=\dot{a}_1Y_3+(a_1\dot{\alpha}_3+\dot{\alpha}_1)\partial_{u^4}\,,\\
[Y_6,Y_2]&=\dot{a}_2Y_3+(a_2\dot{\alpha}_3+\dot{\alpha}_2)\partial_{u^4}\,,\\
[Y_6,Y_3]&=\dot{\alpha_3}\partial_{u^4}\,,\\
[Y_6,Y_4]&=\dot{a}_4Y_3+\big(a_4\dot{\alpha_3}+\dot{\alpha_4}-(\partial_{v^1}\alpha_6)\big)\partial_{u^4}\,,\\
[Y_6,Y_5]&=\dot{a}_5Y_3+\big(a_5\dot{\alpha_3}+\dot{\alpha_5}-(\partial_{v^2}\alpha_6)\big)\partial_{u^4}\,,\\
\end{aligned}
\end{equation*}
where the dot refers to the differentiation w.r.t.\ the coordinate $v^3$. All the above Lie brackets except $[Y_6,Y_3]$ have to be in $\cD$, so 
\begin{align}
\label{eq:s=4eq9}
a_1\dot{\alpha}_3+\dot{\alpha}_1=0\,,\\
\label{eq:s=4eq10}
a_2\dot{\alpha}_3+\dot{\alpha}_2=0\,,\\
\label{eq:s=4eq11}
a_4\dot{\alpha_3}+\dot{\alpha_4}=\partial_{v^1}\alpha_6\,,\\
\label{eq:s=4eq12}
a_5\dot{\alpha_3}+\dot{\alpha_5}=\partial_{v^2}\alpha_6\,,
\end{align}
and, combining equations \eqref{eq:s=4eq7}-\eqref{eq:s=4eq8} together with \eqref{eq:s=4eq11}-\eqref{eq:s=4eq12}, we arrive at
\begin{align*}
\partial_{v^1}\big(a\dot{\alpha_3}+\dot{\alpha}-\alpha_6\big)=0\,,\\
\partial_{v^2}\big(a\dot{\alpha_3}+\dot{\alpha}-\alpha_6\big)=0\,,
\end{align*}
namely $\alpha_6=a\dot{\alpha_3}+\dot{\alpha}+\beta$ for some function $\beta=\beta(v^3)$ that depends only on $v^3$. The change of coordinates $\widehat u^i=u^i$ for $i=1,2$, $\widehat v^j=v^j$ for $j=1,2,3$ and $\widehat u^3=u^3-a$, $\widehat u^4=u^4-\alpha-\alpha_3a$ (recall that $a=a(v)$, $\alpha=\alpha(v)$  and $\alpha_3=\alpha_3(v^3)$), finally allows us to set to zero all the functions appearing in \eqref{eq:basisII}. We stress that
the infinitesimal CR-symmetries in $\gg$ are not modified by this change of coordinates, except for the scaling symmetry $\xi_{r+1}$ of $\gg=\mathbb R\ltimes\mathbb R^r$.

In summary:
\begin{equation}\label{eq:finalbasis}
\begin{aligned}
Y_1&=\partial_{u^1}+a_1Y_3+\alpha_1\partial_{u^4}\,,\qquad Y_2=\partial_{u^2}+a_2Y_3+\alpha_2\partial_{u^4}\,,\\
Y_3&=\partial_{u^3}+\alpha_3\partial_{u^4}\,,\qquad\qquad\quad\, Y_4=\partial_{v^1}\,,\\
Y_5&=\partial_{v^2}\,,\qquad\qquad\qquad\qquad\;\;\;
Y_6=\partial_{v^3}+\alpha_6\partial_{u^4}\,,
\end{aligned}
\end{equation}
for functions $a_1,a_2,\alpha_1,\alpha_2,\alpha_3$ of $v^3$ and $\alpha_6$ of $v$. The identities
\eqref{eq:s=4eq9}--\eqref{eq:s=4eq10} are still in force, while \eqref{eq:s=4eq11}--\eqref{eq:s=4eq12} now say that also $\alpha_6$ depends only on $v^3$. The Lie algebra $\gg$ of infinitesimal CR-symmetries is unchanged, with the exception of the scaling symmetry of $\gg=\mathbb R\ltimes\mathbb R^r$, which becomes a vector field of the form $\xi_{r+1}=\sum_{i=1}^{2} u^i\partial_{u^i}+\sum_{j=3}^4(u^j+f^j(v))\partial_{u^j}$.

This concludes our analysis of the structure equations of vector fields \eqref{eq:finalbasis} and symmetries. To move on, we shall consider the complex structure $\cJ$ of the CR manifold.
Expressing it w.r.t.\ the frame \eqref{eq:finalbasis} on $\cD$, we see that its coefficients depend only on the coordinates $v$, since each vector field in \eqref{eq:finalbasis} commute with the symmetries in $\mathbb R^4=\mathrm{span}\big(\partial_{u^1},\ldots,\partial_{u^4}\big)$. Moreover, since $\cJ$ preserves $TU^\cJ_\cD=TU_\cK$ and $\cK$, we have
\begin{equation*}
\begin{aligned}
\cJ Y_1&=\delta Y_1+\gamma Y_2\,,\\
\cJ Y_5&=\gamma_1 Y_1+\gamma_2 Y_2+\gamma_4 Y_4+\gamma_5Y_5\,,\phantom{\frac{1+\delta^2}{\gamma}}\\
\cJ Y_3&=\delta_1Y_1+\delta_2Y_2+\delta_3Y_3+\delta_{4}Y_4+\delta_5Y_5+\delta_6Y_6\,,
\end{aligned}
\end{equation*}
with $\gamma,\gamma_4,\delta_6$ nowhere vanishing. Now, we write any infinitesimal CR-symmetry 
$\xi\in\mathbb R^r\subset\gg$ as
$\xi=\sum_{i=1}^3 c^i(v)Y_i+c^4(v)\partial_{u^4}$
and we note that the functions $c^3$ and $c^4$ depend on $v^3$ only (consider the condition that the Lie brackets of $\xi$ 
with $Y_4$ and $Y_5$ are both in $\cK$). We compute
\begin{equation}
\label{eq:17}
\begin{aligned}
0&=(L_{\xi}\cJ)(Y_5)=[\xi,\cJ Y_5]-\cJ[\xi,Y_5]\\
&=-\gamma_4\sum_{i=1}^2(\partial_{v^1} c^i) Y_i-\gamma_5\sum_{i=1}^2(\partial_{v^2} c^i) Y_i+
\sum_{i=1}^2(\partial_{v^2} c^i) \cJ Y_i
\\
&=\big(-\gamma_4(\partial_{v^1} c^1)+(\delta-\gamma_5)(\partial_{v^2} c^1) 
-\frac{1+\delta^2}{\gamma}(\partial_{v^2} c^2) 
\big)   Y_1\\
&+
\big(-\gamma_4(\partial_{v^1} c^2)
+\gamma(\partial_{v^2} c^1)-(\delta+\gamma_5)(\partial_{v^2} c^2)
\big)Y_2\,,
\end{aligned}
\end{equation}
where we used that $\cJ Y_2=-\frac{1+\delta^2}{\gamma}Y_1-\delta Y_2$, and
 \begin{equation}
\begin{aligned}
0&=(L_{\xi}\cJ)(Y_3)=[\xi,\cJ Y_3]=\sum_{k=4}^6\delta_k[\xi,Y_k]=
\sum_{k=1}^3\delta_{3+k}[\xi,\partial_{v^k}]\\
&=-\big(\delta_4(\partial_{v^1}c^1)+\delta_5(\partial_{v^2}c^1)+\delta_6\dot{c}^1\big) Y_1-\big(\delta_4(\partial_{v^1}c^2)+\delta_5(\partial_{v^2}c^2)+\delta_6\dot{c}^2\big) Y_2\\
&\;\;\,\,\,-\delta_6\big(\dot{c}^3+\sum_{i=1}^2c^i\dot{a_i}\big)Y_3-\delta_6\big(\dot{c}^4+\dot{\alpha_3}c^3\big)\partial_{u^4}\,,
\end{aligned}
 \end{equation}
where we finally used the identities \eqref{eq:s=4eq9}-\eqref{eq:s=4eq10}. This leads to the following system of four differential equations on four unknowns
\begin{equation}
\label{eq:19}
\begin{aligned}
\dot{c}^1&=-\frac{\delta_4}{\delta_6}(\partial_{v^1}c^1)-\frac{\delta_5}{\delta_6}(\partial_{v^2}c^1)\,,\\
\dot{c}^2&=-\frac{\delta_4}{\delta_6}(\partial_{v^1}c^2)-\frac{\delta_5}{\delta_6}(\partial_{v^2}c^2)\,,\\
\dot{c}^3&=-\sum_{i=1}^2c^i\dot{a_i}\,,\\
\dot{c}^4&=-\dot{\alpha_3}c^3\,,
\end{aligned}
\end{equation}
together with the additional constraints on the initial data given by
 \begin{equation}\label{eq:20}
\begin{pmatrix}
\dot{a}_1 & \dot{a}_2 & 0 & 0 \\
0 &0 & \dot{a}_1 & \dot{a}_2\\
-1 & 0 & \frac{\delta-\gamma_5}{\gamma_4} & -\frac{1+\delta^2}{\gamma\gamma_4}\\
0 & -1 & \frac{\gamma}{\gamma_4} & -\frac{\delta+\gamma_5}{\gamma_4}
\end{pmatrix}
\begin{pmatrix}
\partial_{v^1}c^1\\ \partial_{v^1}c^2\\ \partial_{v^2}c^1\\ \partial_{v^2}c^2
\end{pmatrix}
=
\begin{pmatrix}
0\\ 0\\ 0\\ 0
\end{pmatrix}\,.
 \end{equation}
These constraints are obtained from the vanishing of \eqref{eq:17} and by deriving  the next-to-last equation of \eqref{eq:19} w.r.t.\ $v^1$ and $v^2$. The determinant of the square matrix in \eqref{eq:20} is given by
$\frac{1}{\gamma\gamma_4}\big((\dot{a}_1)^2+(\dot{a}_1\delta+\dot{a}_2\gamma)^2\big)$ and it vanishes if and only if 
$\dot{a}_1=\dot{a}_2=0$.

We assume first that this determinant is non-zero on an open subset of $\cU$ and consider
$\xi\in\mathbb R^r\subset\gg$ that is vanishing at a point $p\in\cU$. Then
\eqref{eq:19} becomes a system of ODEs on the functions $c^k=c^k(v^3)$ for $k=1,\ldots,4$,
whose unique solution is $c^k=0$, i.e., $\xi=0$. If the determinant vanishes on an open subset, then $a_1$ and $a_2$ are constants, $c^3=c^4=0$ by \eqref{eq:19} and $\xi$ is a section of $\cK$. Again, this implies $\xi=0$ by $3$-nondegeneracy.
In summary $\dim\gg=4$ if $\gg=\mathbb R^r$ and $\dim\gg=5$ if $\gg=\mathbb R\ltimes\mathbb R^r$.

\medskip

The case with $\xi_{r+1}=\partial_{u^4}+\sum_{i=1}^{3}u^i\partial_{u^i}$ a CR-symmetry is easier.
Let $\Pi=\langle \partial_{u^1},\partial_{u^2},\partial_{u^{3}}\rangle$ be the distribution defined by the first derived subalgebra $\mathbb R^r$ of $\gg=\mathbb R\ltimes\mathbb R^r$, and note that $1\leq\opp{rk}(\Pi\cap TU_\cK)\leq 2$, since $\Pi$ and  $TU_\cK$ are subbundles of $TU$. If $\opp{rk}(\Pi\cap TU_\cK)=2$, then $\Pi\cap TU_\cK$ has codimension $1$ in $\Pi$ so that $\dim\mathbb R^r=\dim\vardbtilde{\pi}_\star(\mathbb R^r)=1$ by $(i)$ of Proposition \ref{prop:smallcodimension}  (the proof extends verbatim with $\mathbb R^r$ in place of $\gg$). Then $\dim\gg=2$, which is a contradiction. 

We now focus on $\opp{rk}(\Pi\cap TU_\cK)=1$. Up to a permutation of the coordinates $u^{1},u^2,u^3$, we may assume that $\Pi\cap TU_\cK$ is generated by a vector field $Z=\partial_{u^1}+a^2\partial_{u^2}+a^3\partial_{u^3}$. Since the distribution $\Pi\cap TU_\cK$ is $\gg$-stable, we see that $a^2$ and $a^3$ are functions of the coordinates $v$ only. Either the function $a^2$ or $a^3$ is not constant, otherwise $Z$ would be a non-trivial infinitesimal CR-symmetry everywhere tangent to $\cK$. Hence, we may assume $da^2\neq 0$.

Consider an infinitesimal CR-symmetry
$
\xi=\sum_{i=1}^{3}\xi^i(v)\partial_{u^i}\in\mathbb R^r
$.
We fix a point $q\in\cU$ and consider the ``constant'' infinitesimal CR-symmetry 
$$\lambda=\sum_{i=1}^{3}\lambda^i\partial_{u^i}\,,\quad\lambda^i=\xi^i|_q\,,$$ 
constructed out of its value at $q$. 
Then $\widehat\xi:=\xi-\lambda\in\gg(q)^0$ and 
\begin{itemize}
	\item[$(i)$] its linear part $\Xi$ satisfies $\opp{Im}(\Xi)=\opp{Im}(\Xi|_{TV})\subset\Pi$, where $TV=\langle \partial_{v^1},\partial_{v^2},\partial_{v^{3}}\rangle$,
	\item[$(ii)$] its $0$-graded symbol
$\Xi_0=\Xi|_\cD$ satisfies $\opp{Im}(\Xi_0)\subset TU^\cJ_\cD=TU_\cK$ by $(ii)$ of Lemma \ref{lem:codimension}.
\end{itemize}
Since $\opp{rk}(\cD/TU_\cD)=3$, the vector bundle $TV$ is identified with the quotient $\cD/TU_\cD$ via the natural projection
$\pi_{TV}|_\cD:\cD\to TV$ corresponding to the decomposition $T\cU=TU\oplus TV$.
In summary 
$\opp{Im}(\Xi)=\opp{Im}(\Xi|_{TV})=\opp{Im}(\Xi_0)\subset TU_\cK$.
Since $\opp{Im}(\Xi)\subset \Pi$, we finally get that
$\opp{Im}(\Xi)\subset\Pi\cap TU_\cK=\mathrm{span}\big(Z|_q\big)$.
Since $q\in\cU$ is generic, this condition holds at all points of $\cU$ and translates into the following system of differential equations
\begin{equation}
\label{eq:d-v}
\begin{aligned}
d\xi^2&=a^2 d\xi^1\,,\\
d\xi^3&=a^3d\xi^1\,,
\end{aligned}
\end{equation}
where $d$ is the exterior differential in the coordinates $v$. If $\xi^1$ is constant, 
then $\xi^2$ and $\xi^3$ are constant too and $\xi$ is just a ``constant'' infinitesimal CR-symmetry.

We set $x:=\xi^1$  and assume $dx\neq 0$ from now on.
Taking the exterior derivative of \eqref{eq:d-v} yields $da^2\wedge dx=0$ and $da^3\wedge dx=0$,
so that $a^2=a^2(x)$, $a^3=a^3(x)$ are functions of $x$. By our assumptions, $a_2$ is (locally) invertible.
Thanks to \eqref{eq:d-v}, we also have $\xi^2=\xi^2(x)$, $\xi^3=\xi^3(x)$ and
$\dot{\xi}^2=a^2$, $\dot{\xi}^3=a^3$,
where the dot is the differentiation w.r.t.\ $x$. In summary, we arrive at
$$
\xi=x\partial_{u^1}+\sum_{i=2}^{3}\Xi^i(x)\partial_{u^i}\,,
$$
where $\Xi^i(x)$ is a primitive of $a^i(x)$, for $i=2,3$. 
We may carry over the arguments unchanged for another infinitesimal CR-symmetry
$\eta=\sum_{i=1}^{3}\eta^i(v)\partial_{u^i}$
such that $y:=\eta^1$ is not constant. Then 
$$
\eta=y\partial_{u^1}+\sum_{i=2}^{3}\Theta^i(y)\partial_{u^i}\,,
$$
where $\Theta^i(y)$ is a primitive of $a^i(y)$, for $i=2,3$. 

Now, the crucial observation is that
$a^2$ is locally invertible also as a function of $y$, so in turn we may write $y=\varphi(x)$ and
$$
\eta=\varphi(x)\partial_{u^1}+\sum_{i=2}^{3}\Theta^i(\varphi(x))\partial_{u^i}\,.
$$
The system of differential equations $d\eta^i=a^id\eta^1$, $i=1,2$, analogous to \eqref{eq:d-v} turn then into
$$
\frac{d\Theta^i}{dy}|_{\varphi(x)}\cdot\dot{\varphi}|_x=a^i|_x\cdot\dot{\varphi}|_x\Rightarrow
\frac{d\Theta^i}{dy}|_{\varphi(x)}=a^i|_x
$$
and since $\frac{d\Theta^i}{dy}|_{\varphi(x)}=a^i|_{\varphi(x)}$ by construction, we finally arrive at 
$a^i|_x=a^i|_{\varphi(x)}=a^i|_y$. Taking $i=2$, yields $y=x$ and the difference of $\xi$ and $\eta$ is just a 
 ``constant'' infinitesimal CR-symmetry. In summary $r\leq 4$ and $\dim\gg\leq 5$.
\subsubsection{The case $\opp{rk}(TU_\cK)=1$}
Here $TU_\cK^\cJ=0$, so $TU_\cD^\cJ$ is transversal to 
$\cK$ and $\cD=\cK\oplus TU_\cD^\cJ$, $TU_\cD=TU_\cK\oplus TU_\cD^\cJ$.
Since $TU+\cK=T\cU$ is integrable, we may integrate the distributions using adapted coordinates 
$(u^1,u^2,u^3,z,v^1,v^2,v^3)$ so that
 \begin{equation}\label{eq:distribution-aligned}
\begin{aligned}
TU&=\langle \partial_{u^1},\partial_{u^2},\partial_{u^3},\partial_z\rangle\,,\\
\cK&=\langle\partial_{v^1},\partial_{v^2},\partial_{v^3},\partial_z\rangle\,,\\
TU_\cK&=\langle\partial_z\rangle\,,
\end{aligned}
 \end{equation}
and the algebra of infinitesimal CR-symmetries consists of vector fields 
$\xi=\sum_{i=1}^3\xi^i\partial_{u^i}+\xi^4\partial_z$, where the functions $\xi^i$, $i=1,2,3$, 
depend only on the coordinates $u$, since $\cK$ is $\gg$-stable. 
By $(v)$ of Proposition \ref{prop:inducedcomplexstructure}, the projection to the leaf space of $\cK$ 
is injective on CR-symmetries, i.e., $\gg\cong\vardbtilde{\pi}_\star(\gg)$.
Explicitly, we decompose any $\xi$ as $\xi=\xi'+\xi''$, with $\vardbtilde{\pi}_\star(\xi)=\xi':=\sum_{i=1}^3\xi^i\partial_{u^i}$, 
$\xi'':=\xi^4\partial_z$, and the component $\xi'$ fully determines $\xi$.
In particular, $\gg$ {\it is effectively represented} 
on each orbit $\mathcal O^G_q=\{(u,z,v)\in\cU\mid v=v_q\}$ and also on 
any $3$-dimensional hypersurface $\mathcal S_{z=z_q,v=v_q}=\{(u,z,v)\in\cU\mid z=z_q, v=v_q\}$ of a given orbit.

Up to a permutation of the coordinates $u^{1},u^2,u^3$, the distribution $TU_\cD$ is generated by $\partial_z$ and two vector fields
\begin{equation}
\label{eq:generatorsTU_D}
\begin{aligned}
X_1&=\partial_{u^1}+a_1\partial_{u^3}\,,\\
X_2&=\partial_{u^2}+a_2\partial_{u^3}\,.
\end{aligned}
\end{equation}
Here $\cD=\langle\partial_{v^1},\partial_{v^2},\partial_{v^3},\partial_z,X_1,X_2\rangle$, where $a_1$ and $a_2$ depend only on the coordinates $u$, since $\cK$ is the Cauchy characteristic space of $\cD$. 
Finally
$TU_\cD^\cJ$ is generated by two vector fields that need to coincide with \eqref{eq:generatorsTU_D} modulo $TU_\cK$, i.e., we have
$\mathbb X_k=X_k+\alpha_k\partial_z$ for $k=1,2$.
We may then write
\begin{equation*}
\begin{aligned}
\cJ\mathbb X_1&=\delta \mathbb X_1+\gamma \mathbb X_2\,,\\
\cJ\mathbb X_2&=-\frac{1+\delta^2}{\gamma}\mathbb X_1-\delta\mathbb X_2\,,
\end{aligned}
\end{equation*}
for some functions $\delta$ and $\gamma$, with the latter nowhere vanishing. 
The fact that an infinitesimal CR-symmetry $\xi$ preserves $\cD$ is equivalent to the condition that 
$\xi'$ preserves $\langle X_1,X_2\rangle$. 
(The action of $\xi$ and $\xi'$ are then related simply by $[\xi',X_k]\equiv [\xi,\mathbb X_k]\!\mod TU_\cK$, $k=1,2$.)
We now split our analysis in two subcases, depending on whether $\opp{rk}(TU_\cL)=1$ or $\opp{rk}(TU_\cL)=0$.

\medskip

If $\opp{rk}(TU_\cL)=1$, then $TU_\cL=TU_\cK=\langle\partial_z\rangle$, and we compute
\begin{equation}
\label{eq:useful-computation}
[\partial_z,\mathbb X_1^{10}]\equiv-\frac{i}{2}\big((\partial_z\delta) \mathbb X_1+(\partial_z\gamma) \mathbb X_2)\mod TU_\cK\,.
\end{equation}
Since $[\cL,\cD_{10}]\subset \cD_{10}\oplus \cK_{01}$ by Lemma \ref{lem:preliminarybrackets} and \eqref{eq:useful-computation} is a purely imaginary vector field in $\cD\otimes\mathbb C$, we see that \eqref{eq:useful-computation} is  in $\cK\otimes \mathbb C$ and that
$\partial_z\delta=\partial_z\gamma=0$. 
We now claim that each submanifold 
$\mathcal S_{z=z_q,v=v_q}$ has a natural structure of $3$-dimensional contact CR manifold preserved by $\vardbtilde{\pi}_\star(\gg)$. 

First
$T\mathcal S\cong (TU/TU_\cK)|_{\mathcal S}$
with the contact distribution that corresponds to $(TU_\cD/TU_\cK)|_{\mathcal S}$ and inherits a natural almost complex structure $\cI$ via the identification $TU_\cD^\cJ\cong (TU_\cD/TU_\cK)$. 
It is is automatically integrable by dimensional reasons. 
Explicitly 
\begin{equation*}
\begin{aligned}
\cI X_1&=\delta X_1+\gamma X_2\,,\\
\cI X_2&=-\frac{1+\delta^2}{\gamma}X_1-\delta X_2\,,
\end{aligned}
\end{equation*}
and a straightforward computation using that $\delta$ and $\gamma$ are $z$-independent and that $\xi$ is $\partial_v$-independent shows that $\xi'$ preserves this CR structure. In particular $\dim\gg=\dim\vardbtilde{\pi}_\star(\gg)\leq 8$. 
We then argue exactly as in the case $d=3$ in \S\ref{sec:3.3} to get that $\dim\gg\leq 5$ as otherwise the complex structure 
on $\cD$ is projectable to $\vardbtilde{\cD}$, which is not possible by $(iv)$ of Proposition \ref{prop:inducedcomplexstructure}.

\medskip

If $\opp{rk}(TU_\cL)=0$, we proceed differently. In fact, the natural morphisms $TU_\cK\hookrightarrow \cK\twoheadrightarrow\cK/\cL$ of real vector bundles compose to an embedding of $TU_\cK$ in $\cK/\cL\cong \cK_{10}/\cL_{10}$, so the latter has a canonical real line subbundle
and $(ii)$ of
Lemma \ref{lem:simple-but-useful} applies.
Modulo a finite cover of $\cU$, the section $Y_{10}$ is canonical, hence preserved by $\gg$, 
and the complex line bundle
$\cD_{10}/\cK_{10}$ has a canonical real line subbundle, hence $\gg$-stable. Since 
$TU^\cJ_\cD\cong \cD/\cK$ are both $\cJ$-stable, we have $TU_\cD^\cJ\cong(TU_\cD^\cJ)_{10}\cong\cD_{10}/\cK_{10}$ and
there exists a $\gg$-stable real line subbundle $\ell$ in  $TU_\cD^\cJ$. 
Applying $\cJ$, we decompose $TU_\cD^\cJ=\ell\oplus \cJ\ell$ into the direct sum of two $\gg$-stable line subbundles.
 
In conclusion the Lie algebra $\gg$ is effectively represented on any orbit $\mathcal O^G_q$ as infinitesimal symmetries 
of a $4$-dimensional manifold endowed with  a rank $3$ distribution 
$TU_\cD|_{\mathcal O^G_q}\subset TU|_{\mathcal O^G_q}$ whose symbol is 
$\gm(q)=\gm_{-2}(q)\oplus\gm_{-1}(q)\cong \big(TU|_q/TU_\cD|_q\big)\oplus TU_\cD|_q$, 
where $TU|_q/TU_\cD|_q$ is $1$-dimensional and 
$TU_\cD|_q=TU_\cK|_q\oplus TU_\cD^\cJ|_q=\mathbb R Y|_q\oplus \big(\ell|_q\oplus \cJ\ell|_q\big)$ 
splits into $3$ preserved lines.
It is straightforward to see that the reduced structure algebra $\mathfrak f_0(q)$
is $1$-dimensional, 
generated by the semisimple element with eigenvalues $0$ on $\mathbb R Y|_q$, $-1$ on $\ell|_q\oplus \cJ\ell|_q$ 
and $-2$ on $TU|_q/TU_\cD|_q$, and that 
$\mathfrak f_k(q)=0$ for all $k\geq 1$. 
Hence $\dim\gg\leq 5$.

 \subsection{Dimension $d=6$}\label{sec:3.5}

We consider now the case $d=6$ prior to the case $d=5$, because for the latter in \S\ref{sec:3.6}
we will use all the different strategies developed from \S\ref{sec:3.3} to \S\ref{sec:3.5}.

 \begin{theorem}
\label{thm:dim6}
If the symmetry algebra $\gg$ of a 7-dimensional 3-nondegenerate CR-hypersurface acts with generic orbits of dimension $d=6$, then $\gg$ acts simply transitively on orbits.
 \end{theorem}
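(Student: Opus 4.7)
Since $d=6$ orbits already force $\dim\gg\ge 6$, the task reduces to proving $\dim\gg\le 6$, and by Lemma \ref{lem:idontknow} it suffices to show that the $0$-graded isotropy $\gg_0(p)$ vanishes at a regular point. A direct rank count gives $\opp{rk}(TU_\cD)=5$ (Lemma \ref{lem:codimension}(i)) and $\opp{rk}(TU_\cD^\cJ)=4$ (since $TU_\cD$ has odd rank within the $\cJ$-stable rank-$6$ bundle $\cD$), while $\opp{rk}(TU_\cK)\in\{3,4\}$. I split the argument along this last invariant.

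\emph{Case I: $\opp{rk}(TU_\cK)=4$, i.e., $\cK\subset TU$.} Then $TU_\cK$ has codimension $2$ in $TU$ and Proposition \ref{prop:smallcodimension}(ii) gives that $\gg$ is either $\bR^r$ or $\bR\ltimes\bR^r$ in the explicit form displayed there. In either case six commuting translations $\partial_{u^i}$ act simply transitively on the orbits, so all invariant tensors --- the Freeman bundles and the complex structure $\cJ$ --- have adapted-frame components depending only on the single transverse coordinate $v$. A putative extra symmetry $\eta=\sum_i\xi^i(v)\partial_{u^i}$ must satisfy $L_\eta\cJ=0$ together with preservation of $\cD\supset\cK\supset\cL$; expressed against the normalized frame $(X_{10},Y_{10},Z_{10})$ of Lemma \ref{lem:simple-but-useful} and using Lemma \ref{lem:preliminarybrackets} together with $3$-nondegeneracy, these reduce --- in the style of \S\ref{sec:3.4} --- to an overdetermined ODE in $v$ with only the zero solution. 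The scaling element of $\bR\ltimes\bR^r$ is ruled out analogously via its action on the canonical line bundles of Lemma \ref{lem:simple-but-useful}(iii).

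\emph{Case II: $\opp{rk}(TU_\cK)=3$ (the generic position, $TU+\cK=T\cM$).} Fix $\xi\in\gg_0(p)$ and let $\Xi_0=\Xi|_\cD$ be the induced $\cJ$-linear, Freeman-filtration-preserving endomorphism of $\cD|_p$. Lemma \ref{lem:codimension}(ii) gives $\Xi_0(\cD)\subset TU_\cD^\cJ$, whence $\Xi_0(\cK)\subset N$ and $\Xi_0(\cL)\subset\cL\cap N$, where $N:=\cK\cap TU_\cD^\cJ$ is $\cJ$-stable of rank $2$. Preservation of the Hermitian Levi form $\clL_1$, combined with the fact that $\Xi$ acts trivially on $\gm_{-2}=T\cM/\cD$, forces the induced $\bC$-linear action of $\Xi_0$ on the complex line $\cD_{10}/\cK_{10}$ to be multiplication by a purely imaginary number $ir$ with $r\in\bR$; iterating the same symplectic-type identity with the higher Levi forms $\clL_2,\clL_3$ propagates this to actions $2ir$ on $\cK_{10}/\cL_{10}$ and $3ir$ on $\cL_{10}$ (these are exactly the infinitesimal rotation derivations of the normalized frame in Lemma \ref{lem:simple-but-useful}(i)). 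The intransitivity constraint $\Xi_0(\cL)\subset\cL\cap N$ now kills $r$ in both subcases: if $\cL\not\subset TU$ then $\cL\cap N=0$, forcing $\Xi_0|_\cL=0$ and hence $r=0$; if $\cL\subset TU$ then $N=\cL$, but the condition $\Xi_0(\cK)\subset N=\cL$ combined with the $\cJ$-linear action $Y\mapsto 2r\cJ Y\!\!\mod\cL$ on $\cK$ and $\cJ(\cK)=\cK\not\subset\cL$ again forces $r=0$. Once $r=0$, $\Xi_0$ is strictly filtration-raising; a final round of Levi-form analysis combined with Lemma \ref{lem:from-left-to-right} --- converting section-level bracket identities to bracket identities at the symmetry-algebra level --- annihilates the remaining nilpotent piece and yields $\Xi_0=0$.

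\textbf{Main obstacle.} The hardest step is the final nilpotent contraction in Case II: at this stage the three Levi forms vanish on the relevant Cauchy directions by Lemma \ref{lem:preliminarybrackets} and no longer yield symplectic-type constraints. One has to invoke the non-projectability of $\cJ$ under $\vardbtilde{\pi}$ from Proposition \ref{prop:inducedcomplexstructure}(iv) --- transferred to the symmetry algebra via Lemma \ref{lem:from-left-to-right} --- to rule out the residual complex one-parameter family of strictly filtration-raising $\cJ$-linear endomorphisms of $\cD|_p$ with image in $N$, and this is where the distinction between the two sub-alignments $\cL\subset TU$ and $\cL\not\subset TU$ must be handled delicately.
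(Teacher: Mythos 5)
Your case split on $\opp{rk}(TU_\cK)\in\{3,4\}$ matches the paper's, and the case $\opp{rk}(TU_\cK)=4$ is indeed the easy one (the paper disposes of it by a three-line linear-algebra argument: pick $w\in TU_\cD|_p\setminus TU_\cD^\cJ|_p$, note $\cJ w\notin TU|_p$, and use $\cJ\Xi=\Xi\cJ$ to force $\Xi=0$ -- no ODE analysis is needed). The gap is in your Case II. First, a concrete error: you assert that $\Xi$ acts trivially on $\gm_{-2}=T\cM/\cD$ and deduce from Levi-form preservation that $\Xi_0$ acts on $\cD_{10}/\cK_{10}$ by a purely imaginary scalar. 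But Lemma \ref{lem:codimension}/the lemma preceding it only give triviality on $T_p\cU/T_pU$, a \emph{different} quotient: since $R=\tfrac12[X,\cJ X]$ with $X,\cJ X\in TU_\cD^\cJ$, the class of $R$ generates $T\cM/\cD$ from \emph{inside} the orbit, and the isotropy may rescale it. The grading element $\Lambda$ of \S\ref{subsubsec:352} (eigenvalues $1,0,-1$ on the Freeman quotients and $2$ on $T\cM/\cD$) satisfies every constraint you list, so the ``purely imaginary'' conclusion and the subsequent elimination of $r$ do not follow.

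More fatally, your final step asserts that after killing the semisimple part, ``a final round of Levi-form analysis'' plus non-projectability of $\cJ$ forces $\Xi_0=0$. The paper's analysis shows this is not achievable by pointwise arguments: the reduced structure algebra $\gf_0(p)$ is genuinely nonzero, containing the nilpotent element $B$ with $[B,X_{10}]=iZ_{10}$, $[B,X_{01}]=-iZ_{01}$. This $B$ is $\cJ$-linear, strictly Freeman-filtration-raising, has image inside $\cL\subset TU_\cK^\cJ$ (so it passes all your image constraints), and annihilates $\clL_1,\clL_2,\clL_3$ because its image lies in their kernels. Non-projectability of $\cJ$ (Proposition \ref{prop:inducedcomplexstructure}$(iv)$) is a property of the CR structure, not of a candidate isotropy element, and provides no mechanism for excluding $B$. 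What the paper actually does in both subcases $\opp{rk}(TU_\cL)=2$ and $\opp{rk}(TU_\cL)=1$ is: normalize adapted frames to reduce the structure group (Propositions \ref{prop:generic1!}--\ref{prop:generic3!} handle the generic branches via canonical parallelisms), then compute the Tanaka prolongation of the reduced symbol -- which is \emph{not} trivial in positive degree ($\gf'_1$ is $2$-dimensional), yielding only an a priori bound $\dim\gg\leq 10$ -- and finally exclude every filtered deformation of dimension $\geq 7$ of the admissible graded subalgebras by Jacobi identities, a substantial computer-assisted computation. That entire mechanism, which is the real content of Theorem \ref{thm:dim6} in the case $\opp{rk}(TU_\cK)=3$, is absent from your proposal.
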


If $d=6$, then $\opp{rk} (TU_\cD)=5$, $\opp{rk} (TU_\cD^\cJ)=4$ and $3\leq\opp{rk}(TU_\cK)\leq4$ 
by dimensional reasons. We therefore split our analysis into two main cases, which will split in turn into subcases.

\subsubsection{The case $\opp{rk}(TU_\cK)=4$}

In this case $\cK=TU_\cK=TU^\cJ_\cD$. By $(ii)$ of Proposition \ref{prop:smallcodimension}, 
we know that $\gg$ is isomorphic to the Abelian Lie algebra 
\begin{equation*}
\label{eq:abelian6}
\mathbb R^r=\mathrm{span}\big(\xi_a=\partial_{u^a}\;(1\leq a\leq 6),\;\xi_{b}=\sum_{i=1}^{6}\xi^i_{b}(v)\partial_{u^i}\;(7\leq b\leq r)\big)\,, 
\end{equation*}
or its scaling extension
 \begin{equation*}\displaystyle\label{eq:scalingextension6}
\mathbb R\ltimes\mathbb R^r=
		\begin{cases}\displaystyle
\mathrm{span}\big(\xi_a=\partial_{u^a}\;(1\leq a\leq 6),\;\xi_{b}=\sum_{i=1}^{6}\xi^i_{b}(v)\partial_{u^i}\;(7\leq b\leq r),\;
\xi_{r+1}=\sum_{i=1}^{6} u^i\partial_{u^i}\big) \\
		\text{or}\\
		\displaystyle
\mathrm{span}\big(\xi_a=\partial_{u^a}\;(1\leq a\leq 5),\;\xi_b=\sum_{i=1}^{5}\xi^i_{b}(v)\partial_{u^i}\;(6\leq b\leq r),\;
\xi_{r+1}=\partial_{u^6}+\sum_{i=1}^{5}u^i\partial_{u^i}\big)
		\end{cases}
 \end{equation*}
Consider the Abelian case. 
Let $\xi=\sum_{i=1}^{6}\xi^i(v)\partial_{u^i}$ be an infinitesimal CR-symmetry vanishing at $p\in\cU$ and 
$\Xi=-\sum_{i=1}^{6}(\partial_{v}\xi^i)|_p\;\partial_{u^i}\otimes dv$
its linear part. Let $w$ be a vector in $TU_\cD|_p$ but not in $TU_\cD^\cJ|_p$, so that $\cJ w$ is in $\cD|_p$ but not in $TU_\cD|_p$. Hence $\cJ w=\lambda\partial_v\mod TU|_p$ for some $\lambda\neq 0$ and $0=\cJ \Xi w=\Xi \cJ w=\lambda \Xi\partial_v$. This shows that $\Xi=0$, so $\gg_0(p)=0$ and $\dim\gg=\dim U=6$. 

Next, the case of the scaling extension with first derived subalgebra $\mathbb R^r$ acting intransitively on $TU$ is similar,
so we omit the proof. 

Finally, if $\gg$ is the scaling extension with transitive first derived subalgebra, we may write any infinitesimal CR-symmetry 
as $\xi=\sum_{i=1}^{6}\xi^i(v)\partial_{u^i}+\mu\xi_{r+1}$, for some $\mu\in\mathbb R$. If $\xi$ vanishes at $p\in\cU$, 
its linear part is 
$\Xi=-\sum_{i=1}^{6}(\partial_{v}\xi^i)|_p\;\partial_{u^i}\otimes dv-\mu\operatorname{Id}_{TU|_p}$,
and its restriction $\Xi|_\cD$ to $\cD|_p$ has rank at least $\opp{rk}(TU_\cD)=5$ if $\mu\neq 0$. 
Since $\opp{Im}(\Xi|_\cD)\subset TU_\cD^\cJ$ by (ii) of Lemma \ref{lem:codimension}, we immediately see that 
$\mu=0$ and, arguing as before, that $\gg_0(p)=0$ and $\dim\gg=6$. This is actually a contradiction: 
the case of the scaling extension with transitive first derived subalgebra cannot happen.

\medskip\par
{\it If $\opp{rk}(TU_\cK)=3$, then $\opp{rk}(TU_\cK^\cJ)=2$ by dimensional reasons 
and we further split our analysis into two subcases, depending on whether $\opp{rk}(TU_\cL)=2$ or $\opp{rk}(TU_\cL)=1$.}

\subsubsection{The case $\opp{rk}(TU_\cK)=3$, $\opp{rk}(TU_\cL)=2$}\label{subsubsec:352}

By Lemma \ref{lem:simple-but-useful}, we may consider local trivializing sections satisfying the normalization conditions 
\eqref{eq:norm-conditions}. For our purposes, it is convenient to let $X_{10}$, $Y_{10}$, $Z_{10}$ be corresponding 
local sections of $\cD_{10}$, $\cK_{10}$, $\cL_{10}$ (instead of quotient bundles), determined up to transformations
$X_{10}\mapsto \lambda e^{i\varphi}X_{10}\!\mod \cK_{10}$, $Y_{10}\mapsto e^{2i\varphi}Y_{10}\!\mod \cL_{10}$, 
and $Z_{10}\mapsto \lambda^{-1}e^{3i\varphi} Z_{10}$, for $\lambda:\cU\rightarrow \mathbb R_+$ and 
$e^{i\varphi}:\cU\rightarrow S^1$, see $(i)$ of Lemma \ref{lem:simple-but-useful}. The normalization conditions read as
$\clL_{2}(Y_{10},\overline{X_{10}})=X_{10}\!\mod \cK_{10}\oplus \cD_{01}$,
$\clL_{3}(Z_{10},\overline{X_{10}})=Y_{10}\!\mod \cL_{10}\oplus \cD_{01}$.

Here $\cL=TU_\cL=TU_\cK^\cJ$, so we have the real line subbundle $TU_\cK/TU_\cL$ of $\cK/\cL\cong  \cK_{10}/\cL_{10}$. Then $(ii)$ of Lemma \ref{lem:simple-but-useful} applies: we may take $Y$ a local section of $TU_\cK$ that is not in $\cL$ at all points of $\cU$, and assume that $e^{i\varphi}=1$, modulo a finite cover of $\cU$. Finally we note that $Z$ is a section of $\cL=TU_\cK^\cJ$ and, since $\cD/\cK\cong TU^\cJ_\cD/TU^\cJ_\cK=TU^\cJ_\cD/\cL$, we are also free to ask that $X$ is a section of $TU^\cJ_\cD$, which is not in $\cL$ at all points of $\cU$.

In summary
 \begin{equation}
\label{eq:summary-for-preliminary-frames}
\begin{aligned}
TU_\cD&=TU^\cJ_\cD\oplus\langle Y\rangle\,,\\
TU_\cD^\cJ&=\langle X,\cJ X\rangle\oplus TU_\cK^\cJ \,,\\
TU_\cK^\cJ&=\langle Z,\cJ Z\rangle\,,
\end{aligned}
 \end{equation}
with the local sections uniquely defined up to transformations of the form
\begin{equation}
\label{eq:gauge-freedom}
X_{10}\mapsto \lambda X_{10}\!\mod \cL_{10}\,,\quad Y_{10}\mapsto Y_{10}\!\mod \cL_{10}\,,
\quad Z_{10}\mapsto \lambda^{-1} Z_{10}\,.
\end{equation}
The normalization conditions say that $[Y_{10},X_{01}]=X_{10}+d_1X_{01} +d_2Y_{01}+ d_3Z_{01}+d_4 Y_{10}+d_5 Z_{10}$ for some complex-valued functions $d_1,\ldots,d_5:\cU\to \mathbb C$ and
$[Z_{10},X_{01}]\equiv Y_{10}+(\mu_1+i\mu_2)Y_{01}\mod \langle X_{01}\rangle_\mathbb C\oplus (\cL\otimes\mathbb C)$
for some real-valued functions $\mu_1,\mu_2:\cU\rightarrow \mathbb R$. 
Since $TU$ is integrable and it contains
$TU_\cD=\langle X,\cJ X,Y,Z,\cJ Z\rangle$ but not $\cJ Y$, we readily see that $\mu_1=1$, $\mu_2=0$.
The inclusion $[(TU_{\cD}^\cJ)_{01},(TU_\cD^\cJ)_{01}]\subset (TU_\cD^\cJ)_{01}$ also says that
$[Z_{01},X_{01}]\equiv 0\!\mod \langle X_{01}, Z_{01}\rangle_\mathbb C$. Summing and subtracting appropriately then yields
\begin{equation}
\label{eq:Lie-brackets-expandedII}
\begin{aligned}
{}[Z,X]&\equiv[\cJ Z,\cJ X]\equiv 2Y\!\mod TU_\cD^\cJ\,,\\
[\cJ Z,X]&\equiv [Z,\cJ X]\equiv 0\!\mod TU_\cD^\cJ\,.
\end{aligned}
\end{equation}
The bundles $\cL_{10}\oplus \cK_{01}$ and $\cK_{01}$ are integrable by Lemma \ref{lem:preliminarybrackets}, so that $[Z_{10},Y_{01}]\equiv \alpha Y_{01}\!\mod \cL\otimes\mathbb C$ for some $\alpha:\cU\to\mathbb C$ and $[Z_{10},Y_{10}]\equiv \alpha Y_{10}\!\mod \cL_{10}$ (to see that the coefficients are in fact both equal to $\alpha$, just sum the two Lie brackets and use that $[Z_{10},Y]$ is a section of $TU\otimes\mathbb C$).

We summarize the Lie brackets obtained so far, together with some direct consequences of the integrability of the Freeman bundles. 
Here ``$\star$'' refers to brackets that can be inferred from others by skew-symmetry and conjugation, and $[X_{10},X_{01}]=iR$ 
is an imaginary vector field nowhere tangent to $\cD$. 
The functions $c_1,\ldots, c_6; d_1,\ldots,d_{10};\sigma_1,\sigma_2,\sigma_3;\alpha$ are complex-valued.

\par\bigskip
\centerline{\rotatebox{0}{\footnotesize
$\begin{array}{||c|c|c|c|c|c|c||}\hline
[-,-] &
X_{10} & Y_{10} & Z_{10} & X_{01}& Y_{01}& Z_{01} \\
\hline\hline
X_{10} & 0 & 
\begin{gathered} c_1X_{10}+c_2Y_{10} \vphantom{\frac:.} \\ +c_3Z_{10}\end{gathered} & 
\star & \notin \cD & \star & \star \\
\hline
Y_{10} & \star & 0 & \star &
\begin{gathered}X_{10}+d_1X_{01}+d_2Y_{01} \vphantom{\frac:.} \\ + d_3Z_{01}+d_4 Y_{10}+d_5 Z_{10}\end{gathered} &
\begin{gathered}\sigma_1Y_{10}-\overline\sigma_1Y_{01}\\ +\sigma_2Z_{10}-\overline\sigma_2 Z_{01}\end{gathered} &
\star \\
\hline
Z_{10} & \star & \alpha Y_{10}+c_6Z_{10}  & 0 &
\begin{gathered}Y+d_6X_{01} \vphantom{\frac:.} \\ +d_7Z_{01}+d_8Z_{10}\end{gathered} &
\begin{gathered} \alpha Y_{01}+d_9 Z_{01}\\ +d_{10}Z_{10}\end{gathered}  &
\sigma_3 Z_{10}-\overline\sigma_3 Z_{01} \\
\hline
X_{01} & \star & \star & \star & 0 & \star & \star \\
\hline
Y_{01} & \star & \star & \star & \star & 0 & \star \\
\hline
Z_{01} & \star & \star & \star & c_4X_{01}+c_5Z_{01} & \star & 0 \\
\hline
\end{array}$}}
\vskip5pt\par
\centerline{\small\it Structure equations of the frame in the case $\opp{rk}(TU_\cK)=3$, $\opp{rk}(TU_\cL)=2$.}
\medskip

We aim to canonically constrain $X_{10}$, $Y_{10}$, and $Z_{10}$, at the same time reducing the freedom in the transformations \eqref{eq:gauge-freedom}. For this, we shall make \eqref{eq:gauge-freedom} explicit:
 \begin{equation}\label{eq:gauge-freedom-explicit}
X_{10}\mapsto \widetilde{X}_{10}=\lambda X_{10}+\beta Z_{10}\,,\quad Y_{10}\mapsto
\widetilde{Y}_{10}=Y_{10}+\gamma Z_{10}\,,
\quad Z_{10}\mapsto \widetilde{Z}_{10}=\lambda^{-1} Z_{10}\,,
 \end{equation}
where $\lambda:\cU\rightarrow \mathbb R_+$ and $\beta,\gamma:\cU\rightarrow\mathbb C$. 
A straightforward computation then yields
\begin{multline}
[\widetilde{Y}_{10},\widetilde{X}_{01}]=[Y_{10}+\gamma Z_{10},\lambda X_{01}+\overline\beta Z_{01}]
=\lambda\big(X_{10}+d_1X_{01}+d_2Y_{01}+ d_3Z_{01}+d_4 Y_{10}+d_5 Z_{10}\big)
\\
+\gamma\lambda\big(Y+d_6X_{01}+d_7Z_{01}+d_8Z_{10}\big)
-\overline{\beta}\big(\overline{\alpha} Y_{10}+\overline{d_9} Z_{10}+\overline{d_{10}}Z_{01}\big)\\
+\gamma\overline{\beta}\big(\sigma_3 Z_{10}-\overline\sigma_3 Z_{01}\big)+\big(Y_{10}(\lambda)+\gamma Z_{10}(\lambda)\big)X_{01}\\
+\big(Y_{10}(\overline\beta)+\gamma Z_{10}(\overline\beta)\big)Z_{01}
-\big(\lambda X_{01}(\gamma)+\overline\beta Z_{01}(\gamma)\big)Z_{10}
\end{multline}
so that $[\widetilde{Y}_{10},\widetilde{X}_{01}]\equiv \widetilde{X}_{10}
+\lambda\big( d_2+\gamma)\widetilde{Y}_{01}
+\big(\lambda d_4+\gamma\lambda-\overline{\beta}\overline{\alpha}) \widetilde{Y}_{10}
\!\mod \langle \widetilde X_{01}\rangle_\mathbb C\oplus (\cL\otimes\mathbb C)$.
Hence:
\begin{itemize}
	\item[$(i)$] We may choose $\gamma$ uniquely from the condition $d_2+\gamma=0$;
	\item[$(ii)$] Henceforth $X_{10}$, $Y_{10}$ and $Z_{10}$ are normalized by $d_2=0$, restricting the residual freedom to transformations \eqref{eq:gauge-freedom-explicit} with $\gamma=0$. 
\end{itemize}
In particular $Y$ and $\cJ Y$ are canonical, hence preserved by all infinitesimal CR-symmetries. 
Since $\cJ Y$ is canonical and transverse to any orbit $\mathcal O^G_q$, it follows that {\it $\gg$ is effectively represented on $\mathcal O^G_q$} (this is easier to see using coordinates that are adapted to the leaves of $\gg$ and $\cJ Y$). 
We check $[\widetilde{Z}_{10},\widetilde {Y}_{01}]=[\lambda^{-1}Z_{10},Y_{01}]\equiv\lambda^{-1}\alpha \widetilde Y_{01}\!\mod \cL\otimes\mathbb C$, whence:
\begin{itemize}
	\item[$(iii)$] If $\alpha\neq 0$ on $\cU$, we may choose $\lambda$ uniquely so that $\lambda^{-1}\alpha$ takes values in $S^1\subset\mathbb C$, normalizing the sections accordingly and restricting \eqref{eq:gauge-freedom-explicit} with $\gamma=0$ and $\lambda=1$;
	\item[$(iv)$] In this case, we finally fix the residual freedom $\beta$ by requiring that $d_4-\overline{\beta}\overline{\alpha}=0$, further normalizing the sections with the addition of $d_4=0$.
\end{itemize}
In other words, if $\alpha\neq 0$, we obtained a pair of canonical parallelisms: 
\begin{equation}
\label{eq:pairs-canononical-parallelisms}
\begin{aligned}
&\underbrace{\big(X,Y,Z,\cJ X,\cJ Z,R=\tfrac12[X,\cJ X]\big)}_{\text{Canonical parallelism on each fixed orbit}\;\mathcal O^G_q}\,,\\
&\underbrace{\big(X,Y,Z,\cJ X,\cJ Z,R=\tfrac12[X,\cJ X],\cJ Y\big)}_{\text{Canonical parallelism on}\;\cU}\,.
\end{aligned}
\end{equation}
The parallelism on $\cU$ says that each infinitesimal CR-symmetry $\xi\in\gg$ is determined by its value at a fixed point $q\in\cU$, proving, once more, that $\gg$ is effectively represented 
on each $\mathcal O^G_q$. Since $\gg$ preserves also the absolute parallelism on the orbit, we have $\dim\gg=\dim(\gg|_{\mathcal O^G_p})\leq \dim\mathcal O^G_q=6$. 
The generic case $\alpha\neq 0$ is settled:
\begin{proposition}
\label{prop:generic1!}
If the structure function $\alpha\neq0$, then $\dim\gg\leq 6$.
\end{proposition}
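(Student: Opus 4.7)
The plan is to bootstrap on the canonical parallelisms in \eqref{eq:pairs-canononical-parallelisms} just constructed under the standing assumption $\alpha\neq 0$. Since the choices $\lambda^{-1}\alpha\in S^1$ and $d_4-\overline\beta\overline\alpha=0$ pin down $\lambda$ and $\beta$ uniquely, the seven vector fields $X,Y,Z,\cJ X,\cJ Z,R,\cJ Y$ on $\cU$ are CR-invariant and therefore preserved by every $\xi\in\gg$.

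The first key step is to show the restriction map $\gg\to\gg|_{\cO^G_q}$ is injective. The canonical vector field $\cJ Y$ is nowhere tangent to the orbits: if $\cJ Y$ were a section of $TU$, then $Y$ and $\cJ Y$ would both lie in $TU_\cK$, forcing $Y\in TU_\cK^\cJ=\cL$ and contradicting our choice of $Y\notin\cL$. Working in local coordinates on $\cU$ in which $\cJ Y=\partial_t$ and the orbits are the slices $\{t=\mathrm{const}\}$, the identity $[\xi,\cJ Y]=0$ forces the coefficients of $\xi$ to be $t$-independent. Hence $\xi|_{\cO^G_q}=0$ implies $\xi\equiv 0$ throughout the coordinate slab and, by analyticity, on all of $\cU$.

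The second key step is to observe that $X,Y,Z,\cJ X,\cJ Z,R$ form a canonical absolute parallelism on each orbit $\cO^G_q$. The five fields $X,Y,Z,\cJ X,\cJ Z$ span $TU_\cD\subset TU$ by \eqref{eq:summary-for-preliminary-frames}, while $R=\tfrac12[X,\cJ X]$ is tangent to $TU$ because the orbit distribution is Frobenius-integrable, and it completes the previous five to a basis of $TU$ because $R\notin\cD$ (Levi-nondegeneracy of $X_{10}$, already recorded as $[X_{10},X_{01}]=iR\notin\cD\otimes\mathbb C$ in the bracket table). Since $\gg|_{\cO^G_q}$ preserves this parallelism, the classical bound for symmetry algebras of absolute parallelisms yields $\dim(\gg|_{\cO^G_q})\leq\dim\cO^G_q=6$.

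Combining the injectivity of $\gg\to\gg|_{\cO^G_q}$ with the bound $\dim(\gg|_{\cO^G_q})\leq 6$ gives $\dim\gg\leq 6$, as required. The most delicate verification is the transversality and linear independence underlying the absolute parallelism on both $\cU$ and on $\cO^G_q$; but these are built into the frame construction performed above, relying on $3$-nondegeneracy together with the rank hypotheses $\mathrm{rk}(TU_\cK)=3$, $\mathrm{rk}(TU_\cL)=2$.
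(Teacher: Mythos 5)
Your proof is correct and follows essentially the same route as the paper: both rely on the pair of canonical parallelisms \eqref{eq:pairs-canononical-parallelisms}, using the transversal invariant field $\cJ Y$ (equivalently, the full parallelism on $\cU$) to get injectivity of $\gg\to\gg|_{\cO^G_q}$, and the six-field parallelism on each orbit to bound $\dim(\gg|_{\cO^G_q})\leq 6$. Your explicit verifications that $\cJ Y\notin TU$ and that $R$ completes the orbit frame are exactly the facts the paper leaves implicit.
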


 If $\alpha=0$ identically, we may still enforce $(i)$ and $(ii)$, however this case is much more involved. Let us first focus on the $Y$-component of the brackets involving $Z_{01}, Y_{10}$ and $X_{01}$:
\begin{align*}
[Z_{01},[Y_{10},X_{01}]]&=[Z_{01},X_{10}+d_1X_{01}+ d_3Z_{01}+d_4 Y_{10}+d_5 Z_{10}]
\\&\equiv Y+Z_{01}(d_4)Y_{10}\!\mod TU^\cJ_\cD\otimes\mathbb C\,,\\
[[Z_{01},Y_{10}],X_{01}]&=[\overline{d_9} Z_{10}+\overline{d_{10}}Z_{01},X_{01}]\\
&\equiv \overline{d_9}Y\!\mod TU^\cJ_\cD\otimes\mathbb C\,,\\
[Y_{10},[Z_{01},X_{01}]]&=[Y_{10},c_4X_{01}+c_5Z_{01}]\\
&\equiv c_4d_4 Y_{10}\!\mod TU^\cJ_\cD\otimes\mathbb C\,,
\end{align*}
so that $d_9=1$ identically by the Jacobi identity. Under our conditions $d_2=0$, $\gamma=0$, $\alpha=0$, and $d_9=1$, 
the Lie bracket between $\widetilde{Y}_{10}$ and $\widetilde{X}_{01}$ reduces to
\begin{align*}
[\widetilde{Y}_{10},\widetilde{X}_{01}]
&=\lambda\big(X_{10}+d_1X_{01}+ d_3Z_{01}+d_4 Y_{10}+d_5 Z_{10}\big)
-\overline{\beta}\big(Z_{10}+\overline{d_{10}}Z_{01}\big)\\
&\;\;\;\,+\big(Y_{10}(\lambda)\big)X_{01}
+\big(Y_{10}(\overline\beta)\big)Z_{01}\\
&\equiv \widetilde X_{10}+\lambda d_4\widetilde Y_{10}+\lambda\big(\lambda d_5-2\mathfrak{Re}(\beta)\big)\widetilde Z_{10}\!\mod\langle\widetilde X_{01},\widetilde Z_{01}\rangle\,,
\end{align*}
and we may enforce the following alternative step to $(iii)$:
  \begin{itemize}
\item[$(iii')$] 
We may choose $\beta$ so that $2\mathfrak{Re}(\beta)=\lambda \mathfrak{Re}(d_5)$, 
henceforth we normalize our local sections additionally by $\mathfrak{Re}(d_5)=0$ and 
restrict \eqref{eq:gauge-freedom-explicit} by $\mathfrak{Re}(\beta)=0$. 
  \end{itemize} 
We note that $\beta$ is not determined uniquely, as its purely imaginary part remains unconstrained.
Thus, in what follows, we let $\beta=i\mathfrak{Im}(\beta)=i\beta'$.
The residual tranformations are
  \begin{equation*}\label{eq:gauge-freedom-explicit-real}
X_{10}\mapsto \widetilde{X}_{10}=\lambda X_{10}+i\beta' Z_{10}\,,\quad Y_{10}\mapsto\widetilde{Y}_{10}=Y_{10}\,,
\quad Z_{10}\mapsto \widetilde{Z}_{10}=\lambda^{-1} Z_{10}\,,
  \end{equation*}
for {\it real-valued} functions $\lambda:\cU\rightarrow \mathbb R_+$ and $\beta':\cU\rightarrow\mathbb R$. 
We now have $\alpha=0$, $d_2=0$, $d_9=1$, $d_5=i\mathfrak{Im}(d_5)=id_5'$. 
Further constraints are implied by the Jacobi identities involving $X_{10}$:
  \begin{equation}\label{eq:Jacobators!}
\begin{aligned}
\operatorname{Jac}(X_{10},Z_{10},Z_{01})&
\equiv\big(\overline{\sigma}_3-\overline{c_4}\big) Y \!\mod\langle X_{10},Z_{10},Z_{01}\rangle\,,\\
\operatorname{Jac}(X_{10},Y_{10},Z_{01})&\equiv\big(\overline{d_{10}}-\overline{\sigma}_1+c_1\big)Y_{01}\!\mod\langle X_{10},Y_{10},Z_{10},Z_{01}\rangle\,,\\
\operatorname{Jac}(X_{10},Y_{01},Z_{01})&\equiv\big(\overline{c_6}-\sigma_1-\overline{d_1}\big)Y_{10}+\big(\overline{d_6}-c_4\big)X_{01}\!\mod\langle X_{10},Y_{01},Z_{10},Z_{01}\rangle\,,\\
\operatorname{Jac}(X_{10},Y_{10},Y_{01})&\equiv\big(\overline{\sigma}_1+d_1+c_1\big)X_{01}\!\mod\langle X_{10},Y_{10},Y_{01},Z_{10},Z_{01}\rangle\,,\\
\operatorname{Jac}(X_{10},Y_{01},Z_{10})&\equiv\big(\overline{c_4}-d_6\big)X_{01}\!\mod\langle X_{10},Y_{01},Z_{10},Z_{01}\rangle\,,
\end{aligned}
  \end{equation}
so that their vanishing is equivalent to $c_4=\overline{d_6}=\sigma_3$, 
$\overline{\sigma_1}=c_1+\overline{d_{10}}=-\big(c_1+d_1\big)$, and $c_6=-c_1$. 
A somewhat long but straightforward computation of the Lie brackets of the vector fields of the new frame allows 
to read off the change on structure functions:
  \begin{align*}
\widetilde c_1 &=c_1-Y_{10}(\ln\lambda)\,,\quad\widetilde c_2=\lambda c_2\,,\\
\widetilde c_3 &=\lambda^2 c_3 -2i\beta'\lambda c_1
-i\lambda Y_{10}(\beta')+i\beta'\lambda Y_{10}(\ln\lambda)\,, \\
\widetilde c_5 &=\lambda c_5-iZ_{01}(\beta')+i\beta'\sigma_3+\lambda X_{01}(\ln\lambda)\,,\\
\widetilde d_3 &=\lambda^2d_3 -2i\beta'\lambda c_1
-i\lambda Y_{10}(\beta')+i\beta'\lambda Y_{10}(\ln\lambda)\,,\\
\widetilde d_4 &=\lambda d_4\,,\quad \widetilde d'_5=\lambda^2d'_5\,,\\
\widetilde d_7 &=\lambda d_7+2i\beta'\overline{\sigma}_3-iZ_{10}(\beta')
+i\beta' Z_{10}(\ln\lambda)\,,\\
\widetilde d_8 &= \lambda \big(d_8+X_{01}(\ln\lambda)\big)
-i\beta'\sigma_3-i\beta'Z_{01}(\ln\lambda)\,,\\
\widetilde \sigma_1&=\sigma_1\,,\quad \widetilde \sigma_2=\lambda\sigma_2\,,\quad
\widetilde \sigma_3=\lambda^{-1}\big(\sigma_3+Z_{01}(\ln\lambda)\big)\,.
  \end{align*}
We recognize the invariant $\sigma_1$ and the relative invariants $c_2$, $d_4$, $\sigma_2$, $c_5+\overline{d_7}-d_8$ 
of weight $+1$, as well as $d'_5$, $c_3-d_3$ of weight $+2$. 

Let us first assume that $\lambda$ can be fixed uniquely to $\lambda=1$ on $\cU$ via an algebraic normalization, 
that we do not need to specify. 
(For example, if one of the relative invariants is non-zero, we may require that it takes values in $S^1\subset\mathbb C$.) 
The structure functions $c_1$, $c_2$, $c_3-d_3$, $c_5+\overline{d_7}-d_8$, $d_4$, $d'_5$, $\sigma_2$, $\sigma_3$ 
are then also invariants, and the remaining ones change accordingly to
  \begin{align*}
\widetilde c_3 &= c_3-2i\beta'c_1 -iY_{10}(\beta')\,, \\
\widetilde c_5 &= c_5-iZ_{01}(\beta')+i\beta'\sigma_3\,,\\
\widetilde d_8 &= d_8 -i\beta'\sigma_3\,.
  \end{align*}
  
If $\sigma_3\neq 0$, we may fix $\beta'$ from the last equation, arrive at a pairs of 
canonical absolute parallelisms as in \eqref{eq:pairs-canononical-parallelisms}, and infer $\dim\gg\leq 6$. 

Otherwise $\sigma_3=0$ and hence $d_8$ is an invariant. We then compute the Jacobi identity
\begin{align*}
\operatorname{Jac}(X_{10},Z_{10},Z_{01})&
=[Z_{10},Y+\overline{d_7}Z_{10}+\overline{d_8}Z_{01}]-[Z_{01},\overline{c_5}Z_{10}]\\
&\equiv [Z_{10},Y]\mod \langle Z_{10}\rangle\quad \equiv Z_{01}\mod \langle Z_{10}\rangle\neq0
\end{align*}
where we used $c_4=\overline{d_6}=\sigma_3=0$ and that $\overline{d_8}$ is an invariant (hence it is constant on the orbits and $Z_{10}(\overline{d_8})=0$). We get a contradiction, this case cannot happen. We then proved:
\begin{proposition}
\label{prop:fixed-uniquely}
If the structure function $\alpha=0$, and $\lambda:\cU\rightarrow \mathbb R_+$ can be fixed uniquely to $\lambda= 1$ on $\cU$, then $\dim\gg\leq 6$.
\end{proposition}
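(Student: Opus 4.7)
The plan is to exploit the single residual real gauge parameter $\beta'$ to either fully canonicalize the frame on $\cU$, or to derive a Jacobi obstruction. After the normalizations $\alpha=0$, $d_2=0$, $d_9=1$, $\mathfrak{Re}(d_5)=0$ and the assumption $\lambda=1$, the residual transformation reduces to $\widetilde X_{10}=X_{10}+i\beta' Z_{10}$, $\widetilde Y_{10}=Y_{10}$, $\widetilde Z_{10}=Z_{10}$, and the decisive change law read off from the transformation table is $\widetilde{d_8}=d_8-i\beta'\sigma_3$. Note that $\sigma_3$ is already an invariant under the residual freedom, so its vanishing is a geometric dichotomy.

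First I would treat the case $\sigma_3\neq0$. Shrinking $\cU$ so that $\sigma_3$ is nowhere zero, I would fix $\beta'$ uniquely by requiring $d_8=0$. At this point both the local parallelism $(X,Y,Z,\cJ X,\cJ Z,R,\cJ Y)$ on $\cU$ and its restriction $(X,Y,Z,\cJ X,\cJ Z,R)$ to any orbit $\mathcal O^G_q$ become canonical absolute parallelisms, exactly as in \eqref{eq:pairs-canononical-parallelisms}. Because $\cJ Y$ is canonical and nowhere tangent to the orbits, the restriction map $\gg\to\gg|_{\mathcal O^G_q}$ is faithful, and $\gg|_{\mathcal O^G_q}$ preserves an absolute parallelism on a $6$-manifold; hence $\dim\gg\leq6$, identically to the argument already used in Proposition \ref{prop:generic1!}.

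Next I would treat the case $\sigma_3\equiv0$. Here $\widetilde{d_8}=d_8$, so $d_8$ is itself an invariant; in particular it is constant along orbits and $Z_{10}(d_8)=0$. I would then expand the Jacobi identity $\operatorname{Jac}(X_{10},Z_{10},Z_{01})=0$ using the structure table, substituting $\alpha=0$, $d_2=0$, $d_9=1$, and $c_4=\overline{d_6}=\sigma_3=0$. Every contribution except $[Z_{10},[X_{10},Z_{01}]]=[Z_{10},Y+\overline{d_7}Z_{10}+\overline{d_8}Z_{01}]$ collapses modulo $\langle Z_{10}\rangle$, and this remaining term reduces to $[Z_{10},Y]$, which by the second line of \eqref{eq:Lie-brackets-expandedII} has a nonzero $Z_{01}$-component modulo $\langle Z_{10}\rangle$. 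This produces a contradiction, excluding the case.

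The main obstacle is identifying which structure function to normalize with $\beta'$. The choice $d_8$ is singled out because its transformation coefficient is precisely the invariant $\sigma_3$, which cleanly splits the analysis and whose vanishing locus is exactly where the Jacobi identity on $(X_{10},Z_{10},Z_{01})$ becomes obstructed by the normalization \eqref{eq:norm-conditions} of $\clL_3$. Once these two ingredients are lined up, the argument is a finite bookkeeping of the structure table together with the preceding Jacobi constraints \eqref{eq:Jacobators!}.
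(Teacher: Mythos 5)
Your proof is correct and follows essentially the same route as the paper: the normalization $\widetilde d_8 = d_8 - i\beta'\sigma_3$, the dichotomy on $\sigma_3$, the pair of canonical parallelisms when $\sigma_3\neq0$, and the Jacobi obstruction on $\operatorname{Jac}(X_{10},Z_{10},Z_{01})$ when $\sigma_3=0$. One small slip: the nonzero $Z_{01}$-component of $[Z_{10},Y]$ modulo $\langle Z_{10}\rangle$ comes from the earlier normalization $d_9=1$ (i.e. from $[Z_{10},Y_{01}]$ in the structure table), not from \eqref{eq:Lie-brackets-expandedII}, which concerns brackets of $Z$ with $X$.
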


Assume now that $\lambda$ cannot be fixed to $\lambda= 1$ via any algebraic normalization:
in particular, all the relative invariants $c_2$, $d_4$, $\sigma_2$, $c_5+\overline{d_7}-d_8$, $d'_5$, $c_3-d_3$ vanish. 
The relevant changes of structure functions are more easily expressed in terms of $\lambda$ and 
$\widehat\beta=\lambda^{-1}\beta'$ as follows
 \begin{align*}
\widetilde c_1&=c_1-Y_{10}(\ln\lambda)\,,\\
\widetilde c_3
&=\lambda^2 \big(c_3-2i\widehat\beta c_1-iY_{10}(\widehat\beta)\big)\,, \\
\widetilde d_7 
&=\lambda \big(d_7+2i\widehat\beta\overline{\sigma}_3-iZ_{10}(\widehat\beta)\big)\,,\\
\widetilde d_8&=
\lambda \big(d_8+X_{01}(\ln\lambda)
-i\widehat\beta\sigma_3-i\widehat\beta Z_{01}(\ln\lambda)\big)\,,\\
\widetilde \sigma_1&=\sigma_1\,,\quad
\widetilde \sigma_3=\lambda^{-1}\big(\sigma_3+Z_{01}(\ln\lambda)\big)\,.
 \end{align*}
and the Lie brackets drastically simplify to the following form:
\par
{\footnotesize
 \begin{equation*}\label{Table:tab-simplified}
\begin{array}{||c|c|c|c|c|c|c||}\hline
[-,-] & X_{10} & Y_{10} & Z_{10} & X_{01} & Y_{01} & Z_{01} \\
\hline\hline
X_{10} & 0 & c_1X_{10}+c_3Z_{10} & \star & \notin \cD & \star & \star \\
\hline
Y_{10} & \star & 0 & \star &
X_{10}-(\overline{\sigma}_1+c_1)X_{01}+ c_3Z_{01} & \sigma_1Y_{10}-\overline\sigma_1Y_{01} & \star \\
\hline
Z_{10} & \star & -c_1Z_{10} & 0 & Y+\overline{\sigma}_3X_{01} +d_7Z_{01}+d_8Z_{10} &
Z_{01} +(\sigma_1-\overline{c_1})Z_{10} & \sigma_3 Z_{10}-\overline\sigma_3 Z_{01} \\
\hline
X_{01} & \star & \star & \star & 0 & \star & \star \\
\hline
Y_{01} & \star & \star & \star & \star & 0 & \star \\
\hline
Z_{01} & \star & \star & \star & \vphantom{\frac{A^b}A}
\sigma_3 X_{01}+(d_8-\overline{d_7})Z_{01} & \star & 0 \\
\hline
\end{array}\;\;
 \end{equation*}}
\vskip0pt\par
\centerline{\small\it Strucrture equations of the frame in the subcase $\alpha=0$ and $\lambda$ cannot be fixed.}
\medskip

It is only from now on that applying Tanaka's algebraic prolongation pays off!
In fact $\gg$ is represented as infinitesimal symmetries of a $7$-dimensional manifold endowed with
a collection of graded frames, defined up to $\lambda:\cU\to \mathbb R_+$ and $\beta':\cU\rightarrow\mathbb R$, 
and the symbol algebra is a $7$-dimensional metabelian Lie algebra with $2$-dimensional reduced structure algebra:
  \begin{equation*}\label{eq:symbol-with-structure}
\begin{aligned}
\mathfrak f_0(q)&\cong\operatorname{span}\big(\Lambda, B\big) \,,\\
\gm_{-1}(q)&\cong \cD|_q=\big(TU_\cD^\cJ|_q\oplus \mathbb R Y|_q\big)\oplus  \mathbb R \cJ Y|_q\,,\\
\gm_{-2}(q)&\cong T\cU|_q/\cD|_q=\mathbb R R|_q\,.
\end{aligned}
  \end{equation*}
Here $TU_\cD^\cJ=\langle X,\cJ X,Z,\cJ Z \rangle$ and we tacitly identified $R|_q$ with its class modulo $\cD|_q$. 

The Lie brackets of $\gf_{\leq 0}(q):=\gm(q)\rtimes\gf_0(q)$ are more conveniently expressed w.r.t.\ the natural basis in the complexified picture with omitted evaluation at $q$ for simplicity of notation:
\begin{equation*}
\begin{aligned}
{}[X_{10},X_{01}]&=iR\,,\,\,[B,X_{10}]=iZ_{10}\,,\,\,[B,X_{01}]=-iZ_{01}\,,\,\,[\Lambda,B]=-2B\,,\,\,[\Lambda,R]=2R\,, \\
[\Lambda,X_{10}]&=X_{10}\,,\quad [\Lambda,X_{01}]=X_{01}\,,\quad[\Lambda,Z_{10}]=-Z_{10}\,,\quad[\Lambda,Z_{01}]=-Z_{01}\,.
\end{aligned}
\end{equation*}

Now $\gf_{\leq 0}(q)=\gf'_{\leq 0}(q)\oplus\gf''_{\leq 0}(q)$ is the direct sum of two ideals 
  \begin{align*}
\gf'_{\leq 0}(q)&=\gf'_{-2}(q)\oplus \gf'_{-1}(q)\oplus \gf'_{0}(q)\qquad\qquad \gf''_{\leq 0}(q)=\gf''_{-1}(q)\\
&=\gm_{-2}(q)\oplus TU_\cD^\cJ|_q\oplus \gf_0(q)\qquad\qquad\quad\;\;\,\,=\mathrm{span}\big(Y|_q,\cJ Y|_q\big)\,.
  \end{align*}
It is known that the Tanaka prolongation of $\gf_{\leq 0}(q)$ is the direct sum of the prolongations of $\gf_{\leq 0}'(q)$ and 
$\gf_{\leq 0}''(q)$ as ideals. Since $\gf_{\leq 0}''(q)$ has trivial structure algebra $\gf_{0}''(q)=0$ by construction, 
it coincides with its prolongation. We now turn to $\gf_{\leq 0}'(q)$. 

Let $R|_q,X_{10}|_q,X_{01}|_q,Z_{10}|_q,Z_{01}|_q,\Lambda,B$ be the basis of the complexification of $\gf_{\leq 0}'(q)$ 
and denote the dual basis by $\rho,\xi_{10},\xi_{01},\zeta_{10},\zeta_{01},\lambda,\beta$. The Spencer operator
  $$
\delta:\gf'_{-1}(q)^*\otimes \gf'_0(q)\,\bigoplus\,\gf'_{-2}(q)^*\otimes\gf'_{-1}(q)\longrightarrow 
\Lambda^2 (\gf_{<0}'(q))^*\otimes \gf'_{<0}(q)
 $$
is defined as the  component of degree $1$ of the differential of the Chevalley-Eilenberg complex of the 
Lie algebra $\gf_{<0}'(q)$ with values in $\gf_{\leq 0}'(q)$ and it is determined by the following expressions
  \begin{equation}\label{eq:Spencer-differential-component}
\begin{aligned}
\delta\rho&=-i\xi_{10}\wedge\xi_{01}\,,\quad\delta\xi_{10}=\delta\xi_{01}=\delta\zeta_{10}=\delta\zeta_{01}=0\,,\\
\delta\Lambda&=-\xi_{10}\otimes X_{10}-\xi_{01}\otimes X_{01}+\zeta_{10}\otimes Z_{10}+\zeta_{01}\otimes Z_{01}-2\rho\otimes R\,,\\
\delta B&=-i\xi_{10}\otimes Z_{10}+i\xi_{01}\otimes Z_{01}\,,\\
\delta X_{10}&=-i\xi_{01}\otimes R\,,\quad\delta X_{01}=i\xi_{10}\otimes R\,,\\
\delta Z_{10}&=\delta Z_{01}=\delta R=0\,.
\end{aligned}
  \end{equation}
A Spencer $1$-cochain
  \begin{multline*}
\big(a_1\xi_{10}+a_2\xi_{01}+a_3\zeta_{10}+a_4\zeta_{01}\big)\otimes\Lambda+
\big(b_1\xi_{10}+b_2\xi_{01}+b_3\zeta_{10}+b_4\zeta_{01}\big)\otimes B\\
+\rho\otimes\big(c_1X_{10}+c_{2}X_{01}+c_3Z_{10}+c_4Z_{01}\big)
  \end{multline*}
is $\delta$-closed precisely when $a_1=a_2=a_3=a_4=b_3=b_4=c_1=c_2=0$ and $b_1=-c_4$, $b_2=-c_3$. 
In other words, 
$\gf'_1(q)=\mathrm{span}\big(
B_{10}:=\xi_{10}\otimes B-\rho\otimes Z_{01},
B_{01}:=\xi_{01}\otimes B-\rho\otimes Z_{10}
\big)$.


The element $B\in\gf'_0(q)$ acts trivially on $\gf'_1(q)$, while $\Lambda$ acts as a scaling by $-3$. Therefore, 
the semisimple element $S:=-\Lambda$ has spectrum $(-2; -1,-1,+1,+1;0,+2;+3,+3)$ in the eigenbasis
$R|_q$; $X_{10}|_q$, $X_{01}|_q$, $Z_{10}|_q$, $Z_{01}|_q$;  $S$, $B$; $B_{10}$, $B_{01}$ of $\gf'$.

A longer but still straightforward computation with the Spencer operator of higher order implies $\gf'_2(q)=0$;
this verification is included into the \textsc{Maple} supplement accompanying the arXiv posting of the article. 
Hence the Tanaka prolongation of $\gf'_{\leq 0}(q)$ is the $9$-dimensional graded Lie algebra $\gf'_{\leq 0}(q)\oplus\gf'_1(q)$. 
The prolongation of $\gf_{\leq 0}(q)$ is therefore $11$-dimensional,
but since $\gg$ is effectively represented on any fixed orbit, it is sufficient to consider the prolongation of 
$\gf_{\leq 0}'(q)\oplus\mathbb R Y|_q$, which is the $10$-dimensional Lie algebra 
$\bar\gf=\big(\gf'_{<0}(q)\oplus \mathbb R Y|_q\big)\oplus\gf_0'(q)\oplus\gf'_1(q)$ majorizing $\opp{gr}(\gg)$. 
Consequently we get the a-priori bound $\dim\gg\leq 10$.

 \begin{proposition}\label{P6a}
If the structure function $\alpha=0$, and $\lambda:\cU\rightarrow \mathbb R_+$ 
cannot be fixed to $\lambda= 1$ via algebraic normalizations, then $\dim\gg\leq 6$.
  \end{proposition}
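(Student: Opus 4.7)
The plan is to sharpen the a priori bound $\dim\gg\leq 10$ coming from $\bar\gf$ to $\dim\gg\leq 6$, by showing that $\opp{gr}(\gg)$ has trivial component in $\gf'_0(q)\oplus\gf'_1(q)$. A first reduction: since $Y$ and $\cJ Y$ are canonical and $\cJ Y$ is transverse to every orbit $\mathcal O^G_q$, the Lie algebra $\gg$ is effectively represented on any fixed $6$-dimensional orbit (via arguments analogous to those preceding Proposition \ref{prop:generic1!}). Hence $\opp{gr}(\gg)_{<0}\subset\gf'_{<0}(q)\oplus\mathbb{R}Y|_q$ has dimension at most $6$, and it remains to show $\opp{gr}(\gg)_0=\opp{gr}(\gg)_1=0$.

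Next I would rule out $\opp{gr}(\gg)_0\subset\gf'_0(q)=\langle\Lambda,B\rangle$. An element $a\Lambda+bB$ in $\opp{gr}(\gg)_0$ acts as a derivation of the symbol compatible with all the structure functions of the frame. The assumption that $\lambda$ cannot be fixed algebraically means that every positive-weight relative invariant vanishes, so the surviving structure functions are $c_1, c_3, d_7, d_8, \sigma_1, \sigma_3$, subject to the algebraic relations $\sigma_1+c_1+d_1=0$, $c_6=-c_1$, $d_9=1$, $c_4=\overline{d_6}=\sigma_3$ derived from the Jacobi identities \eqref{eq:Jacobators!}. The gauge transformation laws $\widetilde c_1 = c_1 - Y_{10}(\ln\lambda)$, $\widetilde\sigma_3 = \lambda^{-1}(\sigma_3 + Z_{01}(\ln\lambda))$, and the analogous laws for $c_3, d_7, d_8$, translate the existence of a non-trivial $a\Lambda+bB$ in $\opp{gr}(\gg)_0$ into an overdetermined PDE system for $\ln\lambda$ and $\widehat\beta$. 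I expect that the remaining Jacobi identities of the simplified bracket table render this system inconsistent, with a separate treatment of the degenerate sub-case $\sigma_3=0$ analogous to the final contradiction $[Z_{10},Y]\equiv Z_{01}\not\equiv 0\!\mod\langle Z_{10}\rangle$ appearing at the end of the proof of Proposition \ref{prop:fixed-uniquely}.

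The final step is to rule out $\opp{gr}(\gg)_1\subset\gf'_1(q)=\langle B_{10},B_{01}\rangle$. A non-zero element is, by construction, a Spencer $1$-cochain $\delta$-closed with respect to the flat bracket, and its lift to a genuine infinitesimal symmetry of the filtered structure is obstructed by a Spencer $2$-cocycle built from the curvature, i.e., from the non-flat structure functions above, via the explicit formulas \eqref{eq:Spencer-differential-component}. This obstruction is a linear system in the two unknown coefficients of the cochain with matrix entries built from $c_3, d_7, \sigma_3, d_8$. I anticipate that a generic non-vanishing of these functions forces the zero solution, while the fully degenerate sub-case reduces again to the contradiction invoked in the previous step.

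The hard part of the argument will be the explicit Spencer-cohomological and Jacobi identity computation of the last two steps. Each surviving structure function carries its own gauge transformation law under the two-parameter residual freedom $(\lambda,\widehat\beta)$, and the vanishing of the relevant cocycles must be verified carefully, likely with symbolic-computation assistance, in analogy with the $\gf'_2(q)=0$ verification already relegated to the \textsc{Maple} supplement accompanying the arXiv posting.
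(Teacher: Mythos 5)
Your overall frame (reduce to the $6$-dimensional orbit, then kill the non-negative part of $\opp{gr}(\gg)$) identifies the right target, but the central step of your plan has a genuine gap. You propose to show $\opp{gr}(\gg)_0=0$ by arguing that a non-trivial $a\Lambda+bB$ would have to be ``compatible with the structure functions'' $c_1,c_3,d_7,d_8,\sigma_3$ and that this forces an inconsistent PDE system for $\ln\lambda$ and $\widehat\beta$. This cannot work as stated: the hypothesis of the proposition is precisely that \emph{all} relative invariants ($c_2$, $d_4$, $\sigma_2$, $c_5+\overline{d_7}-d_8$, $d'_5$, $c_3-d_3$) vanish, and the remaining structure functions transform with derivative terms of the gauge parameters $(\lambda,\widehat\beta)$, so they are not tensorial and a symmetry in $\gg(q)^0$ is under no obligation to preserve them. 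The only absolute invariant left, $\sigma_1$, is annihilated by every symmetry automatically and constrains nothing. In other words, in this degenerate branch the reduced structure algebra genuinely is the $2$-dimensional $\gf'_0(q)=\langle\Lambda,B\rangle$, there is no graded-level obstruction to $\gg_0(q)\neq0$, and the functions $\lambda,\widehat\beta$ are frame-gauge data, not data of the symmetry, so they cannot be fed into a PDE system that constrains $\Xi_0$. The same objection undercuts your degree-$1$ step, which moreover presupposes the degree-$0$ step.

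The obstruction actually lives one level deeper, at the level of \emph{filtered deformations}. The paper's proof accepts the a priori bound $\dim\gg\leq 10$ from $\bar\gf=\big(\gf'_{<0}(q)\oplus\R Y|_q\big)\oplus\gf'_0(q)\oplus\gf'_1(q)$, enumerates the seven graded subalgebras of $\bar\gf$ containing the negative part and of dimension $\geq 7$ (all combinations of $\gf'_0(q)$ or $\R B$ or $\R\Lambda$ with $\gf'_1(q)$, a line therein, or nothing), and shows that none of them admits a filtered deformation compatible with the structure equations imported via Lemma \ref{lem:from-left-to-right} (observations (O1)--(O3), the normalization $[S,Y]=0$, and the bracket table after Proposition \ref{prop:fixed-uniquely}). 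The contradictions come from the Jacobi identities of the putative \emph{filtered} bracket, i.e., from the interaction of terms of different filtration degree, which is invisible to any purely graded argument; the verification is a linear-algebra computation carried out in the \textsc{Maple} supplement. Your Spencer-cocycle idea for degree $1$ is at least pointing at obstruction theory of the right kind, but to repair the proposal you would have to replace the degree-$0$ step entirely by a case-by-case analysis of filtered deformations, which is essentially the paper's route.
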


In order to prove this we have to exclude all the cases where $\dim\gg\geq 7$.
We are thus led to investigate filtered deformations $\gg$ of the following graded subalgebras of
$\bar\gf$ that include the negative part (since $\gg\cong\gg|_{\cO^G_q}$ is transitive on the orbit),
and have dimension at least $7$:
  \begin{itemize}
\item[$(i)$] $\big(\gf_{<0}'(q)\oplus\R Y|_q\big)\oplus\gf_0'(q)\oplus\gf'_1(q)$,
\item[$(ii)$] $\big(\gf_{<0}'(q)\oplus\R Y|_q\big)\oplus\R B\oplus\gf'_1(q)$,
\item[$(iii)$] $\big(\gf_{<0}'(q)\oplus\R Y|_q\big)\oplus\gf_0'(q)\oplus\ell$, where $\ell$ is a line in $\gf'_1(q)$,
\item[$(iv)$] $\big(\gf_{< 0}'(q)\oplus\R Y|_q\big)\oplus\R B\oplus\ell$, where $\ell$ is a line in $\gf'_1(q)$,
\item[$(v)$] $\big(\gf_{<0}'(q)\oplus\R Y|_q\big)\oplus\gf_0'(q)$,
\item[$(vi)$] $\big(\gf_{<0}'(q)\oplus\R Y|_q\big)\oplus\R B$,
\item[$(vii)$] $\big(\gf_{<0}'(q)\oplus\R Y|_q\big)\oplus\R\Lambda$,
  \end{itemize}
In $(i)$-$(iv)$ we use the fact that the commutator of a non-zero element in degree $1$ with 
$\gf'_{-1}(q)=TU_\cD^\cJ|_q$ gives $B$. 
In $(v)$-$(vii)$ we use the fact that a non-trivial linear combination of the elements $\Lambda$ and $B$ 
is either equal to $B$ or $\opp{Ad}\bigl(\exp\gf'_0(q)\bigr)$-conjugate to $\Lambda$.

Several general observations in the study of the filtered deformations of the above graded algebras $(i)$-$(vii)$ are in order:
\begin{enumerate}
	\item[O1] 
The Lie bracket is compatible with the filtration induced by the graded structure. Explicitly, it has the form $[-,-]=[-,-]_o+\cdots$, where $[-,-]_o$ is the graded bracket and $\cdots$ are terms of overall positive degree (i.e, the degree of the output is higher than the sum of the degrees of the two inputs).
	\item[O2] 
If the semisimple element $S$ enters the graded subalgebra, we may redefine the elements of the graded basis so that $[S,-]=[S,-]_o$. In other words, $S$ can be assumed to have the same spectral properties in the filtered algebra.
	\item[O3] 
Many of the structure equations coming from the brackets of vector fields summarized in the Table 
after Proposition \ref{prop:fixed-uniquely} can be used thanks to Lemma \ref{lem:from-left-to-right} and 
Remark \ref{rem:change-sign}, upon  rewriting the equations in terms of vector fields tangent to the orbit.
	
For instance, the equations coming from higher-order Levi forms and integrability of bundles are clearly tensorial, 
while the equations $\alpha=0$ and $d_9=1$ fall within the scope of (ii) Lemma \ref{lem:from-left-to-right} 
(the section $Y$ is canonical, the bundle $\langle Z_{10}\rangle$ is $G$-stable by \eqref{eq:gauge-freedom-explicit} and one 
may consider the map $\langle Z_{10}\rangle\to TU_\cD\otimes\mathbb C/\langle Z_{10}\rangle$ induced by Lie bracket).
	
The equations that we don't take into account are the direct consequences of the Jacobi identities \eqref{eq:Jacobators!} 
and the vanishing of the relative invariants $c_5+\overline{d_7}-d_8$, $c_3-d_3$.
(The Jacobi identities will be enforced later at the level of the symmetry algebra. 
The vanishing of the relative invariants actually do fall within the scope of Lemma \ref{lem:from-left-to-right}, 
but it is cumbersome to show it. 
In fact, an elaboration of Tanaka theory implies that one can impose all
the constraints obtained so far, but we decided not to discuss it here.) 
\end{enumerate}

We indicate the final Lie brackets in the following Table for the reader's convenience:

\par\medskip
\centerline{\rotatebox{0}{\footnotesize
$\begin{array}{||c|c|c|c|c|c||}\hline
[-,-] &
X_{10} & Y & Z_{10} & X_{01} & Z_{01} \\
\hline\hline
X_{10} & 0 & \star & \star & \notin TU_\cD & \star \\
\hline
Y & \star & 0 & \star &
X_{10}+(d_1-\overline c_1)X_{01}+ (d_3-\overline c_3)Z_{01} & \star \\
\hline
Z_{10} & \star &  Z_{01}+(c_6+d_{10})Z_{10}  &  0 & 
Y+d_6X_{01}+d_7Z_{01}+d_8Z_{10} & \sigma_3 Z_{10}-\overline\sigma_3 Z_{01} \\
\hline
X_{01} & \star & \star & \star & 0 & \star \\
\hline
Z_{01} & \star & \star & \star & c_4X_{01}+c_5Z_{01} & 0 \\
\hline
\end{array}$
}}
\vskip5pt\par
\centerline{\small\it Structure equations of the sub-frame $X$, $Y$, $Z$, $\cJ X$, $\cJ Z$.}

\begin{remark}
Observation (O2) needs a clarification. For instance $[S,X_{10}]_o=-X_{10}$ and, by (O1), we may write
$[S,X_{10}]=-X_{10}+k_1S+k_2B+k_3B_{10}+k_{4}B_{01}$ for some $k_1,k_2,k_3,k_4\in\mathbb C$. The general redefinition $\widetilde X_{10}:=X_{10}+\alpha_1S+\alpha_2 B+\alpha_3B_{10}+\alpha_{4}B_{01}$ is still compatible with the filtration and 
$
[S,\widetilde X_{10}]
=-X_{10}+k_1S+\big(k_2+2\alpha_2\big)B
+\big(k_3+
\alpha_2 r_1+3\alpha_3\big)B_{10}
+\big(k_{4}+
\alpha_2 r_{2}+3\alpha_4\big)B_{01}
$,
with $[S,B]=2B+r_1B_{10}+r_{2}B_{01}$ for some $r_1,r_2\in\mathbb C$, $[S,B_{10}]=3B_{10}$, $[S,B_{01}]=3B_{01}$ by (O1). Choosing
$\alpha_1=-k_1$, $3\alpha_2=-k_2$, $4\alpha_3=-k_3-
\alpha_2 r_1$, $4\alpha_4=-k_{4}-\alpha_2 r_{2}$ yields $[S,\widetilde X_{10}]=-\widetilde X_{10}$.
Similarly $[S,\widetilde X_{01}]=-\widetilde X_{01}$, and setting $\widetilde R:=-i[\widetilde X_{10},\widetilde X_{01}]$ also enforces $[S,\widetilde R]=-2\widetilde R$.

The crucial point here is that the eigenvalue of the graded action of $S$ on $X_{10}$
is different from those on the higher filtration terms $S$, $B$, $B_{10}$, and $B_{01}$ entering the redefinition. 
This is the case for all the basis elements except $Y$, in which case we may only enforce $[S,Y]=kS$ for some $k\in\mathbb C$, after redefinitions. However:
\end{remark}

 \begin{lemma}
We have $[S,Y]=0$ also in the filtered algebra.
 \end{lemma}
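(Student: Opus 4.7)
The plan is to extract $[S,Y]=0$ from a single application of the Jacobi identity, exploiting the fact --- established in the preceding Remark and Observation (O2) --- that after suitable redefinitions $\opp{ad}(S)$ is diagonalizable on every basis vector of $\bar\gf$ other than $Y$, with graded eigenvalues $-2,-1,-1,+1,+1,0,+2,+3,+3$ on $R,X_{10},X_{01},Z_{10},Z_{01},S,B,B_{10},B_{01}$. After these redefinitions we may write $[S,Y]=kS$ for some $k\in\mathbb C$, so the entire problem reduces to showing $k=0$.

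First I apply the Jacobi identity to the triple $(S,Y,X_{10})$, using $[S,Y]=kS$ and $[S,X_{10}]=-X_{10}$:
\[
[S,[Y,X_{10}]] \;=\; [[S,Y],X_{10}]+[Y,[S,X_{10}]] \;=\; -kX_{10}-[Y,X_{10}].
\]
Since $Y|_q$ lies in the abelian ideal summand $\gf''_{-1}(q)$ while $X_{10}|_q\in\gf'_{-1}(q)$, the graded bracket $[Y,X_{10}]_o\in\gm_{-2}(q)$ vanishes, hence $[Y,X_{10}]\in\gf^{-1}$ and admits an expansion
\[
[Y,X_{10}] \;=\; aX_{10}+bX_{01}+cZ_{10}+dZ_{01}+eY+fS+gB+hB_{10}+iB_{01}.
\]
Applying $\opp{ad}(S)$ to the right-hand side, I use that $S$ acts by the stated eigenvalues on every basis element except $Y$, and that $[S,Y]=kS$. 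The key comparison is that of the $X_{10}$-coefficient in the two expressions for $[S,[Y,X_{10}]]$: one obtains $-a$ (since $X_{10}$ is an eigenvector of weight $-1$ and no other basis vector except $X_{10}$ itself feeds into the $X_{10}$-line), while the other side yields $-k-a$. This forces $k=0$.

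No serious obstacle arises: the semisimplicity of $\opp{ad}(S)$ on all basis elements other than $Y$ is precisely what the Remark provides, each such element having a graded eigenvalue distinct from those of all vectors of strictly higher filtration degree; the vector $Y$ is the single exceptional one, because both $S$ and $Y$ sit at graded eigenvalue zero, and it is resolved by the one Jacobi identity above. As a byproduct, matching the remaining coefficients shows that $c=d=e=f=g=h=i=0$, so in fact $[Y,X_{10}]=aX_{10}+bX_{01}$ --- a constraint that will streamline the filtered-deformation analysis of the cases (i)--(vii) ahead.
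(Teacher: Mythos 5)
Your proof is correct and relies on the same mechanism as the paper's argument: a single Jacobi identity involving $S$, $Y$ and an $\opp{ad}(S)$-eigenvector, followed by a coefficient comparison that forces $k=0$. The only difference is the choice of test vector --- the paper brackets with $Z_{01}$ and uses the structure equation $[Y,Z_{01}]\equiv -Z_{10}-tZ_{01}$ supplied by (O3), whereas you bracket with $X_{10}$, expand $[Y,X_{10}]$ with arbitrary coefficients (legitimately omitting $R$ since the graded bracket vanishes by the ideal decomposition), and extract $k=0$ from the $X_{10}$-line alone; this is a harmless variation, and your byproduct constraints $c=d=e=f=g=h=i=0$ on $[Y,X_{10}]$ are likewise valid consequences of the same identity.
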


 \begin{proof}
Working modulo terms in the stabilizer (i.e., modulo non-negative terms), we write $[Y,Z_{01}]\equiv -Z_{10}-tZ_{01}$  
for some $t\in\mathbb C$ thanks to (O3), whence $[S,[Y,Z_{01}]]\equiv-Z_{10}-tZ_{01}$ too. But
$[[S,Y],Z_{01}]+[Y,[S,Z_{01}]]=k[S,Z_{01}]+[Y,Z_{01}]\equiv  -Z_{10}-\big(t-k\big)Z_{01}$
and the Jacobi identity immediately implies $k=0$.
 \end{proof}

In summary, not only the filtration is preserved by (O1) but (O2) holds: the brackets of the filtered Lie algebra are compatible 
with the grading given by $S$, whenever present, simplifying the commutation relations. For instance, the stabilizer is always rigid, 
either by $S$-grading in cases $(i)$, $(iii)$, $(v)$ and $(vii)$, or by trivial dimensional reasons in cases $(ii)$, $(iv)$, $(vi)$.

We thus checked that the Jacobi identities rule out all the possible filtered deformations.
The computation is straightforward linear algebra, done using the symbolic package \textsc{Maple} -- 
it can be found in the supplement accompanying the arXiv posting of the article. 
Let us note that although one expects quadratic relations in the Jacobi constraints on the deformation parameters,
the ideal of relations has a linear part, resolving which we get another linear part, and so forth until a non-ambiguous
contradiction of the type $1=0$ arises.

This proves Proposition \ref{P6a} and finishes the subcase \S\ref{subsubsec:352}.

\subsubsection{The case $\opp{rk}(TU_\cK)=3$, $\opp{rk}(TU_\cL)=1$}\label{subsubsec:353}

Let us recall that $\opp{rk} (TU_\cD)=5$, $\opp{rk} (TU_\cD^\cJ)=4$, $\opp{rk}(TU_\cK)=3$, and $\opp{rk}(TU_\cK^\cJ)=2$.
Since $\opp{rk}(TU_\cL)=1$, then $TU^\cJ_\cL=0$, i.e., $TU_\cD^\cJ$ and $\cL$ are transversal, and $\cD=TU^\cJ_\cD\oplus \cL$. Now
$TU+\cK=TU+\cL=T\cU$ is integrable, thus we may integrate the distributions using adapted coordinates $(u^1,\ldots,u^5,z,v)$ so that
\begin{equation}
\label{eq:distribution-aligned-II}
\begin{aligned}
TU&=\langle \partial_{u^1},\ldots,\partial_{u^5},\partial_z\rangle\,,\\
\cK&=\langle \partial_{u^4},\partial_{u^5},\partial_z,\partial_v\rangle\,,\\
\cL&=\langle\partial_{z},\partial_v\rangle\,,
\end{aligned}
\end{equation}
and the Lie algebra $\gg$  consists of vector fields
$\xi=\sum_{i=1}^5\xi^i\partial_{u^i}+\xi^6\partial_z$, with $\xi^i$ depending only on 
$u^1,u^2,u^3$ for $i=1,2,3$ (since $\cK$ is $\gg$-stable) and $u^1,\ldots,u^5$ for $i=4,5$ (since $\cL$ is $\gg$-stable).
The projection 
to the leaf space of $\cL$
is injective on symmetries by $(v)$ of Proposition \ref{prop:inducedcomplexstructure}. In other words $\gg\cong\widetilde{\pi}_\star(\gg)$ and, decomposing
any infinitesimal CR-symmetry into $\xi=\xi'+\xi''$ with $\xi':=\sum_{i=1}^5\xi^i\partial_{u^i}$, $\xi'':=\xi^6\partial_z$, we have that $\xi'$ fully determines $\xi$. In particular, $\gg$ is {\it effectively} represented on any fixed orbit $\mathcal O^G_q=\{(u,z,v)\in\cU\mid v=v_q\}$.

By Lemma \ref{lem:simple-but-useful}, we may consider local sections $X_{10}$, $Y_{10}$, $Z_{10}$ of $\cD_{10}$, $\cK_{10}$, $\cL_{10}$, respectively, as in \S\ref{subsubsec:352}. They are normalized sections and they are defined up to transformations depending on 
functions $\lambda:\cU\rightarrow \mathbb R_+$ and $e^{i\varphi}:\cU\rightarrow S^1$.
Now $TU_\cL\subset TU_\cL\otimes\mathbb C\cong \cL_{10}$ is a canonical real line subbundle of $\cL_{10}$, so (ii) of Lemma \ref{lem:simple-but-useful} applies: we may take $Z$ a non-zero sections of $TU_\cL$ and set $e^{i\varphi}=1$. Since $\cD/\cK\cong TU^\cJ_\cD/TU^\cJ_\cK$ and $\cK/\cL\cong TU^\cJ_\cK $ (by
$TU^\cJ_\cL=0$),
we are also free to ask for $X$ to be a section of $TU^\cJ_\cD$, which is not in $\cK$ at all points of $\cU$, and for $Y$ to be a non-zero section of $TU_\cK^\cJ$.
In summary:
\begin{equation}
\label{eq:summary-for-preliminary-frames-secondsubcase}
\begin{aligned}
TU_\cD&=TU^\cJ_\cD\oplus\langle Z\rangle\,,\\
TU_\cD^\cJ&=\langle X,\cJ X\rangle\oplus TU_\cK^\cJ \,,\\
TU_\cK^\cJ&=\langle Y,\cJ Y\rangle\,,
\end{aligned}
\end{equation}
with the normalized sections uniquely defined up transformations of the form
\begin{equation}
\label{eq:gauge-freedom-secondsubcase}
X_{10}\mapsto \lambda X_{10}\mod (TU^\cJ_\cK)_{10}\,,\quad Y_{10}\mapsto Y_{10}\,,
\quad Z_{10}\mapsto \lambda^{-1} Z_{10}\,,
\end{equation}
for some function $\lambda:\cU\rightarrow \mathbb R_+$. In particular, the sections $Y$ and $\cJ Y$ are both preserved by $\gg$.
The normalization conditions say that
$[Y_{10},X_{01}]=X_{10}+d_1X_{01} +d_2Y_{01}+d_3 Y_{10}+d_4(Z_{10}+ Z_{01})$  (here we also used that $TU$ is integrable) and 
$[Z_{10},X_{01}]= Y_{10}+ d_5 X_{01}   + d_6 Y_{01}+ d_7  Z_{01}+d_8  Z_{10}$,
for some complex-valued functions $d_1,\ldots,d_8:\cU\to \mathbb C$. Furthermore 
$[X_{10},Y_{10}]= c_1X_{10}+c_2 Y_{10}$ for some $c_1,c_2:\cU\to \mathbb C$, using the general inclusion
$[(TU_{\cD}^\cJ)_{10},(TU_\cD^\cJ)_{10}]\subset (TU_\cD^\cJ)_{10}$.

We summarize the Lie brackets obtained so far, with some consequences of the integrability of $TU$ and the Freeman bundles (including that $\cL_{10}\oplus \cK_{01}$ and $TU_\cK$ are integrable). 
	
\par\bigskip
\centerline{\rotatebox{0}{\footnotesize
$\begin{array}{||c|c|c|c|c|c|c||}\hline
[-,-] & X_{10} & Y_{10} & Z_{10} & X_{01} & Y_{01} & Z_{01} \\
\hline\hline
X_{10} & 0 & 
\begin{gathered} c_1X_{10}+c_2Y_{10} \vphantom{\frac:.}\end{gathered} & 
\star & \notin D & \star & \star \\
\hline
Y_{10} & \star & 0 & \star &
\begin{gathered}X_{10}+d_1X_{01} +d_2Y_{01}\vphantom{\frac:.}\\ +d_3 Y_{10}+d_4Z\end{gathered} &
\begin{gathered}\sigma_1Y_{10}-\overline\sigma_1Y_{01}\\ +irZ \end{gathered} &
\star \\
\hline
Z_{10} & \star & c_6 Y_{10}+c_7Z_{10}  & 0 &
\begin{gathered} Y_{10}+ d_5 X_{01}   + d_6 Y_{01}\vphantom{\frac:.}\\+ d_7  Z_{01}+d_8  Z_{10}\end{gathered} &
\begin{gathered} d_9 Y_{01}+d_{10} Z_{01}\\ +d_{11}Z_{10}\end{gathered}  &
\sigma_2 Z_{10}-\overline\sigma_2 Z_{01} \\
\hline
X_{01} & \star & \star & \star & 0 & \star & \star \\
\hline
Y_{01} & \star & \star & \star & \star & 0 & \star \\
\hline
Z_{01} & \star & \star & \star & c_3X_{01}+ c_4Y_{01}+c_5Z_{01} & \star & 0 \\
\hline
\end{array}$
}}
\vskip5pt\par
\centerline{\small\it Structure equations of the frame in the case $\opp{rk}(TU_\cK)=3$, $\opp{rk}(TU_\cL)=1$.}
\medskip

\noindent
The functions $c_1,\ldots c_7, d_1,\ldots,d_{11},\sigma_1,\sigma_2$ are complex-valued, whereas $r$ is real, 
since $Z$ is tangent to the orbit while $\cJ Z$ not. 
This fact also yields the additional relations $d_8=c_5+d_7$ (bracketing $Z$ with $X_{01}$) and 
$d_{11}=\overline{c_7}+d_{10}$ (bracketing $Z$ with $Y_{10}$).

We aim to canonically constrain the sections $X_{10}$, $Y_{10}$ and $Z_{10}$, at the same time reducing the freedom in the transformations \eqref{eq:gauge-freedom-secondsubcase}, given by
  \begin{equation}\label{eq:gauge-freedom-explicit-secondsubcase}
X_{10}\mapsto \widetilde{X}_{10}=\lambda X_{10}+\beta Y_{10}\,,\quad Y_{10}\mapsto\widetilde{Y}_{10}=Y_{10}\,,
\quad Z_{10}\mapsto \widetilde{Z}_{10}=\lambda^{-1} Z_{10}\,,
  \end{equation}
for functions $\lambda:\cU\rightarrow \mathbb R_+$ and $\beta:\cU\rightarrow\mathbb C$. 
The change on structure functions is as follows:
  \begin{equation*}\label{eq:structure-function-explicit-secondsubcase}
\begin{aligned}
\widetilde d_1&=d_1+Y_{10}(\ln\lambda)\,,\quad
\widetilde d_2=\lambda d_2-\overline\beta \big(d_1+ Y_{10}(\ln\lambda)+\overline\sigma_1\big)
+Y_{10}(\overline\beta)\,,\\
\widetilde d_3&=\lambda d_3+\overline{\beta}\sigma_1-\beta\,,\quad
\widetilde d_4=\lambda\big(\lambda d_4+\overline{\beta} r\big)\,,\quad
\widetilde d_5=\lambda^{-1}\big(d_5+Z_{10}(\ln\lambda)\big)\,,\\
\widetilde d_6&=\lambda^{-1}\bigg(\lambda d_6
-\overline\beta \big(d_5+Z_{10}(\ln\lambda)-d_9\big)+Z_{10}(\overline\beta)\bigg)\,,\quad
\widetilde d_7=\lambda d_7+\overline\beta d_{10}\,,\\
\widetilde d_8&=\lambda\big(d_8+X_{01}(\ln\lambda)\big)+\overline\beta \big(d_{11}+Y_{01}(\ln\lambda)\big)\,,\quad
\widetilde d_9=\lambda^{-1}d_9\,,\quad
\widetilde d_{10}=d_{10}\,,\\
\widetilde d_{11}&=d_{11}+Y_{01}(\ln\lambda)\,,\quad
\widetilde c_1=c_1-Y_{10}(\ln\lambda)\,,\quad
\widetilde c_2=\lambda c_2-\beta\big(c_1-Y_{10}(\ln\lambda)\big)-Y_{10}(\beta)\,,\\
\widetilde c_3&=\lambda^{-1}\big(c_3+Z_{01}(\ln\lambda)\big)\,,\quad
\widetilde c_4=\lambda^{-1}\bigg(\lambda c_4+Z_{01}(\overline\beta)-\overline\beta\big(c_3+Z_{01}(\ln\lambda)-\overline{c_6}\big)\bigg)\,,\\
\widetilde c_5&=\lambda\big(c_5+X_{01}(\ln\lambda)\big)+\overline\beta\big( \overline{c_7}+ Y_{01}(\ln\lambda)\big)\,,\quad
\widetilde c_6=\lambda^{-1}c_6\,,\quad
\widetilde c_7=c_7+Y_{10}(\ln\lambda)\,,\\
\widetilde \sigma_1&=\sigma_1\,,\quad
\widetilde \sigma_2=\lambda^{-1}\big(\sigma_2+Z_{01}(\ln\lambda)\big)\,,\quad
\widetilde r=\lambda r\,.
\end{aligned}
  \end{equation*}
We recognize the absolute invariants $d_1-\overline{d_{11}}$, $d_1+c_1$, $c_1+c_7$, $d_{10}$, $\sigma_1$, and
the relative invariants $d_5-\overline{c_3}$, $c_3-\sigma_2$, $d_9$, $c_6$ of weight $-1$, as well as $r$ of weight $+1$. 

There are three generic cases that allow us to restrict \eqref{eq:gauge-freedom-explicit-secondsubcase} with
the constraint $\beta=0$:
  \begin{itemize}
	\item[($i'$)] 
If $TU^\cJ_\cK$ is not integrable (equivalently, if $r\neq 0$), we may additionally normalize sections by the condition $d_4=0$;
	\item[($i''$)] 
If $\cL_{10}\oplus (TU^\cJ_\cK)_{01}=\langle Z_{10}, Y_{01}\rangle$ is not integrable (equivalently, if $d_{10}\neq 0$), 
we may additionally normalize sections by the condition $d_7=0$;
	\item[($i'''$)] 
If $\sigma_1$ does not take values in $S^1\subset\mathbb C$, we may additionally normalize sections by the condition $d_3=0$.
  \end{itemize}
If any of these conditions hold, we reduce to the case $\beta=0$ and then $d_2$, $d_3$, $d_7$,  $c_2$ are additional 
relative invariants of weight $+1$ as well as $d_4$ is a relative invariant of weight $+2$. 
If at least one of these overall ten relative invariants is non-zero, then: 
  \begin{itemize}
	\item[($ii$)] 
We further normalize sections by requiring that the above relative invariants take values in $S^1\subset\mathbb C$, 
finally restricting \eqref{eq:gauge-freedom-explicit-secondsubcase} with $\beta=0$ and $\lambda=1$.
  \end{itemize}
In this case, we obtain a pair of canonical parallelisms
  \begin{equation}\label{eq:pairs-canononical-parallelisms-II}
\begin{aligned}
&\underbrace{\big(X,Y,Z,\cJ X,\cJ Y,R=\tfrac12[X,\cJ X]\big)}_{\text{Canonical parallelism on each fixed orbit}\;\mathcal O^G_q}\,,\\
&\underbrace{\big(X,Y,Z,\cJ X,\cJ Z,R=\tfrac12[X,\cJ X],\cJ Z\big)}_{\text{Canonical parallelism on}\;\cU}\,,
\end{aligned}
  \end{equation}
and conclude as in Proposition \ref{prop:generic1!}:

 \begin{proposition}\label{prop:generic2!}
In the generic case, given by ($i$)-($ii$), we have $\dim\gg\leq\dim\mathcal O^G_q=6$.
 \end{proposition}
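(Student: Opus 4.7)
The plan is to exploit the rigidity obtained from the generic conditions $(i')$–$(i''')$ and $(ii)$, following the same pattern that worked in Proposition~\ref{prop:generic1!}. Specifically, under any one of $(i')$, $(i'')$, $(i''')$ the residual gauge freedom in \eqref{eq:gauge-freedom-explicit-secondsubcase} collapses to $\beta=0$, because the chosen normalization ($d_4=0$, or $d_7=0$, or $d_3=0$) uniquely fixes the complex parameter $\beta$ in terms of already-determined structure functions via the corresponding change-of-frame formulas. Once $\beta=0$ is enforced, the structure functions $d_2$, $d_3$, $d_7$, $c_2$, $d_4$ become genuine relative invariants of positive weight in $\lambda$, and under $(ii)$ at least one of them is nowhere vanishing and is normalized to take values in $S^1\subset\mathbb C$; this uniquely determines $\lambda\in\mathbb R_+$, completely rigidifying the local sections $X_{10}$, $Y_{10}$, $Z_{10}$.

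With the gauge completely fixed, the (real parts of the) sections $X, Y, Z, \cJ X, \cJ Y, \cJ Z$ together with $R=\tfrac12[X,\cJ X]$ form a \emph{canonical} (i.e., $\gg$-invariant) global frame on $\cU$, yielding the pair of parallelisms as in \eqref{eq:pairs-canononical-parallelisms-II} after correcting the evident typo (the parallelism on $\cU$ must include $\cJ Y$ and $\cJ Z$). The first key consequence is that any infinitesimal CR-symmetry $\xi\in\gg$ is completely determined by its value $\xi|_q$ at a single point $q\in\cU$, by the standard rigidity of symmetries of an $\{e\}$-structure.

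The second key consequence uses the canonicity of $\cJ Z$: since $\cJ Z$ is preserved by $\gg$ yet transverse to every orbit (as $\cJ Z \notin TU$ because $TU_\cL^\cJ=0$), the stabilizer of a point $q$ inside $\gg$ acts trivially not only on $T_q\mathcal O^G_q$ but also on $\cJ Z|_q$; more importantly, the local flow along $\cJ Z$ intertwines the $G$-actions on different orbits, so $\gg$ is \emph{effectively} represented on the single orbit $\mathcal O^G_q$. Restricting the canonical parallelism to $\mathcal O^G_q$ produces six $\gg$-invariant vector fields $X, Y, Z, \cJ X, \cJ Y, R$ tangent to the orbit, and the standard rigidity for parallelisms yields
\[
\dim\gg \;=\; \dim(\gg|_{\mathcal O^G_q}) \;\leq\; \dim\mathcal O^G_q \;=\; 6,
\]
which is the claimed bound. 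The only nontrivial point to verify carefully is the second consequence (effective representation on a single orbit), since the first and third are purely formal from the existence of the parallelisms; this is precisely the step that was already verified word-for-word in the proof of Proposition~\ref{prop:generic1!}, and the present situation differs only in which canonical section plays the role of the transverse direction ($\cJ Z$ here versus $\cJ Y$ there).
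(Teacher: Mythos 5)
Your proof is correct and follows the paper's own route: enforce the normalizations $(i')$--$(i''')$ and $(ii)$ to eliminate the residual freedom $\beta$ and $\lambda$, obtain the pair of canonical parallelisms \eqref{eq:pairs-canononical-parallelisms-II}, and bound $\dim\gg$ by the dimension of an orbit on which $\gg$ acts effectively and preserves an absolute parallelism. The only cosmetic difference is that the paper already has effectiveness on a fixed orbit from the injectivity of $\widetilde{\pi}_\star$ on symmetries established at the start of \S\ref{subsubsec:353}, whereas you rederive it from the canonical transverse section $\cJ Z$ (mirroring \S\ref{subsubsec:352}); both are valid, and you are right that \eqref{eq:pairs-canononical-parallelisms-II} contains a typo (the parallelism on $\cU$ should list $\cJ Y$ and $\cJ Z$ once each).
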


If ($ii$) cannot be enforced, then $d_2=d_3=d_4=d_7=d_9=c_2=c_6=r=0$ and
$d_5=\overline{c_3}=\overline\sigma_2$, and still 
$X_{10}\mapsto \widetilde{X}_{10}=\lambda X_{10}$, $Y_{10}\mapsto\widetilde{Y}_{10}=Y_{10}$, 
$Z_{10}\mapsto \widetilde{Z}_{10}=\lambda^{-1} Z_{10}$, for some $\lambda:\cU\to \mathbb R_+$. 
In other words, $\gg$ is represented as infinitesimal symmetries of a $7$-dimensional manifold endowed 
with a collection of graded frames defined up to $\lambda:\mathfrak U\to \mathbb R_+$.

We work at a fixed point $p\in\cU$ that is regular for the filtered structure as in \S\ref{sec:2.3} 
and, taking into account the reduction of structure algebra, write
  \begin{equation*}\label{eq:symbol-with-structure-second-occurence}
\begin{aligned}
\gg_0(p)&\subset\gf_0(p)\cong\mathbb R \Lambda\\
\gg_{-1}(p)&\cong TU_\cD|_p=\operatorname{span}\big(X|_p, \cJ X|_p, Y|_p, \cJ Y|_p, Z|_p\big)\subset \gm_{-1}(p)\cong \cD|_p\,,\\
\gg_{-2}(p)&=\gm_{-2}(p)\cong T \cU|_p/\cD|_p\cong\mathbb R R|_p\,,
\end{aligned}
  \end{equation*}
where we identified $R|_p$ with its class modulo $\cD|_p$ and the semisimple element
$S:=-\Lambda$ has spectrum $(-2; -1,-1,0,0,+1,+1;0)$ in the eigenbasis 
$R|_p$, $X|_p$, $\cJ X|_p$, $Y|_p$, $\cJ Y|_p$, $Z|_p$, $\cJ Z|_p$, $\Lambda$. 
Since $p$ is a regular point, the graded algebra $\opp{gr}(\gg)$ satisfies 
$[\gg_{i+1}(p),\gm_{-1}(p)]=[\gg_{i+1}(p),\cD|_p]\subset \gg_{i}(p)$ for all $i\in\mathbb Z$ by \cite[Thm. 2]{Kru2014}.

If $\gg_0(p)=\gf_0(p)$, then in particular $[S,\cJ Z|_p]=\cJ Z|_p\in\gg_{-1}(p)$, a contradiction. Therefore
$\gg_0(p)=0$, and thanks to Lemma \ref{lem:idontknow} and Propositon \ref{prop:generic2!}, we get:

  \begin{proposition}\label{prop:generic3!}
If $r\neq 0$, or $d_{10}\neq 0$, or $\sigma_1$ does not take values in $S^1\subset\mathbb C$, then $\dim\gg\leq 6$.
  \end{proposition}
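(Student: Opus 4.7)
The plan is to combine Proposition~\ref{prop:generic2!} with the filtered-prolongation analysis developed in the paragraphs just above the proposition, via a dichotomy on whether condition $(ii)$ can be enforced after one of the generic reductions $(i')$, $(i'')$, $(i''')$.

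Each of the hypotheses $r\neq 0$, $d_{10}\neq 0$, $|\sigma_1|\neq 1$ triggers exactly one of the sub-cases $(i')$, $(i'')$, $(i''')$, so in all three situations the gauge freedom \eqref{eq:gauge-freedom-explicit-secondsubcase} reduces to $\beta=0$, leaving only the scaling $\lambda:\cU\to\mathbb R_+$. If at least one of the ten relative invariants enumerated prior to $(ii)$ is non-vanishing on an open dense subset of $\cU$ (which we may assume after analytic localization to a regular subdomain), this $\lambda$ can be fixed algebraically to unity, producing the canonical parallelisms \eqref{eq:pairs-canononical-parallelisms-II}, and Proposition~\ref{prop:generic2!} immediately delivers $\dim\gg\leq 6$. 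In particular this already disposes of the hypothesis $r\neq 0$, since $r$ itself belongs to the list of ten invariants.

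The remaining configuration is when all ten relative invariants vanish identically while sub-case $(i'')$ or $(i''')$ remains operative. Here the reduced structure algebra at a regular point $p\in\cU$ is $\gf_0(p)\cong\mathbb R\Lambda$, and I would invoke \cite[Thm.~2]{Kru2014} for the associated graded algebra $\opp{gr}(\gg)$, yielding the key inclusion $[\gg_0(p),\gm_{-1}(p)]\subset\gg_{-1}(p)\cong TU_\cD|_p$. The semisimple element $S=-\Lambda$ has eigenvalue $+1$ on $\cJ Z|_p$, but $\cJ Z|_p\notin TU_\cD|_p$ since the condition $TU_\cL^\cJ=0$ forces $\cJ Z$ to be transverse to the orbit. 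Consequently $\gg_0(p)\neq\gf_0(p)$, and the one-dimensionality of $\gf_0(p)$ forces $\gg_0(p)=0$; Lemma~\ref{lem:idontknow} then concludes $\dim\gg=\dim U=6$.

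The proposition is thus a direct synthesis of the computations and tools already assembled in \S\ref{subsubsec:353}: no substantive obstacle is expected, because the enumeration of the ten relative invariants together with their transformation laws, the structure of $\gf_0(p)$, and the spectrum of $S$ on $\cD|_p$, have all been carried out in the preceding text. The only qualitative inputs required are Proposition~\ref{prop:generic2!} on one side, and the spectral contradiction involving $\cJ Z|_p$ together with Lemma~\ref{lem:idontknow} on the other.
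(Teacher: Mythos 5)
Your proposal is correct and follows essentially the same route as the paper: the dichotomy between fixing $\lambda=1$ via a non-vanishing relative invariant (whence the canonical parallelisms and Proposition~\ref{prop:generic2!}) and the residual case where all ten relative invariants vanish, which is killed by the spectral observation that $S=-\Lambda$ has eigenvalue $+1$ on $\cJ Z|_p\notin TU_\cD|_p\cong\gg_{-1}(p)$, forcing $\gg_0(p)=0$ and hence $\dim\gg\le 6$ by Lemma~\ref{lem:idontknow}. Your explicit remark that the hypothesis $r\neq 0$ already lands in the first branch (since $r$ is itself one of the ten relative invariants) is a small clarifying refinement consistent with the paper's statement that $r=0$ whenever $(ii)$ cannot be enforced.
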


We finally assume that none of the conditions $(i')$-$(i''')$ is satisfied: $r=d_{10}=0$, $\sigma_1=e^{i\theta}$. 
In this case, the Lie brackets between the normalized sections simplify to 

\par
{\footnotesize
\begin{equation*}
\begin{array}{||c|c|c|c|c|c|c||}\hline
[-,-] & X_{10} & Y_{10} & Z_{10} & X_{01} & Y_{01} & Z_{01} \\
\hline\hline
X_{10} & 0 & c_1X_{10}+c_2Y_{10} & \star & \notin D & \star & \star \\
\hline
Y_{10} & \star & 0 & \star &
\begin{gathered}X_{10}+d_1X_{01} +d_2Y_{01}\vphantom{\frac:.}\\ +d_3 Y_{10}+d_4Z\end{gathered} &
e^{i\theta}Y_{10}-e^{-i\theta}Y_{01} & \star \\
\hline
Z_{10} & \star & c_6 Y_{10}+c_7Z_{10}  & 0 &
\begin{gathered} Y_{10}+ d_5 X_{01}   + d_6 Y_{01}\vphantom{\frac:.}\\+ d_7  Z_{01}+d_8  Z_{10}\end{gathered} &
d_9 Y_{01}+d_{11}Z_{10} & \sigma_2 Z_{10}-\overline\sigma_2 Z_{01} \\
\hline
X_{01} & \star & \star  & \star & 0 & \star & \star \\
\hline
Y_{01} & \star & \star & \star & \star & 0 & \star \\
\hline
Z_{01} & \star & \star & \star & c_3X_{01}+ c_4Y_{01}+c_5Z_{01} & \star & 0 \\
\hline
\end{array}
\end{equation*}}
\vskip0pt\par
\centerline{\small\it Structure equations in the subcase $r=d_{10}=0$ and $\sigma_1=e^{i\theta}$.}
\medskip

Denoting $d_3^\sigma:=d_3e^{-i\theta/2}$, $\beta^\sigma:=\beta e^{-i\theta/2}$ 
($\theta\!\!\mod2\pi$ is an invariant) the transformation rule
$\widetilde d_3=\lambda d_3+\overline{\beta}\sigma_1-\beta$ takes the form
 $$
\widetilde{d_3^\sigma}= \lambda d_3^\sigma-2i\mathfrak{Im}\big(\beta^\sigma\big).
 $$ 
Thus we can fix $\mathfrak{Im}\big(d_3^\sigma\big)=0$ with residual freedom $\mathfrak{Im}\big(\beta^\sigma\big)=0$.
In other words, $d_3=d'_3e^{i\theta/2}$, $\beta=\beta'e^{i\theta/2}$ for some functions $d'_3,\beta':\cU\to\mathbb R$.

It is only from now on that applying Tanaka's algebraic prolongation pays off
and that we can work as in the last part of \S\ref{subsubsec:352}. 
In fact $\gg$ is represented as infinitesimal symmetries of a $7$-dimensional manifold endowed with a collection
of graded frames defined up to $\lambda:\cU\to \mathbb R_+$  and $\beta':\cU\rightarrow\mathbb R$. 
The symbol algebra is a $7$-dimensional metabelian Lie algebra together with the $2$-dimensional reduced structure algebra:
  \begin{equation*}\label{eq:symbol-with-structure-third-occurence}
\begin{aligned}
\gg_{0}(p)&\subset\gf_0(p)
\cong\operatorname{span}\big(\Lambda,B\big) \,,\\
\gg_{-1}(p)&\cong TU_\cD|_p
=\operatorname{span}\big(X|_p,\cJ X|_p,Y|_p,\cJ Y|_p, Z|_p\big)\subset\gm_{-1}(p)\cong\cD|_p\,,\\
\gg_{-2}(p)&=\gm_{-2}(p)\cong T\cU|_p/\cD|_p\cong\mathbb R R|_p\,,
\end{aligned}
  \end{equation*}
where we identified $R|_p$ with its class modulo $\cD|_p$.
The brackets of $\gf_{\leq 0}(p):=\gm(p)\rtimes\gf_0(p)$ are conveniently described in the complexified picture, 
omitting evaluation at $p$ for simplicity of notation:
  \begin{equation*}
\begin{aligned}
{}[X_{10},X_{01}]&=iR\,,\,\,[B,X_{10}]=e^{i\theta/2}Y_{10}\,,\,\,[B,X_{01}]=e^{-i\theta/2}Y_{01}\,,\,\,[\Lambda,B]=-B\,,\,\,[\Lambda,R]=2R\\
{}
[\Lambda,X_{10}]&=X_{10}\,,\quad [\Lambda,X_{01}]=X_{01}\,,\quad[\Lambda,Z_{10}]=-Z_{10}\,,\quad[\Lambda,Z_{01}]=-Z_{01}\,.
\end{aligned}
  \end{equation*}
Note that $\gf_{\leq 0}(p)$ is not the direct sum of two ideals as in \S\ref{subsubsec:352}, since $\Lambda$ acts non-trivially 
on $Z$. However, we may work at a regular point for the filtered structure: $[\gg_{0}(p),\gm_{-1}(p)]\subset \gg_{-1}(p)$, 
and if $\gg_0(p)$ includes an element $a\Lambda+bB$ with $a\neq 0$, then
$[a\Lambda+bB,\cJ Z|_p]=-a\cJ Z|_q\in\gg_{-1}(p)$. This is a contradiction, so $\gg_0(p)\subset \mathbb R B$ 
and we may assume $\gg_{0}(p)=\mathbb R B$ thanks to Lemma \ref{lem:idontknow}.

Since $\gg\cong\gg|_{\mathcal O^G_p}$ is {\it effectively} represented on the orbit $\mathcal O^G_p$, 
we may consider the maximal prolongation of $\gf'_{\leq 0}(p):=\gg_{\leq 0}(p)$, instead of $\gf_{\leq 0}(p)$.  
As in \S\ref{subsubsec:352}, the first prolongation $\gf'_1(p)$ is $2$-dimensional and the second prolongation
vanishes, so the prolongation $\bar\gf=\gf'_{\leq 0}(p)\rtimes \gf'_1(p)$ is $9$-dimensional. 
Explicitly
 $$
\gf'_1(p)=\operatorname{span}\big(B_{10}:=e^{i\theta/2}\xi_{10}\otimes B-i\rho\otimes Y_{01},
B_{01}:=e^{-i\theta/2}\xi_{01}\otimes B+i\rho\otimes Y_{10}\big)
 $$
where $\rho,\xi_{10},\xi_{01},\upsilon_{10},\upsilon_{01},\zeta,\beta$ denotes the dual basis to 
$R|_p,X_{10}|_p,X_{01}|_p,Y_{10}|_p,Y_{01}|_p,Z|_p,B$. 
Consequently we get the a-priori bound $\dim\gg\leq 9$.

 \begin{proposition}\label{P6b}
If $r=d_{10}=0$ and $\sigma_1$ takes values in $S^1\subset\mathbb C$, then $\dim\gg\leq 6$.
 \end{proposition}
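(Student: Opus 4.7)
The plan is to mirror the strategy employed in the proof of Proposition \ref{P6a} in \S\ref{subsubsec:352}, adapting it to the slightly different prolongation $\bar\gf=\gf'_{\leq 0}(p)\rtimes\gf'_1(p)$ obtained here. Since we have a priori $\dim\gg\leq\dim\bar\gf=9$, and we must also have $\gg_0(p)=\mathbb R B$ (nontrivial, by Lemma \ref{lem:idontknow} applied to avoid the already-handled case $\gg_0(p)=0$), the only possibilities compatible with $\dim\gg\geq 7$ correspond to filtered deformations of the three graded subalgebras of $\bar\gf$
\begin{itemize}
\item[$(a)$] $\gg_{\leq 0}(p)\oplus\gf'_1(p)$, of dimension $9$;
\item[$(b)$] $\gg_{\leq 0}(p)\oplus\ell$, where $\ell\subset\gf'_1(p)$ is a line, of dimension $8$;
\item[$(c)$] $\gg_{\leq 0}(p)$ itself, of dimension $7$,
\end{itemize}
where the negative part $\gg_{<0}(p)$ is fixed because $\gg$ must act transitively on every orbit $\mathcal O^G_p$ (on which $\gg$ is effectively represented). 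The task is to rule out each of these three possibilities.

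For each case I will impose the analogues of observations O1, O2, O3 from \S\ref{subsubsec:352}. First, compatibility of the filtered bracket with the filtration (O1) fixes the form of every commutator up to higher filtration terms. Second, in cases $(a)$ and $(c)$ the element $B$ is not semisimple in the appropriate sense, so unlike in \S\ref{subsubsec:352} I would not rely on a full $S$-grading; instead I would exploit the partial scaling action of $\Lambda$ wherever it enters the filtered algebra (redefining basis elements by higher filtration correctives, using that $\Lambda$'s eigenvalues $0,-1,+1,+2$ separate most basis vectors from the higher terms entering the redefinitions). In particular one must prove, as in the earlier case, a small lemma of the form $[\Lambda,Y]=0$ in the filtered algebra using the structural bracket $[Y,Z_{01}]\equiv -Y_{10}+\cdots$ read off from the table together with the Jacobi identity; the argument is parallel but now uses the $\mathbb R B$-stabilizer and the eigenvalues of $\Lambda$ on $Y_{10},Y_{01},Z$. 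Third (O3), I would import the tensorial structure equations supplied by the table in \S\ref{subsubsec:353} using Lemma \ref{lem:from-left-to-right} and Remark \ref{rem:change-sign}, namely those coming from integrability of Freeman bundles, from the higher-order Levi forms, and from the normalized relations $r=d_{10}=0$, $\sigma_1=e^{i\theta}$, $\mathfrak{Im}(d_3^\sigma)=0$.

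With these ingredients fixed, each candidate filtered Lie bracket is parametrized by finitely many constants, and the Jacobi identities become a polynomial system in these parameters (with coefficients depending on the surviving structure functions $d_1,d_2,d_3',d_4,d_5,d_6,d_7,d_8,d_9,d_{11},c_1,\dots,c_7,\sigma_2,\theta$). I expect the main obstacle to lie precisely here: as the authors remark in \S\ref{subsubsec:352}, the resulting ideal has both linear and quadratic parts which must be processed iteratively -- resolving the linear part produces a new linear part, and so on, eventually forcing an outright contradiction like $1=0$. This final computation is not illuminating by hand and would be delegated to the same \textsc{Maple} supplement accompanying the paper. Once each of $(a)$, $(b)$, $(c)$ is excluded by such a contradiction, we obtain $\dim\gg\leq 6$, establishing the proposition and completing the subcase \S\ref{subsubsec:353}.
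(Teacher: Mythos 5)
Your overall strategy coincides with the paper's: the same three candidate graded subalgebras (your $(a)$--$(c)$ are the paper's $(i)$--$(iii)$ in \S\ref{subsubsec:353}), the same use of (O1) and (O3) with the structure equations of the sub-frame $X,Y,Z,\cJ X,\cJ Y$ imported via Lemma \ref{lem:from-left-to-right}, and the same final delegation of the Jacobi-identity analysis to the \textsc{Maple} supplement. Up to that point the proposal is a faithful reconstruction.

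However, one intermediate step you propose would fail: you cannot ``exploit the partial scaling action of $\Lambda$ wherever it enters the filtered algebra,'' because $\Lambda$ never enters it. The whole point of the preceding reduction is that $[\Lambda,\cJ Z|_p]=-\cJ Z|_p$ is transverse to $\gg_{-1}(p)$, which forces $\gg_0(p)\subset\mathbb R B$; consequently every one of the subalgebras $(a)$--$(c)$ has degree-zero part exactly $\mathbb R B$, and $\Lambda$ belongs to none of them. Your proposed lemma $[\Lambda,Y]=0$ therefore has no meaning in the filtered algebra, and no (O2)-type normalization is available -- the paper says this explicitly (``there is no semisimple element as in (O2) \dots at our disposal''), which is precisely why the deformation here carries a large number of unconstrained parameters, in contrast with \S\ref{subsubsec:352}. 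The fix is simply to delete that step: the contradiction must come entirely from the Jacobi identities applied to the (O1)/(O3)-constrained brackets, which is what the paper's computation does. With that correction the argument is the paper's argument.
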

 
In order to prove this we have to exclude all the cases where $\dim\gg\geq 7$.
So we investigate filtered deformations $\gg$ of the following graded subalgebras of
$\bar\gf$ that include the non-positive part (since $\gg\cong\gg|_{\cO^G_q}$ is transitive on the orbit
and $\gf'_1(p)$ is one-dimensional): 
  \begin{itemize}
\item[$(i)$] $\gf'_{\leq 0}(p)\oplus \gf'_1(p)$,
\item[$(ii)$] $\gf'_{\leq 0}(p)\oplus\ell$, where $\ell$ is a line in $\gf'_1(p)$,
\item[$(iii)$] $\gf'_{\leq 0}(p)$.
  \end{itemize}
Observations (O1) and (O3) of \S\ref{subsubsec:352} are still in force, but with the Table after Proposition \ref{prop:generic3!}, 
upon restriction to vector fields tangent to the orbit. In this case, all the structure equations can be taken into account, 
as they fall within the scope of Lemma \ref{lem:from-left-to-right}. 
We summarize all the relevant Lie brackets in the following Table (therein $s:\cU\to\mathbb R$):
\par
{\footnotesize
\begin{equation*}
\begin{array}{||c|c|c|c|c|c||}\hline
[-,-] & X_{10} & Y_{10} & Z & X_{01} & Y_{01} \\
\hline\hline
X_{10} & 0 & c_1X_{10}+c_2Y_{10} & \star & \notin TU_D & \star \\
\hline
Y_{10} & \star & 0 & \star &
\begin{gathered}X_{10}+d_1X_{01} +d_2Y_{01}\vphantom{\frac:.}\\ +e^{i\theta/2}s Y_{10}+d_4Z\end{gathered} &
e^{i\theta}Y_{10}-e^{-i\theta}Y_{01} \\
\hline
Z & \star & (c_6+\overline{d_9}) Y_{10}+\overline {d_{11}}Z & 0 &
\begin{gathered} Y_{10}+ (d_5+c_3) X_{01}\vphantom{\frac:.}\\ + (d_6+c_4) Y_{01}+d_8  Z\end{gathered} & \star \\
\hline
X_{01} & \star & \star & \star & 0 & \star \\
\hline
Y_{01} & \star & \star & \star & \star & 0 \\
\hline
\end{array}
\end{equation*}}
\smallskip
\centerline{\small\it Structure equations of the sub-frame $X$, $Y$, $Z$, $\cJ X$, $\cJ Y$.}
\smallskip

The deformation is otherwise unrestricted, since there is no semisimple element as in (O2) of \S\ref{subsubsec:352} at our disposal.   
This leads to a large number of deformation parameters, yet the Jacobi identities rule out all such possibilities (this computation 
can be found in the \textsc{Maple} supplement accompanying the arXiv posting of the article):

This proves Proposition \ref{P6b} and finishes the subcase \S\ref{subsubsec:353}.

\subsection{Dimension $d=5$}\label{sec:3.6}

 \begin{theorem}\label{thm:dim5}
If the symmetry algebra $\gg$ of a 7-dimensional 3-nondegenerate CR-hypersurface acts with generic orbits of dimension $d=5$, 
then $\dim\gg\leq 6$.
 \end{theorem}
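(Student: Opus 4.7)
The plan is to run a case analysis on the ranks $r_\cD:=\opp{rk}(TU_\cD)$, $r_\cK:=\opp{rk}(TU_\cK)$, $r_\cL:=\opp{rk}(TU_\cL)$ and on the $\cJ$-stable variants, combining the three strategies developed in \S\ref{sec:3.3}-\S\ref{sec:3.5}. Since $d=5$, Lemma \ref{lem:codimension}(i) forces $r_\cD=4$, and by (iii) of the same Lemma we may assume $TU_\cD^\cJ\neq0$, so $\opp{rk}(TU_\cD^\cJ)\in\{2,4\}$. Dimension counting gives $r_\cK\in\{2,3,4\}$. When $r_\cK=4$, $TU_\cK$ has codimension $1$ in $TU$, and Proposition \ref{prop:smallcodimension}(i) immediately gives $\dim\gg=1$.

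When $r_\cK=3$, $TU_\cK$ has codimension $2$ in $TU$ and Proposition \ref{prop:smallcodimension}(ii) applies: either $\dim\gg\leq 3$, or $\gg\cong\mathbb R^r$ or $\mathbb R\ltimes\mathbb R^r$ in the explicit form listed there. In the two latter cases I would mimic the analysis of \S\ref{sec:3.4}: write an adapted basis of sections of the flag $TU\supset TU_\cD\supset TU_\cD^\cJ\supset TU_\cK$ in the rectifying coordinates, exploit that $\cK$ is the Cauchy characteristic of $\cD$ together with the integrability of $\cD$ and of $TU+\cK$ to kill most of the coefficients, perform a further tangential coordinate change to normalize what remains, and finally impose $\gg$-invariance of $\cJ$. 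The outcome should be an overdetermined linear system on the component functions of a symmetry vanishing at $p$, whose only solution is trivial; this yields $\dim\gg\leq d+1=6$, split as in \S\ref{sec:3.4} according to whether the first derived subalgebra of $\gg$ acts transitively on the orbits or not.

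The generic and most delicate case is $r_\cK=2$, which itself splits according to whether $TU_\cK$ is $\cJ$-stable (equivalently $TU_\cK=\cL\subset TU$, with $r_\cL=2$) or $TU_\cK^\cJ=0$, and according to $r_\cL\in\{0,1,2\}$ and $\opp{rk}(TU_\cD^\cJ)\in\{2,4\}$. For each subcase I plan to normalize sections $X_{10}, Y_{10}, Z_{10}$ of $\cD_{10},\cK_{10},\cL_{10}$ via Lemma \ref{lem:simple-but-useful}, and derive a structure table of Lie brackets of the adapted frame in direct analogy with the tables of \S\ref{subsubsec:352}-\S\ref{subsubsec:353}. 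The residual gauge is generated by a real scaling $\lambda$ and possibly by a real or complex shift $\beta$. Whenever at least one weighted relative invariant produced by this normalization is non-vanishing, it can be fixed to take values in $S^1\subset\mathbb C$ and the remaining gauge parameters become uniquely determined; this yields a canonical absolute parallelism both on $\cU$ and on each orbit $\cO^G_q$, and hence $\dim\gg\leq\dim\cO^G_q=5$.

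The hardest subcases are those in which no algebraic normalization is available and the gauge group stays positive-dimensional: this is where all the machinery of \S\ref{subsubsec:352}-\S\ref{subsubsec:353} is needed. The non-positively graded symbol $\gf_{\leq 0}(p)=\gm(p)\rtimes\gf_0(p)$ is prescribed by the constrained frame, and its first prolongation $\gf_1(p)$ is computed via the Spencer operator as in \eqref{eq:Spencer-differential-component}, with higher prolongations shown to vanish by direct Spencer computation. This yields an a priori bound on $\bar\gf$; one then enumerates the graded subalgebras of $\bar\gf$ that contain $\gm(p)$ and have dimension $\geq 7$, and rules out each possible filtered deformation by the Jacobi identities, combined with the structure equations translated through Lemma \ref{lem:from-left-to-right}, Remark \ref{rem:change-sign} and observations (O1)-(O3). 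The main obstacle, as in \S\ref{subsubsec:352}-\S\ref{subsubsec:353}, is the combinatorial explosion of deformation parameters: I would rely on symbolic computation to derive the final $1=0$ contradictions. Assembling all subcases gives the sharp bound $\dim\gg\leq 6$.
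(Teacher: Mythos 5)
Your case skeleton ($r_\cK\in\{4,3,2\}$ via Proposition \ref{prop:smallcodimension}, plus the dichotomy $\opp{rk}(TU_\cD^\cJ)\in\{2,4\}$) matches the paper's, and the branches $r_\cK=4$ and $r_\cK=3$ are essentially right — though the paper disposes of $r_\cK=3$ far more cheaply than your proposed \S\ref{sec:3.4}-style computation: there $\opp{rk}(TU_\cD^\cJ)=2$ forces $\cJ$ to identify $TU_\cD/TU_\cD^\cJ$ with $T\cU/TU$, and the linearization of any isotropy element is killed in two lines. The genuine gap is in the branch $\opp{rk}(TU_\cD^\cJ)=4$ (and its sibling in \S\ref{subsec:3.6.3}, subcase $(i)$). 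There each orbit carries a $5$-dimensional $2$-nondegenerate (almost) CR structure on which $\gg$ is effectively and transitively represented, and in the dangerous situation this structure is the maximally symmetric tube over the future light cone, with symmetry algebra $\mathfrak{sp}_4(\mathbb R)\cong\mathfrak{so}(3,2)$. Your proposed mechanism breaks down exactly here: all relative invariants of an adapted frame vanish (the orbit model is flat), so no algebraic normalization is available and the relevant prolongation is the full $10$-dimensional $\mathfrak{sp}_4(\mathbb R)$; worse, the $7$-dimensional candidates — the maximal parabolics conjugate to $\mathfrak p_1^{\text{opp}}$ and $\mathfrak p_2^{\text{opp}}$ — are \emph{not} obstructed by Jacobi identities, since they are honest transitive symmetry algebras of genuine invariant CR structures on the orbit. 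They can only be excluded by bringing in the transverse direction: one must determine which parabolics act locally transitively at the base point (the paper does this via the $P$-orbit structure on $\mathbb R\mathbb P^3$ and on the Lagrangian Grassmannian), classify the invariant rank-$4$ distributions and complex structures on the orbit, and then show that the resulting $v$-dependence forces either projectability of $\cJ$ to a leaf space (contradicting Proposition \ref{prop:inducedcomplexstructure}$(iv)$) or degeneracy of the third-order Levi form (contradicting $3$-nondegeneracy). None of this appears in your plan, and ``rule out filtered deformations by Jacobi'' cannot substitute for it.

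Two secondary omissions. First, when $\opp{rk}(TU_\cD^\cJ)=2$ and $TU\oplus\cL=T\cU$, the orbit does \emph{not} inherit a CR structure by restriction, since $TU_\cD$ is not $\cJ$-stable; the paper's key device is to project, $\widecheck\cJ=\pi_{TU}\circ\cJ|_{TU_\cD}$, prove the result is $2$-nondegenerate but possibly non-integrable, and only then run the frame/prolongation machinery — without this step there is no $5$-dimensional structure to normalize. Second, your parenthetical ``$TU_\cK$ is $\cJ$-stable (equivalently $TU_\cK=\cL$)'' is false: in \S\ref{subsec:3.6.1} one has $TU_\cK=TU_\cK^\cJ$ of rank $2$ with $TU_\cL=0$, so $TU_\cK\neq\cL$; the case $TU_\cK=\cL$ is a separate subcase (\S\ref{subsec:3.6.3}$(iii)$), resolved by projecting to the leaf space of $\cL$ and invoking the $\mathfrak{su}(1,2)$ rigidity argument of \S\ref{sec:3.3}.
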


We shall now prove this result. First note that $\opp{rk} (TU_\cD)=4$ and $\opp{rk} (TU_\cD^\cJ)$ is either $4$ or $2$, 
by dimensional reasons. Furthermore $2\leq\opp{rk}(TU_\cK)\leq 4$ but, if it is $4$, then $\dim\gg=1$ by (i) of 
Proposition \ref{prop:smallcodimension}, a contradiction.
Therefore $\opp{rk}(TU_\cK)$ is either 2 or 3.

\subsubsection{The case $\opp{rk}(TU_\cD^\cJ)=4$}\label{subsec:3.6.1}

The bundle $TU_\cD$ is $\cJ$-stable, so $TU_\cK$ and $TU_\cL$ are $\cJ$-stable too,
$TU_\cK=TU_\cK^\cJ$ has rank $2$, and $TU_\cL=TU_\cL^\cJ$ has rank either $2$ or $0$. 

If $\opp{rk}(TU_\cL)=2$, then $\cL=TU_\cL=TU_\cK$ and
 \begin{align*}
[\cL_{10},\cD_{01}]&= [\cL_{10},(TU_\cD^\cJ)_{01}]+[\cL_{10},\cK_{01}]\\
&= [(TU^\cJ_\cL)_{10},(TU_\cD^\cJ)_{01}]+[\cL_{10},\cK_{01}]\\
&\subset \big(\cK_{10}\oplus \cD_{01}\big)\cap TU\otimes\mathbb C+\big(\cL_{10}\oplus \cK_{01}\big)
=\cL_{10}\oplus \cD_{01},
 \end{align*}
where we used Lemma \ref{lem:preliminarybrackets}, $(TU_\cD^\cJ)_{01}+\cK_{01}=\cD_{01}$,
$TU_\cD\otimes\mathbb C=TU_\cD^\cJ\otimes\mathbb C=(TU_\cD^\cJ)_{10}\oplus(TU_\cD^\cJ)_{01}$, 
$(TU_\cD^\cJ)_{10}\cap \cK_{10}=\cL_{10}$. 
In this case the Levi form \eqref{eq:h.o.l.f.3} vanishes identically, contradicting $3$-nondegeneracy.
Summarizing, this case cannot happen:

  \begin{proposition}\label{prop:induced5CR}
If $\opp{rk} (TU_\cD^\cJ)=4$, then $\opp{rk}(TU_\cK)=2$ and $\opp{rk}(TU_\cL)=0$. 
In particular, the distributions $TU$ and $\cL$ are transversal.
  \end{proposition}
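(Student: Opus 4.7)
The plan is to break the statement into two parts: first deduce the rank constraints from $\cJ$-stability, then exclude the critical case $\opp{rk}(TU_\cL)=2$ via a Lie-bracket argument contradicting $3$-nondegeneracy.

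First I would observe that the hypothesis $\opp{rk}(TU_\cD^\cJ)=4=\opp{rk}(TU_\cD)$ is equivalent to $TU_\cD=TU_\cD^\cJ$, i.e., $TU_\cD$ is $\cJ$-stable. Since $\cK$ and $\cL$ are themselves $\cJ$-stable, the bundles $TU_\cK=TU_\cD\cap\cK$ and $TU_\cL=TU_\cD\cap\cL$ are $\cJ$-stable subbundles of $TU_\cD$, so both have even real rank. Combined with the standing bounds $\opp{rk}(TU_\cK)\in\{2,3\}$ (and $\opp{rk}(TU_\cL)\le\opp{rk}(\cL)=2$), parity forces $\opp{rk}(TU_\cK)=2$ and $\opp{rk}(TU_\cL)\in\{0,2\}$.

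The main obstacle is ruling out $\opp{rk}(TU_\cL)=2$. If this occurred, then $TU_\cL=\cL$ (so $\cL\subset TU$), and since $\cL\subset\cK\cap TU=TU_\cK$ with both sides of real rank $2$, we would get $\cL=TU_\cK$. The plan is to show that this identification forces the third-order Levi form $\clL_3$ of \eqref{eq:h.o.l.f.3} to vanish identically, contradicting $3$-nondegeneracy. Concretely, it suffices to establish the inclusion $[\cL_{10},\cD_{01}]\subset\cL_{10}\oplus\cD_{01}$.

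To prove that inclusion I would split $\cD_{01}=(TU_\cD^\cJ)_{01}+\cK_{01}$ (valid because $TU_\cD=TU_\cD^\cJ$) and handle the two summands separately. The bracket $[\cL_{10},\cK_{01}]$ is already contained in $\cL_{10}\oplus\cK_{01}$ by Lemma \ref{lem:preliminarybrackets}. For $[\cL_{10},(TU_\cD^\cJ)_{01}]$, both entries lie in $TU\otimes\mathbb C$, so integrability of $TU$ keeps the bracket inside $TU\otimes\mathbb C$; meanwhile Lemma \ref{lem:preliminarybrackets} gives $[\cL_{10},\cD_{01}]\subset\cK_{10}\oplus\cD_{01}$. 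Using $TU_\cD=TU_\cD^\cJ$ and the identification $TU_\cK=\cL$ from the contradiction hypothesis, one computes
\[
(\cK_{10}\oplus\cD_{01})\cap TU\otimes\mathbb C=\cL_{10}\oplus(TU_\cD^\cJ)_{01}\subset\cL_{10}\oplus\cD_{01},
\]
which traps the second contribution as well. Adding the two gives $[\cL_{10},\cD_{01}]\subset\cL_{10}\oplus\cD_{01}$, so $\clL_3\equiv 0$, a contradiction. Hence $\opp{rk}(TU_\cL)=0$, i.e., $TU\cap\cL=0$, which is exactly the transversality statement. The delicate point is the intersection identity above: it relies crucially on \emph{both} the hypothesis ($TU_\cD=TU_\cD^\cJ$) and the conditional identification $\cK_{10}\cap TU\otimes\mathbb C=\cL_{10}$ produced by assuming $\opp{rk}(TU_\cL)=2$, and it is what makes the $3$-nondegeneracy obstruction trigger.
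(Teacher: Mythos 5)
Your proposal is correct and follows essentially the same route as the paper: parity of $\cJ$-stable subbundles pins down the ranks, and the case $\opp{rk}(TU_\cL)=2$ is excluded by splitting $\cD_{01}=(TU_\cD^\cJ)_{01}+\cK_{01}$, using integrability of $TU$ together with Lemma \ref{lem:preliminarybrackets}, and the same intersection identity $(\cK_{10}\oplus\cD_{01})\cap (TU\otimes\mathbb C)=\cL_{10}\oplus(TU_\cD^\cJ)_{01}$ to force $\clL_3\equiv 0$. No gaps.
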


Therefore $T\cU=TU\oplus\cL$ and there exist local coordinates $(u^1,\ldots,u^5,v^1,v^2)$ on
$\cU\cong U\times V$ such that $TU=\langle \partial_{u^1},\ldots,\partial_{u^5}\rangle$ and
$\cL=\langle\partial_{v^1},\partial_{v^2}\rangle$.
Any infinitesimal CR-symmetry is of the form $\xi=\sum_{i=1}^5 \xi^i(u)\p_{u^i}$, since $\cL$ is $\gg$-stable, 
and $\gg$ is {\it effectively} represented on each fixed orbit $\mathcal O^G_q=\{(u,v)\in\cU\mid v=v_q\}$.
By Proposition \ref{prop:induced5CR}, each of the orbits inherits a natural structure of 
a $2$-nondegenerate $5$-dimensional CR manifold, with the CR-structure 
  $$
\big(TU_\cD|_{\mathcal O^G_q}=TU_\cD^\cJ|_{\mathcal O^G_q},\cJ|_{TU_\cD|_{\mathcal O^G_q}}\big)
  $$ 
and the Cauchy characteristic space $TU_\cK|_{\mathcal O^G_q}$. Of course $\gg\cong\gg|_{\mathcal O^G_q}$ 
is a transitive Lie algebra of infinitesimal symmetries of this structure.

Any maximally symmetric such structure is locally isomorphic to the tube over the future light cone, seen as the
homogeneous manifold $SO^\circ(3,2)/H$ for an appropriate $5$-dimensional closed subgroup $H$ of $SO^\circ(3,2)$,
see \cite{IZ,MS}.

  \begin{proposition}\label{prop:induced5CR-II}
In the case $\opp{rk} (TU_\cD^\cJ)=4$, if the $2$-non\-de\-ge\-nerate CR structure induced on 
one orbit $\cO^G_q$ is not isomorphic to the tube over the future light cone,
then $\dim\gg\leq5$.
  \end{proposition}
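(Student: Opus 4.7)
The proof should essentially amount to invoking the submaximal symmetry dimension result for $5$-dimensional $2$-nondegenerate CR manifolds, once we confirm that the induced structure on an orbit lies in the right category and supports the full algebra $\gg$ effectively.

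The plan is to proceed as follows. First, I would formalize the observations already present right before the proposition: since $TU_\cD=TU_\cD^\cJ$ is $\cJ$-stable of rank $4$, and $\cL$ is transversal to $TU$ by Proposition \ref{prop:induced5CR}, each orbit $\cO^G_q\cong U\times\{v_q\}$ carries a well-defined subbundle $TU_\cD|_{\cO^G_q}$ with the restricted endomorphism $\cJ|_{TU_\cD|_{\cO^G_q}}$, and a $2$-dimensional Cauchy characteristic subbundle $TU_\cK|_{\cO^G_q}$. I need to verify that this almost CR structure is integrable on $\cO^G_q$: for $X,Y$ local sections of $(TU_\cD^\cJ)_{10}$ tangent to the orbit, the bracket $[X,Y]$ lies in $\cD_{10}$ by the integrability of $\cJ$ on $\cM$, and it is tangent to $\cO^G_q$ since both vector fields are; hence $[X,Y]$ is a section of $(TU_\cD^\cJ)_{10}|_{\cO^G_q}$, giving an honest CR structure. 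Two-nondegeneracy of this induced CR structure on $\cO^G_q$ follows from the higher-order Levi forms already computed in \S\ref{sec:2.1}: $\clL_2$ restricted to the orbit is nontrivial (by Proposition \ref{prop:induced5CR}, $TU_\cL=0$, so no direction inside the Cauchy space drops out) while $\clL_3$ would force a contradiction as in the proof of Proposition \ref{prop:induced5CR}.

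Next, as already noted in the text, because $\cL$ is transverse to $TU$ and $\gg$ preserves $\cL$, every infinitesimal CR-symmetry is of the form $\xi=\sum_{i=1}^5\xi^i(u)\partial_{u^i}$ with coefficients independent of $v$; hence the restriction map $\gg\to\gg|_{\cO^G_q}$ is injective, i.e., $\gg\cong\gg|_{\cO^G_q}$, and $\gg|_{\cO^G_q}$ acts transitively by construction. Thus $\gg$ embeds as a transitive subalgebra of the symmetry algebra of the $5$-dimensional $2$-nondegenerate CR manifold $\cO^G_q$.

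At this point I would invoke the sharp bounds recalled in \S\ref{sec:2.2}: the maximal symmetry dimension of a $5$-dimensional $2$-nondegenerate CR manifold is $10$, realized only by the tube $\mathcal{C}_5$ over the future light cone (equivalently, by a local model of $SO^\circ(3,2)/H$), while the submaximal symmetry dimension is $6$, actually attained only in the locally homogeneous case (cf.\ \cite{MP,IZ,MS,FK2}). Since by hypothesis the induced CR structure on some $\cO^G_q$ is not locally equivalent to $\mathcal{C}_5$, the full CR-symmetry algebra of $\cO^G_q$ has dimension at most $6$, and in particular the chain of inequalities $\dim\gg=\dim(\gg|_{\cO^G_q})\leq \dim\inf(\cO^G_q)\leq 5$ holds, giving the claim.

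The only genuinely nontrivial step is the verification that the induced almost CR structure is integrable and $2$-nondegenerate on $\cO^G_q$; the rest is a dimension-count built on the cited classification of $2$-nondegenerate $5$-dimensional CR manifolds. One subtlety to double-check is that an exceptional orbit cannot be locally equivalent to $\mathcal{C}_5$ while a nearby generic orbit is not --- but this is ruled out by analyticity together with the fact, stated in the hypothesis of the proposition, that we only need to assume non-equivalence for a single orbit.
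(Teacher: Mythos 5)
Your overall strategy is exactly the paper's: the setup just before the proposition already establishes that each orbit carries a $2$-nondegenerate $5$-dimensional CR structure on which $\gg\cong\gg|_{\cO^G_q}$ acts effectively and transitively, and the paper then concludes in one line by citing the submaximal symmetry bound for such structures. Your preliminary verifications (integrability of $\cJ|_{TU_\cD}$ on the orbit, $2$-nondegeneracy from $TU_\cK\neq 0$ and $TU_\cL=0$, injectivity of restriction to an orbit because symmetries are $v$-independent) are correct and consistent with the text.

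However, the final and only essential step contains a genuine error. You assert that the submaximal symmetry dimension of a $5$-dimensional $2$-nondegenerate CR manifold is $6$, and then write ``the full CR-symmetry algebra of $\cO^G_q$ has dimension at most $6$, and in particular $\dim\gg\leq\dim\inf(\cO^G_q)\leq 5$.'' This is a non sequitur: a bound of $6$ does not yield a bound of $5$. The correct fact, stated in \S\ref{sec:2.2} of the paper and due to \cite{MP} (with the homogeneous models classified in \cite{FK2}), is that the submaximal symmetry dimension is $5$, attained only in the locally homogeneous case. With the correct value the proposition follows immediately, exactly as in the paper; with the value $6$ as you state it, your argument would only give $\dim\gg\leq 6$, which is strictly weaker than the claim. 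Since this single citation is the entire content of the proof, the misquotation is not cosmetic and must be corrected. (Your closing worry about whether a single non-spherical orbit suffices is moot: the bound is obtained by restricting $\gg$ effectively to that one orbit, so nearby orbits play no role.)
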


Indeed, the submaximal CR symmetry dimension of a $2$-nondegenerate CR structure on a 5-manifold 
is equal to $5$ by \cite{MP}; 
in our case this 5-dimensional CR manifold is homogeneous and the claim actually follows also from \cite{FK2}.
The rest of this subsection will be focused on the proof of the remaining subcase:

  \begin{proposition}\label{prop:induced5CR-III}
In the case $\opp{rk} (TU_\cD^\cJ)=4$, if the induced $2$-nondegenerate CR structure 
is isomorphic to the tube over the future light cone on every orbit $\cO^G_q$, then $\dim\gg\leq5$.
  \end{proposition}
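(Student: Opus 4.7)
The plan is to construct a canonical absolute parallelism on each orbit $\cO^G_q$ from the ambient $3$-nondegenerate CR structure, and then to conclude $\dim\gg\le\dim\cO^G_q=5$ using the effective representation of $\gg$ on the orbit together with preservation of the parallelism. The setup is more delicate than in \S\ref{subsubsec:352}--\S\ref{subsubsec:353}, because here $\cL$ is entirely transverse to the orbit ($\opp{rk}(TU_\cL)=0$) and $TU_\cK\cap\cL=0$, so no canonical real line in the Freeman line bundles is immediately available; both gauge parameters $\lambda:\cU\to\R_+$ and $e^{i\varphi}:\cU\to S^1$ must be fixed using structure functions.

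First I would introduce the partially-normalized sections $X_{10},Y_{10},Z_{10}$ given by Lemma \ref{lem:simple-but-useful}, with residual gauge $X_{10}\mapsto\lambda e^{i\varphi}X_{10}$, $Y_{10}\mapsto e^{2i\varphi}Y_{10}$, $Z_{10}\mapsto\lambda^{-1}e^{3i\varphi}Z_{10}$, and then compute the structure equations for the Lie brackets of the full complexified frame modulo the Freeman filtration, exactly as in \S\ref{subsubsec:352}--\S\ref{subsubsec:353}. Tracking how the resulting structure functions transform under this gauge yields a list of relative invariants of various $(\lambda,e^{i\varphi})$-weights, which are then used to attempt algebraic normalizations.

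In the generic case, enough relative invariants of non-trivial weight are non-vanishing to fix both $\lambda$ and $e^{i\varphi}$ algebraically (for instance, by requiring a complex-valued invariant of biweight $(a,n)$ with $a\neq 0$ and $n\neq 0$ to be identically equal to $1$, or by combining two invariants of independent weights). This produces, modulo a finite cover of $\cU$, a canonical frame $\big(X,\cJ X,Y,\cJ Y,Z,\cJ Z,R=\tfrac12[X,\cJ X]\big)$ on $\cU$. The vectors $X,\cJ X,Y,\cJ Y$ lie in $TU_\cD$ and are tangent to the orbit. Using the canonical decomposition $T\cU=TU\oplus\cL$ (valid since $\opp{rk}(TU_\cL)=0$), the projection $R_{TU}$ of $R$ onto $TU$ along $\cL$ is canonical, and since $R$ is transverse to $\cD=TU_\cD\oplus\cL$ its projection lies pointwise in $TU\setminus TU_\cD$. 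Hence $(X,\cJ X,Y,\cJ Y,R_{TU})|_{\cO^G_q}$ is a canonical absolute parallelism on the $5$-dimensional orbit, and the effective action of $\gg$ on $\cO^G_q$ immediately yields $\dim\gg\le5$.

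The hard part will be the exceptional case where no algebraic normalization fixes the full gauge, that is, where all relative invariants of non-zero $(\lambda,e^{i\varphi})$-weight vanish identically. Here I would apply Tanaka's algebraic prolongation in the spirit of the final parts of \S\ref{subsubsec:352}--\S\ref{subsubsec:353}: compute the reduced graded Lie algebra $\gf_{\le 0}(p)=\gm(p)\rtimes\mathfrak f_0(p)$ with $\mathfrak f_0(p)$ the Lie algebra of the surviving gauge transformations (at most a $2$-dimensional $\R\Lambda\oplus\R B$, where $\Lambda$ generates the $\lambda$-action and $B$ the $\varphi$-rotation), compute the Spencer--Tanaka prolongation $\bar\gf$, and then enumerate the candidate graded subalgebras $\gh\subset\bar\gf$ that contain the full negative part (for transitivity on the orbit) and have $\dim\gh\ge 6$. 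For each such candidate I would verify, by the Jacobi identities together with the tensorial structure equations forced by the normalizations of $\clL_2,\clL_3$ and by $3$-nondegeneracy, that no consistent filtered deformation exists. As in the authors' earlier treatments, this final verification is a large but routine symbolic linear-algebra computation best carried out in \textsc{Maple}.
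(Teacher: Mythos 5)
Your overall strategy (normalize a frame, split into a generic branch with a canonical parallelism and a degenerate branch handled by Tanaka prolongation plus Jacobi identities) is the one the paper uses in \S\ref{subsubsec:352}--\S\ref{subsubsec:353}, but it is \emph{not} what the paper does here, and as applied to this subcase it has a genuine gap at the decisive step. By hypothesis every orbit carries the maximally symmetric $2$-nondegenerate structure, whose symmetry algebra is $\mathfrak{so}(3,2)\cong\mathfrak{sp}_4(\R)$, of dimension $10$. This algebra acts transitively on each orbit, preserves the induced distribution, its Cauchy characteristic space and the induced complex structure, and of course satisfies every Jacobi identity and every orbit-intrinsic structure equation: it is a bona fide filtered (indeed graded) Lie algebra compatible with all the data you can import to the orbit. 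Hence your claim that ``the Jacobi identities together with the tensorial structure equations \dots rule out all filtered deformations'' cannot be correct as stated: the computation you defer to \textsc{Maple} would find consistent candidates of dimension up to $10$ (at least $\mathfrak{so}(3,2)$ itself and its $7$-dimensional parabolics). The only constraints that can exclude these come from the \emph{ambient} $7$-dimensional structure -- the transverse bundle $\cL$, the third-order Levi form $\clL_3$, and the non-projectability of $\cJ$ from Proposition \ref{prop:inducedcomplexstructure}$(iv)$ -- and precisely because $\opp{rk}(TU_\cL)=0$ these constraints involve vector fields that are \emph{not} tangent to the orbits, so they fall outside the scope of Lemma \ref{lem:from-left-to-right} and cannot be encoded in your filtered-deformation framework on the orbit. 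You acknowledge these constraints in passing but give no mechanism for making them act on the deformation problem; without that, the argument does not close.

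For contrast, the paper's proof of this proposition is purely Lie-theoretic and uses none of the frame machinery: it identifies $\gg$ with a transitive subalgebra of $\mathfrak{so}(3,2)$, invokes the classification of proper subalgebras of $\mathfrak{sp}_4(\R)$ of dimension $\geq 7$ (only the two maximal parabolics, by \cite{AS}), determines via orbit analysis on $\R\mathbb P^3$ and on the Lagrangian Grassmannian which conjugates act locally transitively, and then kills each surviving case by an invariant-theoretic argument: uniqueness (up to sign, or up to a $2$-parameter family) of the invariant distribution and complex structure on the orbit forces either projectability of $\cJ$ to the leaf space of $\cL$, contradicting Proposition \ref{prop:inducedcomplexstructure}$(iv)$, or degeneracy of the third-order Levi form, contradicting $3$-nondegeneracy. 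If you want to rescue your approach, you would need to explain concretely how the normalization of $\clL_3$ and the $v$-dependence of $\cJ|_{\cL}$ translate into algebraic constraints on the filtered deformation; as written, the ``hard part'' of your proposal is exactly the part that fails.
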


We thus assume that every orbit is maximally symmetric. 
The symmetry dimension of any such orbit is 10, which by effectiveness of the action gives an upper bound for $\dim\gg$.

If $\dim(\gg|_{\mathcal O^G_q})=10$ for one orbit, then $\dim(\gg|_{\mathcal O^G_p})=10$ for all orbits and 
$\gg\cong\gg|_{\mathcal O^G_p}\cong \mathfrak{so}(3,2)$. We claim that this contradicts $3$-nondegeneracy.
First, we recall that the $\mathbb Z$-grading of the Lie algebra of infinitesimal CR-symmetries $\gg\cong\mathfrak{so}(3,2)$ 
is of the form $\gg=\gg_{-2}\oplus\cdots\oplus\gg_{+2}$, with $\gg_{-2}=\mathbb R$, $\gg_{-1}=\mathbb C$, 
$\gg_{0}=\mathfrak{gl}_2(\mathbb R)$, $\gg_{+i}=\gg_{-i}^*$, $i=1,2$. Let 
  $$
\mathfrak{u}(1)=\Bigl\langle J_o=\begin{pmatrix}0 & -1 \\ 1 &0 \end{pmatrix}\Bigr\rangle
  $$ 
be the compact Cartan subalgebra of $\mathfrak{sl}_{2}(\mathbb R)$ and $\mathfrak{h}_0=\mathfrak{u}(1)\oplus\R Z_1$ 
the corresponding Cartan subalgebra of $\gg_{0}=\mathfrak{gl}_2(\mathbb R)$, with $Z_1$ the grading element. 
It is a Cartan subalgebra of $\gg$ too.
Then the Lie algebra $\mathfrak{h}$ of the stabilizer $H$ is $\mathbb Z$-graded in non-negative degrees 
$\mathfrak{h}=\mathfrak{h}_0\oplus\mathfrak{h}_1\oplus\mathfrak{h}_2$, 
where $\mathfrak{h}_k=\mathfrak g_k$ for $k=1,2$, see \cite{IZ} and \cite[Example 4.5]{San}.

The tangent space $T_q\cO^G_q$ at a preferred point $q$ can be  $\mathfrak h_0$-equivariantly identified 
with $\gm=\gg/\mathfrak h\cong\gg_{-2}\oplus\gg_{-1}\oplus\gg_0/\mathfrak h_0$, where 
$(\gg_0/\mathfrak h_0)\otimes\mathbb C\cong \mathbb C_{\beta}\oplus\mathbb C_{-\beta}$ 
is the sum of the root spaces of $\gg$ for the simple long root $\beta$ and its opposite.

Invariant distributions on the orbit correspond to $\mathfrak h$-stable subspaces of $\gm$. 
Since the action of $\mathfrak h_0$ is irreducible on $\gg_{-1}$ and $\gg_0/\mathfrak h_0$, there is a unique 
invariant distribution of rank $4$. It corresponds to $\gg_{-1}\oplus \gg_0/\mathfrak h_0\subset\gm$ 
and its Cauchy characteristic space corresponds to $\gg_0/\mathfrak h_0\subset\gm$. Similarly, 
invariant complex structures on this rank $4$ invariant distribution correspond to $\mathfrak h$-invariant 
complex structures $J$ on $\gg_{-1}\oplus \gg_0/\mathfrak h_0$. 
The invariance under $Z_1$ implies that $\gg_{-1}$ and $\gg_0/\mathfrak h_0$ are $J$-stable, 
while the invariance under $J_o$ yields only one complex structure up to sign on each of them. 
Upon complexification, with $\alpha$ being the simple short root, we get:
  \begin{align}
J|_{\gg_{-\alpha}}&=+i\,,\;\; J|_{\gg_{-\beta-\alpha}}=- i\,,\label{eq:first-complex}\\
J|_{\gg_{\beta}}&=+ i\,,\;\; J|_{\gg_{-\beta}}=- i\,,\label{eq:second-complex}
  \end{align}
where each of \eqref{eq:first-complex} and \eqref{eq:second-complex} is determined up to a sign.
Integrability of the complex structure 
(equivalent to the integrability of the corresponding holomorphic and antiholomorphic distributions), 
fixes the relative sign and $J$ is the complex structure \eqref{eq:first-complex}-\eqref{eq:second-complex}, up to an overall sign. 
In summary: there is a unique up to sign CR-structure of hypersurface type on $\mathcal O^G_q$ preserved by 
$\gg\cong\gg|_{\mathcal O^G_q}$. 

Symmetries in $\gg$ have the $v$-independent form $\xi=\sum_{i=1}^5 \xi^i(u)\p_{u^i}$,
so that $\gg|_{\mathcal O^G_p}$ is not only abstractly isomorphic to $\gg|_{\mathcal O^G_q}$ for all $p\in\cU$ but actually 
{\it equal}, where we identify orbits in the obvious way. The CR-structure on $\mathcal O^G_p$ is then equal to that 
on $\mathcal O^G_q$ up to sign, but $\cJ$ depends smoothly on $v$, so it is equal. Thus $\cJ$ is projectable to 
$\widetilde{\cD}$, contradicting Proposition \ref{prop:inducedcomplexstructure}. 
Thus the case $\dim\gg=10$ is impossible.
\medskip\par
If $\dim\gg<10$, then $\gg$ is a proper subalgebra of $\mathfrak{so}(3,2)\cong\mathfrak{sp}_4(\mathbb R)$ and therefore 
either $\dim\gg<7$ or $\gg$ has dimension $7$ and is {\it conjugated} to one of the two maximal standard parabolics 
$\mathfrak p_1$, $\mathfrak p_2$ of $\mathfrak{sp}_4(\R)$, cf.\ \cite[Thm. 5]{AS}. 
We thus fix an orbit $\mathcal O_q$, with maximally symmetric $5$-dimensional $2$-nondegenerate CR-structure, 
and let $\mathfrak s\cong\mathfrak{sp}_4(\mathbb R)$ be its full symmetry algebra. It is $\mathbb Z$-graded 
$\mathfrak s=\mathfrak s_{-2}\oplus\cdots\oplus\mathfrak s_{+2}$, with $\mathfrak s_{-2}=\mathbb R$, 
$\mathfrak s_{-1}=\mathbb C$, $\mathfrak s_{0}=\mathfrak{gl}_2(\mathbb R)$, $\mathfrak s_{+i}=\mathfrak s_{-i}^*$, 
for $i=1,2$, as before and with stabilizer subalgebra at $q$ given by 
$\mathfrak{h}=\mathfrak{h}_0\oplus\mathfrak{h}_1\oplus\mathfrak{h}_2=\big(\mathfrak{u}(1)\oplus\mathbb R Z_1\big)\oplus\mathfrak s_1\oplus\mathfrak s_2$. 

It is convenient to consider the double-cover $Sp_4(\mathbb R)$ of $SO^o(3,2)$ and its quotient $Sp_4(\mathbb R)/H$ 
by the connected closed subgroup $H$ with $Lie(H)=\mathfrak h$. 
We let $(p_1,q_1, p_2,q_2)$ be a symplectic basis  of 
$\big(V, \O\big):=  \big(\mathbb R^4, -(p^*_1 \wedge q^*_1 + p^*_2 \wedge q^*_2)\big)$
and identify $\mathfrak s\cong\mathfrak{sp}(V)$ with the space
of quadratic polynomials $S^2(V)$ on $V^*$ via $uv : w \mapsto \Omega(u,w)v + \Omega(v,w)u$ for all $u,v,w\in V$. 

The parabolic subalgebra stabilizing the line $\mathrm{span}\big(p_1\big)$ is identified via the orthogonal decomposition 
$V=\cV_1\oplus \cV_2=\mathrm{span}\big(p_1,q_1\big)\oplus\mathrm{span}\big(p_2,q_2\big)$ as
 $$
\mathfrak p_1=\gs_{\ge0}=\gs_{0}\oplus\gs_{1}\oplus\gs_{2}=
\big(S^2(\cV_2)\oplus\mathbb R p_1q_1\big)\oplus p_1\cV_2\oplus\mathbb Rp_1^2.
 $$
The opposite parabolic is $\mathfrak p_1^{\text{opp}}=\gs_{\le0}
=\big(S^2(\cV_2)\oplus\mathbb R p_1q_1\big)\oplus q_1\cV_2\oplus\mathbb Rq_1^2$.

The parabolic subalgebra stabilizing the Langragian plane $\cP=\mathrm{span}\big(p_1,p_2\big)$ is identified 
via the 
Lagrangian decomposition $V=\cP\oplus \cQ=\mathrm{span}\big(p_1,p_2\big)\oplus\mathrm{span}\big(q_1,q_2\big)$ as 
 $$
\mathfrak p_2=\cQ\cP\oplus S^2(\cP)\cong\mathfrak{gl}_2(\mathbb R)\ltimes S^2(\mathbb R^2).
 $$
The opposite parabolic is $\mathfrak p_2^{\text{opp}}=\cQ\cP\oplus S^2(\cQ)$.
We also remark that $\mathfrak h=\mathfrak{h}_0\oplus\mathfrak{h}_1\oplus\mathfrak{h}_2
=\big(\mathbb R(p_2^2+q_2^2)\oplus\mathbb R p_1q_1\big)\oplus p_1\cV_2\oplus\mathbb Rp_1^2$.

\medskip

Other maximal parabolic subalgebras of $\mathfrak s$ are conjugate to the standard ones, i.e., 
either $\mathfrak p_1$ or $\mathfrak p_2$ (this includes the opposite parabolics). We describe those that act locally transitively 
on $\mathcal O_q\cong Sp_4(\mathbb R)/H$ around the fixed base point $q$:

  \begin{lemma}
Let $Sp_4(\mathbb R)/H$ be the tube over the future light cone with its maximally symmetric $2$-nondegenerate CR-structure and 
$\gg$ a maximal parabolic subalgebra of $\mathfrak s\cong\mathfrak{sp}_4(\mathbb R)$. Then $\gg$ acts locally transitively 
around the base point $q$ if and only if $\gg=Ad_{h^{-1}}\mathfrak p^{\text{opp}}$ for $h\in H$, where $\gp$ is 
a maximal standard parabolic of $\gs$ and $\gp^{\text{opp}}$ the corresponding opposite parabolic. The stabilizer subalgebra 
of $\gg$ at the base point $q$ is given by $\gg\cap\mathfrak h=Ad_{h^{-1}}\big(\mathfrak p^{\text{opp}}\cap\gh\big)$.
  \end{lemma}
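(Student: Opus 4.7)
The plan is to recast local transitivity as the algebraic condition $\gg+\gh=\gs$ and then use the $H$-orbit geometry on the $3$-dimensional flag varieties $Sp_4(\mathbb R)/P_i$ to identify the transitive locus with the $H$-orbit of $[P_i^{\text{opp}}]$. Since $\dim\gs=10$, $\dim\gg=7$ and $\dim\gh=5$, the connected subgroup with Lie algebra $\gg$ acts locally transitively at $q=[H]$ if and only if $\gg+\gh=\gs$, equivalently $\dim(\gg\cap\gh)=2$. This condition is open, and it is invariant under the natural left $H$-action on the space of maximal parabolics of a fixed type, since $\opp{Ad}_h\gh=\gh$ for every $h\in H$.

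Using that every maximal parabolic is $Sp_4(\mathbb R)$-conjugate to either $\gp_1$ or $\gp_2$, I would reduce to the study of the transitive locus $\mathcal T_i\subset Sp_4(\mathbb R)/P_i$ separately for $i=1,2$. I would first verify $[P_i^{\text{opp}}]\in\mathcal T_i$ by a direct dimension count with the bases of $\gp_i^{\text{opp}}$ and $\gh$: one finds $\gp_1^{\text{opp}}\cap\gh=\gh_0$ and $\gp_2^{\text{opp}}\cap\gh=\langle p_1q_1,p_1q_2\rangle$, both $2$-dimensional, whence $\gp_i^{\text{opp}}+\gh=\gs$. I would then show that the $H$-orbit through $[P_i^{\text{opp}}]$ is open: for $i=1$ with $Sp_4(\mathbb R)/P_1\cong\R\mathbb{P}^3$, applying the nilpotent elements $p_1^2,p_1p_2,p_1q_2\in\gh$ to $q_1$ produces $-2p_1,-p_2,-q_2$, which are linearly independent modulo $\langle q_1\rangle$, so the infinitesimal $H$-action fills the $3$-dimensional tangent space; for $i=2$, the analogous check at $[\cQ]\in LG(2,4)$ using $p_1^2$, $p_1p_2$ and $p_2^2+q_2^2$ produces three linearly independent tangent directions. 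The complement of this open orbit is then a proper closed subvariety: the projective hyperplane $\mathbb P(p_1^{\perp})\cong\R\mathbb{P}^2$ for $i=1$, and the subvariety of Lagrangians containing the line $[p_1]$ for $i=2$.

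The main obstacle is to show that transitivity fails at every point of these complements, ruling out the coexistence of other open $H$-orbits of transitive parabolics. By $H$-equivariance it suffices to check one representative of each closed $H$-orbit in the complement. For $i=1$, at $[p_2]\in\mathbb P(p_1^{\perp})$ a direct computation of the stabilizer $\gp_{[p_2]}\subset\gs$ shows that $q_1q_2\notin\gp_{[p_2]}+\gh$, so the sum has dimension at most $9$; for $i=2$, at $[\cP]$ one computes $\gp_2\cap\gh=\langle p_1q_1,p_1p_2,p_1q_2,p_1^2\rangle$ is $4$-dimensional, whence $\gp_2+\gh$ has dimension only $8$. Since $\mathcal T_i$ is open, $H$-invariant, and contains no point of the complement, it coincides with the open $H$-orbit through $[P_i^{\text{opp}}]$, yielding $\gg=\opp{Ad}_{h^{-1}}\gp^{\text{opp}}$ for some $h\in H$. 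The stabilizer identity $\gg\cap\gh=\opp{Ad}_{h^{-1}}(\gp^{\text{opp}}\cap\gh)$ is then immediate, since $\opp{Ad}_{h^{-1}}\gh=\gh$.
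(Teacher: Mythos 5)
Your argument is correct in outline and takes a genuinely different route from the paper's. The paper dualizes the other way: it recasts local transitivity of $\gg=g_o^{-1}\gp g_o$ as openness of the $P$-orbit $Pg_oH$ in $Sp_4(\R)/H$, and then treats the two parabolic types asymmetrically --- for $\gp_1$ it uses the $P_1$-equivariant fibration $Sp_4(\R)/H\to\R\bP^3$ (available because $H\subset P_1$), while for $\gp_2$ it resorts to a twistorial double fibration over $Sp_4(\R)/(P_2\cap H)$ followed by explicit manipulations of group elements to land on $\opp{Ad}_{h^{-1}}\gp^{\text{opp}}$. Your infinitesimal criterion $\gg+\gh=\gs$, combined with the observation that the transitive locus in $Sp_4(\R)/P_i$ is exactly the union of the open $H$-orbits, treats both types uniformly and reads the conclusion (including the stabilizer identity) directly off the orbit picture. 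Your dimension counts are all correct: $\gp_1^{\text{opp}}\cap\gh=\gh_0$, $\gp_2^{\text{opp}}\cap\gh=\langle p_1q_1,p_1q_2\rangle$, $\dim(\gp_{[p_2]}+\gh)=9$ (missing $q_1q_2$), and $\dim(\gp_2+\gh)=8$.

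There is, however, one step you assert rather than prove, and it is exactly the step that is supposed to ``rule out the coexistence of other open $H$-orbits'': the claim that the open $H$-orbit through $[P_i^{\text{opp}}]$ is the \emph{entire} complement of the named closed subvariety ($\bP(p_1^\perp)$ for $i=1$; the Lagrangians containing $[p_1]$ for $i=2$). Openness of the orbit gives only one inclusion; a priori the complement of $H\cdot[P_i^{\text{opp}}]$ could still contain a further open $H$-orbit, whose points would be transitive parabolics not $H$-conjugate to $\gp_i^{\text{opp}}$, and your ``check a representative of each orbit in the complement'' step would never see them because you only enumerate the orbits inside the named subvariety. The gap is real but easy to fill by showing $H$ acts transitively on the complement of that subvariety. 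For $i=1$, the Heisenberg part $\langle p_1p_2,p_1q_2,p_1^2\rangle\subset\gh$ already moves $q_1$ to $q_1+ap_1+bp_2+cq_2$ for arbitrary $a,b,c$, sweeping out all of $\R\bP^3\setminus\bP(p_1^\perp)$. For $i=2$, given a Lagrangian $L$ with $p_1\notin L$, the line $L\cap p_1^\perp$ can be brought to $[q_2]$ using the rotation $p_2^2+q_2^2$ and the abelian part $p_1\cV_2$, after which the orthogonality constraint forces $L=\langle q_2,\,q_1+bp_1\rangle$ and $\exp(tp_1^2)$ kills $b$, giving $L=\cQ$. With this inserted (and the trivial remark that $[p_1]$, respectively $\cP$, lies in a non-open orbit since $\gh\subset\gp_1$, respectively $\dim(\gp_2+\gh)=8$), your proof is complete.
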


  \begin{proof}
Let $P=P_1$ be the parabolic subgroup  stabilizing the line $[p_1]$ and
let $\gg$ be conjugated to the subalgebra $\mathfrak\gp=\mathfrak p_1=Lie(P_1)$.
We write $\gg=g_o^{-1}\mathfrak p g_o$, $G=g_o^{-1}Pg_o$,
for some $g_o\in Sp_4(\mathbb R)$ and note that
$G\cdot q\cong\big(g_o^{-1}Pg_o\big)H$ is open in $Sp_4(\mathbb R)/H$ if and only if the $P$-orbit
$\mathcal O^{P}_{g_{o}q}:=Pg_oH$ is open. 
We are led to determine the open orbits of the natural left action of $P$ on $Sp_4(\mathbb R)/H$. 

As intermediate step, consider the natural projection
$\pi:Sp_4(\mathbb R)/H\longrightarrow \mathbb R\mathbb P^3$
to the contact $3$-dimensional projective space $\mathbb R\mathbb P^3\cong Sp_4(\mathbb R)/P$.
It is a $P$-equivariant surjective submersion, so it is an open map, and it sends any open $P$-orbit onto an open $P$-orbit. 
A straightforward task determines the orbit structure of $P\subset Sp_4(\mathbb R)$ acting on $\mathbb R\mathbb P^3$ -- 
the orbits are distinguished by their symplectic relation with the line $[p_1]$. More precisely:
  \begin{itemize}
	\item[$(i)$] 
The line $[p_1]$ gives an orbit $\mathcal O^{P}_{[p_1]}=[p_1]$ consisting of a single point;
	\item[$(ii)$] 
The lines orthogonal to $[p_{1}]$ different from $[p_1]$ constitute an orbit 
$\mathcal O^{P}_{[p_2]}\cong \mathbb R\mathbb P^2\setminus\{\text{point}\}$; 
	\item[$(iii)$] 
The lines not orthogonal to $[p_1]$ constitute the generic orbit $\mathcal O^{P}_{[q_1]}\cong\mathbb R\mathbb P^3\setminus\mathbb R\mathbb P^2$.
  \end{itemize}
In summary, we have the stratification $\mathbb R\mathbb P^{3}=\mathcal O^{P}_{[q_1]}\cup 
\mathcal O^{P}_{[p_2]}\cup \mathcal O^{P}_{[p_1]}$, with the first orbit open.

We claim that every point in $\pi^{-1}(\mathcal O^{P}_{[q_1]})$ determines in fact an open $P$-orbit in 
$Sp_4(\mathbb R)/H$. To see it, first note that $[q_1]=a\cdot [p_1]\cong a P\in Sp_4(\mathbb R)/P$ for 
$a=\begin{psmallmatrix}J_o & 0 \\ 0 & \operatorname{Id}_{2\times 2}\end{psmallmatrix}\in Sp_4(\mathbb R)$, so the fiber 
$\pi^{-1}[q_1]=\big(aP\big)H=\big(a e^{S^2(\cV_2)}\big)H=
  \begin{psmallmatrix} 
J_o & 0 \\
0  & SL_2(\mathbb R)
  \end{psmallmatrix} 
H\subset Sp_4(\mathbb R)/H$. 
In the last identity we used the explicit form of the subgroup  
$e^{S^2(\cV_2)}=
  \begin{psmallmatrix} 
\operatorname{Id}_{2\times 2} & 0 \\
0  & SL_{2}(\mathbb R)  \\
  \end{psmallmatrix}$.
Now $\pi^{-1}(\mathcal O^{P}_{[q_1]})=P\cdot \pi^{-1}[q_1]$, 
so it is enough to check the claim for elements $\widetilde gH\in Sp_4(\mathbb R)/H$ with
  \begin{equation}\label{eq:expressiongtilde}
\widetilde g=\begin{pmatrix} J_o & 0 \\ 0  & D \end{pmatrix}\,,\quad D\in SL_2(\mathbb R)\,.
  \end{equation}
The stabilizer in $P$ of $\widetilde gH$ is given by
$\mathrm{Stab}_{P}(\widetilde g H)=P\cap Ad_{\widetilde g}(H)$, with associated Lie algebra 
$\mathfrak{stab}_{\mathfrak p}(\widetilde gH)=\mathfrak p\cap Ad_{\widetilde g}(\mathfrak h)$.
A straightforward computation using the explicit identifications of $\gp$ and $\gh$ provided beforehand
gives $\mathfrak{stab}_{\mathfrak p}(\widetilde gH)=
\mathrm{span}\big(D(p_2^2+q_2^2),p_1q_1\big)$, which is $2$-dimensional. 

The claim is thus proved and the parabolic subalgebras $\gg=Ad_{g_o^{-1}}\mathfrak p$ acting locally transitive at $q$ 
are those for which $g_o=\widetilde gh$ with $\widetilde g$ as in \eqref{eq:expressiongtilde} and $h\in H$. 
A direct computation shows $\gg=Ad_{h^{-1}}\mathfrak p^{\text{opp}}$, where
$\mathfrak p^{\text{opp}}=\mathfrak s_{0}\oplus\mathfrak s_{-1}\oplus\mathfrak s_{-2}= (S^2(\cV_2)\oplus
\mathbb R p_1q_1\big)\oplus q_1\cV_2\oplus\mathbb Rq_1^2$
is the parabolic subalgebra opposite to $\mathfrak p$. The claim on the stabilizer subalgebra is immediate. 
\smallskip\par

Let now $P=P_2$ be the parabolic subgroup  stabilizing the plane $\cP$, $\gp=\gp_2=Lie(P_2)$, and 
$\gg=g_o^{-1}\mathfrak pg_o$. We will first determine the $P$-orbits $\mathcal O^{P}_{g_{o}q}:=Pg_oH$ 
that are open in $Sp_4(\mathbb R)/H$. To this aim, we consider the twistorial correspondence given by the double fibration
 \begin{center}
\begin{tikzcd}
& Sp_4(\mathbb R)/(P\cap H) \arrow[ddl, "\pi_1" '] \arrow[ddr, "\pi_2"] & \\ \\
 Sp_4(\mathbb R)/H \hskip-12pt && \hskip-12pt Sp_4(\mathbb R)/P
\end{tikzcd}
 \end{center}
and note that, if the orbit $\mathcal O^{P}_{g_{o}q}$ is open, then $\pi_2\big(\pi_1^{-1}(\mathcal O^{P}_{g_{o}q})\big)$ 
is open and $P$-stable in $Sp_4(\mathbb R)/P$. This flag variety is the Lagrangian-Grassmannian
$LG(2,4)$ and its $P$-orbit structure follows from the mutual position w.r.t.\ the Lagrangian plane $\cP$. Explicitly, we have:
  \begin{itemize}
	\item[$(i)$] 
The orbit $\mathcal O^{P}_{\cP}=\{\cP\}$ consisting of a single point;
	\item[$(ii)$] 
The orbit $\mathcal O^{P}_{\mathrm{span}(p_1,q_2)}$ of Lagrangian planes with $1$-dimensional intersection with $\cP$. 
This orbit has dimension $2$ and it is neither open or closed;
	\item[$(iii)$] \
The open orbit $\mathcal O^{P}_{\cQ}$ of the Lagrangian planes transversal to $\cP$. 
  \end{itemize}

It follows that $\pi_2\big(\pi_1^{-1}(\mathcal O^{P}_{g_{o}q})\big)$ always contains $\mathcal O^{P}_{\cQ}$, 
in particular we have $\cQ\in \pi_2\big(\pi_1^{-1}(\mathcal O^{P}_{g_{o}q})\big)$. 
Writing $\cQ=a\cdot \cP\cong aP\in Sp_4(\mathbb R)/P$ for 
 $$
a=\exp\Bigl(\tfrac{\pi}{2}\tfrac{p_1^2+q_1^2}{2}\Bigr)\cdot\exp\Bigl(\tfrac{\pi}{2}\tfrac{p_2^2+q_2^2}{2}\Bigr)=
  \begin{psmallmatrix}
		0 & -1 & & \\
		1 & 0 & & \\
		  &   & 0 & -1\\
			& & 1 & 0
  \end{psmallmatrix},
 $$ 
this condition is rewritten as $xg_ob P=a P$ for some $x\in P$, $b\in H$, i.e., $xg_ob y=a$ for some $x,y\in P$, $b\in H$. 
By renaming the elements, we may assume w.l.o.g. that $y\in P/(P\cap H)\cong e^{\operatorname{span}(q_1p_2, q_2p_2, p_2^2)}$. 
(Note that $\operatorname{span}(q_1p_2, q_2p_2, p_2^2)$ is a Lie algebra.) Then
  \begin{align*}
\gg&=g_o^{-1}\mathfrak pg_o=bya^{-1}\big(x\mathfrak p x^{-1}\big)ay^{-1}b^{-1}\\
&=by\big(a^{-1}\mathfrak p a\big)y^{-1}b^{-1}=b\big(y\mathfrak p^{\text{opp}}y^{-1})b^{-1}\,,
  \end{align*}
with $\mathfrak p^{\text{opp}}=\cQ\cP\oplus S^2(\cQ)$ the parabolic subalgebra opposite to $\mathfrak p$. 
However $q_1p_2, q_2p_2\in \mathfrak p^{\text{opp}}$, so we may assume that $y\in e^{\mathbb Rp_2^2}$. 
A direct check using 
$p_2^2\in\mathrm{span}\big(p_2^2+q_2^2;p_2q_2,q_2^2\big)\subset \mathfrak h+\mathfrak p^{\text{opp}}$ 
shows that in fact $e^{\mathbb Rp_2^2}\subset H\cdot P^{\text{opp}}$, 
whence $\mathfrak g=Ad_{h^{-1}}\mathfrak p^{\text{opp}}$ for some $h\in H$.
  \end{proof}

If $\gg=Ad_{h^{-1}}\mathfrak p_1^{\text{opp}}$, then the stabilizer 
$\gg\cap\mathfrak h=Ad_{h^{-1}}\big(\mathfrak p_1^{\text{opp}}\cap\gh\big)=Ad_{h^{-1}}\gh_0$ is Abelian. 
Since it is conjugated to $\gh_0$, we may use its equivariance to argue as in the $\dim(\gg|_{\mathcal O^G_q})=10$ case: 
on any orbit, there are unique $G$-stable distribution of rank $4$ and complex structure on it. 
Then the $7$-dimensional CR-structure is projectable to $\widetilde{\cD}$, a contradiction. 

If $\gg=Ad_{h^{-1}}\mathfrak p_2^{\text{opp}}$, then the stabilizer 
$\gg\cap\mathfrak h=Ad_{h^{-1}}\big(\mathfrak p_2^{\text{opp}}\cap\gh\big)=Ad_{h^{-1}}(p_1\cQ)$ 
is a solvable Lie algebra of dimension $2$. Of course, it is enough to consider the case
$\gg=\mathfrak p_2^{\text{opp}}=\cQ\cP\oplus S^2(\cQ)$, with
stabilizer $\mathfrak{sol}_2:=p_1\cQ$. We use the maximal vectorial Cartan subalgebra 
$\mathfrak t=\mathrm{span}\big(Z_1,Z_2\big)$ of $\mathfrak{sp}_4(\mathbb R)$, 
where $Z_1=p_1q_1$ and $Z_2=\tfrac12(q_1p_1+q_2p_2)$ are the grading elements of $\mathfrak p_1$ and $\mathfrak p_2$.

  \begin{lemma}
The tangent space at the preferred point can be identified with the $\mathfrak{sol}_2$-module
$\gm=\gg/\mathfrak{sol}_2 \cong\mathrm{span}\big( q_1^2\big)\oplus\mathrm{span}\big( q_1q_2,q_1p_2\big)\oplus\mathrm{span}\big( q_2^2,Z_2\big)$,
with the following action of $\mathfrak{sol}_2$: $Z_1$ acts semisimply with the eigenvalues $-2,-1,0$ 
w.r.t.\ the above direct sum decomposition and 
$X:=-p_1q_2$ acts as $\tfrac12 q_1^2\mapsto q_1q_2\mapsto q_2^2\mapsto 0$, 
$q_1p_2\mapsto 2Z_2\mapsto 0\!\!\mod \mathfrak{sol}_2$.
  \end{lemma}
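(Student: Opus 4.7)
The plan is to exploit the standard identification $T_q\mathcal O^G_q\cong\gg/\mathfrak{sol}_2$ under which the isotropy representation of $\mathfrak{sol}_2$ on the tangent space is realized as the adjoint action on the quotient. Since $\mathfrak{sol}_2=p_1\cQ=\mathrm{span}(p_1q_1,p_1q_2)=\mathrm{span}(Z_1,-X)$ is contained in $\cQ\cP$, a transversal complement in $\gg=\cQ\cP\oplus S^2(\cQ)$ is obtained by adjoining to $S^2(\cQ)=\mathrm{span}(q_1^2,q_1q_2,q_2^2)$ any two elements of $\cQ\cP$ not lying in $\mathfrak{sol}_2$; the claimed basis uses $q_1p_2$ and $Z_2=\tfrac12(q_1p_1+q_2p_2)\equiv \tfrac12 q_2p_2\bmod\mathfrak{sol}_2$. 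This immediately gives the underlying $5$-dimensional vector space decomposition asserted in the lemma.

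The actions are then direct computations through the standard bracket formula on $S^2V\cong\mathfrak{sp}(V)$,
\[
[uv,u'v']=\Omega(u,u')vv'+\Omega(u,v')vu'+\Omega(v,u')uv'+\Omega(v,v')uu',
\]
together with $\Omega(p_i,q_j)=-\delta_{ij}$, $\Omega(q_i,p_j)=\delta_{ij}$, $\Omega(p_i,p_j)=\Omega(q_i,q_j)=0$. For $Z_1=p_1q_1$ the semisimple action and the spectrum are read off from the $\mathbb Z$-grading of $\gs$ defined by $Z_1$ itself: $q_1^2\in\gs_{-2}$ while $q_1q_2$ and $q_1p_2$ lie in $\gs_{-1}$ and $q_2^2$, $q_1p_1$, $q_2p_2$ lie in $\gs_{0}$, giving respective eigenvalues $-2,-1,-1,0,0$ in the stated order of the decomposition.

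For the nilpotent element $X=-p_1q_2$, I would compute each bracket in turn: $[X,q_1^2]=2q_1q_2$ (equivalently $[X,\tfrac12 q_1^2]=q_1q_2$), $[X,q_1q_2]=q_2^2$, $[X,q_2^2]=0$, and $[X,q_1p_2]=q_2p_2+p_1q_1=2Z_2$. Finally $[X,Z_2]=\tfrac12[X,q_1p_1]+\tfrac12[X,q_2p_2]=\tfrac12 p_1q_2+\tfrac12 p_1q_2=p_1q_2\in\mathfrak{sol}_2$, so $[X,Z_2]\equiv 0\bmod\mathfrak{sol}_2$. This reproduces the Jordan string $\tfrac12 q_1^2\mapsto q_1q_2\mapsto q_2^2\mapsto 0$ and the short chain $q_1p_2\mapsto 2Z_2\mapsto 0\bmod\mathfrak{sol}_2$ exactly as claimed.

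There is no real conceptual obstacle here; the statement is a bookkeeping lemma whose verification reduces to unfolding the symplectic pairing in the bracket formula and checking that the image of $\mathrm{ad}(X)$ escapes the stabilizer except on $Z_2$. The mildly subtle point is that $Z_2$ is only defined modulo $\mathfrak{sol}_2$, so one must check that both $[X,q_1p_1]$ and $[X,q_2p_2]$ produce multiples of $p_1q_2$ and therefore land in $\mathfrak{sol}_2$ on the nose, which is what makes the action well defined on the quotient.
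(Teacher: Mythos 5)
Your proof is correct and is exactly the verification the paper has in mind (the paper omits it as ``immediate''): identify the tangent space with $\gg/\mathfrak{sol}_2$ and compute $\ad(Z_1)$, $\ad(X)$ on the chosen transversal via the bracket formula on $S^2(V)$. Two intermediate identities have sign slips, though neither affects the conclusion: with the paper's conventions one gets $[X,q_1p_2]=q_2p_2-p_1q_1=2Z_2-2Z_1$, which equals $2Z_2$ only modulo $\mathfrak{sol}_2$ (not on the nose), and $[X,q_1p_1]=p_1q_2$ while $[X,q_2p_2]=-p_1q_2$, so in fact $[X,Z_2]=0$ exactly rather than $p_1q_2$; in both cases the discrepancy lies in $\mathfrak{sol}_2$, so the induced action on $\gm$ is as the lemma states.
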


The proof is immediate and we omit it. Invariant distributions correspond to $\mathfrak{sol}_2$-stable subspaces of $\gm$, 
and those of rank $4$ are $\mathrm{span}\big(q_1q_2,q_1p_2,q_2^2,Z_2\big)$ and $\mathrm{span}\big(q_1^2,q_1q_2,q_2^2,Z_2\big)$. The latter correspond to an integrable distribution, so 
$TU_\cD|_{\cO^G_q}\cong\mathrm{span}\big(q_1q_2,q_1p_2,q_2^2,Z_2\big)$, with the Cauchy characteristic space 
$TU_\cK|_{\cO^G_q}\cong\mathrm{span}\big(q_2^2,Z_2\big)$.
Invariant complex structures on $TU_\cD|_{\cO^G_q}$ correspond to $\mathfrak{sol}_2$-invariant complex structures $J$: invariance by $Z_1$ says that the subspaces $\mathrm{span}\big(q_1q_2,q_1p_2\big)$ and $\mathrm{span}\big(q_2^2,Z_2\big)$
are both $J$-stable, invariance by $X$ says that $J$ on the former determines $J$ on the latter.  
Thus, on each orbit $\mathcal O^G_q$, we obtain a $2$-parameter family of CR-structures of hypersurface type, 
each preserved by $\gg\cong\gg|_{\mathcal O^G_q}$. 

One may check that each CR-structure of this family is integrabile and $2$-nondegenerate. 
Consider the simply transitive algebra 
$\mathfrak c=\mathrm{span}\big(q_1p_2, Z_2\big)\oplus S^2(\cQ)$ (subalgebra of $\gg$) consisting
of right-invariant vector fields and the corresponding algebra $\widecheck{\mathfrak c}$ of left-invariant vector fields. As already seen, elements of $\gc$ have the $v$-independent form
$\xi=\sum_{i=1}^5 \xi^i(u)\p_{u^i}$, so elements of $\widecheck{\mathfrak c}$ are $v$-independent too. 
We have $\mathfrak c$ and $\cJ$ invariant decomposition
  \begin{equation}\label{cDsplit}
\cD=TU_\cD\oplus \cL=\widecheck{\langle q_1q_2,q_1p_2\rangle}\oplus\widecheck{\langle
q_2^2,Z_2\rangle}\oplus \langle\partial_{v^1},\partial_{v^2}\rangle\,,
  \end{equation}
where the indicated subbundles correspond to the bundles $\cD/\cK$, $\cK/\cL$ and $\cL$, respectively.

Since $\cJ$ is invariant by $\mathfrak c$, it acts on the left-invariant generators of $TU_\cD$ in a $v$-dependent fashion, 
i.e., $\cJ=\cJ(v)$ 
meaning that the coefficients of $\cJ$ expressed in a left-invariant frame depend only on $v$; 
using $\mathfrak c$-invariance, we see that $\cJ=\cJ(v)$ also on $\cL$. 
Now a direct computation of the Lie bracket $[\cL_{10},\cD_{01}/\cK_{01}]$ using \eqref{cDsplit} yields the following
alternative: either the $3^{rd}$-order Levi form \eqref{eq:h.o.l.f.3} gives a non-zero contribution in $\cD_{10}/\cK_{10}$ 
or it vanishes identically. The first contradicts Lemma \ref{lem:preliminarybrackets}, the second
contradicts $3$-nondegeneracy.

This completes the proof of Proposition \ref{prop:induced5CR-III} and 
finishes the subcase \S\ref{subsec:3.6.1}.

\subsubsection{The case $\opp{rk}(TU_\cD^\cJ)=2$, $\opp{rk}(TU_\cK)=3$}\label{subsec:3.6.2}

By Proposition \ref{prop:smallcodimension} $(ii)$ there exist coordinates $(u^1,\ldots,u^{5},v^1,v^2)$ on $\cU$ 
such that $\gg$ is isomorphic to the Abelian Lie algebra 
  \begin{equation*}\label{eq:abelian5}
\R^r=\mathrm{span}\big(\xi_a=\partial_{u^a}\;(1\leq a\leq 5),\;\xi_{b}=\sum_{i=1}^{5}\xi^i_{b}(v)\partial_{u^i}\;(6\leq b\leq r)\big)\,, 
  \end{equation*}
or its scaling extension
  \begin{equation*}\displaystyle\label{eq:scalingextension5}
\R\ltimes\R^r=
 \begin{cases}\displaystyle
\mathrm{span}\big(\xi_a=\partial_{u^a}\;(1\leq a\leq 5),\;\xi_{b}=\sum_{i=1}^{5}\xi^i_{b}(v)\partial_{u^i}\;(6\leq b\leq r),\;
\xi_{r+1}=\sum_{i=1}^{5} u^i\partial_{u^i}\big)\\
\text{or}\\ \displaystyle
\mathrm{span}\big(\xi_a=\partial_{u^a}\;(1\leq a\leq 4),\;\xi_b=\sum_{i=1}^{4}\xi^i_{b}(v)\partial_{u^i}\;(5\leq b\leq r),\;
\xi_{r+1}=\partial_{u^5}+\sum_{i=1}^{4}u^i\partial_{u^i}\big)
 \end{cases}
  \end{equation*}

We focus on the Abelian ideal $\R^r$. 
Let $\xi=\sum_{i=1}^{5}\xi^i(v)\partial_{u^i}$  be an infinitesimal CR-symmetry vanishing at a fixed regular point $p\in\cU$ 
for the Weisfeiler filtration (the last summand is not present in the case of the second scaling extension, i.e., $\xi^5=0$ identically), 
with linear part
  $$
\Xi=-\sum_{i=1}^{5}(\partial_{v^1}\xi^i)|_p\;\partial_{u^i}\otimes dv^1-\sum_{i=1}^{5}(\partial_{v^2}\xi^i)|_p\;\partial_{u^i}\otimes dv^2\,.
  $$
The complex structure $\cJ$ induces an embedding $\cJ:TU_\cD/TU^\cJ_\cD\to T\cU/TU$ of real vector bundles of equal rank 2, 
so an isomorphism. Therefore, we may find vectors $v_1,v_2$ in $TU_\cD|_p$ 
so that $\cJ v_1$ and $\cJ v_2$ are linearly independent in  $T_p\,\cU/T_pU$
and $\Xi(\cJ v_k)=\cJ(\Xi v_k)=0$ for $k=1,2$, since $\Xi$ acts trivially on $TU_\cD|_p$. Then $\Xi=0$,  the stabilizer of the Abelian ideal $\mathbb R^r$ vanishes by Lemma \ref{lem:idontknow}, $r=\dim\mathbb R^r\leq 5$ and we have established: 
  
  \begin{proposition}
If $\opp{rk} (TU_\cD^\cJ)=2$ and $\opp{rk}(TU_\cK)=3$, then $\dim\gg\leq 6$.
  \end{proposition}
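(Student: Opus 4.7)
The plan is to apply Proposition \ref{prop:smallcodimension}(ii), which is available because $TU_\cK$ has codimension $2$ inside $TU$ (ranks $3$ and $5$). It presents $\gg$ either as the Abelian algebra $\mathbb R^r$ or as a scaling extension $\mathbb R\ltimes\mathbb R^r$ in adapted coordinates $(u^1,\dots,u^5,v^1,v^2)$ on $\cU$. Since the scaling factor contributes at most one extra dimension, the goal reduces to proving $r\le 5$ for the Abelian ideal $\mathbb R^r$, whence $\dim\gg\le 6$.

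First I would invoke Lemma \ref{lem:idontknow} to reduce the bound on $r$ to showing that the stabilizer of $\mathbb R^r$ at a regular point $p\in\cU$ vanishes. Pick $\xi=\sum_{i=1}^{5}\xi^i(v)\partial_{u^i}\in\mathbb R^r$ with $\xi|_p=0$ and form its linearization $\Xi:=[\xi,-]|_p$. Two basic facts frame the problem: (a) $\Xi$ annihilates $TU|_p$ because each $\xi^i$ depends only on $v$, and (b) by Lemma \ref{lem:codimension}(ii) the image $\operatorname{Im}(\Xi|_{\cD|_p})$ lies in $TU_\cD^\cJ|_p$, which is already a subspace of $TU|_p$; in particular $\Xi$ annihilates $TU_\cD$.

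The decisive step is a rank count that turns $\cJ$ into a transverse isomorphism. The natural maps
\[
\cJ\colon TU_\cD/TU_\cD^\cJ \;\longrightarrow\; \cD/TU_\cD \;\longrightarrow\; T\cU/TU
\]
are injections between rank-$2$ bundles at $p$: the first by the very definition $TU_\cD^\cJ=TU_\cD\cap\cJ(TU_\cD)$, the second because $TU_\cD=\cD\cap TU$ with both source and target of rank $2$ under our hypotheses. Hence the composition is an isomorphism, and I can select $v_1,v_2\in TU_\cD|_p$ such that $\cJ v_1,\cJ v_2$ form a basis of a complement to $TU|_p$. Since $v_k\in TU_\cD\subset TU$ gives $\Xi v_k=0$, and $\Xi$ commutes with $\cJ$ on $\cD|_p$, I obtain $\Xi(\cJ v_k)=\cJ(\Xi v_k)=0$. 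Combined with $\Xi|_{TU|_p}=0$, this forces $\Xi=0$, so the stabilizer is trivial and $r\le\dim U=5$. The scaling extension is handled by applying the same linearization to the Abelian ideal only, giving $r\le 5$ and $\dim\gg\le r+1\le 6$.

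The main obstacle is securing the rank count that makes the $\cJ$-transport an isomorphism onto a complement to $TU$: it relies essentially on having simultaneously $\operatorname{rk}(TU_\cD^\cJ)=2$ and $\operatorname{rk}(TU_\cK)=3$, which pins down $\operatorname{rk}(TU_\cD)=4$ and $\operatorname{codim}_{T\cU}TU=2$. Without this precise matching one would only obtain partial vanishing of $\Xi$ on some proper subspace, insufficient to conclude triviality of the stabilizer. Once the isomorphism is in place, the rest is a direct linear-algebraic assembly of constraints (a) and (b).
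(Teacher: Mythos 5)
Your proposal is correct and follows essentially the same route as the paper: after invoking Proposition \ref{prop:smallcodimension}$(ii)$ and reducing to the Abelian ideal, the paper likewise observes that $\cJ$ induces an isomorphism $TU_\cD/TU_\cD^\cJ\to T\cU/TU$ of rank-$2$ bundles, picks $v_1,v_2\in TU_\cD|_p$ with $\cJ v_1,\cJ v_2$ spanning a complement to $T_pU$, and concludes $\Xi=0$, hence $r\le 5$ via Lemma \ref{lem:idontknow}. Your two-step factorization of the $\cJ$-map through $\cD/TU_\cD$ is a harmless elaboration of the same rank count.
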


\subsubsection{The case $\opp{rk}(TU_\cD^\cJ)=2$, $\opp{rk}(TU_\cK)=2$}\label{subsec:3.6.3}
 
This case is more involved and splits in three possibilities, ordered by the rank of $TU_\cL$, 
which can be either $0$, $1$ or $2$, namely:
\begin{itemize}
	\item[$(i)$] $TU\oplus \cL=T\cU$,
	\item[$(ii)$] $\opp{rk}(TU_\cL)=1$,
	\item[$(iii)$] $\cL=TU_\cD^\cJ$.
\end{itemize}
\smallskip\par

\noindent
{\it Subcase (i).} 
If $TU$ and $\cL$ are transversal, then there exist local coordinates $(u^1,\ldots,u^5,v^1,v^2)$ on
$\cU\cong U\times V$ such that $TU=\langle \partial_{u^1},\ldots,\partial_{u^5}\rangle$ and
$\cL=\langle\partial_{v^1},\partial_{v^2}\rangle$.
Any infinitesimal CR-symmetry has the form $\xi=\sum_{i=1}^5 \xi^i(u)\p_{u^i}$, since $\cL$ is $\gg$-stable.
Now $TU_\cD$ is a non-holonomic distribution
with Cauchy characteristic space $TU_\cK$ and $\gg\cong\gg|_{\mathcal O^G_q}$ is {\it effectively} represented 
on any orbit $\mathcal O^G_q$ as a Lie algebra of infinitesimal symmetries of $TU_\cD|_{\mathcal O^G_q}$. 
However $\opp{rk}(TU_\cD^\cJ)=2$, so {\it $TU_\cD$ is not $\cJ$-stable and the orbit $\mathcal O^G_q$ does  not inherit a CR structure simply by restriction}. 

Let $\pi_{TU}:T\cU=TU\oplus \cL\to TU$ be the projection to $TU$ along $\cL$. Since $\cD=TU_\cD\oplus \cL$, we may set $\widecheck{\cD}:=\pi_{TU}(\cD)=TU_\cD$ and define a complex structure
on $\widecheck \cD$ via projection:
  \begin{equation}\label{eq:projectedCR-structure}
\widecheck \cJ:=\pi_{TU}\circ \cJ|_{\widecheck \cD}:\widecheck \cD\rightarrow \widecheck \cD\,.
  \end{equation} 
Clearly $\widecheck{\cK}:=\pi_{TU}(\cK)=TU_\cK$ is $\widecheck \cJ$-stable too, as $\cK=TU_\cK\oplus \cL$. By construction $[\widecheck{\cD},\widecheck{\cD}]\subset TU$, and $\pi_{TU}|_{\cD_{10}}:\cD_{10}\to \widecheck{\cD}_{10}$ induces natural identifications $\cD_{10}/\cL_{10}\cong \widecheck{\cD}_{10}$ and $\cK_{10}/\cL_{10}\cong \widecheck{\cK}_{10}$.

 \begin{proposition}
The complex structure $\widecheck{\cJ}$ given by \eqref{eq:projectedCR-structure} satisfies:
\begin{enumerate}
\item It is not Levi-flat, in the sense that $[\widecheck{\cD}_{10},\widecheck{\cD}_{01}]\not\subset \widecheck{\cD}\otimes\mathbb C$,
\item The  
sections of $\widecheck{\cK}_{10}$ are precisely the sections of $\widecheck{\cD}_{10}$ that send $\widecheck{\cD}_{01}$ into $\widecheck \cD\otimes\mathbb C$,
\item It is $2$-nondegenerate, in the sense that $[\widecheck{\cK}_{10},\widecheck{\cD}_{01}]\not\subset\widecheck{\cK}_{10}\oplus  \widecheck{\cD}_{01}$,
\item It is integrable restricted on $\widecheck{\cK}$, i.e., $[\widecheck{\cK}_{10},\widecheck{\cK}_{10}]\subset \widecheck{\cK}_{10}$.
\end{enumerate}
However, it is not necessarily integrable on $\widecheck{\cD}$.
In particular $\gg$ is effectively represented 
on any orbit as infinitesimal symmetries of a $5$-dimensional $2$-nondegenerate almost CR structure. 
 \end{proposition}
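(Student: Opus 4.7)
The plan is to exploit the natural isomorphisms $\cD_{10}/\cL_{10}\cong\widecheck{\cD}_{10}$ and $\cK_{10}/\cL_{10}\cong\widecheck{\cK}_{10}$ (already noted just above the statement) induced by $\pi_{TU}|_{\cD_{10}}$, and to reduce each of the four claims to a corresponding statement about the original Freeman filtrands.  Concretely, any section $\widecheck{X}_{10}\in\Gamma(\widecheck{\cD}_{10})$ can be written as $\widecheck{X}_{10}=X_{10}-V$ for some lift $X_{10}\in\Gamma(\cD_{10})$ and some $V\in\Gamma(\cL\otimes\bC)$, and likewise for $\widecheck{\cK}_{10}$ and for the antiholomorphic bundles.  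Bracket computations on projected sections then expand into brackets of lifts plus correction terms involving $\cL$, all of which lie in $\cD\otimes\bC$ by Lemma \ref{lem:preliminarybrackets} together with the integrability of $\cL$.

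For claims (1) and (2) I would expand
\[
[\widecheck{X}_{10},\widecheck{X}_{01}]=[X_{10},X_{01}]-[X_{10},W]-[V,X_{01}]+[V,W],
\]
use Lemma \ref{lem:preliminarybrackets} to place the three correction terms in $\cD\otimes\bC$, and note that the left-hand side lies in $TU\otimes\bC$ by integrability of $TU$, hence in $TU_\cD\otimes\bC=\widecheck{\cD}\otimes\bC$.  The isomorphism $TU/TU_\cD\cong T\cU/\cD$ induced by $\pi_{TU}$ then identifies $[\widecheck{X}_{10},\widecheck{X}_{01}]\!\!\mod\widecheck{\cD}\otimes\bC$ with the ordinary Levi class $[X_{10},X_{01}]\!\!\mod\cD\otimes\bC$.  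Non Levi-flatness of $\cJ$ on $\cD$ yields (1), and the identification of the Levi kernel with $\cK_{10}$ yields (2).

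For claim (3) the same expansion applied to $\widecheck{Y}_{10}=Y_{10}-V_1$, with $Y_{10}\in\Gamma(\cK_{10})$, shows that every correction bracket falling under $[\cK_{10},\cL\otimes\bC]$, $[\cL\otimes\bC,\cD_{01}]$, or $[\cL\otimes\bC,\cL\otimes\bC]$ lies in $\cK_{10}\oplus\cD_{01}$ by Lemma \ref{lem:preliminarybrackets}; crucially, the term $[\cL_{10},\cD_{01}]$ sits in $\cK_{10}\oplus\cD_{01}$ precisely because $\cL_{10}$ is the left kernel of $\clL_{2}$.  Under the induced isomorphism $\cD_{10}/\cK_{10}\cong\widecheck{\cD}_{10}/\widecheck{\cK}_{10}$ the second-order Levi form of the projected structure therefore coincides with $\clL_2$ of the original structure modulo $\cL_{10}$; its left kernel is trivial by 3-nondegeneracy, giving (3).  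For (4) the same bookkeeping applied to $[\widecheck{Y}_{10},\widecheck{Y}'_{10}]$ places the output in $\cK_{10}\oplus\cL_{01}$, intersecting with $TU\otimes\bC$ forces it into $TU_\cK\otimes\bC$, and checking $\widecheck\cJ$-eigentypes (which behave well on $TU_\cK$ since $\widecheck{\cJ}=\pi_{TU}\circ\cJ|_{\widecheck{\cD}}$) identifies the result with $\widecheck{\cK}_{10}$.  The anticipated main obstacle is exactly the careful accounting of the $\cL$-valued correction terms in each expansion: the bracket table of Lemma \ref{lem:preliminarybrackets} is precisely what makes them controllable, and it is the failure of the Nijenhuis tensor of $\cJ$ to vanish after projection along $\cL$ that leaves $\widecheck{\cJ}$ non-integrable on all of $\widecheck{\cD}$ — the remark closing the statement.
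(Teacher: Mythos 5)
Your proposal is correct and follows essentially the same route as the paper: both decompose a projected $(10)$-section as a lift in $\cD_{10}$ (resp.\ $\cK_{10}$) minus an $\cL\otimes\bC$-valued correction, expand the brackets, and control all correction terms via the table of Lemma \ref{lem:preliminarybrackets} together with the intersection identities such as $(\cK_{10}\oplus\cD_{01})\cap(TU_\cD\otimes\bC)=\widecheck{\cK}_{10}\oplus\widecheck{\cD}_{01}$. The paper's (terse) argument for $(3)$ and $(4)$ is exactly this bookkeeping, so no further comparison is needed.
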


 \begin{proof}
The first two claims are straightforward. 
To prove (3) note that 
$[\cK_{10},\cD_{01}]\not\subset \cK_{10}\oplus \cD_{01}$ by $3$-nondegeneracy translates into 
$[\widecheck{\cK}_{10},\widecheck{\cD}_{01}]\not\subset \cK_{10}\oplus \cD_{01}$ 
by Lemma \ref{lem:preliminarybrackets} since 
$\cK_{10}\equiv \widecheck{\cK}_{10}\!\mod \cL\otimes\mathbb C$ and
$\cD_{01}\equiv \widecheck{\cD}_{01}\!\mod \cL\otimes\mathbb C$. Therefore
  $$
[\widecheck{\cK}_{10},\widecheck{\cD}_{01}]\not\subset
\big(\cK_{10}\oplus \cD_{01}\big)\cap TU_\cD\otimes\mathbb C
=\big(\widecheck{\cK}_{10}\oplus\widecheck{\cD}_{01}\oplus (\cL\otimes\mathbb C)\big)\cap TU_\cD\otimes
\mathbb C=\widecheck{\cK}_{10}\oplus  \widecheck{\cD}_{01}\,.
  $$
To establish $(4)$, we consider sections $Y_1,Y_2$ of $\widecheck{\cK}$ and note that
  \begin{align*}
[Y_1-i\widecheck{\cJ} Y_1,Y_2-i\widecheck{\cJ} Y_2]&=\pi_{TU}[Y_1-i\cJ Y_1,Y_2-i\cJ Y_2]-\pi_{TU}[Y_1-i\cJ Y_1,i(\widecheck{\cJ} Y_2-\cJ Y_2)]\\
&\;\;\;-\pi_{TU}[i(\widecheck{\cJ} Y_1-\cJ Y_1),Y_2-i\cJ Y_2]
  \end{align*}
is still a section of $\widecheck{\cK}_{10}$, since $\pi_{TU}\cK_{10}= \widecheck{\cK}_{10}$ and
$\pi_{TU}[\cK_{10},\cL\otimes\mathbb C]\subset \pi_{TU}\big(\cK_{10}\oplus \cL_{01}\big)=\widecheck{\cK}_{10}$.
  \end{proof}

Now  $\widecheck \cJ$ is not necessarily integrable on $\widecheck{\cD}$, since 
$[\cD_{10},\cL\otimes\mathbb C]\subset\cD_{10}\oplus \cK_{01}$,and we have a dichotomy: 
the integrable case $[\widecheck{\cD}_{10},\widecheck{\cD}_{10}]\subset \widecheck{\cD}_{10}$ and the non-integrable case 
$[\widecheck{\cD}_{10},\widecheck{\cD}_{10}]\not\subset \widecheck{\cD}_{10}$. 

\medskip

If \eqref{eq:projectedCR-structure} is integrable, we may argue exactly as in \S\ref{subsec:3.6.1} when $\opp{rk}(TU_\cL)=0$ 
but with \eqref{eq:projectedCR-structure} in place of $\cJ|_{TU_\cD}$:
if $\dim\gg\geq 7$, then all the orbits are locally isomorphic to the tube over the future light cone, and 
either the CR-structures on all the orbits can be identified by translations along $\cL$
or $\gg$ is conjugated to the parabolic subalgebra $\mathfrak p_2$ of $\mathfrak{sp}_4(\R)\cong\mathfrak{so}(3,2)$. 
In the first case \eqref{eq:projectedCR-structure} is projectable to $\widetilde{\cD}$, 
so the original CR-structure $\cJ$ is projectable too, a contradiction by Proposition \ref{prop:inducedcomplexstructure}. 
In the second case both $\widecheck \cJ=\widecheck \cJ(v)$ on $\widecheck \cD=TU_\cD$
and $\cJ=\cJ(v)$ on $\cL$ are $v$-dependent. Since $\cD_{01}\equiv \widecheck{\cD}_{01}\!\mod \cL\otimes\mathbb C$, the $3^{rd}$-order Levi form \eqref{eq:h.o.l.f.3} of the original CR-structure
either gives a non-zero contribution in $\cD_{10}/\cK_{10}$ or
it vanishes identically, in any case a contradiction. Thus we conclude:

  \begin{proposition}
If $\opp{rk} (TU_\cD^\cJ)=2$, $\opp{rk}(TU_\cK)=2$, $\opp{rk} (TU_\cL)=0$ and the complex structure $\widecheck\cJ$ 
as in \eqref{eq:projectedCR-structure} is integrable, then $\dim\gg\leq 6$.
  \end{proposition}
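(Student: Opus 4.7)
The plan is to run the argument of \S\ref{subsec:3.6.1} almost verbatim, replacing $\cJ|_{TU_\cD}$ by the projected $\widecheck\cJ$, and then exploit the remaining freedom along $\cL$ to extract a contradiction from the third-order Levi form. Since $\widecheck\cJ$ is integrable by hypothesis, each orbit $\cO^G_q$ inherits a genuine $5$-dimensional $2$-nondegenerate CR-structure $(TU_\cD|_{\cO^G_q},\widecheck\cJ|_{\cO^G_q})$ with Cauchy characteristic space $TU_\cK|_{\cO^G_q}$, and $\gg\cong\gg|_{\cO^G_q}$ acts on it transitively by infinitesimal CR-symmetries. If $\dim\gg\leq 6$ there is nothing to prove, so I would assume for contradiction that $\dim\gg\geq 7$; then, by the classification invoked in \S\ref{subsec:3.6.1} (cf.\ the results of \cite{IZ,MS,AS}), the orbit $\cO^G_q$ is locally CR-isomorphic to the tube over the future light cone and $\gg|_{\cO^G_q}$ is either all of $\mathfrak{sp}_4(\R)\cong\mathfrak{so}(3,2)$, or conjugate to one of the two opposite maximal parabolics $\mathfrak{p}_1^{\mathrm{opp}}$ or $\mathfrak{p}_2^{\mathrm{opp}}$.

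For the full $\mathfrak{sp}_4(\R)$ and the $\mathfrak{p}_1^{\mathrm{opp}}$ cases I would reuse the stabilizer-invariance argument of \S\ref{subsec:3.6.1}: the stabilizer at a basepoint leaves invariant a unique rank-$4$ distribution on the orbit and, up to an overall sign, a unique compatible complex structure on it. Recalling from \S\ref{sec:3.1} that symmetries in $\gg$ have $v$-independent coordinate expressions in the rectifying coordinates of $\cU\cong U\times V$, the induced $\widecheck\cJ$ on nearby orbits must coincide after the natural identification of orbits, so $\widecheck\cJ$ is constant in $v$. My idea is then to push this back to $\cJ$: since $\cJ\equiv\widecheck\cJ\mod \cL\otimes\mathbb{C}$ on $\cD$ and $\cL=\langle\partial_{v^1},\partial_{v^2}\rangle$, the $v$-independence of $\widecheck\cJ$ forces $\cJ$ itself to be $\widetilde\pi$-projectable to $\widetilde\cD$, contradicting $(iv)$ of Proposition \ref{prop:inducedcomplexstructure}.

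The hard part is the $\mathfrak{p}_2^{\mathrm{opp}}$ case, where the analysis in \S\ref{subsec:3.6.1} produces a genuine $2$-parameter family of invariant complex structures on each orbit, so no rigidity argument is available and $\widecheck\cJ$ on $TU_\cD$ as well as $\cJ|_\cL$ genuinely depend on $v$. Here my plan is to pass directly to the third-order Levi form $\clL_3:\cL_{10}\otimes\cD_{01}\to\cK_{10}/\cL_{10}$. Using the identification $\cD_{01}\equiv\widecheck\cD_{01}\mod \cL\otimes\mathbb{C}$, I would write the bracket $[Z_{10},X_{01}]$ for $Z_{10}$ a local section of $\cL_{10}$ and $X_{01}$ a section of $\cD_{01}$ as the sum of its $\pi_{TU}$-component (encoding the $v$-derivatives of $\widecheck\cJ$) and a remainder in $\cK_{10}\oplus\cL_{01}$ controlled by Lemma \ref{lem:preliminarybrackets}. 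The expected dichotomy is then that either the projected component produces a non-zero contribution in $\cD_{10}/\cK_{10}$, contradicting Lemma \ref{lem:preliminarybrackets}, or all such contributions vanish, forcing $\clL_3\equiv 0$ and contradicting $3$-nondegeneracy. Either alternative yields $\dim\gg\leq 6$, completing the proof.
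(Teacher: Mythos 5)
Your proposal follows the paper's own proof essentially verbatim: reduce to the analysis of \S\ref{subsec:3.6.1} with $\widecheck\cJ$ in place of $\cJ|_{TU_\cD}$, dispose of the $\mathfrak{sp}_4(\R)$ and $\mathfrak p_1^{\mathrm{opp}}$ cases by uniqueness of the invariant CR-structure and projectability of $\cJ$ (contradicting Proposition \ref{prop:inducedcomplexstructure}$(iv)$), and kill the $\mathfrak p_2^{\mathrm{opp}}$ case via the dichotomy for the third-order Levi form, where either a component appears in $\cD_{10}/\cK_{10}$ against Lemma \ref{lem:preliminarybrackets} or $\clL_3$ vanishes against $3$-nondegeneracy. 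This is exactly the argument in the paper, so no further comparison is needed.
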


Now we focus on non-integrable $2$-nondegenerate CR-structures. This means that for any section $X_{10}$ of 
$\widecheck \cD_{10}$, which is not in $\widecheck \cK_{10}$ at all points, and any non-zero section $Y_{10}$ of 
$\widecheck \cK_{10}$, we have $[X_{10},Y_{10}]\equiv r X_{01}+s Y_{01}\mod \widecheck \cD_{10}$
for functions $r,s:\cU\to \mathbb C$, not both identically vanishing. 
The freedom of choosing such a frame $(X_{10},Y_{10})$ of $\widecheck \cD_{10}$ is as follows: 
$\widetilde X_{10}=\alpha X_{10}+\beta Y_{10}$ and $\widetilde Y_{10}=\gamma Y_{10}$ for non-zero functions 
$\alpha,\gamma:\cU\to \mathbb C_\times$ and a function $\beta:\cU\to\mathbb C$. 

\medskip

If $r\neq 0$, there exists a frame $(X_{10},Y_{10})$ of $\widecheck \cD_{10}$ such that 
$[X_{10},Y_{10}]\equiv X_{01}\opp{mod}\widecheck{\cD}_{10}\oplus\widecheck{\cK}_{01}$, 
and the transformations respecting this correspond to $(\alpha,\beta,\gamma)=(\lambda e^{i\phi},\beta,e^{-2i\phi})$, 
with $\lambda:\cU\to \mathbb R_+$ and $\phi:\cU\to\R$. Using the $2$-nondegeneracy condition
  \begin{equation}\label{eq:2nondeg-almost-the-end}
[X_{10},Y_{01}]=t\,X_{01}\,\opp{mod}\widecheck{\cD}_{10}\oplus\widecheck{\cK}_{01}\,,
  \end{equation}
the structure function $t:\cU\to\mathbb C$ is non-zero. It changes as $\widetilde{t}=e^{4i\phi}t$, 
and we may normalize the sections in such a way that $t:\cU\to\R_+$. This normalization fixes $\phi$ 
modulo $\frac{\pi}{2}\mathbb Z$, i.e., up to finite cover of $\cU$, and we may set $\phi=0$.

The residual transformations are
$\widetilde{X}_{10}=\lambda X_{10}+\beta Y_{10}$, $\widetilde{Y}_{10}=Y_{10}$,
for $\lambda:\cU\to\mathbb R_+$, $\beta:\cU\to\mathbb C$.
Writing $[X_{10},Y_{10}]\equiv X_{01}+bY_{01}\opp{mod}\widecheck{\cD}_{10}$ we compute
the change $\widetilde{b}=\lambda b-\bar\beta$. Enforce the normalization $b=0$ via $\beta=\lambda\bar{b}$.
The final transformations are: $\widetilde{X}_{10}=\lambda X_{10}$, $\widetilde{Y}_{10}=Y_{10}$.

Hence $\gg\cong \gg|_{\cO^G_q}$ is effectively represented as infinitesimal symmetries of a $5$-dimensional manifold 
endowed with a collection of graded frames defined up to a scaling. The associated symbol algebra
is a $5$-dimensional metabelian Lie algebra with $1$-dimensional reduced structure algebra:
  \begin{equation}\label{eq:symbol-with-structure-close-to-last--occurence}
\begin{aligned}
\gg_0(q)&\subset \gf_0(q)\cong\mathbb R \Lambda\\
\gg_{-1}(q)&=\gm_{-1}(q)\cong TU_\cD|_q=\operatorname{span}\big(X|_q, \widecheck\cJ X|_q, Y|_q, \widecheck\cJ Y|_q\big)\,,\\
\gg_{-2}(q)&=\gm_{-2}(q)\cong TU|_q/TU_\cD|_q=\mathbb R R|_q\,,
\end{aligned}
  \end{equation}
The semisimple element $S=-\Lambda$ has spectrum $(-2; -1,-1,0,0)$ 
in the frame $R=[X,\widecheck\cJ X]_q$, $X|_q$, $\widecheck\cJ X|_q$, $Y|_q$, $\widecheck\cJ Y|_q$;
its first prolongation is trivial. Consequently, $\dim\gg=\dim(\gg|_{\mathcal O^G_q})\leq 6$.

\medskip

If $r=0$ but $s\neq0$, there exists a frame $(X_{10},Y_{10})$ of $\widecheck \cD_{10}$ such that 
$[X_{10},Y_{10}]=Y_{01}\,\opp{mod}\,\widecheck{\cD}_{10}$ and the transformations restrict to functions 
$(\alpha,\beta,\gamma)=(e^{-2i\phi},\beta,\lambda e^{i\phi})$, with $\lambda:\cU\to \mathbb R_+$ and 
$\phi:\cU\to\R$. Using \eqref{eq:2nondeg-almost-the-end},
the non-zero structure function $t:\cU\to\mathbb C$ changes as $\widetilde{t}=\lambda e^{-5i\phi}t$, so we can set $s=1$.
Up to a finite cover, this normalization fixes both $\lambda$ and $\phi$, so the residual transformations are: 
$\widetilde{X}_{10}=X_{10}+\beta Y_{10}$, $\widetilde{Y}_{10}=Y_{10}$.

We write $[X_{10},Y_{01}]=X_{01}+bY_{01} \opp{mod}\widecheck{\cD}_{10}$, 
$[Y_{10},Y_{01}]=\kappa Y_{10}-\bar{\kappa}Y_{01}$ for some $b,\kappa:\cU\to\mathbb C$, 
and compute the change of structure function $\widetilde{b}=b-\bar{\kappa}\beta-\bar{\beta}$.

If $|\kappa|\neq1$, then we enforce the normalization $b=0$, fixing $\beta$, 
and obtain a canonical absolute parallelism on the orbit $\mathcal O^G_q$. 
In this case $\dim\gg=\dim(\gg|_{\mathcal O^G_q})\leq 5$.

If, on the other hand, $\kappa=e^{2i\psi}$ for a function $\psi:\cU\to\R$ then we compute the change
  $$
\widetilde{be^{i\psi}}=be^{i\psi}-2\mathfrak{Re}(e^{-i\psi}\beta)\,.
  $$
Enforcing the normalization $\mathfrak{Re}\bigl(be^{i\psi}\bigr)=0$, we fix one real dimension of the 
complex function $\beta:\cU\to\mathbb C$ (the residual transformations respect $\mathfrak{Re}(e^{-i\psi}\beta)=0$).
The associated symbol algebra, taking into account the reduction of structure algebra, is given by
  \begin{equation}\label{eq:symbol-with-structure-almost-last-occurence}
\begin{aligned}
\gg_{0}(q)&\subset \gf_0(q)\cong\mathbb R B \,,\\
\gg_{-1}(q)&=\gm_{-1}(q)\cong TU_\cD|_q=\operatorname{span}\big(X|_q, \widecheck\cJ X|_q, Y|_q, \widecheck\cJ Y|_q\big)\,,\\
\gg_{-2}(q)&=\gm_{-2}(q)\cong TU|_q/TU_\cD|_q=\mathbb R R|_q\,,
\end{aligned}
  \end{equation}
where $B$ act as $[B,X_{10}]=ie^{i\psi}Y_{10}$, $[B,X_{01}]=-ie^{-i\psi}Y_{01}$,
and trivially on $Y_{10}$, $Y_{01}$ and $R$
(we used the natural complexified basis and omitted evaluation at $q$ for simplicity). 
If $\gg_{0}(q)=0$, then $\gg_{k}(q)=0$ for all $k\geq 1$ and $\dim\gg=\dim(\gg|_{\mathcal O^G_q})\leq 5$. 
If $\gg_{0}(q)=\mathbb R B$, a computation similar to that at the end of \S\ref{subsubsec:353}
tells us that the first prolongation $\gf_1(q)$ is $2$-dimensional and the second prolongation
vanishes, so overall the Tanaka prolongation is $8$-dimensional. Explicitly
$\gf_1(q)=\operatorname{span}\big(B_{10}:=e^{i\psi}\xi_{10}\otimes B-\rho\otimes Y_{01},
B_{01}:=e^{-i\psi}\xi_{01}\otimes B+\rho\otimes Y_{10}\big)$,
where $\rho,\xi_{10},\xi_{01},\upsilon_{10},\upsilon_{01},\beta$ is the dual basis to 
$R|_q,X_{10}|_q,X_{01}|_q,Y_{10}|_q,Y_{01}|_q,B$.

To exclude the cases $7\leq \dim\gg\leq 8$, we investigate filtered deformations of graded subalgebras 
of the prolongation including the non-positive part and of dimension $\geq 7$. 
Thanks to the observations (O1) and (O3) of \S\ref{subsubsec:352}, 
we consider such filtered deformations with partially restricted structure relations, 
as in the following Table with the condition $\mathfrak{Re}\bigl(be^{i\psi}\bigr)=0$:
\par
{\footnotesize
  \begin{equation*}
\begin{array}{||c|c|c|c|c||}\hline
[-,-] & X_{10} & Y_{10} & X_{01} & Y_{01} \\
\hline\hline
X_{10} &
0 & Y_{01}+c_1X_{10}+c_2Y_{10} & \notin TU_D & X_{01}+d_1X_{10}+d_{2}Y_{10}+bY_{01} \\
\hline 
Y_{10} & \star & 0 & \star & e^{2i\psi}Y_{10}-e^{-2i\psi}Y_{01} \\
\hline
X_{01} & \star & \star & 0 & \star \\
\hline
Y_{01} & \star & \star & \star & 0 \\
\hline
\end{array}
  \end{equation*}}
\smallskip
\centerline{\small\it Structure equations of the sub-frame $X$, $Y$, $\widecheck \cJ X$, $\widecheck \cJ Y$.}
\smallskip

The Jacobi identities rule out all such possibilities (this computation can be found in the \textsc{Maple} supplement 
accompanying the arXiv posting of the article). Thus we conclude: 

  \begin{proposition}
If $\opp{rk} (TU_\cD^\cJ)=2$, $\opp{rk}(TU_\cK)=2$, $\opp{rk} (TU_\cL)=0$ and the complex structure $\widecheck\cJ$ 
as in \eqref{eq:projectedCR-structure} is non-integrable, then $\dim\gg\leq 6$.
  \end{proposition}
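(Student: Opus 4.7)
\begin{pfns}(Proof proposal.)
The plan is to closely parallel the strategy already used for the non-integrable $\widecheck\cJ$ cases: work on a generic orbit $\cO^G_q$ (on which $\gg$ acts effectively as infinitesimal symmetries of a $5$-dimensional $2$-nondegenerate \emph{almost} CR structure), choose a frame $(X_{10},Y_{10})$ of $\widecheck\cD_{10}$ with $Y_{10}$ a local trivialization of $\widecheck\cK_{10}$, and record the non-integrability data via
\[
[X_{10},Y_{10}]\equiv r\,X_{01}+s\,Y_{01}\mod\widecheck\cD_{10}.
\]
The frame freedom reduces to $\widetilde X_{10}=\alpha X_{10}+\beta Y_{10}$, $\widetilde Y_{10}=\gamma Y_{10}$, and I split into the two exhaustive subcases $r\neq0$ and $r=0,s\neq0$, in each of which $(\alpha,\beta,\gamma)$ is successively partially fixed by algebraic normalizations of structure functions. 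The entire scheme is that of \S\ref{subsubsec:352}-\S\ref{subsubsec:353}: reduce the frame freedom as much as is algebraically possible, write the symbol with its reduced structure algebra $\mathfrak f_0(q)$, compute the Tanaka prolongation, and then exclude the remaining filtered deformations of dimension $\geq 7$ via Jacobi identities (using observations (O1) and (O3) of \S\ref{subsubsec:352}).

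In the case $r\neq 0$, I would normalize $[X_{10},Y_{10}]\equiv X_{01}$ modulo $\widecheck\cD_{10}\oplus\widecheck\cK_{01}$, then use the nonvanishing structure function $t$ in $[X_{10},Y_{01}]\equiv t\,X_{01}$ (which transforms by a pure $e^{4i\phi}$ phase) to fix $\phi$ up to a fourth root of unity; after setting $b=0$ in $[X_{10},Y_{10}]\equiv X_{01}+bY_{01}$ by choosing $\beta$, only the scaling $\lambda\colon\cU\to\R_+$ remains. The resulting reduced structure algebra is $\mathfrak f_0(q)=\R\Lambda$, whose first Spencer cohomology vanishes on the metabelian symbol, so the prolongation is trivial and $\dim\gg\leq \dim(\gg|_{\cO^G_q})\leq 6$ follows at once.

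In the case $r=0,s\neq 0$, I would normalize $[X_{10},Y_{10}]\equiv Y_{01}$ modulo $\widecheck\cD_{10}$ and use the resulting $\lambda e^{-5i\phi}$-weight on $t\neq 0$ (from \eqref{eq:2nondeg-almost-the-end}) to fix $\lambda$ and $\phi$ completely (up to a finite cover), leaving only $\beta\colon\cU\to\mathbb C$. Then I would read off the invariants from $[X_{10},Y_{01}]\equiv X_{01}+bY_{01}$ and $[Y_{10},Y_{01}]=\kappa Y_{10}-\bar\kappa Y_{01}$, whose change is $\widetilde b=b-\bar\kappa\beta-\bar\beta$. If $|\kappa|\neq 1$ one fixes $\beta$ uniquely by $b=0$, obtaining a canonical parallelism on $\cO^G_q$ and $\dim\gg\leq5$. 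If $|\kappa|=\lvert e^{2i\psi}\rvert=1$, only the real part $2\mathfrak{Re}(e^{-i\psi}\beta)$ of $\beta$ can be fixed from $\mathfrak{Re}(be^{i\psi})=0$, so the residual structure algebra is $\mathfrak f_0(q)=\R B$ with $[B,X_{10}]=ie^{i\psi}Y_{10}$ and trivial action on $Y_{10}$ and $R$.

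The hard part, and the actual obstacle, is the last subcase $r=0,s\neq0,|\kappa|=1$, where the Tanaka prolongation is non-trivial: a direct Spencer computation gives $\mathfrak f_1(q)=\mathrm{span}(B_{10},B_{01})$ of dimension $2$, $\mathfrak f_2(q)=0$, hence an a priori bound $\dim\gg\leq 8$ that must be pushed down to $\leq 6$. Here I would enumerate the graded subalgebras of the prolongation that contain $\mathfrak f_{\leq 0}(q)$ and have total dimension $\geq 7$ (namely $\mathfrak f_{\leq 0}(q)\oplus\mathfrak f_1(q)$ and $\mathfrak f_{\leq 0}(q)\oplus\ell$ for lines $\ell\subset\mathfrak f_1(q)$), parametrize the most general filtered bracket compatible with these graded data as in \S\ref{subsubsec:353} (incorporating the tensorial constraints of the Table preceding this proposition, which survive by Lemma \ref{lem:from-left-to-right}), and then impose the Jacobi identity. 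Exactly as in Propositions \ref{P6a} and \ref{P6b}, I expect the resulting system to be inconsistent after a short cascade of linear resolutions, producing a $1=0$ contradiction; this is the computation that would be carried out in the accompanying \textsc{Maple} supplement rather than by hand. Assembling the three subcases completes the proof.
\end{pfns}
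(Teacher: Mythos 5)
Your proposal is correct and follows essentially the same route as the paper: the same dichotomy $r\neq 0$ versus $r=0,\ s\neq 0$, the same chain of algebraic normalizations of the frame $(X_{10},Y_{10})$ (including the $e^{4i\phi}$- and $\lambda e^{-5i\phi}$-weights on $t$, the reduction $\widetilde b=b-\bar\kappa\beta-\bar\beta$, and the split on $|\kappa|\neq 1$ versus $\kappa=e^{2i\psi}$), the same reduced structure algebras $\R\Lambda$ and $\R B$ with the identical prolongation computation ($\gf_1$ two-dimensional, $\gf_2=0$), and the same final exclusion of filtered deformations of dimension $\geq 7$ via Jacobi identities carried out in the \textsc{Maple} supplement. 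No gaps; nothing to add.
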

\smallskip\par 

\noindent
{\it Subcase (ii).}
If $\opp{rk}(TU_\cL)=1$, then we have a direct sum decomposition $TU_\cD=TU_\cD^\cJ\oplus TU_\cK $ 
(otherwise $TU_\cK^\cJ=TU_\cD^\cJ\cap TU_\cK$ is non-trivial, so $\opp{rk}(TU_\cK^\cJ)=2$,
$TU_\cD^\cJ=TU_\cK^\cJ=TU_\cK\supset TU_\cL$, and $TU_\cL^\cJ=TU_\cL$, which is not possible) 
and the canonical real line subbundle $TU_\cL\subset \cL\cong \cL_{10}$.

  \begin{lemma}\label{lemma:cotto}
There is a frame $(X,\cJ X,Y,\cJ Y,Z,\cJ Z)$ on $\cD$ such that
 $$
TU_\cL=\langle Z\rangle\,,\;\ \cL=\langle Z,\cJ Z\rangle\,,\;\ TU_\cK=\langle Z,e^{\cJ\psi}Y\rangle\,,\;\
\cK=\langle Z,\cJ Z,Y,\cJ Y\rangle\,,\;\ TU_\cD^\cJ=\langle X,\cJ X\rangle\,,
 $$
where $e^{\cJ\psi}Y=\cos\psi Y+\sin\psi \cJ Y$. 
  \end{lemma}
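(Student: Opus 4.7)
The proof will be a sequence of normalizations, each of which is forced by the ranks already established and by the $\cJ$-stability of the Freeman bundles $\cK$ and $\cL$. The plan is to construct the frame in three stages, working outward from $\cL$ to $\cK$ and then to $\cD$.

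First I would handle $\cL$. Pick any local trivializing section $Z$ of the real line bundle $TU_\cL$. Since $\cL$ is $\cJ$-stable of real rank $2$ and $Z$ alone already has rank $1$, the only issue is whether $\cJ Z$ lies in $TU_\cL$; if it did we would have $\cJ Z = \lambda Z$ for a real function, contradicting $\cJ^2=-1$. Hence $TU_\cL^\cJ=0$ and $\cL = \langle Z, \cJ Z\rangle$ automatically.

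Next I would handle $\cK$. The quotient $\cK/\cL$ is a real rank $2$ bundle carrying the induced complex structure (still denoted $\cJ$), and the distribution $TU_\cK$ projects to a real line in $\cK/\cL$, because $TU_\cK\cap\cL = TU\cap\cK\cap\cL = TU_\cL$ and $\opp{rk}(TU_\cK/TU_\cL)=1$. Choose any local trivializing section $Y'$ of $\cK$ whose class spans $\cK/\cL$; then every real line in $\cK/\cL$ has the form $\langle \cos\psi\,[Y'] + \sin\psi\,\cJ[Y']\rangle$ for a uniquely determined function $\psi$. Pick a local section $W\in\Gamma(TU_\cK)$ representing this line; then $W = \cos\psi\,Y' + \sin\psi\,\cJ Y' + \alpha Z + \beta\cJ Z$ for some real functions $\alpha,\beta$. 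I would then replace $Y'$ by $Y := Y' + aZ + b\cJ Z$ and solve the linear system
\begin{equation*}
\begin{pmatrix} \cos\psi & -\sin\psi \\ \sin\psi & \cos\psi \end{pmatrix}\begin{pmatrix} a \\ b \end{pmatrix} = \begin{pmatrix}\alpha \\ \beta\end{pmatrix},
\end{equation*}
which has determinant $1$ and hence a unique solution $(a,b)$. A direct computation shows that with this choice $\cos\psi\,Y + \sin\psi\,\cJ Y = W \in TU_\cK$, giving $TU_\cK = \langle Z, e^{\cJ\psi}Y\rangle$ and, since $Y$ still represents a trivializing class in $\cK/\cL$, also $\cK = \langle Z,\cJ Z, Y,\cJ Y\rangle$.

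Finally, for $\cD$: I would verify that $(TU_\cD^\cJ)_{10}$ and $\cK_{10}$ are complementary in $\cD_{10}$. Indeed, $(TU_\cD^\cJ)_{10}\cap\cK_{10} = (TU_\cD^\cJ\cap \cK)_{10} = (TU_\cK^\cJ)_{10} = 0$ since $TU_\cK^\cJ=0$ by the direct sum $TU_\cD = TU_\cD^\cJ\oplus TU_\cK$ recalled at the start of subcase (ii). Choose any trivializing section $X$ of the $\cJ$-stable rank $2$ bundle $TU_\cD^\cJ$; then $(X,\cJ X)$ is a real frame of $TU_\cD^\cJ$, and $X_{10}$ together with $Y_{10}, Z_{10}$ gives a complex frame of $\cD_{10}$. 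The resulting real frame $(X,\cJ X,Y,\cJ Y,Z,\cJ Z)$ satisfies all the listed properties. No step presents a real obstacle; the only mild subtlety is the linear-algebra adjustment of $Y'$ by $\cL$-valued terms, which is why we get $e^{\cJ\psi}Y$ rather than simply $Y$ as the generator of $TU_\cK$ modulo $TU_\cL$.
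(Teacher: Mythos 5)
Your argument is correct and is precisely the straightforward linear-algebra normalization the paper has in mind (the proof is omitted there as routine): the key points --- $TU_\cL^\cJ=0$ forcing $\cL=\langle Z,\cJ Z\rangle$, the line $TU_\cK/TU_\cL$ inside $\cK/\cL$ determining $\psi$, the rotation system of determinant $1$ fixing the $\cL$-correction of $Y'$, and $TU_\cD^\cJ\cap\cK=TU_\cK^\cJ=0$ giving $\cD=TU_\cD^\cJ\oplus\cK$ --- are all present and used correctly. No gaps.
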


The proof is straightforward, we omit it. 
Note that $TU=\langle X,\cJ X, e^{\cJ\psi}Y,Z, R=\tfrac12 [X,\cJ X]\rangle$ and $TU_\cK^\cJ=0$;
the function $\psi:\cU\to \R\!\mod\,2\pi$ is not fixed at this stage.

The sections as in Lemma \ref{lemma:cotto} are defined up to transformations of the form 
  \begin{equation}\label{eq:gauge-freedom-explicit-final}
X\mapsto \widetilde{X}=\lambda_1 e^{\cJ \theta_1}X\,,\quad Y\mapsto\widetilde{Y}=\lambda_2e^{\cJ \theta_2} Y
+\rho e^{\cJ (\theta_2-\psi)} Z\,,\quad Z\mapsto \widetilde{Z}=\pm\lambda_3Z\,,
  \end{equation}
for some functions $\lambda_1,\lambda_2,\lambda_3:\cU\to\mathbb R_+$, $\rho:\cU\to\mathbb R$, 
$\theta_1,\theta_2:\cU\to\R\!\mod\,2\pi$. 
In terms of the corresponding sections of the holomorphic bundles, we have
  \begin{equation*}
X_{10}\mapsto \widetilde{X}_{10}=\lambda_1 e^{i\theta_1}X_{10}\,,\quad Y_{10}\mapsto\widetilde{Y}_{10}=\lambda_2 e^{i\theta_2} Y_{10}+\rho e^{i(\theta_2- \psi)} Z_{10}\,,
\quad Z_{10}\mapsto \widetilde{Z}_{10}=\pm\lambda_3 Z_{10}\,,
  \end{equation*}
and we enforce the normalization conditions  \eqref{eq:norm-conditions}:
$[Y_{10},X_{01}]=X_{10}\,\opp{mod}\,\cK_{10}\oplus \cD_{01}$ 
and $[Z_{10},X_{01}]=Y_{10}\,\opp{mod}\,\cL_{10}\oplus \cD_{01}$.
Up to a finite cover, this fixes $\psi$ and the frames are defined up to transformations 
$\widetilde{X}_{10}=\lambda X_{10}$, $\widetilde{Y}_{10}=Y_{10}+\rho e^{-i\psi}  Z_{10}$,
$\widetilde{Z}_{10}=\lambda^{-1} Z_{10}$, for some $\lambda:\cU\to\R_+$.

Consider the expression $[Y_{10},X_{01}]=X_{10}+cY_{10}\mod \cL_{10}\oplus \cD_{01} $ with $c:\cU\to\mathbb C$,
and directly compute the change of structure function as
$\widetilde{c}=\lambda(c+e^{-i\psi}\rho)$. We enforce the normalization $\mathfrak{Re}(e^{i\psi}c)=0$,
with the remaining frame freedom
$\widetilde{X}_{10}=\lambda X_{10}$, $\widetilde{Y}_{10}=Y_{10}$, $\widetilde{Z}_{10}=\lambda^{-1} Z_{10}$. 

In summary $\gg$ is a Lie algebra of infinitesimal symmetries of a $7$-dimensional manifold endowed with a collection
of graded frames defined up to scaling. The associated $7$-dimensional symbol algebra, taking into account the reduction 
of structure algebra, is given by
  \begin{equation*}\label{eq:symbol-with-structure-next-to-last-occurence}
\begin{aligned}
\gf_0(p) &\cong\R\Lambda\,,\\
\gm_{-1}(p)&\cong\cD|_p=\mathrm{span}\big(X|_p, \cJ X|_p, Y|_p, \cJ Y|_p, Z|_p, \cJ Z|_p\big)\,,\\
\gm_{-2}(p)&\cong T\cU|_p/\cD|_p\cong\R R|_p\,,
\end{aligned}
  \end{equation*}
where we identified $R|_p$ with its class modulo $\cD|_p$.
The spectrum of the semisimple element $S:=-\Lambda$ acting on $R|_p$; $X|_p$, $\cJ X|_p$, $Y|_p$, etc.,  
is  $(-2; -1,-1,0,0,+1,+1)$, and the Tanaka prolongation is trivial in positive degrees. 
Since $\gf_{-2}(p)\cong\gm_{-2}$ and $\gf_{-1}(p)\subset\gm_{-1}(p)$ has codimension 2,
we conclude:

  \begin{proposition}
If $\opp{rk} (TU_\cD^\cJ)=2$, $\opp{rk}(TU_\cK)=2$, $\opp{rk} (TU_\cL)=1$, then $\dim\gg\leq 6$.
  \end{proposition}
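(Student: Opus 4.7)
The plan is to conclude from the structural setup already established in the present subcase. The preceding analysis has produced, at any regular point $p\in\cU$, canonical local frames $(X,\cJ X,Y,\cJ Y,Z,\cJ Z)$ of $\cD$ defined only up to the real scaling $\widetilde X_{10}=\lambda X_{10}$, $\widetilde Y_{10}=Y_{10}$, $\widetilde Z_{10}=\lambda^{-1}Z_{10}$ with $\lambda:\cU\to\R_+$. Thus $\gg$ acts as infinitesimal symmetries of a filtered $G$-structure whose symbol–structure pair $\gf_{\leq 0}(p)=\gm(p)\rtimes\gf_0(p)$ is the one displayed: $\gm_{-2}\cong\R R|_p$, $\gm_{-1}\cong\cD|_p$ with only non-trivial bracket $[X_{10},X_{01}]=iR$, and $\gf_0=\R\Lambda$ with $\Lambda$-weights $(+2;+1,+1,0,0,-1,-1)$ on $(R;X,\cJ X,Y,\cJ Y,Z,\cJ Z)$.

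First, I would invoke the intransitive Tanaka--Weisfeiler theory of \S\ref{sec:2.3}: the evaluation map $\ev_p$ identifies $\gg_{-2}(p)$ with a subspace of $TU|_p/\cD|_p$ of dimension $5-4=1$, $\gg_{-1}(p)$ with a subspace of $TU_\cD|_p$ of dimension $4$, and $\gg_0(p)$ with a subspace of $\gf_0(p)$ of dimension at most $1$, while for $i\geq 1$ each $\gg_i(p)$ embeds into the Tanaka prolongation $\gf_i(p)$ of $\gf_{\leq 0}(p)$. These non-positive contributions already sum to at most $1+4+1=6$, so the entire proposition reduces to the claim that $\gf_i(p)=0$ for all $i\geq 1$.

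The main remaining step is therefore to carry out this positive-prolongation vanishing by a direct Spencer computation, which I expect to be short. An element of $\gf_1(p)$ is encoded by a form $\alpha:\gm_{-1}\to\R$ via $W\mapsto\alpha(W)\Lambda$ together with a value $\phi(R)\in\gm_{-1}$; applying the derivation identity to the vanishing brackets $[Z_{10},X_{10}]=0$, $[Y_{10},X_{10}]=0$, and so on, produces relations such as $\alpha(Z_{10})X_{10}+\alpha(X_{10})Z_{10}=0$ that, exploiting the distinct $\Lambda$-weights on $X$, $Y$, $Z$, force every coefficient $\alpha(W)$ to vanish, while applying it to $[X_{10},X_{01}]=iR$ then yields $\phi(R)=0$. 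Hence $\gf_1(p)=0$, and the vanishing of all higher $\gf_k(p)$ follows by induction. Combined with step one this gives $\dim\gg\leq 6$.

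The principal subtlety I foresee is purely bookkeeping: keeping track of weights and ensuring no prolongation element hides in the weight-zero slots $Y,\cJ Y$. What makes this subcase easier than \S\ref{subsubsec:352}--\S\ref{subsubsec:353} is precisely that the reduction to a single scaling parameter $\lambda$ makes the filtered structure rigid at the graded level, so no analysis of filtered deformations via Jacobi identities is required and the bound $\dim\gg\leq 6$ follows essentially from the prolongation count alone.
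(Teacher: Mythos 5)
Your concluding argument coincides with the paper's: once the coframe freedom is reduced to a single positive scaling $\lambda$, the symbol--structure pair is exactly the one you display, the positive Tanaka prolongation vanishes, and the count $\dim\gg\leq 1+4+1=6$ follows. Your Spencer computation killing $\gf_1(p)$ -- using the $\Lambda$-weights $(+1,+1,0,0,-1,-1)$ on $\gm_{-1}(p)$ together with the fact that the only nonzero symbol bracket is $[X_{10},X_{01}]=iR$ -- is correct, and it actually supplies a detail that the paper merely asserts ("the Tanaka prolongation is trivial in positive degrees").

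The gap is that you take as given precisely what constitutes the body of the paper's proof of this proposition: the reduction of the frame freedom from the six functions $(\lambda_1,\lambda_2,\lambda_3,\rho,\theta_1,\theta_2)$ of \eqref{eq:gauge-freedom-explicit-final} down to the single scaling $\lambda$. This requires constructing the adapted frame of Lemma \ref{lemma:cotto} out of the splitting $TU_\cD=TU_\cD^\cJ\oplus TU_\cK$ and the canonical real line $TU_\cL\subset\cL\cong\cL_{10}$, imposing the normalization conditions \eqref{eq:norm-conditions} to fix the angle $\psi$ (and only up to a finite cover of $\cU$), and then computing the transformation law $\widetilde c=\lambda(c+e^{-i\psi}\rho)$ of the structure function in $[Y_{10},X_{01}]=X_{10}+cY_{10}\!\mod\cL_{10}\oplus\cD_{01}$ to see that the normalization $\mathfrak{Re}(e^{i\psi}c)=0$ eliminates the residual shear $\rho$. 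None of this is automatic: had $\rho$ or one of the angles resisted normalization, $\gf_0(p)$ would be at least two-dimensional, the positive prolongation would no longer be forced to vanish, and one would have to analyze filtered deformations via Jacobi identities exactly as in \S\ref{subsubsec:352}--\S\ref{subsubsec:353} -- the scenario you claim is avoided here. So the "structural setup already established in the present subcase" is the substantive content of the statement, and a self-contained proof must include it; only the final bookkeeping, which you do correctly, is then left.
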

\smallskip\par

\noindent
{\it Subcase (iii).}
If $\opp{rk}(TU_\cL)=2$, then $\cL=TU_\cL=TU_\cK=TU^\cJ_\cD$. Now $TU$ and $\cK$ are integrable distributions with integrable sum 
$T\cU=TU+\cK$, and their intersection $\cL$ is also integrable, so we may find adapted coordinates $(u^1,u^2,u^3,v^1,v^2,w^1,w^2)$ such that 
\begin{equation}
\label{eq:distribution-aligned-last}
\begin{aligned}
TU&=\langle \partial_{u^1},\partial_{u^2},\partial_{u^3},\partial_{w^1},\partial_{w^2}\rangle\,,\\
\cK&=\langle \partial_{v^1},\partial_{v^2},\partial_{w^1},\partial_{w^2}\rangle\,,\\
\cL&=\langle\partial_{w^1},\partial_{w^2}\rangle\,.
\end{aligned}
\end{equation}
The Lie algebra $\gg$ consists of vector fields
$\xi=\sum_{i=1}^3\xi^i\partial_{u^i}+\sum_{j=1}^2\eta^j\partial_{w^j}$,
where the $\xi^i$'s depend only on the coordinates 
$u$, as $\cK$ is $\gg$-stable. By Proposition \ref{prop:inducedcomplexstructure}, the projection 
\eqref{eq:projectionstoleafspaceL} to the leaf space of $\cL$
is injective on CR-symmetries, i.e., $\gg\cong\widetilde{\pi}_\star(\gg)$.
We may thus decompose any infinitesimal CR-symmetry as $\xi=\xi'+\xi''$, $\xi':=\sum_{i=1}^3\xi^i\partial_{u^i}$, $\xi'':=\sum_{j=1}^2\eta^j\partial_{w^j}$, the component $\xi'$ determines $\xi$, and $\gg$ is {\it effectively} represented on any orbit $\mathcal O^G_q=\{(u,v,w)\in\cU\mid v=v_q\}$.

By Proposition \ref{prop:inducedcomplexstructure}, the $5$-dimensional base $\widetilde \cM$ of \eqref{eq:projectionstoleafspaceL}  comes with distributions $\widetilde \cD$, $\widetilde \cK$ and the complex structure $\cJ$ induces a complex structure $\widetilde \cJ$ on $\widetilde \cD/\widetilde \cK$. 
The distribution $TU_\cD$ is $\widetilde\pi$-projectable to $\widetilde {TU_\cD}=\widetilde\pi_*(TU_\cD)$, since $TU_\cD\supset TU_\cL=\cL$ and $[TU_\cD, \cL]=[TU_\cD, TU_\cL]\subset TU_\cD$. Now $TU_\cD/\cL\cong \cD/\cK$, so 
$\widetilde {TU_D}=\widetilde\pi_*(TU_\cD/\cL)\cong  \widetilde\pi_*(\cD/\cK)
=\widetilde \cD/\widetilde \cK$ and $\widetilde {TU_D}$ inherits a complex structure $\mathcal I$ from the complex structure $\widetilde \cJ$.

Let us fix one orbit $\mathcal O^G_q$. It is endowed with the non-integrable distribution $TU_\cD|_{\mathcal O^G_q}$ 
with Cauchy characteristic space $TU_\cK|_{\mathcal O^G_q}=\cL|_{\mathcal O^G_q}$. The leaf space 
  $$
\widetilde\pi|_{\mathcal O^G_q}:\mathcal O^G_q\to\widetilde {\mathcal O^G_q}
  $$ 
by $\cL|_{\mathcal O^G_q}$ then determines  a contact $3$-dimensional CR manifold 
$\widetilde {\mathcal O^G_q}:=\widetilde\pi(\mathcal O^G_q)$ on which the Lie algebra 
$\gg\cong\widetilde{\pi}_\star(\gg)$ acts effectively via the isomorphism $\xi\mapsto\xi'$. 
Explicitly, the contact CR structure is given by
  \begin{equation}\label{eq:we-are-there}
\mathcal C:=\widetilde{TU_\cD}|_{\widetilde{\mathcal O^G_q}}\,,\;\;\mathcal I|_{\widetilde{\mathcal O^G_q}}:\mathcal C\to\mathcal C\,,
  \end{equation}
and the integrability of $\mathcal I|_{\widetilde{\mathcal O^G_q}}$ is of course automatic in $3$ dimensions. In summary: 

  \begin{lemma}
The leaf space $\widetilde \cM$ has local coordinates $(u,v)=(u^1,u^2,u^3,v^1,v^2)$ and it is foliated by $3$-dimensional contact CR manifolds $\widetilde{\mathcal O^G_q}=\{(u,v)\in\widetilde \cM\mid v=v_q\}$. 
  \end{lemma}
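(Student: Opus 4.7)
The plan is to assemble the structural facts already established in Subcase (iii) into the stated description of $\widetilde\cM$. My first step is to inspect the adapted coordinates \eqref{eq:distribution-aligned-last}, where $\cL=\langle\partial_{w^1},\partial_{w^2}\rangle$. This immediately identifies the integral leaves of $\cL$ with the level sets of $(u,v)$, so that $\widetilde\pi$ reads locally as $(u,v,w)\mapsto(u,v)$ and $\widetilde\cM$ is endowed with the coordinate chart $(u^1,u^2,u^3,v^1,v^2)$.

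Next I would identify the $G$-orbits on $\cU$. Recall from the discussion just above the Lemma that any CR-symmetry $\xi\in\gg$ has the form $\xi=\sum_{i=1}^{3}\xi^i(u)\,\partial_{u^i}+\sum_{j=1}^{2}\eta^j\,\partial_{w^j}$, since $\gg$ stabilizes $\cK$. In particular, the orbit distribution $\mathfrak S$ is a subbundle of $TU=\langle\partial_{u^1},\partial_{u^2},\partial_{u^3},\partial_{w^1},\partial_{w^2}\rangle$ and, since generic orbits have dimension $5$ in the subcase under consideration, $\mathfrak S=TU$. Integrating yields $\mathcal O^G_q=\{(u,v,w)\in\cU\mid v=v_q\}$. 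By the $G$-equivariance of $\widetilde\pi$ established in \S\ref{sec:2.4}, the image is $\widetilde{\mathcal O^G_q}=\widetilde\pi(\mathcal O^G_q)=\{(u,v)\in\widetilde\cM\mid v=v_q\}$, which is $3$-dimensional.

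Finally, the contact CR structure on each leaf is already in hand via \eqref{eq:we-are-there}: the contact distribution $\mathcal C$ is obtained as the projection of $TU_\cD$ modulo $\cL$ and has rank $2$, while the induced complex structure $\mathcal I|_{\widetilde{\mathcal O^G_q}}$ is automatically integrable in this dimension. No step is genuinely hard here; the only delicate point is to verify that the $G$-equivariance of $\widetilde\pi$ indeed sends the orbits of $\gg$ on $\cU$ onto the orbits of $\widetilde\pi_\star(\gg)$ on $\widetilde\cM$ as honest level sets of $v$, which was already addressed in \S\ref{sec:2.4}.
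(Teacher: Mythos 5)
Your proposal is correct and follows essentially the same route as the paper, which proves this lemma by exactly the preceding discussion: the adapted coordinates \eqref{eq:distribution-aligned-last} give $\widetilde\pi:(u,v,w)\mapsto(u,v)$, the explicit form of the symmetries identifies the orbits with the level sets of $v$, and the contact CR structure on each projected orbit is the one in \eqref{eq:we-are-there}, with integrability of $\mathcal I$ automatic in dimension $3$. The only point you leave implicit (as does the paper at this stage) is that $TU_\cD|_{\mathcal O^G_q}$ is non-integrable with Cauchy characteristic space exactly $\cL|_{\mathcal O^G_q}$, which is what makes the rank-$2$ projection $\mathcal C$ genuinely contact rather than merely a distribution.
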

  
We stress that \eqref{eq:we-are-there} is {\it not} constructed by projecting to the $3$-dimensional leaf space \eqref{eq:projectionstoleafspaceK}: there is no induced complex structure there, see $(iv)$ Proposition \ref{prop:inducedcomplexstructure}. Rather, we used the projection to the leaf space of $\cL$ and 
the complex structure on $\cD/\cK$, but projecting a fixed $5$-dimensional orbit.

Thus we obtained a $3$-dimensional contact CR-manifold 
  $$
(\widetilde{\mathcal O^G_q},\mathcal C,\mathcal I|_{\widetilde{\mathcal O^G_q}})
  $$
on which $\gg\cong\widetilde{\pi}_\star(\gg)$ acts by infinitesimal symmetries. 
Locally, any maximally symmetric such structure is isomorphic to the CR sphere $SU(1,2)/B$ 
and the submaximal CR symmetry dimension is $3$, see \cite{C,Kru2016}. 
Since the proper subalgebras of $\mathfrak{su}(1,2)$
have dimension at most $5$, we only consider the case where
$\widetilde{\pi}_\star(\gg)\cong\mathfrak{su}(1,2)$ and
show that this contradicts $3$-nondegeneracy.
As already established in \S\ref{sec:3.3}, there is a unique (up to sign) contact CR-structure on each $\widetilde{\mathcal O^G_q}$ preserved by $\widetilde{\pi}_\star(\gg)$. On the other hand, symmetries in $\widetilde{\pi}_\star(\gg)$ have the local form
$\xi'=\sum_{i=1}^3 \xi^i(u)\p_{u^i}$,
so that $\widetilde{\pi}_*(\gg)|_{\widetilde{\mathcal O^G_p}}$ is not only abstractly isomorphic to $\widetilde{\pi}_*(\gg)|_{\widetilde{\mathcal O^G_q}}$ for all $p,q\in\cU$ but in fact {\it equal}, identifying projected orbits in the obvious way.

The CR-structure on $\widetilde{\mathcal O^G_q}$ is then equal to that on $\widetilde{\mathcal O^G_p}$, since $\mathcal I$ depends smoothly on $v$, and $\mathcal I$ is projectable to a complex structure on 
$$\vardbtilde{\cD}=\vardbtilde{\pi}_*(\cD)=
\vardbtilde{\pi}_*(TU_\cD+\cK)=\vardbtilde{\pi}_*(TU_\cD)\,.
$$ In other words $\mathcal I$ is $\vardbtilde{\pi}$-projectable
to the leaf space of $\cK$. It is then easily checked that the original complex structure $\cJ$ on $\mathcal D$ is $\vardbtilde{\pi}$-projectable too, which is a contradiction by Proposition \ref{prop:inducedcomplexstructure} $(iv)$.
Conseqeuently, we conclude:

  \begin{proposition}
If $\opp{rk} (TU_\cD^\cJ)=2$, $\opp{rk}(TU_\cK)=2$, $\opp{rk} (TU_\cL)=0$, then $\dim\gg\leq 5$.
  \end{proposition}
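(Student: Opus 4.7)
The plan is to interpret the stated hypotheses in the framework of subcase (iii) that precedes the proposition, namely where $\cL=TU_\cL=TU_\cK=TU_\cD^\cJ$ coincide, and to mimic the strategy of \S\ref{sec:3.3}: reduce to the Lie algebra of infinitesimal symmetries of a $3$-dimensional contact CR manifold obtained as a leaf-space quotient of each orbit, then invoke the classical bound on proper subalgebras of $\mathfrak{su}(1,2)$.

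First I would introduce local rectifying coordinates $(u^1,u^2,u^3,v^1,v^2,w^1,w^2)$ simultaneously adapted to $TU$, $\cK$ and their intersection $\cL$ as in \eqref{eq:distribution-aligned-last}, so that any infinitesimal CR-symmetry has the form $\xi=\sum_{i=1}^3\xi^i(u)\p_{u^i}+\sum_{j=1}^2\eta^j\p_{w^j}$, with the $u$-components independent of $v$ because $\cK$ is $\gg$-stable. Next I would argue that $TU_\cD$ projects under $\widetilde\pi$ to a well-defined rank $1$ distribution on each image $\widetilde{\mathcal O^G_q}$ of a fixed $5$-dimensional orbit: indeed $TU_\cD\supset\cL$ and $[TU_\cD,\cL]\subset TU_\cD$ since $\cL$ is the Cauchy characteristic space, while the isomorphism $TU_\cD/\cL\cong\cD/\cK$ shows that the complex structure $\widetilde\cJ$ on $\cD/\cK$ induced by Proposition \ref{prop:inducedcomplexstructure}$(ii)$ descends to a complex structure $\mathcal I$ on $\widetilde\pi_\star(TU_\cD)$.

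Restricting to any $5$-dimensional orbit $\cO^G_q$, the further quotient by the Cauchy characteristic subbundle $\cL|_{\cO^G_q}$ thus yields a $3$-dimensional contact manifold $\widetilde{\cO^G_q}$ equipped with an invariant contact distribution $\mathcal C=\widetilde\pi_\star(TU_\cD|_{\cO^G_q})$ and an (automatically integrable) complex structure $\mathcal I|_{\widetilde{\cO^G_q}}$. By Proposition \ref{prop:inducedcomplexstructure}$(v)$, the algebra $\gg$ maps injectively to the symmetries of this $3$-dimensional contact CR structure, so $\gg\cong\widetilde\pi_\star(\gg)\subset\inf(\widetilde{\cO^G_q},\mathcal C,\mathcal I|_{\widetilde{\cO^G_q}})$. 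By Cartan's theorem, any transitive algebra of infinitesimal symmetries of such a structure is either isomorphic to $\mathfrak{su}(1,2)$ (in the locally spherical case) or has dimension at most $5$ (which already yields the desired bound); thus it suffices to exclude $\gg\cong\mathfrak{su}(1,2)$.

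The main obstacle is ruling out the maximally symmetric case. My approach is to rerun the uniqueness analysis from \S\ref{sec:3.3}: the Borel subalgebra of $\mathfrak{su}(1,2)$ acts on $\mathfrak{su}(1,2)/\mathfrak b\cong \gg_{-2}\oplus\gg_{-1}$ with distinct spectra on the two summands, so there is a unique $\mathfrak{su}(1,2)$-invariant rank $2$ distribution and a unique (up to sign) $\mathfrak{su}(1,2)$-invariant complex structure on it. Because the $u$-components of any $\xi\in\gg$ are $v$-independent, the restriction $\widetilde\pi_\star(\gg)|_{\widetilde{\cO^G_q}}$ literally coincides, as a Lie algebra of vector fields on the common $u$-coordinate neighborhood, with $\widetilde\pi_\star(\gg)|_{\widetilde{\cO^G_p}}$, so the invariant CR structures on all projected orbits agree up to a sign. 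Smooth dependence on $v$ then forces them to agree identically, whence $\mathcal I$ descends to a complex structure on $\vardbtilde\cD=\vardbtilde\pi_\star(TU_\cD+\cK)$; a short bracket computation using $\cD=TU_\cD+\cK$ then upgrades this to projectability of the original $\cJ$, contradicting Proposition \ref{prop:inducedcomplexstructure}$(iv)$. This rules out $\gg\cong\mathfrak{su}(1,2)$ and leaves $\dim\gg\leq 5$, as required.
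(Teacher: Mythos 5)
Your proposal is correct and follows essentially the same route as the paper's own proof of this proposition: the adapted coordinates \eqref{eq:distribution-aligned-last}, the projection of each $5$-dimensional orbit along $\cL$ to a $3$-dimensional contact CR manifold $(\widetilde{\mathcal O^G_q},\mathcal C,\mathcal I|_{\widetilde{\mathcal O^G_q}})$ on which $\gg$ acts effectively, the dichotomy between the spherical case and $\dim\gg\leq 5$, and the exclusion of $\gg\cong\mathfrak{su}(1,2)$ via the uniqueness (up to sign) of the invariant contact CR structure, $v$-independence of the $u$-components of symmetries, and the resulting $\vardbtilde{\pi}$-projectability of $\cJ$ contradicting Proposition \ref{prop:inducedcomplexstructure}$(iv)$. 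You also correctly read the stated hypothesis $\opp{rk}(TU_\cL)=0$ as the subcase-(iii) situation $\cL=TU_\cL=TU_\cK=TU_\cD^\cJ$ (i.e., $\opp{rk}(TU_\cL)=2$, evidently a typo in the proposition), exactly as the paper's proof treats it; the only slip is calling $\widetilde\pi_\star(TU_\cD)$ a rank-$1$ distribution, which is its complex rank, the real rank being $2$.
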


\section{On 3-nondegenerate CR models with many symmetries}\label{sec:4}

Here we provide realizations of submaximal symmetric models 
as real hypersurfaces $\cM^7$ of $\bC^4$ and give an expression for their symmetry algebra --
we consider in turn the cases of infinitesimal symmetries and global automorphisms.
Then we discuss other cases with the symmetry algebra $\gg$ of $\dim\gg\ge4$, 
and finish with the proof of the generalized Beloshapka's conjecture in dimension 7.

\subsection{Homogeneous curves and tubes}\label{sec:5.1}

Starting with a real analytic curve $\mathbbm{r}\subset\R\mathbb{P}^3$ satisfying $\mathbbm{r}\wedge\mathbbm{r}'\wedge\mathbbm{r}''\neq 0$ (here and in the following, the prime symbol means $\frac{d}{dt}$), we consider 
the cone $R=C\mathbbm{r}\subset\R^4$ over it  and its tangent variety $TR\subset\R^4$.
This surface is singular along $R$ and, removing this singularity, 
we get the regular hypersurface $\Sigma=TR\setminus R\subset\R^4(x)$. Define the tube $\cM=\Sigma\times\R^4(y)\subset\R^4(x)\times\R^4(y)=\bC^4(z)$, with its induced CR structure $(\cD,\cJ)$.

We recall that $\mathbbm{r}=[\gamma]$ is called {\it nondegenerate} if, for any local affine parametrization of 
its lift $\R\ni t\mapsto\gamma(t)\in\R^4$, the vectors $\gamma(t),\gamma'(t),\gamma''(t),\gamma'''(t)$ 
are linearly independent for any $t\in\mathbb R$. In other words, they span $\R^4$ for any $t\in\R$. 

 \begin{lemma}
The tube $(\cM,\cD,\cJ)$ is 3-nondegenerate if and only if $\mathbbm{r}$ is nondegenerate.
 \end{lemma}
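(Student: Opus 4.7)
My plan is to compute the Freeman filtration of the tube $(\cM,\cD,\cJ)$ directly in terms of the osculating geometry of the lift $\gamma$, and to read off $3$-nondegeneracy pointwise as a transversality condition. I first parametrize the smooth locus $\Sigma=TR\setminus R$ by $\phi(a,b,t)=a\gamma(t)+b\gamma'(t)$ with $b\neq 0$, so that
\begin{equation*}
T_x\Sigma=\operatorname{span}\bigl(\gamma,\gamma',a\gamma'+b\gamma''\bigr)=V_2(t),\qquad V_2(t):=\operatorname{span}\bigl(\gamma(t),\gamma'(t),\gamma''(t)\bigr),
\end{equation*}
depending only on $t$ and of dimension $3$ by the hypothesis $\mathbbm{r}\wedge\mathbbm{r}'\wedge\mathbbm{r}''\neq0$; hence $\Sigma$ is a genuine real hypersurface. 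Writing $X_a=\gamma$, $X_b=\gamma'$, $X_t=a\gamma'+b\gamma''$ for the coordinate frame and using the tube identification $\cD_{10}|_{(x,y)}\cong T_x\Sigma\otimes\bC$ via $Z_I:=\sum_k(X_I)_k\,\p_{z_k}$, the observation that $(X_I)_k$ depends only on $\Re(z)$ yields the direct computation
\begin{equation*}
[Z_I,\bar Z_J]\equiv \tfrac{i}{2}(D_{X_I}X_J)_y\pmod{\cD\otimes\bC},
\end{equation*}
where $D$ is the ordinary directional derivative in $\R^4$ and $(v)_y$ denotes a real vector $v$ placed in the $\p_y$-direction. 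By symmetry of the second fundamental form $II_\Sigma$, this identifies $\clL_1$ up to a nonzero scalar with the complexification of $II_\Sigma$ in the transverse direction $N_y$.

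I then evaluate $II_\Sigma$ on the frame: the derivatives $D_{X_a}X_a, D_{X_a}X_b, D_{X_b}X_b$ vanish, while $D_{X_a}X_t=D_{X_t}X_a=\gamma'$ and $D_{X_b}X_t=D_{X_t}X_b=\gamma''$ lie in $V_2(t)$, so the only entry with a potentially transverse component is $D_{X_t}X_t=a\gamma''+b\gamma'''\equiv b\gamma'''\pmod{V_2(t)}$. Hence $\clL_1$ is nonvanishing at $(x,y)$ iff $\gamma'''(t)\notin V_2(t)$, i.e.\ iff $\gamma,\gamma',\gamma'',\gamma'''$ are linearly independent at the corresponding $t$ --- a condition manifestly independent of the choice of affine lift and parametrization. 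On the open locus where this holds, the Levi form has Hermitian rank $1$ with kernel $\cK_{10}=\operatorname{span}(Z_a,Z_b)$, and the same type of computation gives
\begin{equation*}
[Z_a,\bar Z_t]=\tfrac12(\bar Z_b-Z_b),\qquad [Z_b,\bar Z_t]=\tfrac{1}{2b}\bigl((\bar Z_t-Z_t)-a(\bar Z_b-Z_b)\bigr),
\end{equation*}
the remaining brackets of $Z_a,Z_b$ with conjugate frame elements lying inside $\cD\otimes\bC$. Reducing modulo $\cK_{10}\oplus\cD_{01}$, the first bracket vanishes while the second retains a nonzero multiple of $Z_t$, so $\cL_{10}=\operatorname{span}(Z_a)$. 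Finally, reducing the first bracket modulo $\cL_{10}\oplus\cD_{01}$ rather than $\cK_{10}\oplus\cD_{01}$, its $Z_b$-summand descends to a nonzero class in $\cK_{10}/\cL_{10}$, exhibiting $\clL_3\neq 0$ and proving $3$-nondegeneracy.

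Conversely, if $\gamma'''(t_0)\in V_2(t_0)$ at some $t_0$, then $II_\Sigma$ and hence $\clL_1$ vanish at every tube point with that $t$-coordinate, so the tube is Levi-flat there --- and on any open set where the degeneracy persists it is holomorphically degenerate in the sense of \cite{Fr} --- in particular not $3$-nondegenerate. The main delicacy I anticipate is purely notational: tracking signs and $\tfrac12$-factors when converting between the real $(\p_x,\p_y)$ and complex $(\p_z,\p_{\bar z})$ pictures (the key identity being $\bar Z_v-Z_v=iv_y$ for real $v$) and checking that the successive reductions mod $\cD\otimes\bC$, $\cK_{10}\oplus\cD_{01}$, and $\cL_{10}\oplus\cD_{01}$ correctly produce $\clL_1,\clL_2,\clL_3$. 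The geometric content --- that the Freeman flag of the tube coincides with the complexified osculating flag $V_0\subset V_1\subset V_2$ of $\gamma$, where $V_j:=\operatorname{span}(\gamma,\gamma',\dots,\gamma^{(j)})$, so $3$-nondegeneracy pinpoints exactly the transversality of $\gamma'''$ to $V_2(t)$ --- makes this essentially a bookkeeping exercise.
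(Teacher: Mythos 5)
Your proof is correct and takes essentially the same route as the paper's: parametrize $\Sigma$ by $(a,b,t)\mapsto a\gamma(t)+b\gamma'(t)$, bracket the induced frame of $\cD_{10}$ with its conjugates, and read off the Freeman flag $\cD_{10}\supset\langle Z_a,Z_b\rangle\supset\langle Z_a\rangle$, your displayed brackets matching the paper's $[Z_2,\overline{Z_1}]\equiv\overline{Z_2}$ and $[Z_2,\overline{Z_0}]\equiv\tfrac1s\overline{Z_1}$ after conjugation and the change of frame $Z_t=aZ_1+bZ_2$. The only addition is your explicit identification of the Levi form with the second fundamental form of $\Sigma$, which the paper leaves implicit.
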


 \begin{proof}
The hypersurface $\Sigma=TR\setminus R$ can be parametrized via
 $
\psi:(r,s,t)\mapsto r\gamma(t)+s\gamma'(t)
 $, with $s\neq 0$.
Then $\cD=\langle\gamma,\gamma',\gamma'',\cJ \gamma,\cJ \gamma',\cJ \gamma''\rangle$,  where 
$\gamma=\psi_*\p_r$, $\gamma'=\psi_*\p_s$ while $\gamma''$ is a combination of those 
and $\psi_*\p_t$. 
Consequently $\cD$ is 1-step bracket generating if and only if $\mathbbm{r}$ is nondegenerate:
the equality $[\cD,\cD]=T\cM$ fails at all points $(x,y)\in\cM$ with $x\in T_{\lambda a}R$, $\lambda\in\R_\times$,
corresponding to $a\in\mathbbm{r}$, where the curve degenerates.

In this case, we let $Z_k=\frac12(\gamma^{(k)}-i\cJ \gamma^{(k)})\in T\cM\otimes\mathbb C$ 
be the $(10)$-components of the above generators of $\cD$, $0\leq k\leq2$. 
Then one easily computes  the Freeman sequence, cf. \cite[proof of Prop. 38]{KS1}:
 $$
\cD_{10}=\langle Z_0,Z_1,Z_2\rangle\supset\cK_{10}=\langle Z_0,Z_1\rangle
\supset\cL_{10}=\langle Z_0\rangle\supset \{0\}.
 $$
Indeed, $[Z_2,\overline{Z_1}]=\overline{Z_2}\mod\cD_{10}+\cK_{01}$ and 
$[Z_2,\overline{Z_0}]=\frac1s\cdot\overline{Z_1}\mod\cD_{10}+\cL_{01}$.
Consequently, $(\cM,\cD,\cJ)$ is uniformly 3-nondegenerate.
 \end{proof}

It is well known that the maximally symmetric nondegenerate curves $\mathbbm{r}\subset\R\mathbb{P}^3$ are 
rational normal curves,  and that their symmetry algebra is $\sl_2(\R)$ acting irreducibly on $\mathbb R^4\cong S^3\mathbb R^2$. 
Moreover, a maximally symmetric curve is unique up to projective transformation $PSL_4(\R)$ of $\R\mathbb{P}^3$.
Any other locally homogeneous curve has 1-dimensional projective symmetry algebra 
(because the projective curvature is non-trivial and can be normalized to canonical arc-length along the curve). 
The classification 
of homogeneous projective curves corresponds to Jordan normal forms of matrices (in our case of size $4\times4$),
and it has several presentations over $\bC$ and $\R$.  We use our own version of the latter below.

If $\mathbbm{r}$ is assumed (locally) homogeneous with an infintesimal symmetry $v\in\sl_4(\R)$,  then the corresponding
affine hypersurface $\Sigma\subset\R^4(x)$ has a 2-dimensional affine symmetry algebra, generated by $v$
and the radial symmetry corresponding to central element $\rho\in\mathfrak{gl}_4(\R)$.
It does not act transitively on the $3$-dimensional $\Sigma$ and, as we will shortly see, the same concerns the CR-symmetries
of the corresponding tube $\cM=\Sigma\times\R^4(y)\subset\bC^4(z)$. 
\vskip0.2cm\par
We depart with the following.
\begin{lemma}
An orbit $\mathbbm{r}$ of the flow $\exp(tv)$ is a nondegenerate curve
if and only if there is precisely one Jordan block for each eigenvalue of $v$.
 \end{lemma}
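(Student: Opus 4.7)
The plan is to translate the nondegeneracy of the orbit into the classical cyclic-vector condition on $v$, and then invoke standard linear algebra. Fix a lift $\gamma(t)=\exp(tv)\gamma_0$ of the orbit with $\gamma_0\in\R^4\setminus\{0\}$. Since $v$ commutes with $\exp(tv)$, differentiation gives
\[
\gamma^{(k)}(t)=\exp(tv)\,v^k\gamma_0\qquad(k\ge 0).
\]
Invertibility of $\exp(tv)$ then shows that the vectors $\gamma(t),\gamma'(t),\gamma''(t),\gamma'''(t)$ are linearly independent at some (equivalently, at every) $t$ if and only if $\gamma_0,v\gamma_0,v^2\gamma_0,v^3\gamma_0$ span $\R^4$, i.e., $\gamma_0$ is a cyclic vector for $v\in\mathfrak{gl}_4(\R)$.

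Next I would invoke the standard criterion: an endomorphism $v$ of $\R^4$ admits a cyclic vector if and only if its minimal polynomial coincides with its characteristic polynomial. Translating this into the (real) Jordan normal form of $v$, in which a pair of complex-conjugate eigenvalues is represented by a single real block of twice the size, the criterion becomes: precisely one Jordan block for each eigenvalue of $v$. When this regularity condition is satisfied, the set of cyclic vectors is a nonempty Zariski-open subset of $\R^4$, so $\gamma_0$ may be chosen in it and the corresponding orbit is nondegenerate; when the condition fails, no vector is cyclic, and therefore every orbit degenerates.

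I do not expect any serious obstacle in writing this out: the argument is a one-line reduction to a textbook fact about regular elements of $\mathfrak{gl}_4(\R)$. The only minor points requiring care are the convention that ``one Jordan block per eigenvalue'' is interpreted in the real Jordan form (so complex-conjugate pairs count as a single block), and the reading of the lemma as asserting the existence of a nondegenerate orbit among the orbits of $\exp(tv)$, which is exactly the regularity of $v$.
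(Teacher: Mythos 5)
Your proof is correct and takes essentially the same approach as the paper, which also reduces the question to the Jordan normal form of $v$; the paper's own argument is a one-line sketch of only the degenerate direction (two blocks with a common eigenvalue give a linear integral, so the orbit lies in a hyperplane), which is exactly the statement that the minimal polynomial then has degree $<4$ so no cyclic vector exists. Your formalization via the cyclic-vector criterion is a clean and complete way to get both directions, and your two caveats (real Jordan form for conjugate pairs, and reading the lemma as asserting nondegeneracy of a \emph{generic} orbit, since orbits through eigenvectors always degenerate) are exactly the right points to flag.
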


 \begin{proof}
It follows directly from Jordan normal forms: if there are two linearly independent eigenvectors for one
eigenvalue, then the flow $\exp(tv)$ has a linear integral, i.e., the orbit lies on a hyperplane, and hence the curve is degenerate.
 \end{proof}

We can bring any locally homogeneous nondegenerate curve $\mathbbm{r}=[\gamma]:I\to\R\mathbb{P}^3$ 
to one of the following parametrized forms, where:
(i) we use Segre notations for the real case without marking and for the complex case with the superscript$^c$;
(ii) the parameter $\tau\in I=\R_+$, while $t\in I=\R$ (and the passage is given by $\tau=e^t$), in order to meet some traditional forms.

 \begin{center}\begin{longtable}{lll}
(1111) & $\gamma_{\a\b}(\tau)=[1:\tau:\tau^\a:\tau^\b]$ & ($1<\a<\b$) \\                         
(211) & $\gamma_{\b}(\tau)=[1:\ln\tau:\tau:\tau^{\b}]$ & ($\b\neq0,1$) \\
(31) & $\gamma(\tau)=[1:\ln\tau:\ln^2\tau:\tau]$ \\ 
(22) & $\gamma(\tau)=[1:\ln\tau:\tau:\tau\ln\tau]$ \\ 
(4) & $\gamma(\tau)=[1:\tau:\tau^2:\tau^3]$ \\ 
($1^c$11) & $\gamma_{\a\b}(t)=[\cos\a t:\sin\a t:e^{t}:e^{\b t}]$ & ($\a\neq0$, $\b\neq1$) \\ 
($1^c2$) & $\gamma_{\a}(t)=[\cos\a t:\sin\a t:e^t:te^t]$ & ($\a\neq0$) \\ 
($2^c$) & $\gamma(t)=[\cos t:\sin t: t\cos t: t\sin t]$ \\ 
($1^c1^c$) & $\gamma_{\a\b}(t)=[\cos\a t:\sin\a t:e^t\cos\b t:e^t\sin\b t]$ & ($\a,\b\neq0$) \\ 
 & $\gamma_{\b}(t)=[\cos t:\sin t:\cos\b t:\sin\b t]$ & ($\b\neq0,\pm1$) 
 \end{longtable}\end{center}

We note that for the largest nilpotent 
type (4) (fitting into the irreducible $\sl_2(\R)\subset\sl_4(\R)$ upper-triangularly as the $4\times4$ Jordan block with the eigenvalue 0), the domain of the map can be taken to be $\R\mathbb{P}^1$, and the same is true for
the real simple type (1111) for integers $\a,\b$. However the curve $\gamma_{\a\b}(\tau)$ becomes
singular for $\tau=\infty$ and degenerate for $\tau=0$.

The rational normal curve of degree $3$ corresponds to type (4)
and also to the curve of Segre real simple type $(1111)$ with $\a=2,\b=3$.

 \begin{proposition}
\label{prop:6-dim}
If $\mathbbm{r}$ is not a rational normal curve, then the CR-symmetry algebra of the corresponding
tube $(\cM,\cD,\cJ)$ is 6-dimensional, generated by $v,\rho$ and translations along $\R^4(y)$.
 \end{proposition}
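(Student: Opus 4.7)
My plan is to first exhibit a $6$-dimensional subalgebra of CR-symmetries directly from the construction, and then to use the classification results already established in the paper (together with \cite{KS1}) to rule out any strictly larger symmetry algebra.

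\emph{Step 1: six explicit symmetries.} The four translations $\p_{y^j}$, $j=0,\dots,3$, are CR-symmetries of the tube $\cM=\Sigma\times\R^4(y)$. The radial field $\rho=\sum_i x^i\p_{x^i}$ preserves the cone $R$, hence also $TR$ and $\Sigma$; its holomorphic extension $\sum_i z^i\p_{z^i}$ to $\C^4$ has real part tangent to $\cM$, so $\rho$ determines a CR-symmetry. Likewise, the linear vector field on $\R^4(x)$ associated to $v\in\sl_4(\R)$ preserves $\mathbbm{r}$, the cone $R$ and the tangent variety $TR$, and by the same complexification argument it lifts to a CR-symmetry of $\cM$. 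These six fields are linearly independent at generic points since $v\in\sl_4(\R)$ is traceless while $\rho$ equals the identity matrix, so $v$ is not proportional to $\rho$ on $\Sigma$. Therefore $\dim\gg\ge 6$.

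\emph{Step 2: upper bound via the classification.} Suppose for contradiction that $\dim\gg\ge 7$. By Theorem~\ref{T1}, the symmetry algebra of a $3$-nondegenerate $7$-dimensional CR-hypersurface cannot act intransitively once $\dim\gg\ge 7$, so $\gg$ acts locally transitively on $\cM$, i.e.\ $(\cM,\cD,\cJ)$ is locally homogeneous. The main theorem of \cite{KS1} then forces $\dim\gg=8$ and $\cM$ to be locally CR-equivalent to the model $\mathcal{R}^7$, which is itself the tube obtained by the same construction applied to the rational normal curve $\mathbbm{r}_o$ of degree $3$ in $\R\mathbb{P}^3$.

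\emph{Step 3: contradiction with the choice of $\mathbbm{r}$.} It remains to show that a local CR-equivalence $\cM\cong\mathcal{R}^7$ forces $\mathbbm{r}$ to be projectively equivalent to $\mathbbm{r}_o$. The canonical Freeman filtrand $\cL_{10}=\langle \gamma-i\cJ\gamma\rangle$, computed in the preceding lemma, singles out a CR-canonical real line in $\cL$ transverse to the translation directions spanned by $\cJ\cL$; integrating this line field produces open subsets of the generators of the cone $R$. Thus the cone $R$, and with it the projective class of $\mathbbm{r}$, is recovered from the CR structure alone, giving $\mathbbm{r}\sim\mathbbm{r}_o$ projectively. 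This contradicts our hypothesis and, combined with Step~1, yields $\dim\gg=6$.

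The principal obstacle lies in Step~3: confirming that the projective type of $\mathbbm{r}$ is genuinely a CR-invariant of the tube, since a priori the CR manifold might admit ``hidden'' symmetries unrelated to affine transformations of $\R^4(x)$. This is essentially a rigidity statement in the spirit of the Stanton--Barletta theorem on equivalences of tube hypersurfaces, and for locally homogeneous $\mathbbm{r}$ it is enforced by the canonical nature of the Freeman filtration together with the explicit form of $\cL_{10}$ already available from the proof of the preceding lemma.
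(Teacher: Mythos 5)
Your Steps 1 and 2 coincide with the paper's argument: the six explicit symmetries give $\dim\gg\ge 6$, and the dichotomy from Theorem \ref{T1} together with \cite{KS1} reduces everything to excluding that the tube is locally equivalent to the maximal model $\mathcal{R}^7$ with $\dim\gg=8$.

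The genuine gap is in Step 3. You propose to recover the projective class of $\mathbbm{r}$ from the CR structure by singling out the real line $\langle\gamma\rangle$ inside the Freeman filtrand $\cL$ and integrating it. Two things are missing. First, the splitting $\cL=\langle\gamma\rangle\oplus\langle\cJ\gamma\rangle$ is defined using the tube presentation ($\gamma$ tangent to $\R^4(x)$, $\cJ\gamma$ a translation direction); the filtrand $\cL$ is CR-canonical, but you give no argument that a preferred \emph{real} line inside it is -- compare Lemma \ref{lem:simple-but-useful}(ii), where the existence of a canonical real line subbundle is an extra hypothesis, not a consequence of $3$-nondegeneracy. Second, even granting such a canonical line field, its integral curves are merely curves in $\cM$; to conclude that the projective class of $\mathbbm{r}$ is a CR-invariant you would need the local CR-equivalence with $\mathcal{R}^7$ to be affine in the $x$-variables, i.e.\ a tube-rigidity statement. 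This is exactly the ``principal obstacle'' you name, and it is neither proved by you nor available in the paper; as written, Step 3 is an acknowledgment of the gap rather than a closure of it.

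The paper circumvents this entirely by a Lie-algebraic argument that never recovers the curve. If $\dim\gg=8$, then $\gg\cong\mathfrak{gl}_2(\R)\ltimes S^3\R^2$ and the translations $\p_{y^k}$ form its nilradical, hence a canonically defined abelian ideal $\R^4$. The symmetry $v$ normalizes it and $\ad(v)$ acts on it by the matrix $v\in\sl_4(\R)$; on the other hand, modulo the central scaling, this action would have to factor through the irreducibly embedded $\sl_2(\R)\subset\sl_4(\R)$, whose semisimple elements have spectrum $(-3\nu,-\nu,\nu,3\nu)$ on $S^3\R^2$. Comparing with the actual spectrum of $v$ for each Segre type (for instance $(-1-\a-\b,\,3-\a-\b,\,3\a-1-\b,\,3\b-1-\a)$ for type $(1111)$) forces $\a=2$, $\b=3$, i.e.\ the rational normal curve. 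This spectral comparison is the concrete content your Step 3 needs.
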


 \begin{proof}
The symmetry algebra is at least $6$-dimensional and we have a dichotomy: either the tube is locally maximally symmetric or it is not. In the first case, the translations along $\R^4(y)$ 
are the nilradical
of the $8$-dimensional symmetry algebra. In the second case, the tube  is not locally homogeneous by \cite[Thm. 1]{KS1}, the symmetry algebra is $6$-dimensional by Theorem \ref{T1}, generated by $v,\rho$ and the translations along $\R^4(y)$. The latter is the maximal Abelian subalgebra of the symmetry algebra. In both cases, the space $\mathbb R^4$ of translations along $\R^4(y)$ is a canonically defined subalgebra, 
and the other symmetries fits into the $\mathfrak{gl}_4(\R)$ acting on it. 

For instance, the scaling symmetries of the curve $\gamma_{\a,\b}$ of type $(1111)$ are 
$\tilde{v}=\opp{diag}(0,1,\a,\b)$ and $\rho=\opp{diag}(1,1,1,1)$ (as elements of $\mathfrak{gl}_4(\bR)$).
Their combination
 $$
v=4\tilde{v}-(1+\a+\b)\rho=\opp{diag}(-1-\a-\b,3-\a-\b,3\a-1-\b,3\b-1-\a)
 $$
belongs to $\sl_4(\bR)$, and the diagonal comprises the spectrum of $\opp{ad}(v)$ acting on 
$\R^4$.
This is an increasing sequence and, if $v$ belongs to the $\sl_2(\bR)$ sitting irreducibly in $\sl_4(\bR)$,
then it should have the form $(-3\nu,-\nu,\nu,3\nu)$ for some $\nu\neq0$, which implies $\a=2,\b=3$.
For any other values of the parameters, it follows that the curve of type $(1111)$ is not maximally symmetric 
and the symmetry algebra 
is generated by the symmetries already obtained: $v,\rho, \p_{y_k}$, $0\leq k\leq3$. 

The same holds for any other homogeneous nondegenerate curve $\mathbbm{r}$:
unless it is affinely equivalent to the rational normal curve, which can be understood by looking at the spectrum 
of $\opp{ad}(v)$ acting on $\R^4$, the associated tube is not maximally symmetric.
\end{proof}

This proposition has the second main theorem as a corollary:
 \begin{proof}[Proof of Theorem \ref{T2}]
Every tube $\cM$ that corresponds
to one of the nondegenerate projective curves $\mathbbm{r}$ from the above list, which is not a rational normal curve,
has symmetry algebra of submaximal dimension by Proposition \ref{prop:6-dim} and Theorem \ref{T1}. Furthermore, tubes corresponding to
different types from the list or to the same type but with different parameters, have CR-symmetry algebras that are non-isomorphic: in fact, the CR-symmetry algebra is the semi-direct product $\R^2\ltimes\R^4$,
with $\R^2$ generated by $v, \rho$ and $\R^4$ by $\p_{y_k}$, $0\leq k\leq 3$, and the representation of the traceless element $v$ on $\R^4$ encodes the projective symmetry of the curve.
Hence they are not CR-equivalent and we have a continuum of pairwise non-isomorphic submaximally
symmetric 3-nondegenerate CR-structures  in dimension 7.
 \end{proof}
 
 \begin{remark}\label{rem:50}
One can show that the CR-automorphism group 
of the tube  corresponding
to a nondegenerate projective curve $\mathbbm{r}$ is the semi-direct product 
$H\ltimes\R^4$, where $H\subset GL_{4}(\R)$ consists of automorphisms of the cone $R$, 
equivalently linear automorphisms that project to an automorphism of $\mathbbm{r}$
(indeed, automorphisms must preserve the maximal Abelian subgroup $\R^4$, hence descend to 
affine automorphisms of the tangent variety and its singular part $R=C\mathbbm{r}$, and to projective automorphisms of  
$\mathbbm{r}$). In most cases, $H$ is the direct product of the radial scalings $\R_\times$ and the exponents 
$\R_+$ of $v$, but sometimes the latter can be changed, e.g., to $S^1$.
 \end{remark}

\subsection{Submaximal automorphism group}\label{sec:5.2}

As noted in Remark \ref{rem:50}, there is a continuum of 3-nondegenerate 7-dimensional CR-hypersurfaces
with $\dim\opp{Aut}(\cM,\cD,\cJ)=6$. On the other hand, the flat model $\mathcal{R}^7$ has $8$-dimensional automorphism group. 
What about dimension 7?
In this case, there exists a Lie algebra of CR-symmetries of dimension 7 and the CR-manifold is locally flat by Theorem \ref{T1} and \cite[Thm. 3]{KS1}. In other words, it is locally CR-isomorphic to the flat model
$\mathcal{R}^7=\Sigma\times\R^4(y)$, $\Sigma=TR\setminus R$, where $R$ is the rational normal cone in $\R^4(x)$ (here and throughout this subsection).

In order to compute orbits more effectively, let us interpret the  irreducible
$SL_2(\R)$-module $\R^4\cong S^3\R^2$ as the space of cubic polynomials on $(\R^2)^*$.
The following are straghtforward:

 \begin{lemma}
In the above interpretation, $R$ corresponds to polynomials with triple real roots, while $\Sigma$ to
polynomials with one double and one simple real roots.
 \end{lemma}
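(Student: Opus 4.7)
The plan is to identify, in the $SL_2(\R)$-equivariant model $\R^4\cong S^3\R^2$, which cubic forms belong to $R$ and to $\Sigma=TR\setminus R$ by reading off their real root structure.

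First I would describe the rational normal curve and its cone. The standard $SL_2(\R)$-equivariant Veronese embedding $[s:t]\mapsto [(sX+tY)^3]$ realizes $\R\mathbb{P}^1$ as the rational normal curve of degree $3$ in $\mathbb{P}(S^3\R^2)\cong\R\mathbb{P}^3$, so the affine cone over it is
$R=\{\lambda\ell^3:\lambda\in\R,\ \ell\in(\R^2)^*\}$,
i.e., the cubic forms that split as a real cube. These are exactly the cubics whose projective zero locus consists of a single point of multiplicity $3$, which is the claim for $R$.

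Next I would compute the tangent variety $TR$. Parametrizing $R\setminus\{0\}$ locally by $(\lambda,\ell)\mapsto\lambda\ell^3$ and differentiating at a regular point $p_0=\lambda_0\ell_0^3$ (with $\lambda_0\ne0$ and $\ell_0\in(\R^2)^*_\times$) along the path $(\lambda_0+\epsilon a,\ell_0+\epsilon m)$, the velocity at $\epsilon=0$ is $a\ell_0^3+3\lambda_0\ell_0^2 m$. As $a$ and $m$ vary, these velocities sweep out the $3$-plane $\ell_0^2\cdot(\R^2)^*$. Taking the union of these tangent spaces over the regular locus of $R$ gives
$TR=\bigl\{\ell^2 m:\ell\in(\R^2)^*_\times,\ m\in(\R^2)^*\bigr\}$.

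Finally I would split into two cases according to whether $m$ is a multiple of $\ell$. If $m=c\ell$, then $\ell^2 m=c\ell^3\in R$, so this case contributes only points that are excluded by passing to $\Sigma$. Otherwise $\ell$ and $m$ are linearly independent real linear forms, and $\ell^2 m$ has a double real root at $\{\ell=0\}$ and a distinct simple real root at $\{m=0\}$; conversely, any cubic form with this root pattern admits such a factorization. Hence $\Sigma=TR\setminus R$ consists precisely of the cubics with one double and one simple real root, establishing the lemma. The only non-trivial step is the identification $T_{p_0}R=\ell_0^2\cdot(\R^2)^*$; the rest is elementary factorization over $\R$.
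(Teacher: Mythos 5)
Your argument is correct and is essentially the computation the paper has in mind: the paper declares this lemma straightforward and gives no proof, but its earlier parametrization $\psi(r,s,t)=r\gamma(t)+s\gamma'(t)$ of $TR$, with $\gamma=\ell^3$ and $\gamma'$ proportional to $\ell^2\cdot(\text{linear form})$, yields exactly your description $TR=\{\ell^2m\}$, after which the root analysis is elementary. One small slip: the tangent space $T_{p_0}R=\ell_0^2\cdot(\R^2)^*$ is a $2$-plane, not a $3$-plane (the cone $R$ is a surface, and multiplication by $\ell_0^2$ is injective on the $2$-dimensional space $(\R^2)^*$); it is the union of these $2$-planes over all $\ell_0$ that gives the $3$-dimensional tangent variety, and your subsequent factorization argument is unaffected.
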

 
 \begin{corollary}
The orbits of $SL_2(\R)$ on $\R\bP^3\cong\bP(S^3\R^2)$ are: $\mathbbm{r}=\bP R$,
$T\mathbbm{r}\setminus\mathbbm{r}$ as well as two open orbits $\cU_1$ and $\cU_2$ such that
$\cU_1\cup\cU_2=\mathbb R\bP^3\setminus T\mathbbm{r}$, corresponding to binary cubics 
with different roots: three reals for $\cU_1$, and a pair of complex conjugate and one real for $\cU_2$.
 \end{corollary}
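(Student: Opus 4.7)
The plan is to parametrize $\R\bP^3\cong\bP(S^3\R^2)$ by non-zero binary cubic forms modulo scalars and classify such cubics by their factorization type over $\R$. Since $-I\in SL_2(\R)$ acts by $(-1)^3=-1$ on $S^3\R^2$, which is trivial projectively, the induced action on $\R\bP^3$ factors through $PSL_2(\R)$; I would therefore work throughout with the Möbius group.

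First I would recall that every non-zero $p\in S^3\R^2$ falls into exactly one of four classes according to its root configuration in $\bC\bP^1$:
(a) a triple real root, $p\propto \ell^3$;
(b) one double and one simple real root, $p\propto \ell^2 m$ with $\ell,m$ linearly independent over $\R$;
(c) three pairwise distinct real roots;
(d) one real root and a complex-conjugate pair of non-real roots.
The discriminant $\Delta$ of a binary cubic is a relative $SL_2(\R)$-invariant, non-vanishing precisely on (c)$\cup$(d), positive on (c) and negative on (d). Hence the subsets corresponding to (c) and (d) are open in $\R\bP^3$, and $(c)\cup(d)=\R\bP^3\setminus\{\Delta=0\}$.

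Next I would reduce each class to a normal form using transitivity properties of $PSL_2(\R)$ acting by Möbius transformations on $\R\bP^1$ (and, by extension, on $\bC\bP^1$ preserving the real axis). Because $PSL_2(\R)$ is transitive on unordered triples, unordered pairs, and single points of $\R\bP^1$, cases (a), (b), (c) each reduce to a single orbit with representatives $[y^3]$, $[xy^2]$, $[xy(x-y)]$, respectively. For (d) I would observe that the stabilizer of $\infty\in\R\bP^1$ in $PSL_2(\R)$ is the affine group $z\mapsto az+b$, which acts transitively on the upper half-plane $\mathbb{H}$; combined with the transitivity on $\R\bP^1$ this yields transitivity of $PSL_2(\R)$ on $\R\bP^1\times\mathbb{H}$. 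Sending the real root of $p$ to $\infty$ and its upper complex root to $i\in\mathbb{H}$ normalizes $p$ to the representative $[x(x^2+y^2)]$, so (d) is also a single orbit.

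Finally I would identify the four orbits with the geometric sets claimed. By the preceding lemma, $R\subset\R^4\cong S^3\R^2$ consists precisely of cubics with a triple real root, so $\mathbbm{r}=\bP R$ is the orbit (a); and $\Sigma=TR\setminus R$ corresponds to cubics with exactly one double and one simple real root, so $T\mathbbm{r}\setminus\mathbbm{r}$ is the orbit (b). The two open orbits $\cU_1$ and $\cU_2$ are then (c) and (d), distinguished by the sign of $\Delta$; they exhaust $\R\bP^3\setminus T\mathbbm{r}$ because $\{\Delta=0\}=T\mathbbm{r}$. Since the root-type stratification is exhaustive, there are no further orbits. The only step with any subtlety is the transitivity argument in case (d), which reduces to the elementary fact that $PSL_2(\R)$ acts transitively on pairs consisting of a point of $\R\bP^1$ and a point of $\mathbb{H}$; the rest is direct bookkeeping.
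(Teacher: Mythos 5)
Your proof is correct and takes essentially the same route the paper intends: the corollary is stated there as a ``straightforward'' consequence of the root-type interpretation of $S^3\R^2$, and the paper's subsequent proof of Theorem \ref{T3} uses exactly your ingredients (classification of binary cubics by root configuration together with transitivity of $PSL_2(\R)$ on point configurations in $\R\bP^1$). The only tiny imprecision is that case (b) needs transitivity on \emph{ordered} pairs of distinct points of $\R\bP^1$, since the double and simple roots play different roles; this of course also holds, so nothing is lost.
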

 
  \begin{corollary}
The orbits of $GL_{2}(\R)$ on $\R^4_\times=\R^4\setminus\{0\}$ are $R\setminus\{0\}$,
$\Sigma$ and the cones $C\cU_1$ and $C\cU_2$.
 \end{corollary}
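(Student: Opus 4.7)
\begin{pfns}
The argument is a short transfer from the previous corollary via the Hopf-type fibration $\pi:\R^4_\times\to\R\mathbb{P}^3\cong\mathbb{P}(S^3\R^2)$.

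First I would observe that $\pi$ is $GL_2(\R)$-equivariant once one lets $GL_2(\R)$ act on $\R\mathbb{P}^3$ through its quotient by the center $\R_\times\cdot I$, i.e.\ through $PGL_2(\R)$. Since the center acts trivially on $\R\mathbb{P}^3$ and $SL_2(\R)\hookrightarrow GL_2(\R)$ already surjects onto $PGL_2(\R)$ at the level of orbits, the $GL_2(\R)$-orbits on $\R\mathbb{P}^3$ coincide with the $SL_2(\R)$-orbits listed in the preceding corollary, namely $\mathbbm{r}$, $T\mathbbm{r}\setminus\mathbbm{r}$, $\cU_1$ and $\cU_2$. In particular every $GL_2(\R)$-orbit on $\R^4_\times$ is contained in the preimage $\pi^{-1}(\cO)$ of exactly one such orbit $\cO$.

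Next I would check that each such preimage is a \emph{single} $GL_2(\R)$-orbit. For this, fix $p\in \R^4_\times$ and note that the center $\R_\times\cdot I\subset GL_2(\R)$ acts on $S^3\R^2$ by $\lambda I\cdot p=\lambda^3 p$. The map $\lambda\mapsto\lambda^3$ is a bijection on $\R_\times$, so the center acts transitively on the fiber $\pi^{-1}([p])=\R_\times\cdot p$. Combined with the transitivity of $SL_2(\R)$ on $\cO$, this shows $GL_2(\R)\cdot p=\pi^{-1}(\cO)$.

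Finally I would identify the four preimages: by the previous lemma, $\pi^{-1}(\mathbbm{r})=R\setminus\{0\}$ (triple real root) and $\pi^{-1}(T\mathbbm{r}\setminus\mathbbm{r})=TR\setminus R=\Sigma$ (one double and one simple real root), while by definition $\pi^{-1}(\cU_i)=C\cU_i$ for $i=1,2$. This yields the four stated $GL_2(\R)$-orbits and exhausts $\R^4_\times$. There is no serious obstacle here; the only point worth verifying with a little care is the cube-surjectivity step that ensures the fibers of $\pi$ are not split further into two half-line orbits, which would otherwise double the count.
\end{pfns}
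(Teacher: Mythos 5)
Your argument is correct and supplies exactly the details the paper leaves implicit (the paper states this corollary without proof, as "straightforward"): the key points are that the projective orbit structure is inherited from the preceding corollary and that the centre $\R_\times\cdot I$ acts on $S^3\R^2$ by the odd power $\lambda^3$, hence transitively on each fiber $\R_\times\cdot p$, so no orbit splits into half-lines. The only step you assert without justification is that the $GL_2(\R)$-orbits on $\R\mathbb{P}^3$ coincide with the $SL_2(\R)$-orbits; strictly, $SL_2(\R)$ only covers $PSL_2(\R)$, which has index $2$ in $PGL_2(\R)$, so one should add that negative-determinant substitutions preserve the root type of a binary cubic (and the lower-dimensional orbits are distinguished by dimension), whence $\cU_1$ and $\cU_2$ cannot merge. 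With that one-line remark the proof is complete.
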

 
  \begin{proof}[Proof of Theorem \ref{T3}]
By the above, the Lie algebra of a submaximal automorphism group is a subalgebra of 
$\gg=\mathfrak{gl}_2(\R)\ltimes\R^4\cong \mathfrak{gl}_2(\R)\ltimes S^3\mathbb R^2$. There are only two $7$-dimensional subalgebras in $\gg$, namely
 $$
\mathfrak{s}=\sl_2(\R)\ltimes\R^4\ \text{ and }\ \mathfrak{f}=\mathfrak{b}\ltimes\R^4,
 $$
where $\mathfrak{b}$ is the $3$-dimensional Borel subalgebra of upper-triangular matrices in $\mathfrak{gl}_2(\R)$, up to conjugation.

The first Lie algebra integrates to $SL_2(\R)\ltimes\R^4$, which acts simply transitively on the model 
$\mathcal{R}^7=\Sigma\ltimes\R^4(y)$.
Indeed, $SL_2(\R)$ acts simply transitively on $\Sigma$. To see this, first note that $PSL_2(\R)$
acts transitively on triples of points in $\R\bP^1$, hence also on pairs of marked points. 
Moreover any binary cubic can be mapped by a transformation from $SL_2(\R)$ to $x^2y$, 
and the only transformation from this group leaving $x^2y$ invariant is the identity.

Note that the smallest Lie group with Lie algebra $\sl_2(\R)=\mathfrak{s}/\R^4$ is $PSL_2(\R)$,
the quotient of $SL_2(\R)$ by its center $\bZ_2=\{\pm\Id\}$, but this group does not
extend to the semi-direct product with $\R^4$, so it does not give rise to a $7$-dimensional automorphism group.
The remaining Lie groups with Lie algebra $\sl_2(\R)$, which admit an effective irreducible action  
on $\R^4$, factor through the universal cover of $SL_2(\R)$. Consequently, Lie groups $S$ with 
$\operatorname{Lie}(S)=\mathfrak{s}$ factor through the universal cover of $SL_2(\R)\ltimes\R^4$ 
and therefore give rise to the CR-models derived in \cite{KS1}. 
The automorphism group of each of these has dimension 8, so $S$ is only properly contained in there
and therefore is not an automorphism group.

The second Lie algebra integrates to the Lie group $F=B\ltimes\R^4$ and this action of $F$ descends 
to affine automorphisms $B$ of $\Sigma$ (by the argument we already exploited in \cite{KS1}),
hence to those of $R$ and then to projective automorphisms of the rational normal curve 
$\mathbbm{r}\subset\R\mathbb{P}^3$.  This action of $B$ has two orbits: $\mathbbm{r}_\times
=\R\mathbb{P}^1\setminus\{\infty\}\cong\R$
and the point $\infty\in\R\mathbb{P}^1$.

Restricting to the open orbit $\mathbbm{r}_\times$ (otherwise we again get the CR-model),
we consider the cone $R'=C\mathbbm{r}_\times$ and the regular hypersurface $\Sigma'=TR'\setminus R'\subset\mathbb R^4(x)$.
Its affine automorphism group is $B$, the stabilizer of $\Sigma'$ in $\Sigma$. We note that $\Sigma'$ has two connected components:
the corresponding set of roots consists of a double and a single root, and the components differ
by their order in  $\bP R'\cong\R$ (e.g., the single root comes before the double root). 
The same is true for the group $B$: it is disconnected with two components.

Note that $B$ acts simply transitively on $\Sigma'$. To see this, first note that it
acts transitively on pairs of marked points in $\R$, which can be chosen as $\{1,0\}$. 
Then any binary cubic can be mapped by a transformation from $B$ to $x^2(x-y)$, 
and the only transformation from $B$ leaving it invariant is the identity.
Hence we arrive at the tube $\cM'=\Sigma'\times\R^4(y)$, 
which is an open CR-submanifold of $\mathcal{R}^7$ with automorphism group $F$. 
We have an independent \textsc{Maple} verification of this fact in a supplement accompanying the arXiv posting of the article.

This is the only simply transitive 3-nondegenerate 7-dimensional CR-hypersurface
with the automorphism group of dimension 7. Indeed, we have:
 $$
\pi_0(F)=\pi_0(B)=\bZ_2,\quad \pi_1(F)=\pi_1(B)=\{\Id\},
 $$
thus $F$ can be considered as the universal covering Lie group with Lie algebra $\mathfrak{f}$ 
(see \cite[\S5.1]{KS1} for a related discussion of universal covers of disconnected Lie groups).
Any homogeneous model with $7$-dimensional automorphism group is quotient of $F$ by a discrete normal subgroup, which is trivial. 
Consequently there exists no other Lie group with Lie algebra $\mathfrak{f}$ that can act as automorphism group 
of a locally flat 3-nondegenerate CR-hypersurface. 
 \end{proof}

\subsection{Intransitive tubes with fewer symmetries}\label{sec:5.3}

In the proof of Theorem \ref{T1}, we bounded the dimensions of intransitive symmetry algebras 
of 3-nondegenerate 7-dimensional CR-hypersurfaces. However our bounds were not claimed sharp.
It is yet an open question, what is the maximal value of $\dim\gg$ for the intransitive symmetry algebras 
$\gg$ of $(\cM,\cD,\cJ)$ depending on the dimension of the orbit $d\in[1,6]$.

While we expect this maximal value to be $d$, a simpler claim is that every $d<7$ is realizable as the symmetry 
dimension of a 3-nondegenerate 7-dimensional CR-hypersurface $(\cM,\cD,\cJ)$. 
Let us demonstrate this for larger dimensions $4\leq d\leq 6$.

For $d=6$ we already showed realizations starting with a curve $\mathbbm{r}\subset\R\mathbb{P}^3$ 
with a non-trivial projective infinitesimal symmetry. If we choose a generic curve
$\mathbbm{r}$, then its projective symmetry is trivial, yet the corresponding cone  $R=C\mathbbm{r}$
has the radial symmetry, which is inherited by the submanifold 
$\Sigma'=TR\setminus R\subset\R^4(x)$.
Thus the tube 
 $
\cM'=\Sigma'\times\R^4(y)\subset\bC^4(z)
 $
is 3-nondegenerate and has 
symmetry dimension $d=5$: the symmetries are the radial field $\rho$ and the translations $\p_{y^k}$, $0\leq k\leq 3$.
An independent verification was done in \textsc{Maple}.

To realize the symmetry dimension $d=4$, let us choose the ruled submanifold $\Sigma''$ in $\R^4(x)$ parametrized as 
 $
 \psi(t,r,s)=\g(t)+r\g'(t)+s\g''(t)
 $
for a nondegenerate curve $\g$ in $\R^4(x)$ and define the tube $\cM''=\Sigma''\times\R^4(y)$ as usual. Then 
 $$
T\Sigma''=\langle\g',\g'',\g'''\rangle\subset T\R^4(x),\quad 
\cJ T\Sigma''=\langle\g',\g'',\g'''\rangle\subset T\R^4(y)=\langle\g,\g',\g'',\g'''\rangle,
 $$
and $\cD=T\Sigma''\otimes\bC=T\Sigma''\oplus\cJ T\Sigma''\subset T\cM''$. 
Since $T\Sigma''$ is generated by 
 $$
\psi_*\p_t=\g'+r\g''+s\g''',\quad \psi_*\p_r=\g'\quad\text{and}\quad \psi_*\p_s=\g'',
 $$ 
one checks  that $\cM''$ is 3-nondegenerate (as in \cite[\S6.4]{KS1}). 
For generic $\g$ the only infinitesimal CR-symmetries of $\cM''$ are the translations along $\R^4(y)$.
(This is again independently verified in a \textsc{Maple} supplement.)

We expect that the smaller dimensions $0\leq d\leq3$ are also realizable as the symmetry 
dimension of a 3-nondegenerate 7-dimensional CR-hypersurface $(\cM,\cD,\cJ)$, however 
the corresponding examples clearly cannot be realized as tubes.  

\subsection{On Beloshapka's conjecture}\label{sec:5.4}

Let us finally prove a version of Beloshapka's conjecture
for $n=3$ in a stronger form, depending on the degree of degeneracy of $(\cM,\cD,\cJ)$ and
with sharper upper bounds for symmetry dimensions.

 \begin{proof}[Proof of Theorem \ref{T4}]
Here is a summary of the known and novel results about real-analytic infinitesimal  CR-symmetries in dimension 7:
 \begin{itemize}
\item[(i)] if there exist points of Levi-nondegeneracy in $\cM$ then $\dim\gg\leq24$ by \cite{CM,Ta2};
\item[(ii)] if there are both points of Levi-nondegeneracy and points of Levi-degeneracy in $\cM$
then $\dim\gg\leq17$ by \cite{Kr};
\item[(iii)] if the structure is uniformly Levi-nondegenerate but there are non-spherical points then $\dim\gg\leq13$
for Levi-indefinite case and $\dim\gg\leq12$ for Levi-definite case \cite{Kru2016};
\item[(iv)] if the structure is uniformly Levi-degenerate but there are points of 2-nondegeneracy then $\dim\gg\leq16$,
under an additional regularity assumption on the so-called abstract reduced modified symbols \cite{PZ,SZ}, while in the general case the bounds increases to
$\dim\gg\leq17$ by \cite{B2};
\item[(v)] if the structure is uniformly 3-nondegenerate then $\dim\gg\leq8$ according to \cite{KS1}.
 \end{itemize}
These bounds, together with the restriction argument we exploited, imply the claim.
 \end{proof}
 
 \begin{remark}
All the symmetry bounds in the above proof are sharp except for dimension 17 in case (iv). In fact, we suggest this bound 
is not sharp and that the sharp bound of (iv) is 16, but this will be discussed elsewhere. 
If this is the case, then the only realizable symmetry dimension 17 would come from (ii), as was conjectured in \cite{Kr}.   
One of the strategies for the proof is to use, for 2-nondegenerate structures, the methods 
bounding the dimension of intransitive symmetry algebras that we exploited in this paper in the 3-nondegenerate case.
 \end{remark}

\bigskip\par

\end{document}